\long\def\acks#1{\vskip 0.3in\noindent{\large\bf Acknowledgments and Disclosure of Funding}\vskip 0.2in
\noindent #1}
\newcommand{\E}{\mathbb{E}} % Expectation symbol
\newcommand{\R}{\mathbb{R}} % Real symbol
\renewcommand{\P}{\mathbb{P}} % Probability symbol
\newcommand{\D}{\mathcal{D}}
\newcommand{\G}{\mathcal{G}}
\newcommand{\Y}{\mathcal{Y}}
\newcommand{\Z}{\mathcal{Z}}
\newcommand{\T}{\mathcal{T}}
\newcommand{\Q}{\mathcal{Q}}
\newcommand{\X}{\mathcal{X}}
\newcommand{\vnu}{\boldsymbol{\nu}}
\newcommand{\veta}{\boldsymbol{\eta}}
\newcommand{\vtheta}{\boldsymbol{\theta}}
\newcommand{\vbeta}{\boldsymbol{\beta}}
\newcommand{\mSigma}{\boldsymbol{\Sigma}}
\newcommand{\mOmega}{\boldsymbol{\Omega}}
\newcommand{\mY}{\mathbf{Y}}
\newcommand{\mA}{\mathbf{A}}
\newcommand{\mQ}{\mathbf{Q}}
\newcommand{\mX}{\mathbf{X}}
\newcommand{\mD}{\mathbf{D}}
\newcommand{\mW}{\mathbf{W}}
\newcommand{\mv}{\mathbf{v}}
\newcommand{\mB}{\mathbf{B}}
\newcommand{\mV}{\mathbf{V}}
\newcommand{\mU}{\mathbf{U}}
\newcommand{\mO}{\mathbf{O}}
\newcommand{\mS}{\mathbf{S}}
\newcommand{\mL}{\mathbf{L}}
\newcommand{\mH}{\mathbf{H}}
\newcommand{\mZ}{\mathbf{Z}}
\newcommand{\mT}{\mathbf{T}}
\newtheorem{theorem}{Theorem}[section]
\newtheorem{lemma}{Lemma}[section]
\newtheorem{proposition}[theorem]{Proposition} 
\newtheorem{remark}{Remark}[section]
\newtheorem{corollary}{Corollary}[theorem]
\newtheorem{definition}[theorem]{Definition}
\newtheorem{assumption}{Assumption}[section]
\title{Tensor Topic Modeling Via HOSVD}
\author[1]{Yating Liu\thanks{yatingliu@uchicago.edu}}
\author[1]{Claire Donnat\thanks{cdonnat@uchicago.edu}}
\affil[1]{Department of Statistics,  University of Chicago}
\date{}
\begin{document}
\maketitle
\begin{abstract}

%By modeling document-term matrices as the product of a low-rank topic matrix and a topic assignment matrix, 
By representing documents as mixtures of topics, topic modeling has allowed the successful analysis of datasets across a wide spectrum of applications ranging from ecology to genetics. An important body of recent work has demonstrated the computational and statistical efficiency of probabilistic Latent Semantic Indexing (pLSI)-- a type of topic modeling--- in estimating both the topic matrix (corresponding to  distributions over word frequencies), and the topic assignment matrix.
%by modeling documents as mixtures of topics, here understood as distributions over word frequencies. 
However, these methods are not easily extendable to the incorporation of additional temporal, spatial, or document-specific information, thereby potentially neglecting useful information in the analysis of spatial or longitudinal datasets that can be represented as tensors. Consequently, in this paper, we propose using a modified higher-order singular value decomposition (HOSVD) to estimate topic models based on a Tucker decomposition, thus accommodating the complexity of tensor data. 
Our method exploits the strength of tensor decomposition in reducing data to lower-dimensional spaces and successfully recovers lower-rank topic and cluster structures, as well as a core tensor that highlights interactions among latent factors. 
We further characterize explicitly the convergence rate of our method in entry-wise $\ell_1$ norm. Experiments on synthetic data demonstrate the statistical efficiency of our method and its ability to better capture patterns across multiple dimensions. Additionally, our approach also performs well when applied to large datasets of research abstracts and in the analysis of vaginal microbiome data.
\vspace{5mm}
\noindent

\textbf{Keywords: Tucker Decomposition, Topic Modeling, Higher-order Singular Value Decomposition, pLSI.}

\vspace{7mm}\noindent 

\end{abstract}

\newpage
%%%%%%%%%%%%%%%%%%%%%%
\section{Introduction}\label{sec: introduction}
%%%%%%%%%%%%%%%%%%%%%%

Consider a corpus of $N$ documents, each consisting of $M$ words taking values in a dictionary of size $R$. From a mathematical perspective, this data can be summarized in a document-term matrix $\mY \in \R^{N \times R}$ whose $(i,r)^{th}$ entry  records the number of times word $r$ has occurred in document $i$. The objective of topic modeling is to extract a set of $K$ topics from the matrix $\mY$ (usually encoded as a matrix $\mA \in \R^{K \times R}$, where rows correspond to different topics), as well as a topic assignment matrix $\mW \in \R^{N \times K}$ that captures the topic proportions present in each document.  In this setting, topics are understood as distinct distributions over word frequencies. It is customary to assume that each row of the matrix $\mY $ follows a multinomial distribution:
\begin{equation}\label{eq:plsi}
    \mY_{i \cdot} \sim \text{Multinomial}(M,\{\sum_{k=1}^K \mW_{ik} \mA_{kr}\}_{r=1}^R)
\end{equation}
where $\{\sum_{k=1}^K \mW_{ik} \mA_{kr}\}_{r=1}^R$ is a $R$-dimensional vector representing the expected frequency of each word $r$ in document $i$ as a mixture of $K$ different distributions (or topics), whose weights are denoted as $\mW_{ik}$. The rows of $\mY$ are here sampled independently of one another, with no explicit relationship between documents.

Since its introduction, topic modeling has proven to be a useful unsupervised learning technique for extracting latent structure from document corpora and allowing the subsequent clustering of documents according to their topic proportions \cite{lee2012ivisclustering,xie2013integrating, yau2014clustering,zhou2024much}. Beyond the text analysis sphere, topic modeling has been successfully applied to the analysis of count data across multiple applications, including image processing \cite{li2010building,sharma2015image,tu2016topic}, social network analyses \cite{cha2012social,curiskis2020evaluation}, as well as genetics \cite{kho2017novel,pritchard2000inference,yang2019characterizing,liu2016overview}, and microbiome studies \cite{okui2020bayesian,sankaran2019latent,symul2023sub} where topics are interpreted as networks of interacting genes or communities of co-abundant bacteria, respectively. 

From a methods perspective, topic modeling approaches can broadly be classified as belonging to one of two classes: Bayesian approaches, mostly expanding on the work by Blei et al. \cite{blei2003latent}, and frequentist approaches. Amongst the latter, the probabilistic Latent Semantic Indexing (pLSI) model \cite{hofmann1999probabilistic} has recently benefited from an increased amount of interest \cite{ke2022using, klopp2021assigning} in the statistics community, as it allows the development of fast, statistically sound methods for inferring both $\mA$ and $\mW$. 

However, in many applications, the data often includes additional dimensions, such as temporal or spatial attributes. In these settings, the data is best represented as a tensor rather than a matrix to reflect the natural structure of the data.
Instances of this setting occur in a wide number of applications, including:
\begin{itemize}
    \item \textit{Example 1  (text data): analysis of scientific reviews:} Consider the analysis of a set of reviews written by $N^{(1)}$ reviewers on $N^{(2)}$ academic papers. In this setting, it might be desirable to simultaneously cluster reviewers according to their \textit{reviewer types} (e.g. favorable or unfavorable to a given research topic, focused on theoretical considerations or the applicability of the method, etc.), and papers according to their topic proportions \cite{zheng2016topic}. 
    \item \textit{Example 2 (marketing): ``Market-Basket'' analysis:}  Consider a set of $N^{(1)}$ customers (the ``documents''), and their purchases of various items (the ``words'') through time (e.g. months or seasons). Tensor topic modeling can here be used to analyze purchasing patterns over time, identifying latent purchasing behaviors that correlate product categories with customer segments and temporal/seasonal variations (see Section~\ref{sec: real data experiments}).
    \item \textit{Example 3 (biology): longitudinal microbiome studies:} topic modeling has been proven to be a valuable tool for the analysis of microbiome samples\cite{chen2012estimating,movassagh2021vaginal,okui2020bayesian}. In this setting, microbime samples from a given patient can be interpreted as documents, with words corresponding to bacteria types, and topics corresponding to communities of co-abundant bacteria \cite{sankaran2019latent}. In many settings, it is not unusual to have samples from the same subject collected across time \cite{dethlefsen2011incomplete,symul2023sub,romero2014composition}, or across different sampling sites (e.g. vaginal, saliva samples and gut samples, or samples from different parts of the gastro-intestinal tract \cite{shalon2023profiling}). In this case, inferring clusters of patients, time points, as well as communities of bacteria, and understanding the interactions between these latent structures is important for gaining deeper biological insights. (see Section~\ref{sec: real data experiments}).
\end{itemize}

In this paper, we thus propose the first adaptation of pLSI to \textbf{t}ensor \textbf{t}opic \textbf{m}odeling (\textbf{TTM}), enabling us to appropriately leverage tensors' inherent structure to efficiently estimate latent factors across all dimensions of the input tensor matrix.  We formalize the description of our method for the analysis of text documents as described in Example 1 due to its broader ease of interpretation, but, as we will show in Section~\ref{sec: real data experiments}, our method is broadly applicable to the analysis of tensor count data at large. Let us therefore suppose that we observe $N^{(1)}\times N^{(2)}$ documents written by $N^{(1)}$ reviewers for $N^{(2)}$ papers using a vocabulary of $R$ words. For each $i\in[N^{(1)}]$ and $j\in[N^{(2)}]$, let $M_{ij}$ denote the length of document (or review) $(i,j)$. For all $i\in[N^{(1)}]$, $j\in[N^{(2)}]$ and $r\in[R]$, we observe the empirical frequency of each word in each document, defined as:
$$
\mathcal{Y}_{ijr}=\frac{\text{number of times word } r \text{ has appeared in reviewer $i$'s review of paper $j$}}{M_{ij}}.
$$
By analogy with the original topic model framework (Equation \eqref{eq:plsi}), we assume that each document $(i,j)$  is sampled independently, so that the entries $\{M_{ij}\mathcal{Y}_{ij\star}\}_{i\in[N^{(1)}], j\in[N^{(2)}]}$ are sampled independently as: 
\begin{align}\label{eq: model multinominal}
M_{ij}\mathcal{Y}_{ij\star}\sim\text{Multinomial}\left(M_{ij},\mathcal{D}_{ij\star}\right)
\end{align}
for some tensor $\mathcal{D}\in\mathbb{R}^{N^{(1)}\times N^{(2)}\times R}$ representing the expected word frequencies in each document: $\mathbb{E}(\mathcal{Y})=\mathcal{D}$. Letting $\mathcal{Z}:=\mathcal{Y}-\mathcal{D}$, the observed word frequencies $\mathcal{Y}$ can thus be written in a ``signal plus noise" form:
\begin{equation}\label{eq:signal_plus_noise}
\mathcal{Y}=\mathcal{D}+\mathcal{Z}.    
\end{equation}

In the context of tensor topic modeling, each entry of $\D$ is assumed to be non-negative and to sum up to 1 along the ``vocabulary axis''-- that is, for each document $(i,j)$, $\sum_{r=1}^R \D_{ijr} =1$.
As an analogue of the matrix pLSI model,  tensor topic modeling (TTM) imposes a low-rank structure on $\mathcal{D}$. In this paper, we propose studying TTM under a nonnegative Tucker decomposition (NTD) framework \cite{kim2007nonnegative}, which decomposes the tensor $\mathcal{D}$ as the product of a smaller core tensor $\mathcal{G}\in\mathbb{R}^{K^{(1)}\times K^{(2)}\times K^{(3)}}$ and three factor matrices $\mA^{(1)}\in\mathbb{R}^{N^{(1)}\times K^{(1)}}$, $\mA^{(2)}\in\mathbb{R}^{N^{(2)}\times K^{(2)}}$ and $\mA^{(3)}\in\mathbb{R}^{R\times K^{(3)}}$. In this setting, the order-3 tensor $\mathcal{D}$ is said to admit a rank-$(K^{(1)}, K^{(2)}, K^{(3)})$ Tucker decomposition if it can be written as:
\begin{align}\label{eq: TTM}
    \mathcal{D}=\mathcal{G} \cdot (\mA^{(1)},\mA^{(2)},\mA^{(3)}),
\end{align}
where $\cdot$ denote the tensor multiplication.
In element-wise form, this model is equivalent to assuming that each entry $\D_{ijr}$ can be re-written as: 
\begin{align}\label{eq: model defined entry-wise}
\D_{ijr}=\sum_{k^{(1)}=1}^{K^{(1)}} \sum_{k^{(2)}=1}^{K^{(2)}} \sum_{k^{(3)}=1}^{K^{(3)}} \mathcal{G}_{k^{(1)}k^{(2)}k^{(3)}} \mA^{(1)}_{ik^{(1)}}\mA^{(2)}_{jk^{(2)}}\mA^{(3)}_{rk^{(3)}}
\end{align}

Entries of the latent factors and the core tensor of $\D$ are also assumed to be nonnegative, and can be interpreted either as probabilities or mixed membership proportions.
In particular, under model~\eqref{eq: model defined entry-wise}, each reviewer $i$ is assumed to belong to a mixture of reviewer types, as indicated by the row  $\mA^{(1)}_{i\star}$ -- thereby allowing a mixed membership assignment of reviewers to clusters. Similarly, $\mA^{(2)}_{jk^{(2)}}$ represents paper $j$'s soft membership assignment to \textit{paper category} $k^{(2)}$.
Finally, each column $\mA^{(3)}_{\star k^{(3)}}$ corresponds to a probability distribution on the word frequencies, thereby defining \textit{topic} $k^{(3)}$. We thus refer to the latent space $\mA^{(1)}$ as the \textit{reviewer types}, the latent space $\mA^{(2)}$ as the \textit{paper category} space, and $\mA^{(3)}$ as the latent \textit{topic} space for words, as is usually done in matrix topic models \cite{blei2003latent}. 
By contrast, the core tensor $\G$ captures the interactions between latent topics and clusters, so that $\G_{k^{(1)}k^{(2)}k^{(3)}}$ can be understood as the probability of observing topic $k^{(3)}$ in a review authored by an author of type $k^{(1)}$ in a paper of type $k^{(2)}$.  Therefore, \eqref{eq: model defined entry-wise} can be understood as enforcing the following law of total probabilities for any word $r$ in document $(i,j)$:
\begin{align*}
    \mathbb{P}\left(\text{word }r | \text{ reviewer } i, \text{ paper } j\right)=\sum_{k^{(1)},k^{(2)},k^{(3)}} &\mathbb{P}\left(\text{topic }k^{(3)} | \text{ reviewer type } k^{(1)}, \text{ paper category } k^{(2)}\right)\\
    &\times\mathbb{P}\left(\text{reviewer type }k^{(1)} | \text{ reviewer }i\right)\\&\times\mathbb{P}\left(\text{paper category }k^{(2)} | \text{ paper }j\right)\\&\times\mathbb{P}\left(\text{word }r| \text{ topic }k^{(3)} \right)
\end{align*}
These components satisfy:
   \begin{equation}\label{eq:constraints A}
       \begin{split}
        \sum_{k^{(1)}=1}^{K^{(1)}} \mA^{(1)}_{ik^{(1)}}=1, \sum_{k^{(2)}=1}^{K^{(2)}} \mA^{(2)}_{jk^{(2)}}&=1, \sum_{r=1}^R \mA^{(3)}_{rk^{(3)}}=1\\
             \sum_{k^{(3)}=1}^{K^{(3)}} \mathcal{G}_{k^{(1)}k^{(2)}k^{(3)}}&=1 , \text{ and }\sum_{r=1}^R\mathcal{D}_{ijr}=1.
       \end{split}
   \end{equation}

In real applications, the size of the core $\mathcal{G}$ is less than any of the original dimensions of the tensor: $\max\left(K^{(1)} ,K^{(2)}, K^{(3)}\right)\ll \min\left(N^{(1)}, N^{(2)},R \right)$. For technical reasons, we assume throughout this paper that $K^{(a)}$ for $a\in[3]$ is fixed while $N^{(1)}, N^{(2)},R $ and the document length $M_{ij}$ vary. 
Since the nonnegative Tucker decomposition introduced in \eqref{eq: TTM} may not be unique, we introduce the following additional separability conditions to ensure identifiability.

\begin{definition}[Anchor document assumption]\label{assumption: anchor doc} We call document $i^*$ an {\it anchor document} for a given cluster $k\in[K]$ if document $i^*$ belongs solely to cluster $k$(i.e., 
$\P[\text{cluster } k  | \text{document } i^* ] =1$).

Adapting this definition to the context of tensor topic modeling, we will assume in the rest of this paper that there exists for each reviewer cluster $k^{(1)}$ an ``anchor reviewer" $i^*$ such that:
\begin{equation*}%\label{eq:anchor_reviewer}
\mA^{(1)}_{i^*, k^{(1)}} = 1, \qquad \text{and} \qquad \forall k' \neq k^{(1)},\quad  \mA^{(1)}_{i^*, k'} = 0.
\end{equation*}
Similarly, we assume that for each paper category $k^{(2)}$,  there exists an ``anchor paper" $j^*$ such that:
\begin{equation*}%\label{eq:anchor_paper}
\mA^{(2)}_{j^*, k^{(2)}} = 1, \qquad \text{and} \qquad \forall k' \neq k^{(2)},\quad  \mA^{(2)}_{j^*, k'} = 0.
\end{equation*}
    
\end{definition}
\begin{definition}[Anchor word assumption]\label{def: anchor word} We call word $r^*\in[R]$ an \textit{anchor word} for topic $k^{(3)}\in[K^{(3)}]$ if the expected frequency of word $j$ in topic $k^{(3)}$ is nonzero, but its expected frequency is zero in all other topics.

In this paper, we assume that every topic $k^{(3)}$ has at least one anchor word:
    \begin{equation}\label{eq:anchor_word}
         \forall k^{(3)} \in [K^{(3)}], \exists r^*:\quad  \mA^{(3)}_{r^*,k^{(3)}} >0, \qquad \text{and} \qquad \forall k' \neq k^{(3)},\quad  \mA^{(3)}_{r^*, k'} = 0.
    \end{equation} 
\end{definition}

\begin{remark}
  These conditions are sufficient, but not necessary for recovering the latent structure in the tensor-pLSI setting considered in this paper. In fact, we provide in Appendix \ref{sec: vh hunting} a discussion on extending the method to settings where the anchor word assumption is not satisfied. However, we note that these assumptions also enhance the interpretability of the model. In essence, these conditions imply the existence of pure words as well as archetypal reviewers and papers. Unlike centroids, archetypes highlight extremes rather than averages and are often used in machine learning and statistics to provide greater interpretability \cite{cutler1994archetypal,morup2012archetypal}. This is particularly relevant in our context, since, usually, to understand or represent topics or clusters in mixed membership settings, it is rather natural to turn to their ``purest"/ "most extremes" representants  rather than to average members of that group.
\end{remark}

%It can be shown that the simplex is embedded in the components of tensor topic modeling under the anchor assumptions. This simplex information can be fully contained in the HOSVD components. If we can develop a method that estimates the parameter components from these low-dimensional HOSVD components, it will reduce the computational cost significantly.
%Unfortunately, to our best knowledge, there has not yet been a rigorous method about using tensor decomposition for a simplex-based tensor like tensor topic modeling.

\subsection{Our contributions}\label{sec:our contributions}
\par In this paper, we propose a novel estimation procedure to recover the core tensor $\mathcal{G}$, along with the three factor matrices $\mA^{(1)}, \mA^{(2)}, \mA^{(3)}$ in~\eqref{eq: TTM}. We further characterize the statistical efficiency of our method by providing theoretical guarantees under the anchor word, anchor reviewer and anchor paper assumptions. 

 More specifically, inspired by the use of Singular Value Decomposition as the cornerstone step in the estimation of matrix topic models \cite{ke2022using}, we propose to use higher-order SVD (HOSVD) \cite{xia2019sup} for tensor topic modeling. Since $\mathcal{D}$ is assumed to be low-rank, HOSVD is indeed a powerful and robust tool for dimensionality reduction that can (a) be rigorously analyzed under the anchor assumptions (Assumptions~\ref{def: anchor word} and \ref{assumption: anchor doc}); (b) is more interpretable than alternative tensor decompositions (e.g. the CP decomposition) by allowing to model the interactions between all latent components; and (c) has the potential to be computationally fast and efficient.   %In this paper, we introduce the first HOSVD-based algorithm to estimate the latent factors and core tensor in Tucker tensor topic modeling. 
We summarize our main contributions below:
\begin{itemize}
    \item We propose a new HOSVD-based method for estimating the four tensor topic components of \eqref{eq: TTM}: the core tensor $\mathcal{G}$ and three factor matrices $\mA^{(1)}, \mA^{(2)}, \mA^{(3)}$ (Section \ref{sec: preliminary and method}).
    \item We characterize the statistical efficiency of our procedure by deriving an error bound in terms of the entry-wise $\ell_1$ loss in Theorem \ref{theorem: main theorem}. Our method relies on the anchor word and anchor document assumptions (Assumptions~\ref{def: anchor word} and \ref{assumption: anchor doc}). Under mild assumptions on the smallest frequency of the word, our error bound is shown to be valid for all parameter regimes for which $K^{(a)}$ for $a\in[3]$ are considered fixed.  
    \item Our proposed procedure has further been shown to be adaptive to the column-wise $\ell_q$-sparsity assumption (Assumption \ref{ass: weak lq sparsity}) in Section \ref{sec: preliminary and method}. This sparsity assumption, motivated by Zipf's law \cite{zipf2013psycho}, aligns with the natural distribution of word frequencies in topics, where only a few words appear frequently, and most others are rare \cite{tran2023sparse}.
    \item We highlight the performance of our method on an extensive set of synthetic (Section~\ref{sec: experiments with synthetic data}) and real-world experiments (Section~\ref{sec: real data experiments}), highlighting the increased accuracy and interpretability of our proposed approach.
\end{itemize}

We note that theoretical guarantees for the parameters in matrix topic modeling  have already been established in the literature \cite{ke2022using,klopp2021assigning}. Consistent estimates of the three-factor matrices in TTM (Tensor Topic Modeling) can be obtained using similar proof techniques by flattening the tensor into matrices across three different dimensions. However, flattening the tensor is not sufficient to provide a consistent and efficient estimate for the core tensor. To the best of our knowledge, our estimating procedure is the first to leverage the advantages of HOSVD and offer a consistent estimate of the core tensor in Tucker-based TTM.
%\par The remaining of this paper is organized as follows. In Section \ref{sec: preliminary and method} we describe our method and provide its corresponding theoretical results. In Section \ref{sec: experiments with synthetic data} and Section \ref{sec: real data experiments}, we highlight the performance of our method on synthetic data and real data, respectively. In particular, we compare our estimator with existing algorithms, both for matrix and tensor decompositions, to show our consistency and benefits of our Tucker-based TTM.

\subsection{Notations}
For any $k \in \mathbb{N}$, let $[k]$ denote the index set $\{1, \dots, k\}$. We use $\mathbf{1}_d$ to denote the vector in $\mathbb{R}^d$ with all entries equal to 1. For a general vector $\mv \in \mathbb{R}^d$, let $\|\mv\|_r$ denote the vector $\ell_r$ norm, for $r =0, 1, \dots, \infty$, and let $\text{diag}(\mv)$ denote the $d \times d$ diagonal matrix with diagonal entries equal to entries of $\mv$. For any $a, b \in \mathbb{R}$, let $a \vee b := \max(a, b)$ and $a\wedge b := \min(a,b)$. 
\par Let $\mathbf{I}_m$ denote the $m \times m$ identity matrix. For a general matrix $\mQ \in \mathbb{R}^{m \times n}$ and $r = 0, 1, \dots, \infty$, let $\|\mQ\|_r$ denote the vector $\ell_r$-norm of $\mQ$ if one treats $\mQ$ as a vector. Let $\|\mQ\|_F$ and $\|\mQ\|_\text{op}$ denote the Frobenius (i.e. $\|\mQ\|_F = \|\mQ\|_2$) and operator norms of $\mQ$ respectively. For any index $j \in [m]$ and $i \in [n]$, let $\mQ_{ji}$, $\mQ_{j,i}$ or $\mQ(j,i)$ denote the $(j,i)$-entry of $\mQ$. Also, let $\mQ_{j\star}$ and $\mQ_{\star i}$ denote the $j^\text{th}$ row and $i^{th}$ column of $\mQ$ respectively. For an integer $k \leq m \wedge n$, let $\sigma_k(\mQ)$ denote the $k^\text{th}$ largest singular value of $\mQ$, and if $\mQ$ is a square matrix then, if applicable, let $\lambda_k(\mQ)$ denote the $k^\text{th}$ largest eigenvalue of $\mQ$.
 We write the \textit{Kronecker product} between two matrices $\mA\in\mathbb{R}^{n_1\times n_2}$ and $\mB\in\mathbb{R}^{m_1\times m_2}$ as: $\mA\otimes \mB\in \mathbb{R}^{n_1m_1\times n_2m_2}$ with $(\mA\otimes \mB)_{ik,jl}=\mA_{ij}\mB_{kl}$. For any matrix $\mA \in \R^{n_1\times n_2}$, the vectorized $\mathbb{R}^{n_1n_2}$-vector $\text{vec}(\mA)$ is obtained by stacking the columns of $\mA$. Conversely, a vector $\mathbf{u}\in \mathbb{R}^{n_1n_2}$ can be written in matrix form $\mathbf{U}=\operatorname{mat}(\mathbf{u})=\operatorname{vec}^{-1}(\mathbf{u})\in\mathbb{R}^{n_1\times n_2}$.
\par We define the unfolding operations of tensors along each dimensions as follows. The mode-1 matricization of a tensor $\mathcal X \in \R^{n_1 \times n_2 \times n_3}$ matrix $\mX^{(1)}$  with entries: $\mX^{(1)}(i,j_k)$ for $j_k=n_3(j-1)+k$. We also write $(jk)=j_k$. The mode-2 and mode-3 matricization  (defined as $\mathcal{M}_2(\mathcal{X})$ and $\mathcal{M}_3(\mathcal{X})$, respectively)
can be defined in a similar manner as the matrices $\mathcal X \in \R^{n_2 \times n_1 \times n_3}$ matrix $\mX^{(2)}$  and $\mathcal X \in \R^{n_3 \times n_1 \times n_2}$ matrix $\mX^{(3)}$ such that:
$$ \mX^{(2)}_{j, (ik)} = \mathcal X_{ijk} \quad \text{and}\quad  \mX^{(3)}_{k, (ij)} = \mathcal X_{ijk}. $$
%In the last set of equations, the notation $k(ij)$ denotes the index in the matrix $\mW^{(a)}, a\in [3]$ that corresponds to the $k^{th}$ row (for $k \in [K^{(a)}]$), and the $i\times K^{(a')} + j$th column of  $\mW^{(a)}$ (with $a' \in [3], a'\neq a$).
\par Throughout this paper, we denote matrices with simple bold capital letters (e.g. $\textbf{A}\in \R^{n_1 \times n_2}$), vectors with bold lower-case letters (e.g. $\textbf{a}\in \R^{n}$), and 3-way tensors using italics (e.g. $\mathcal{A} \in \R^{n_1 \times n_2 \times n_3}$). Let $C, c >0$ denote absolute constants that may depend on $K^{(a)}$; we assume that $K^{(a)
}$ for $a\in[3]$ are fixed, unobserved constants. Let $C^*, c^* > 0$ denote numerical constants that do not depend on the unobserved quantities like $K^{(a)}$ for $a\in[3]$. The constants $C, c , C^*, c^*$ may change from line to line. For the simplicity of notation, we write $K^{(1,2,3)}=K^{(1)}K^{(2)}K^{(3)}$, and $N^{(1,2)}=N^{(1)}N^{(2)}$.
\vspace{4mm}

\section{Tensor Topic Modeling}\label{sec: preliminary and method}
Recall that $\mathcal{Y}\in\mathbb{R}^{N^{(1)}\times N^{(2)}\times R}$ denotes the observed corpus tensor and by assumption, under model \eqref{eq: model multinominal}, $\mathbb{E}[\mathcal{Y}]=\mathcal{D}$. In our paper, for ease of presentation, we assume that the document lengths are all equal: $M_
{11} = \dots = M_
{N^{(1)}N^{(2)}} = M$. Our results also hold if we assume the document lengths satisfy $\max_{i,j \in [N^{(1)}]\times [N^{(2)}]} M_{ij} \leq C^* \min_{i,j \in [N^{(1)}]\times [N^{(2)}]} M_{ij}$, in which case $M = \frac{1}{N^{(1)}N^{(2)}}\sum_{i,j=1}^{N^{(1)},N^{(2)}} M_{ij}$ denotes the average document length.  Before introducing our method, we begin with some preliminary definitions and properties of tensor matricization, an operation that is central to our proposed method.
\vspace{5mm}\\
\textbf{Mode-$a$ matricization of $\mathcal{D}$}: As discussed in Section \ref{sec:our contributions}, estimating the three factor matrices can be achieved by flattening (or {\it matricizing}) the input tensor $\mathcal{Y}$ across each of the different modes. Specifically, for $a\in [3]$, we define the mode-$a$ matricization of a tensor  $\mathcal{D}$ with rank $(K^{(1)}, K^{(2)}, K^{(3)})$ 
Tucker-decomposition $\mathcal{D} = \G \cdot (\mA^{(1)},\mA^{(2)},\mA^{(3)})$ as:
\begin{align}\label{eq:matricization}
\mD^{(1)}:&=\mathcal{M}_1(\mathcal{D})=\mA^{(1)} \mathcal{M}_1(\mathcal{G})(\mA^{(2)}\otimes \mA^{(3)})^\top=\mA^{(1)}\mW^{(1)}\in\mathbb{R}^{N^{(1)}\times (N^{(2)}R)}\tag{D.1}\\
\mD^{(2)}:&=\mathcal{M}_2(\mathcal{D})=\mA^{(2)} \mathcal{M}_2(\mathcal{G})(\mA^{(1)}\otimes \mA^{(3)})^\top=\mA^{(2)}\mW^{(2)}\in\mathbb{R}^{N^{(2)}\times (N^{(1)}R)}\tag{D.2}\\
    \mD^{(3)}:&=\mathcal{M}_3(\mathcal{D})=\mA^{(3)} \mathcal{M}_3(\mathcal{G})(\mA^{(1)}\otimes \mA^{(2)})^\top=\mA^{(3)}\mW^{(3)}\in\mathbb{R}^{R\times (N^{(1)}N^{(2)})}\tag{D.3}
\end{align}
where $\mW^{(1)}\in\mathbb{R}^{K^{(1)}\times (N^{(2)}R)}$, $\mW^{(2)}\in\mathbb{R}^{K^{(2)}\times (N^{(1)}R)}$ and $\mW^{(3)}\in\mathbb{R}^{K^{(3)}\times (N^{(1)}N^{(2)})}$. By direct application of Bayes' theorem, we have:
\begin{align*}
\mW^{(1)}_{k^{(1)}(jr)}&= \sum_{k^{(2)}=1}^{K^{(2)}} \sum_{k^{(3)}=1}^{K^{(3)}} \mathcal{G}_{k^{(1)}k^{(2)}k^{(3)}} \mA^{(2)}_{jk^{(2)}}\mA^{(3)}_{rk^{(3)}} = \mathbb{P}\left(\text{word }r | \text{ reviewer type } k^{(1)},\text{ paper }j \right)\tag{W.1}\\%\\&=\sum_{k^{(2)}=1}^{K^{(2)}}\sum_{k^{(3)}=1}^{K^{(3)}}\mathcal{G}_{k^{(1)}k^{(2)}k^{(3)}}\mA^{(2)}_{jk^{(2)}}\mA^{(3)}_{rk^{(3)}}
\mW^{(2)}_{k^{(2)}(ir)}&=\sum_{k^{(1)}=1}^{K^{(1)}}\sum_{k^{(3)}=1}^{K^{(3)}} \mathcal{G}_{k^{(1)}k^{(2)}k^{(3)}} \mA^{(1)}_{ik^{(1)}}\mA^{(3)}_{rk^{(3)}}= \mathbb{P}\left(\text{word }r | \text{ paper category } k^{(2)},\text{ reviewer }i \right)\tag{W.2}\\%\\&=\sum_{k^{(1)}=1}^{K^{(1)}}\sum_{k^{(3)}=1}^{K^{(3)}}\mathcal{G}_{k^{(1)}k^{(2)}k^{(3)}}\mA^{(1)}_{ik^{(1)}}\mA^{(3)}_{rk^{(3)}}
\mW^{(3)}_{k^{(3)}(ij)}&=\sum_{k^{(1)}=1}^{K^{(1)}} \sum_{k^{(2)}=1}^{K^{(2)}}\mathcal{G}_{k^{(1)}k^{(2)}k^{(3)}} \mA^{(1)}_{ik^{(1)}}\mA^{(2)}_{jk^{(2)}}=\mathbb{P}\left(\text{topic }k^{(3)} | \text{ reviewer } i,\text{ paper }j \right)\tag{W.3}%\\&=\sum_{k^{(1)}=1}^{K^{(1)}}\sum_{k^{(2)}=1}^{K^{(2)}}\mathcal{G}_{k^{(1)}k^{(2)}k^{(3)}}\mA^{(1)}_{ik^{(1)}}\mA^{(2)}_{jk^{(2)}}
\end{align*}

%Note for $(jr)=R(j-1)+r$.
Since each of the entries of the matrix $\mW^{(a)}, a\in [3]$ can be interpreted as probabilities, the following constraints must hold:
\begin{equation}\label{eq:constraints_on_Ws}
    \sum_{r=1}^{R} \mW^{(1)}_{k^{(1)}(jr)}=1, \quad \sum_{r=1}^{R}\mW^{(2)}_{k^{(2)}(ir)}=1, \text{ and }\sum_{k^{(3)}=1}^{K^{(3)}} \mW^{(3)}_{k^{(3)}(ij)}=1
\end{equation}
Using matricization on the tensor $\mathcal{D}$, the estimation of the factor matrices $\mA^{(a)}$ is similar to the traditional pLSI setting, which we detail in Section \ref{sec: procedure for factor matrices}. However, estimating the core tensor $\mathcal{G}$ is more difficult, as is establishing its consistency. To see this, note that a matricization along any axis $a$ of the tensor $\mathcal{D}$ would yield, in vector form:
$$
\text{vec}\left(\mD^{(a)}\right)=\left(\mA^{(\backslash a)\top}\otimes \mA^{(a)\top} \right)\text{vec}\left(\mathcal{M}_{a}(\mathcal{G})\right)
$$
where $\mA^{(\backslash a)}:=\mA^{(b)}\otimes \mA^{(c)}$ for $b,c\neq a$ and $b<c$. However, unlike for the factor matrices, we cannot rely on any anchor assumption to reliably estimate $\mathcal{M}_{a}(\mathcal{G})$. A solution would be to formulate the recovery of $\mathcal{M}_{a}(\mathcal{G})$ as a regression problem,  using the estimated factor matrices in lieu of $\mA^{(\backslash a)}$. However, this would yield an error bound in the estimation of $\G$ that increases with the error between $\mA:=\left(\mA^{(\backslash a)\top}\otimes \mA^{(a)\top} \right)$ and its estimate $\hat A:=\left(\hat \mA^{(\backslash a)\top}\otimes \hat \mA^{(a)\top} \right)$. Indeed, for any (compatible) matrices $\mA$ and $\mB$,
$$\|\hat A\otimes \hat B-A\otimes B\|_1\leq \|\hat A-A\|_1\|\hat B\|_1+\|\hat B-B\|_1\| A\|_1,$$ and in our case, $\| \mA^{(1)}\|_1 = N^{(1)}, \| \mA^{(2)}\|_1 = N^{(2)}, \| \mA^{(3)}\|_1 = K^{(3)}$. Thus, this discrepancy scales with $N_R=\max\left(N^{(1)},N^{(2)},R\right)$. Additionally, this method is computationally slow due to the vectorization of the tensor data. To address these challenges, we propose an alternative algorithm in Section \ref{sec: procedure for core tensor} that constructs an estimate of $\mathcal{G}$ and establishes its consistency.

\subsection{\texorpdfstring{Oracle procedure for the estimation of the factor matrices $\mA^{(1)}, \mA^{(2)}, \mA^{(3)}$}%
{Oracle procedure for the estimation of the factor matrices A(1), A(2), A(3)}}
\label{sec: procedure for factor matrices}

 We first consider the oracle case in which the underlying frequency matrix $\mathcal{D}$ is observed, and derive an oracle procedure to estimate the factor matrices. In the next subsection, we will show how to modify the oracle procedure to deal with stochastic noise (the $\Z$ in Equation~\eqref{eq:signal_plus_noise}), which gives our final method for estimating factor matrices.
\par As discussed in the Introduction, in this paper, we propose a procedure to estimate the factor matrices $\mA^{(1)}, \mA^{(2)}$, and $\mA^{(3)}$ based on the traditional pLSI framework. We begin by introducing a few preliminary concepts that are central to our estimation procedure.
\begin{definition}[Ideal Simplex]\label{def: ideal simplex} Let $\{\vnu_k\}_{k=1}^K \in\mathbb{R}^{p}$ denote a set of $K$ vertices in $\mathbb{R}^p.$ Let $\{\mathbf{y}_i\}_{i\in[n]}\in\mathbb{R}^{p}$  be a set of $n$ data points in the convex hull of the vertices $\{\vnu_k\in\mathbb{R}^{p}\}_{k=1}^K$:   $$\forall i\in [n], \qquad \mathbf{y}_i=\sum_{k=1}^K \omega_{ik} \vnu_{k}, \qquad \text{ with weights $\omega_{ik}\geqslant 0$ for all $k\in [K]$ and $\sum_{k=1}^K \omega_{ik}=1$}.$$ The set $\{\mathbf{y}_i\}_{i\in[n]}\in\mathbb{R}^{p}$ is known as an ``ideal simplex" if for each $k\in [K]$, there exists an index $i\in[p]$ such that $\omega_{ik}=1$ and $\omega_{ik'}=0$ for all $k'\neq k$.
\end{definition}
For points lying on an ideal simplex $\{\mathbf{y}_i\}_{i\in[n]}$,   the $K$ vertices $\{\vnu_k\}_{k=1}^K \in\mathbb{R}^{p}$ can be identified using a ``vertex hunting'' algorithm. This algorithm returns data points corresponding to the vertices of the simplex $\{\vnu_k\}_{k\in[K]}$.
\begin{definition}[Vertex hunting] Given $K$ and a point cloud $\{\mathbf{y}_i\}_{i\in[n]}\in\mathbb{R}^{p}$ belonging to the ideal vertex defined above,  a vertex hunting algorithm $\mathcal{V}(\cdot)$ is a function that returns a subset $K$  of points $\{\vnu_1,\dots,\vnu_K\} \subset \{\mathbf{y}_i\}_{i\in[n]} $ corresponding to the vertices.    
\end{definition}

\subsubsection{\texorpdfstring{Estimating $\mA^{(1)}$ and $\mA^{(2)}$}{Estimating A(1) and A(2)}}

Under the anchor-reviewer and anchor-document assumptions, the matricizations $ \mD^{(1)} $ and $ \mD^{(2)} $  form an ideal simplex (Definition~\ref{def: ideal simplex}), and the matrices $\mA^{(1)}$ and $\mA^{(2)}$ satisfy the condition of $\{\omega_{ik}\}_{i
\in[n],k\in[K]}$ defined in its definition \ref{def: ideal simplex}, since:
$$ \mD^{(1)} =  \mA^{(1)} \mW^{(1)} \quad \text{with } \mA^{(1)}_{ik} = \mathbb{P}[\text{reviewer type } k | \text{ reviewer }i] ,\quad  \sum_{k=1}^{K^{(1)}} \mA^{(1)}_{ik}=1,$$
$$ \mD^{(2)} =  \mA^{(2)} \mW^{(2)} \quad \text{with } \mA^{(2)}_{jk} = \mathbb{P}[\text{paper category  } k | \text{  document }j] ,\quad  \sum_{k=1}^{K^{(2)}} \mA^{(2)}_{jk}=1.$$

To obtain estimates for $\mA^{(1)}$ and $\mA^{(2)}$, a simple procedure consists in first, recovering vertices from the point clouds $\mD^{(1)}$ and $\mD^{(2)}$ respectively, yielding two sets of vertices $\mH^{(a)}, a\in \{1,2\}$. We note that there exists several methods for inferring vertices from point clouds. Options include for instance the successive projection algorithm (SPA) (see \cite{klopp2021assigning}).
The second step of the algorithm is to express each of the observations as a convex combination of the vertices: $\mD^{(a)} = \mA^{(a)} \mH^{(a)}$ for $a\in\{1,2\}$, and obtain estimates of $\mA^{(a)}$ by simply right multiplying $\mD^{(a)} $ by the inverse of $\mH^{(a)}$. However, as shown in Appendix~\ref{Sec: SVD analysis}, the output matrix $\mH^{(a)}$ of vertices from the point cloud $\{\mD^{(a)}_{i\star}\}_{i\in[N^{(a)}]}$ is not invertible. 
We would thus need to estimate  $\mA^{(a)}$ by regressing $\mD^{(a)}$ unto $\mH^{(a)} $, which would make the error bounds for $\mA^{(a)}$ scale linearly with $N^{(a)}$ for $a\in\{1,2\}$.

\par We propose instead building upon the matrix topic modeling given by \cite{klopp2021assigning,ke2022using,tran2023sparse}, which provides a more efficient way of solving for $\mA^{(a)}$. We start by considering the HOSVD of tensor $\mathcal{D}$. Under the Tucker decomposition framework, each matricization of  $\mathcal{D}$ along axis $a$ admits a singular value decomposition (SVD) of the form:
\begin{align*}
\mD^{(a)}=\mathbf{\mathbf{\Xi}}^{(a)}\mathbf{\Lambda}^{(a)} \mB^{(a)\top} 
\end{align*}
A key observation is that, if $\lambda_{K^{(a)}}\left(\mD^{(a)}\right)>0$, then the matrix $\mathbf{\mathbf{\Xi}}^{(a)}$ can be represented as 
\begin{align}\label{eq: Xi=AtildeV}
    \mathbf{\Xi}^{(a)}=\mA^{(a)}\tilde \mV^{(a)}
\end{align}
where $\tilde \mV^{(a)}$ is unique and non-singular (proof in Appendix \ref{Sec: SVD analysis}). If the matrix $\mA^{(1)}$ satisfies the anchor-reviewer (and $\mA^{(2)}$  the anchor-paper) assumption, the rows of the matrix $\mathbf{\mathbf{\Xi}}^{(a)}, a\in \{1,2\}$ form an ideal simplex in $\mathbb{R}^{K^{(a)}}$ with vertices given by the row of the matrix $\tilde \mV^{(a)}$. The matrices $\mA^{(a)}$ for $a\in\{1,2\}$ are the corresponding weights, and, since $\tilde \mV^{(a)}$  is invertible, $\mA^{(a)}$ can be estimated as: $  \mA^{(a)} = \mathbf{\Xi}^{(a)}   (\tilde\mV^{(a)})^{-1}$.

\subsubsection{\texorpdfstring{Estimating $\mA^{(3)}$}{Estimating A(3)}}\label{sec: estimating A3}
For $\mA^{(3)}$, unlike for the first two dimensions, the anchor-word assumption does not naturally imply that the matricization $\mD^{(3)}$ is an ideal simplex. Inspired by \cite{ke2022using}, our procedure uses the SCORE normalization, which was originally introduced to deal with degree heterogeneity in network analysis in \cite{jin2015fast}, to reduce the word point cloud $\mD^{(3)}$ to an ideal simplex. 
\begin{definition}\label{def: SCORE normalization}[SCORE normalization $s(\mathbf{\Xi})$] Given a $n\times K$ matrix $\mathbf{\Xi}$ where the first column is strictly positive, the SCORE normalization  $s(\mathbf{\Xi}):\mathbb{R}^{n\times K}\rightarrow\mathbb{R}^{n\times K-1}$ is the procedure that normalizes each row of the matrix $\mathbf{\Xi}$ by its first coordinate. The resulting matrix is defined as $ \mS:=s(\mathbf{\Xi})=[\mS_{1\star},\dots,\mS_{n\star}]\in\mathbb{R}^{n\times (K-1)}$ such that for all $i\in[n]$:
    $$\mS_{ik}=\mathbf{\Xi}_{i(k+1)}/\mathbf{\Xi}_{i1} \text{ for } k= 1,\dots, K-1 .$$
    
\end{definition}
Under the anchor-word assumption, we can indeed show that the rows of $\mS$ form an ideal simplex (Lemma \ref{lem: hat S row wise bound mode 3} in the Appendix, similar to \cite{ke2022using}). As for the first two dimensions, we can thus employ a vertex-hunting procedure to recover the vertices of the simplex. Expressing each row of $s(\mathbf{\Xi}^{(3)})$ as a convex combination of these  vertices (whose weights are denoted by  the matrix $\mathbf{\Omega}^{(3)}$), the matrix $\mA^{(3)}$ can then be recovered as:
\begin{align}\label{eq:definition of Pi}
\mathbf{\Omega}^{(3)}:=\left[\text{diag}(\mathbf{\Xi}_{\star 1}^{(3)})\right]^{-1} \mA^{(3)} [\text{diag}(\tilde \mV^{(3)}_{\star 1})] 
\end{align}
The detailed proof is provided by Lemma \ref{lem: hat S row wise bound mode 3} in the Appendix. 

\subsubsection{Estimating the Latent Factors}

We summarize the estimation for all three latent components as follows.

\paragraph{Oracle Procedure:} Let $\mD:=\mD^{(a)}\in\mathbb{R}^{n\times p}$ denote a matrix associated with the factor index $a\in[3]$, where the dimensions $n, p$ may vary depending on the choice of $a$.
\begin{enumerate}
    \item (SVD) Compute the singular value decomposition (SVD) of $\mD$ to obtain the first $K$ left singular vectors. Denote the resulting matrix as $\mathbf{\Xi}=[\Xi_{\star 1},\dots, \Xi_{\star K}]\in\mathbb{R}^{n\times K}$.
    \item (Post-SVD preprocessing) Define the matrix $S$ as:
    \begin{equation*}
       \mS=\begin{cases}
            \mathbf{\Xi}\in\mathbb{R}^{n\times \tilde K} \text{ for }\tilde K:=K &\text{ if } a\in\{1,2\}\\
            s(\mathbf{\Xi})\in\mathbb{R}^{n\times \tilde K} \text{ for }\tilde K:=K-1  &\text{ if } a=3 
            \end{cases}
    \end{equation*}
    where $s$ is defined in Definition~\ref{def: SCORE normalization}.
    \item (VH algorithm) Apply a vertex hunting algorithm on the rows of $\mS$ to obtain a vertex set $\{\mV_{1\star},\dots, \mV_{K\star}\}\in\mathbb{R}^{\tilde K}$. Define $\mV=[\mV_{1\star},\dots, \mV_{K\star}]^\top\in\mathbb{R}^{K\times\tilde K }$
    \item (Recovery) Recover the weight matrix $\mathbf{\Omega}^*\in\mathbb{R}^{n\times K}$ by solving the linear equation $\mathbf{\Omega}^* \mV^* =\mS^*$ where
    \begin{equation*}
        \mV^*=\begin{cases}
            \mV\\
              [\mathbf{1}_K, \mV_{\star 1}, \dots \mV_{\star 
              \tilde K}]
        \end{cases}
        \mS^*=\begin{cases}
            \mS&\text{ if } a\in\{1,2\}\\
            [\mathbf{1}_n, \mS_{\star1},\dots \mS_{\star \tilde K}]&\text{ if } a=3
        \end{cases}
    \end{equation*}
  
     \item (Output) Compute $ \mA$ as
     \begin{equation*}
        \mA^{(a)}=\begin{cases}
            \mathbf{\Omega}^* & \text{ if } a\in\{1,2\}\\
            \text{ColNorm}(\text{diag}
(\mathbf{\Xi}_{\star 1}) \mathbf{\Omega}^*)& \text{ if } a=3
        \end{cases}
     \end{equation*}
     where ColNorm($\mA$) for matrix $\mA$ is to normalize each column of matrix $\mA$ to have unit $l_1$ norm
\end{enumerate}
\begin{remark}\label{remark: oracle of A}
Our oracle procedure differs from that of \cite{ke2022using} and \cite{klopp2021assigning} in the first step of the singular value decomposition (SVD). Indeed, we perform an SVD on $\mD$ instead of $\mL^{1/2}\mD$ where $\mL$ is defined differently in \cite{ke2022using} and \cite{klopp2021assigning} to adapt to different noise levels. We prefer not preconditioning the SVD as in \cite{ke2022using} as this preconditioning yields a point cloud that can be more easily stretched and destabilized by low-frequency words. As shown in \cite{tran2023sparse}, the robustness of the SVD preconditioning requires indeed that the frequencies of the words in $\mA^{(3)}$ have the same order. To increase the signal-to-noise ratio in the procedure, an alternative is to truncate or threshold words altogether as in \cite{tran2023sparse}, which is easy to adapt in our oracle procedure (see Section \ref{sec: theoretical results}).
\par Additionally, considering $\mD$ instead of $\mL^{1/2}\mD$ simplifies some parts of our theoretical analysis. In particular, the estimation of the core tensor via HOSVD based on  $\mD$  (as opposed to $\mL^{1/2}\mD$ with different $\mL$s) is easier to analyze (See Lemma \ref{lemma: hooi svd bound}).
\end{remark}

\subsection{Estimation procedure in the presence of noise }
\par We now extend the oracle procedure described in the previous subsection to real-world settings, in which we observe a noisy version of $\mD^{(a)}$, $\mY^{(a)}:=\mathcal{M}_{a}(\mathcal{Y})$. 

In this case, the procedure for recovering the latent factors is essentially the same as in the oracle case, replacing the first step by an eigendecomposition of the matrix $\hat \mQ$ to obtain a matrix $ \mathbf{\hat\Xi}\in\mathbb{R}^{n\times K}$ corresponding to its leading eigenvectors $K$, where $\hat \mQ$ is defined as 
    \begin{equation}\label{eq: spectral decom}
        \hat \mQ:=\begin{cases}
          \mY\mY^\top&\text{ if } a\in\{1,2\}\\
          \mY\mY^\top-\frac{1}{M}\text{diag}(\mY\mathbf{1}_p)&\text{ if } a=3
        \end{cases}
    \end{equation}

Furthermore, we choose to use the Successive Projection (SP)\cite{araujo2001successive} as our vertex hunting algorithm in all the experiments, but other choices like Sketched Vertex Search (SVS) \cite{jin2017estimating} and Archetypal Analysis (AA) \cite{javadi2020nonnegative} can also be applied. The difference between those vertex hunting algorithms has been studied in \cite{ke2022using, tran2023sparse}.
\noindent\\
\begin{remark}\label{remark: estimating A3}
When $
a=3$, adjusting the matrix $\mY^{(3)}\mY^{(3)\top}$ by subtracting $\frac{1}{M}\text{diag}(\mY^{(3)}\mathbf{1}_{N^{(1)}N^{(2)}})$ helps center the variance of the multinomial distribution. This adjustment leads to a more accurate SVD, thereby improving the match between the true and estimated singular vectors. 
\par To ensure that the SCORE normalization is well-conditioned during the estimation procedure, the leading eigenvectors obtained should be strictly positive. According to Perron's theorem, the signs of the leading singular vectors are necessarily positive in the oracle procedure. However, in the noisy case,  $\mathbf{\hat\Xi}_{\star1}$ may contain negative entries. Therefore, any word $r\in[R]$ for which $\mathbf{\hat \Xi}_{r1}$  is negative should have the corresponding rows in $\mA^{(3)}$ clipped to zero after Step 2. Subsequent steps are performed only with $\mathbf{\hat \Xi}_{r1}>0$ for $r\in[R]$.  In our theoretical analysis, this situation is unlikely to occur with high probability (see Lemma \ref{lem: row-wise HOSVD} in the Appendix).
\end{remark}

\subsection{\texorpdfstring{Estimating the core tensor $\G$}{Estimating the core tensor G}}\label{sec: procedure for core tensor}
For a tensor $\mathcal{D}$ with rank $r(\mathcal{D})=\left(K^{(1)},K^{(2)},K^{(3)}\right)$,
its higher-order singular value decomposition (HO-SVD) is defined as the decomposition 
$$
\mathcal{D}=\mathcal{S}\cdot \left(\mathbf{\Xi}^{(1)},\mathbf{\Xi}^{(2)},\mathbf{\Xi}^{(3)}\right).
$$
The left singular vectors $\mathbf{\Xi}^{(1)}, \mathbf{\Xi}^{(2)}$ and $\mathbf{\Xi}^{(3)}$ can be obtained from the SVDs of the matricizations $\mD^{(1)}, \mD^{(2)}$ and $\mD^{(3)}$ respectively. 
The $K^{(1)}\times K^{(2)}\times K^{(3)}$ HO-SVD core tensor $\mathcal{S}$ can thus be obtained through the following equation:
$$
\mathcal{S}:=\mathcal{D}\cdot\left(\mathbf{\Xi}^{(1)\top},\mathbf{\Xi}^{(2)\top},\mathbf{\Xi}^{(3)\top}\right)
$$
due to the orthogonality of left singular vectors.

However, in topic modeling, such a simple relationship between the core tensor $\mathcal{G}$ and the three mode matrices $\mA^{(a)}$ for $a\in[3]$ does not exist. Mode matrices in topic modeling are indeed neither non-singular nor orthogonal. While $\mathcal{G}$ can be estimated using a constrained least square regression, as highlighted in the introduction, this naive approach does not yield good estimates.

%\par Fortunately, the factor matrices $\mA^{(a)}$ can be decomposed into a product of a non-singular matrix and an orthogonal matrix. Indeed,
However, as detailed in Equation \eqref{eq: Xi=AtildeV}, the singular vectors $    \mathbf{\Xi}^{(a)}$ of the HO-SVD of $\mathcal{D}$ can be decomposed as $ \mathbf{\Xi}^{(a)}=\mA^{(a)}\tilde \mV^{(a)}$. Thus, if $\lambda_{K^{(a)}}(\mD^{(a)})>0$, then the matrix $\tilde \mV^{(a)}$ is invertible and $\mA^{(a)}=\mathbf{\Xi}^{(a)} \left(\tilde \mV^{(a)}\right)^{-1}$.

This allows us to rewrite the tensor topic model of \eqref{eq: TTM} as
\begin{align*}
\mathcal{D}=&\mathcal{G}\cdot\left(\mA^{(1)},\mA^{(2)}, \mA^{(3)} \right)\\
=&\mathcal{G}\cdot\left(\mathbf{\Xi}^{(1)} (\tilde \mV^{(1)})^{-1},\mathbf{\Xi}^{(2)}  (\tilde \mV^{(2)})^{-1},\mathbf{\Xi}^{(3)} (\tilde \mV^{(3)})^{-1}\right)\\
=&\left\{\mathcal{G}\cdot \left( (\tilde \mV^{(1)})^{-1}, (\tilde \mV^{(2)})^{-1}, (\tilde \mV^{(3)})^{-1}\right)\right\}\cdot \left(\mathbf{\Xi}^{(1)},\mathbf{\Xi}^{(2)},\mathbf{\Xi}^{(3)}\right)
\end{align*}
Right multiplying the two sides of the previous equation by the singular vectors, we see that: 
$$\mathcal{G}=\mathcal{S}\cdot \left( \tilde \mV^{(1)}, \tilde \mV^{(2)},\tilde \mV^{(3)}\right).
    $$

%Similarly, $\tilde \mV^{(a)}$ is embedded in the matrix $\mV^{(a)}$ of the simplex vertices. 
For $a \in \{1,2\}$, the matrix  $\tilde{\mV}^{(a)}$ corresponds simply to the vertices extracted by the vertex hunting procedure: $\tilde{\mV}^{(a)} = \mV^{(a)} = \mathcal{V}(\mathbf{\Xi}^{(a)})$. For $a = 3$, Lemma \ref{lem: hat S row wise bound mode 3} shows that:
\begin{equation}\label{eq: tilde V and Vstar}
    \left[\text{diag}(\tilde \mV^{(3)}_{\star 1})\right]^{-1}\tilde \mV^{(3)}=\mV^{*(3)}=[\mathbf{1}_{K^{(3)}}, \mV^{(3)}]
\end{equation}
We now introduce an explicit procedure to recover $\mathcal{G}$ from $\mathcal{S}$ and the three mode matrices $\tilde \mV^{(a)}$ for $a\in[3]$. This procedure relies on the following lemma.
\begin{lemma}\label{lemma: relationship of G and S} Suppose $\sigma_{K^{(3)}}(\mA^{(3)})\geqslant c^*$ for some constant $c^*>0$. Then, there exists a positive vector $\mathbf{q}_0\in\mathbb{R}^{K^{(3)}}$ such that $ \tilde \mV^{(3)}=\text{diag}(\mathbf{q}_0) \mV^{*(3)}$ and
   \begin{align}
    \mA^{(3)}\text{diag}(\mathbf{q}_0)=\text{diag}(\mathbf{\Xi}^{(3)}_{\star 1})\mathbf{\Omega}^{*(3)}\label{eq: lemma 2.1 A3}
    \end{align}
    where $\mV^{*(3)}$ is the set of vertices extracted by the vertex hunting procedure.
\end{lemma}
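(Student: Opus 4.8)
The plan is to exhibit $\mathbf{q}_0$ explicitly as the first column of $\tilde\mV^{(3)}$ and then verify both identities by unwinding the definitions already set up in \eqref{eq: Xi=AtildeV}, \eqref{eq:definition of Pi}, and \eqref{eq: tilde V and Vstar}. Concretely, I would set $\mathbf{q}_0 := \tilde\mV^{(3)}_{\star 1}$. With this choice, the relation $\tilde\mV^{(3)} = \text{diag}(\mathbf{q}_0)\mV^{*(3)}$ is nothing more than a left-multiplication of \eqref{eq: tilde V and Vstar} by $\text{diag}(\tilde\mV^{(3)}_{\star 1})$, an identity that Lemma~\ref{lem: hat S row wise bound mode 3} supplies. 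The only genuine content in the first claim is therefore the \emph{positivity} of $\mathbf{q}_0$, which I address next.

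To prove $\mathbf{q}_0 > 0$ entrywise, I would invoke Perron's theorem together with the anchor-word assumption. Since $\mD^{(3)} = \mA^{(3)}\mW^{(3)}$ is entrywise nonnegative, so is its Gram matrix $\mD^{(3)}\mD^{(3)\top}$, and by Perron's theorem its leading eigenvector---equivalently the first column $\mathbf{\Xi}^{(3)}_{\star 1}$ of $\mathbf{\Xi}^{(3)}$---can be taken strictly positive, exactly as asserted in Remark~\ref{remark: estimating A3}. Now fix a topic $k\in[K^{(3)}]$ and let $r^*$ be an anchor word for $k$, which exists by Definition~\ref{def: anchor word}. Because $\mA^{(3)}_{r^*,k}>0$ and $\mA^{(3)}_{r^*,k'}=0$ for $k'\neq k$, the decomposition $\mathbf{\Xi}^{(3)}=\mA^{(3)}\tilde\mV^{(3)}$ from \eqref{eq: Xi=AtildeV} gives $\mathbf{\Xi}^{(3)}_{r^*\star}=\mA^{(3)}_{r^*,k}\,\tilde\mV^{(3)}_{k\star}$. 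Comparing first coordinates yields $\mathbf{\Xi}^{(3)}_{r^*,1}=\mA^{(3)}_{r^*,k}\,\tilde\mV^{(3)}_{k,1}$; since the left side is positive and $\mA^{(3)}_{r^*,k}>0$, we conclude $\tilde\mV^{(3)}_{k,1}>0$. As every topic has an anchor word, this holds for all $k$, so $\mathbf{q}_0=\tilde\mV^{(3)}_{\star1}>0$.

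For the second identity \eqref{eq: lemma 2.1 A3}, I would first argue that the recovered weight matrix equals the Bayes weights, i.e.\ $\mathbf{\Omega}^{*(3)}=\mathbf{\Omega}^{(3)}$ with $\mathbf{\Omega}^{(3)}$ defined in \eqref{eq:definition of Pi}. Writing $\pi_{rk}:=\mA^{(3)}_{rk}\tilde\mV^{(3)}_{k,1}/\mathbf{\Xi}^{(3)}_{r,1}$, one checks from \eqref{eq: Xi=AtildeV} that each SCORE-normalized row of $\mathbf{\Xi}^{(3)}$ is the convex combination $\mS_{r\star}=\sum_k \pi_{rk}\,s(\tilde\mV^{(3)})_{k\star}$, with $\sum_k\pi_{rk}=1$ because the numerators sum to $\mathbf{\Xi}^{(3)}_{r,1}$. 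In the oracle/ideal-simplex case the vertex set returned by vertex hunting is exactly $\mV^{(3)}=s(\tilde\mV^{(3)})$, so appending the all-ones column (as in the definitions of $\mV^*$ and $\mS^*$) turns the recovery step $\mathbf{\Omega}^*\mV^*=\mS^*$ into an affine system whose solution is $\pi_{r\star}$; this solution is unique because $\mV^{*(3)}=[\text{diag}(\mathbf{q}_0)]^{-1}\tilde\mV^{(3)}$ is invertible ($\tilde\mV^{(3)}$ is non-singular under the hypothesis $\sigma_{K^{(3)}}(\mA^{(3)})\geq c^*$, which forces $\mD^{(3)}$ to have rank $K^{(3)}$ as in \eqref{eq: Xi=AtildeV}, and $\text{diag}(\mathbf{q}_0)$ is invertible by the positivity just established). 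Hence $\mathbf{\Omega}^{*(3)}=\mathbf{\Omega}^{(3)}$, and left-multiplying \eqref{eq:definition of Pi} by $\text{diag}(\mathbf{\Xi}^{(3)}_{\star1})$ gives
\[
\text{diag}(\mathbf{\Xi}^{(3)}_{\star1})\,\mathbf{\Omega}^{*(3)}=\mA^{(3)}\,\text{diag}(\tilde\mV^{(3)}_{\star1})=\mA^{(3)}\,\text{diag}(\mathbf{q}_0),
\]
which is exactly \eqref{eq: lemma 2.1 A3}. The main obstacle is this step $\mathbf{\Omega}^{*(3)}=\mathbf{\Omega}^{(3)}$: it is the only place where the oracle geometry (anchor words producing an exact ideal simplex whose vertices are recovered by vertex hunting) and the invertibility granted by $\sigma_{K^{(3)}}(\mA^{(3)})\geq c^*$ must be combined, so I would spend most of the effort verifying that the appended all-ones column correctly encodes the constraint $\sum_k\pi_{rk}=1$ and that $\mV^{*(3)}$ is genuinely invertible; everything else is algebraic rearrangement of \eqref{eq: Xi=AtildeV}, \eqref{eq:definition of Pi}, and \eqref{eq: tilde V and Vstar}.
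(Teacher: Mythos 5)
Your algebraic skeleton is essentially the paper's: choosing $\mathbf{q}_0=\tilde\mV^{(3)}_{\star 1}$, reading the first identity off \eqref{eq: tilde V and Vstar}, and obtaining \eqref{eq: lemma 2.1 A3} by rearranging $\mathbf{\Omega}^{*(3)}=[\text{diag}(\mathbf{\Xi}^{(3)}_{\star 1})]^{-1}\mA^{(3)}[\text{diag}(\tilde\mV^{(3)}_{\star 1})]$, with invertibility of $\mV^{*(3)}$ and the anchor-word ideal-simplex structure handled exactly as in the proof of Lemma \ref{lem: hat S row wise bound mode 3}. The genuine gap is in the one step you correctly identify as the only real content: the strict positivity of $\mathbf{q}_0$. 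Perron's theorem applied to $\mD^{(3)}\mD^{(3)\top}$ does not deliver what you need. That matrix is only entrywise \emph{nonnegative}, and for a nonnegative (possibly reducible) matrix Perron--Frobenius guarantees only a nonnegative leading eigenvector, with no guarantee that the top eigenvalue is simple — so ``the first column of $\mathbf{\Xi}^{(3)}$'' is not even canonically a Perron vector without a spectral-gap argument. Strict positivity would require irreducibility or entrywise positivity, and neither follows from $\sigma_{K^{(3)}}(\mA^{(3)})\geqslant c^*$: indeed $(\mD^{(3)}\mD^{(3)\top})_{rr'}=\sum_{k,k'}\mA^{(3)}_{rk}(\mW^{(3)}\mW^{(3)\top})_{kk'}\mA^{(3)}_{r'k'}$ can vanish when $r,r'$ are anchor words of distinct topics $k\neq k'$ with $(\mW^{(3)}\mW^{(3)\top})_{kk'}=0$. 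With only $\mathbf{\Xi}^{(3)}_{\star 1}\geqslant 0$ in hand, your anchor-row identity $\mathbf{\Xi}^{(3)}_{r^*,1}=\mA^{(3)}_{r^*,k}\tilde\mV^{(3)}_{k,1}$ yields merely $\tilde\mV^{(3)}_{k,1}\geqslant 0$, so $\text{diag}(\mathbf{q}_0)$ is not known to be invertible and both claimed identities lose force. Citing Remark \ref{remark: estimating A3} is circular here: the rigorous backing for that remark is precisely the argument you are trying to replace.

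The paper closes this gap differently, and in the reverse logical order. It works with $\mathbf{\Theta}=\frac{1}{N^{(1,2)}}\mW^{(3)}\mW^{(3)\top}\mA^{(3)\top}\mA^{(3)}$, whose entries are bounded below by a constant via Assumption \ref{ass: min value A3} together with Assumption \ref{ass: min singular value} (inequality \eqref{eq: entry of theta 3}); by Lemma \ref{lemma:v_tilde_properties} (item 3), $\tilde\mV^{(3)}_{\star 1}$ is the leading right eigenvector of this \emph{strictly positive} matrix, and the Perron-type Lemma \ref{lem: eigen gap} supplies both the eigengap (so the top eigenvector is well defined up to sign) and a uniform entrywise lower bound, giving \eqref{eq: bound on first value of tilde V}. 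Positivity of $\mathbf{\Xi}^{(3)}_{\star 1}$ is then a \emph{consequence}, via $\mathbf{\Xi}^{(3)}_{\star 1}=\mA^{(3)}\tilde\mV^{(3)}_{\star 1}$ and $f_r>0$ (inequality \eqref{eq: bound of xi 1 entry}) — the opposite direction from yours. Note also that your proof, as written, never uses Assumption \ref{ass: min value A3}, yet the positivity conclusion appears unprovable without some such condition; your anchor-word reduction is a nice observation, but it can only be salvaged by first establishing strict positivity of $\mathbf{\Xi}^{(3)}_{\star 1}$ (at least at anchor rows), which in the paper's framework requires the $\mathbf{\Theta}$ argument anyway.
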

Lemma \ref{lemma: relationship of G and S} establishes the connection between equations \eqref{eq:definition of Pi} and \eqref{eq: tilde V and Vstar}, and allows the estimation of $\tilde \mV^{(3)}$ from the estimation procedure for $\mA^{(3)}$. Since each column of $\mA^{(3)}$ has unit $\ell_1-$norm, the right hand size of \eqref{eq: lemma 2.1 A3} means that 
%$\mA^{(3)}$ can be recovered by dividing each column of $\text{diag}(\mathbf{\Xi}^{(3)}_{\star 1})\mathbf{\Omega}^{*(3)}$ by its $\ell_1-$norm, 
the vector $\mathbf{q}_0$ can be estimated as the $\ell_1$-norm of each column of $\text{diag}(\mathbf{\Xi}^{(3)}_{\star 1})\mathbf{\Omega}^{*(3)}$. In the oracle procedure, the vector $\mathbf{q}_0$ corresponds to the vector $\tilde  \mV_{\star 1}^{(3)}$. The entries of $\tilde \mV_{\star 1}^{(3)}$  are all positive if $\lambda_{K^{(a)}}(\mD^{(a)})>0$ (see the proof of \eqref{eq: bound on first value of tilde V} in Section \ref{sec: vh hunting}). In the noisy setting, our theoretical analysis guarantees that the entries of $\mathbf{q}_0$ are positive with high probability.

\par We summarize the full estimation procedure for $\mathcal{G}$ in the noisy setting:
\noindent\\
%\textbf{[Oracle procedure for $\mathcal{G}$]} Given input $\mathcal{D}$\begin{enumerate}    \item (HOSVD) Obtain $\mathbf{\Xi}^{(a)}$ for all $a\in\{1,2,3\}$, from HOSVD of $\mathcal{D}$.  \item (HOSVD Core recovery) Recovery the core tensor $\mathcal{S}$ by $\mathcal{S}:=\mathcal{D}\cdot\left(\mathbf{\Xi}^{(1)\top},\mathbf{\Xi}^{(2)\top},\mathbf{\Xi}^{(3)\top}\right)$ \item (Mode Recovery) Obtain $\tilde \mV^{(a)}$ from the oracle procedure of $A$.  \item (Core Tensor Recovery) Obtain the core tensor $\mathcal{G}$ by lemma \ref{lemma: relationship of G and S}.\end{enumerate}In the real case, we want to guarantee the fact that each row of $\mathcal{M}_3(\mathcal{G})$ has unit $l_1$ norm. Thus, we recover $\mathcal{G}$ by the following procedure.\\\noindent\vspace{0.1cm}\\
\textbf{[Estimation procedure for $\mathcal{G}$]} Given an input tensor $\mathcal{Y}$
\begin{enumerate}
    \item (HOSVD) Obtain estimates of the singular matrices $\mathbf{\hat\Xi}^{(a)}$ for all $a\in\{1,2,3\}$, from the spectral decomposition in \eqref{eq: spectral decom}.
    \item (HOSVD Core recovery) Estimate the core tensor $\hat{\mathcal{S}}$ as $\mathcal{\hat S}:=\mathcal{Y}\cdot\left(\mathbf{\hat \Xi}^{(1)\top},\mathbf{\hat \Xi}^{(2)\top},\mathbf{\hat\Xi}^{(3)\top}\right)$
    \item (Mode Recovery) Estimate $\check \mV^{(a)}$ from the matrices of vertices of $\hat \mV^{*(a)}$ such that 
    \begin{align*}
        \check \mV^{(1)}=\hat \mV^{*(1)},\quad\check \mV^{(2)}=\hat \mV^{*(2)} \quad\text{and}\quad \check \mV^{(3)}=\text{diag}(\mathbf{\hat q}_0) \hat \mV^{*(3)}
    \end{align*}
    where $\mathbf{\hat q}_0=\left(\|\hat \mA^*_{\star 1}\|_1,\dots  \|\hat \mA^*_{\star K^{(3)}}\|_1\right)^\top$ with $\hat \mA^*=\text{diag}(\mathbf{\hat \Xi}^{(3)}_{\star 1}) \mathbf{\hat\Omega}^{*(3)}$
    \item (Core Tensor Recovery) Set the core tensor to be: $ \tilde{\mathcal{G}}=\mathcal{\hat \mS}\cdot \left( \check \mV^{(1)}, \check \mV^{(2)},\check \mV^{(3)}\right)$.
      Set $\hat{\mathcal{G}}$ to be a clipped version of $\mathcal{\tilde G}$, so that  any negative entries in $\mathcal{\tilde G}$ are set to zero and then all rows are appropriately re-normalized so that $\hat{\mathcal{G}}_{k^{(1)}k^{(2)}\star}$ sums up to 1.
\end{enumerate}

\begin{remark}[Comparison with HOOI]
Suppose the underlying noise tensor $\mathcal{Z}$ consists of $i.i.d.$ subGaussian entries with mean zero and variance $\sigma$, 
\cite{richard2014statistical}  and \cite{zhang2018tensor} showed that the method of high-order orthogonal iteration (HOOI)\cite{de2000best} yields optimal estimates of the singular vectors for the Tucker decomposition when the signal-to-noise ratio  is $\gtrsim p^{3/4}$, where $p$ is the maximum tensor dimension. In contrast, HOSVD tends to yield sub-optimal results under such conditions. Our method can be easily adapted to work with HOOI by incorporating power iterations into steps 1 and 2 using the data $\mathcal{Y}$ with no centering step (described in \eqref{eq: spectral decom}). This adaptation would in principle allow us to leverage the advantages of HOOI and deploy them to our subGaussian setting.
\par However, under  model \eqref{eq: model multinominal}, the stochastic noise $\mathcal{Z}$ follows a multinomial distribution, and entries are not independent across dimensions $r\in[R]$. We can show that in this scenario, HOOI achieves performance bounds comparable to those of HOSVD (see Lemma \ref{lemma: hooi svd bound}). 
%Our method, which involves centering the variance in mode 3 as described in \eqref{eq: spectral decom}, provides improved bounds for both traditional HOSVD and HOOI.
\end{remark}

\begin{remark}[Comparison with truncated HOSVD] Truncated HOSVD and STAT-SVD \cite{zhang2019optimal} are methods designed to identify low-rank structures in tensor data while assuming a certain sparsity structure. Specifically, these methods often rely on mode-wise hard sparsity, requiring at least one row of the mode matrices $\mA^{(a)}$ to have all entries equal to zero. However, this assumption is not suitable for topic modeling: if a word does not appear in any of the documents, it is absent from the vocabulary and does not affect the model.
\par As discussed in \cite{tran2023sparse}, assuming element-wise sparsity or row-wise sparsity in the mode matrix $\mA^{(3)}$ is not appropriate for dealing with the large vocabulary size $R$. Instead, a weak column-wise $\ell_q$ sparsity assumption (Assumption \ref{ass: weak lq sparsity}) would be more appropriate. This assumption is based on the empirical observation that word frequency in text data often decreases inversely proportionally with word rank. Similar assumptions are used in other statistical contexts, such as sparse PCA \cite{ma2013sparse} and sparse covariance estimation \cite{cai2012optimal}.
Our method is easily adapted to accommodate this weak $\ell_q$ sparsity. This approach is detailed in Lemma \ref{lemm: sparse ttm} and Section \ref{sec: theoretical results}, showing how our method can effectively handle the sparsity structure relevant to tensor topic modeling.
\end{remark}

%\begin{remark}[Compared to \cite{anandkumar2014tensor}] In \cite{anandkumar2014tensor}, tensor decompositions are used to learn latent variable models, specifically employing CP (CANDECOMP/PARAFAC) decomposition. This approach focuses on deriving symmetric orthogonal decomposition structures from low-order observable moments of tensors. However, the tensor power method proposed in that work is generally less efficient compared to matrix-based approaches, such as singular value decomposition (SVD). Additionally, CP decomposition, while useful, captures less information compared to Tucker decomposition, which offers a richer and more detailed representation of the data structure.\end{remark}

\subsection{Theoretical Results}\label{sec: theoretical results}
Given a matrix parameter $\mA\in\mathbb{R}^{n\times K}$, we measure the performance of its estimator $\hat \mA$ through the element-wise $\ell_1$ error (subject to a permutation matrix $\mathbf{\Pi}$):
\begin{align}\label{eq: l1 error in matrices}
    \mathcal{L}\left(\hat \mA, \mA\right)\equiv\sum_{k=1}^K \|\hat \mA_{\star \pi(k)}-\mA_{\star k}\|_1
\end{align}
The core tensor is also permuted consistently with the factor matrices. We therefore measure
\begin{align}\label{eq: l1 error in tensor}
    \mathcal{L}\left(\mathcal{\hat G}, \mathcal{G}\right) \equiv \sum_{k^{(1)}=1}^{K^{(1)}}\sum_{k^{(2)}=1}^{K^{(2)}}\sum_{k^{(3)}=1}^{K^{(3)}}|\mathcal{\hat G}_{\pi^{(1)}(k^{(1)})\pi^{(2)}(k^{(2)})\pi^{(3)}(k^{(3)})}-\mathcal{G}_{k^{(1)}k^{(2)}k^{(3)}}|
\end{align}
where $\mathbf{\Pi}^{(1)},\mathbf{\Pi}^{(2)},\mathbf{\Pi}^{(3)}$ are permutation matrices for the factor matrices $\mA^{(1)},\mA^{(2)},\mA^{(3)}$ respectively. 

\par We begin by stating the assumptions underlying our analysis.
\begin{assumption}\label{ass: min singular value}
For some constant $c^*\in(0,1)$
$$\sigma_{K^{(1)}}(\frac{1}{N^{(1)}}\mA^{(1)\top}\mA^{(1)})\geqslant c^*, \sigma_{K^{(2)}}(\frac{1}{N^{(2)}}\mA^{(2)\top}\mA^{(2)})\geqslant c^*, \text{ and } \sigma_{K^{(3)}}(\mA^{(3)})\geqslant c^*\sqrt{K^{(3)}}$$
$$\sigma_{K^{(a)}}(\frac{1}{K^{(1)}K^{(2)}}\mathcal{M}_a(\mathcal{G})\mathcal{M}_a(\mathcal{G})^\top)\geqslant c^* \text{ for all } a\in[3]$$
\end{assumption}
\begin{assumption}[Anchor conditions]\label{ass:identifiability} For the factor matrices defined in the introduction section, we assume that:
\begin{itemize}
    \item $\mA^{(1)}$ and $\mA^{(2)}$ has at least one anchor entry for each class $k$. That is, for each class $k$, there exist an index $j$ such that $\mA_{jk}=1$ and $\mA_{jk'}=0$ for all other class $k'.$
    \item $\mA^{(3)}$ has at least one anchor word for each topic $k$. That is, for each topic $k$, there exists a word $r$ such that $\mA_{rk}>0$ but $\mA_{rk'}=0$  for all other topic $k'.$
\end{itemize}
\end{assumption}
\begin{assumption}\label{ass: min value A3}
    For some constant $c^*>0$, we assume that:
    $$
    \min_{k,k'\in[K^{(3)}]} \left(\mA^{(3)\top}\mA^{(3)}\right)_{kk'}\geqslant c^*
    $$
\end{assumption}
Assumption \ref{ass: min singular value} assumes the factor matrices and the core tensor are well-conditioned. This assumption is commonly assumed in the literature on matrix topic modeling \cite{ke2022using,klopp2021assigning,tran2023sparse}. Since the entries of the four components of Tucker decomposition in topic modeling are all probabilities, we always have $\sigma_1(\mA^{(3)})\leqslant\sqrt{K^{(3)}}$, $\sigma_1(\frac{1}{N^{(a)}}\mA^{(a)\top}\mA^{(a)})\leqslant 1$ for $a\in\{1,2\}$, and $\sigma_1(\mathcal{M}_a(\mathcal{G})^\top \mathcal{M}_a(\mathcal{G}))\leqslant K^{(1)}K^{(2)}$ for any $a\in[3]$. Combined with Assumption~\ref{ass: min singular value}, this implies that the condition numbers of the Tucker components are all bounded by some constant.

\par Assumption \ref{ass:identifiability} requires the lower-dimensional classes or topics from the factor matrices to each have at least one anchor entity. For comparison, identifiability in matrix topic modeling is guaranteed when assuming an anchor condition on only one dimension (e.g. anchor document or anchor word, but not necessarily both). In the tensor case, matricization across a mode does not guarantee identifiability on another, and therefore we have to assume an anchor condition for each target factor matrix. This is thus a much stronger assumption than for matrix topic modeling. However, this assumption is only used to guarantee that there exists a vertex hunting algorithm that recovers the vertices such that $\max_{k\in[K]}\|\hat \vnu_k-\vnu_k\|_2\leqslant c^* \max_{i\in[n]}\|\mathbf{\hat y}_i-\mathbf{y}_i\|_2$(up to a label permutation). We refer the reader to the discussion of \cite{tran2023sparse} for weaker assumptions in Section 2.4, under which our main theorem \ref{theorem: main theorem} may also hold.
\par Assumption \ref{ass: min value A3} pertains the correlation between topics,  effectively assuming that topics are reasonably correlated. We do not perceive this assumption to be restrictive, since any topic in $\mA^{(3)}$  will assign significant weights to common, less informative words.  This condition implies a large eigengap, meaning the $K^{(3)}$ first largest eigenvalues along the third mode are well-separated from the smaller eigenvalues (due to noise). This makes the model well-conditioned and facilitates the subsequent analysis.

\vspace{0.4cm}

\par We now state our main theorem.
\begin{theorem}\label{theorem: main theorem} Suppose that the assumptions \ref{ass: min singular value},\ref{ass:identifiability} and \ref{ass: min value A3} hold. Define $N_R=\max\{N^{(1)},N^{(2)},R\}$ and $f_r=\sum_{k\in[K^{(3)}]}\mA^{(3)}_{rk}$. For some constants $c,c^*,C>0$, we further assume that the number of words per document $M$ is such that $M\geqslant C\log(N_R)$, and that the word frequencies $f_r = \sum_{k=1}^K \mA^{(3)}_{jk}$ are such that:
$\min_{r\in[R]} f_r\geqslant c \sqrt{\frac{\log(N_R)}{N^{(1)}N^{(2)}M}}$. 

Then, with probability $1-o(N_R^{-1})$, 
 \begin{align}
    &\mathcal{L}\left(\hat \mA^{(1)},\mA^{(1)}\right)\leq c^* N^{(1)}\sqrt{\frac{\log(N_R)}{M}}\label{eq L A1}\\
      &\mathcal{L}\left(\hat \mA^{(2)},\mA^{(2)}\right)\leq c^*  N^{(2)}\sqrt{\frac{\log(N_R)}{M}}\label{eq L A2}\\
    &  \mathcal{L}\left(\hat \mA^{(3)},\mA^{(3)}\right)\leq c^* \left(\frac{\log(N_R)}{N^{(1)}N^{(2)}M}\right)^{1/4}\label{eq L A3}\\
    &\mathcal{L}\left(\mathcal{\hat G}, \mathcal{G}\right)\leq c^*  \max\left\{\sqrt{\frac{\log N_R}{M}},\left(\frac{\log N_R}{N^{(1)}N^{(2)}M}\right)^{1/4}\right\}\label{eq L G}
\end{align}   
\end{theorem}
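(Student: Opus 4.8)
The plan is to establish the four bounds in the order in which the estimates are constructed: first control the row-wise perturbation of the estimated singular vectors $\mathbf{\hat\Xi}^{(a)}$, then propagate this through the vertex-hunting and recovery steps to obtain the factor-matrix rates \eqref{eq L A1}--\eqref{eq L A3}, and finally feed everything into the multilinear recovery formula for the core tensor to obtain \eqref{eq L G}.

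\textbf{Step 1 (row-wise control of $\mathbf{\hat\Xi}^{(a)}$).} For each mode $a\in[3]$ I write $\hat\mQ^{(a)}=\mQ^{(a)}+\mathbf{E}^{(a)}$, with $\mQ^{(a)}$ the population (signal) counterpart of the matrix in \eqref{eq: spectral decom} and $\mathbf{E}^{(a)}$ the perturbation induced by the multinomial noise $\mathcal Z$. The first task is to bound $\opnorm{\mathbf{E}^{(a)}}$ by a matrix Bernstein inequality, using the multinomial variance structure $\var(\mathcal Z_{ijr})=\mathcal D_{ijr}(1-\mathcal D_{ijr})/M$ together with the within-document correlations; for $a=3$ the centering term $\frac1M\text{diag}(\mY\mathbf{1}_p)$ cancels the diagonal bias of $\mY\mY^\top$, which is what makes the bound sharp (Remark \ref{remark: estimating A3}). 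Assumptions \ref{ass: min singular value} and \ref{ass: min value A3} guarantee an eigengap of the right order, so a Davis--Kahan $\sin\Theta$ argument controls $\opnorm{\mathbf{\hat\Xi}^{(a)}-\mathbf{\Xi}^{(a)}\mO^{(a)}}$ for some orthogonal $\mO^{(a)}$. The $\ell_1$ guarantees, however, need the \emph{row-wise} bound $\max_i\norm{(\mathbf{\hat\Xi}^{(a)}-\mathbf{\Xi}^{(a)}\mO^{(a)})_{i\star}}$, which I obtain from a leave-one-out decoupling of the noise from the data-dependent eigenvectors (Lemma \ref{lem: row-wise HOSVD}). For $a=3$ the extra subtlety is the heterogeneous word frequencies: the hypothesis $\min_r f_r\geqslant c\sqrt{\log(N_R)/(N^{(1)}N^{(2)}M)}$ ensures that even the rarest words keep $\mathbf{\hat\Xi}^{(3)}_{r1}>0$ with high probability, so the SCORE normalization of Definition \ref{def: SCORE normalization} stays well-conditioned and its per-row error (Lemma \ref{lem: hat S row wise bound mode 3}) is controlled.

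\textbf{Step 2 (vertex hunting and factor matrices).} Under the anchor conditions (Assumption \ref{ass:identifiability}) the rows of $\mS^{(a)}$ lie on an ideal simplex, so the chosen vertex-hunting algorithm satisfies $\max_k\norm{\hat\vnu_k-\vnu_k}\leqslant c^*\max_i\norm{\mathbf{\hat y}_i-\mathbf y_i}$, and the vertex error is therefore dominated by the row-wise singular-vector error from Step 1. The recovery step inverts the $K^{(a)}\times K^{(a)}$ vertex matrix $\tilde\mV^{(a)}$, whose smallest singular value is bounded below through \eqref{eq: Xi=AtildeV} and Assumption \ref{ass: min singular value}; propagating each per-row error through this inverse and summing over rows yields \eqref{eq L A1}--\eqref{eq L A3}. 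The prefactors $N^{(1)},N^{(2)}$ in \eqref{eq L A1}--\eqref{eq L A2} come from summing a per-row error of order $\sqrt{\log(N_R)/M}$ over the $N^{(a)}$ rows, while the $(\cdot)^{1/4}$ rate in \eqref{eq L A3} reflects the SCORE step, the effective token count $N^{(1)}N^{(2)}M$ available for estimating word distributions, and the final column normalization of $\mA^{(3)}$.

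\textbf{Step 3 (core tensor).} I compare $\tilde{\mathcal G}=\hat{\mathcal S}\cdot(\check\mV^{(1)},\check\mV^{(2)},\check\mV^{(3)})$ with the identity $\mathcal G=\mathcal S\cdot(\tilde\mV^{(1)},\tilde\mV^{(2)},\tilde\mV^{(3)})$ derived just before Lemma \ref{lemma: relationship of G and S}. Expanding the multilinear product one mode at a time, the error splits into (i) the HOSVD-core error $\hat{\mathcal S}-\mathcal S$ and (ii) the vertex-matrix errors $\check\mV^{(a)}-\tilde\mV^{(a)}$, the latter already bounded in Step 2 with $\mathbf{\hat q}_0$ controlled via Lemma \ref{lemma: relationship of G and S}. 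The essential point---and what defeats the naive regression bound that scales with $N_R$---is that each $\tilde\mV^{(a)}$ is a \emph{fixed-dimensional} $K^{(a)}\times K^{(a)}$ matrix of constant norm, so contracting against it cannot inflate the error. Writing $\hat{\mathcal S}-\mathcal S$ as a noise term $\mathcal Z\cdot(\mathbf{\hat\Xi}^{(1)\top},\mathbf{\hat\Xi}^{(2)\top},\mathbf{\hat\Xi}^{(3)\top})$ plus a singular-vector-perturbation term, the noise term is exactly the multinomial noise projected onto a $K^{(1,2,3)}$-dimensional subspace; decoupling this projection from the data and using the variance structure gives a contribution of order $\sqrt{\log(N_R)/M}$ from modes $1,2$ and of order $(\log(N_R)/(N^{(1)}N^{(2)}M))^{1/4}$ from mode $3$. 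Taking the maximum gives \eqref{eq L G}, and the final clipping and renormalization can only decrease the $\ell_1$ distance to the feasible tensor $\mathcal G$, so they are harmless.

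\textbf{Main obstacle.} The hardest parts are the row-wise analysis for mode $3$ in Step 1 and the noise projection in Step 3: both require decoupling the multinomial noise $\mathcal Z$---whose entries are correlated across the vocabulary within each document---from the data-dependent singular vectors, and tracking the \emph{asymmetric} rates across the three modes so that the slower mode-$3$ quarter-power term enters \eqref{eq L G} only through the final maximum and does not contaminate the faster mode-$1/2$ terms.
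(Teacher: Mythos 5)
Your proposal is correct in substance and follows essentially the same pipeline as the paper: row-wise perturbation control of the HOSVD singular vectors, SCORE normalization and vertex hunting on ideal simplices under the anchor conditions, recovery through the invertible $K^{(a)}\times K^{(a)}$ matrices $\tilde\mV^{(a)}$, and the core via the identity $\mathcal{G}=\mathcal{S}\cdot\left(\tilde\mV^{(1)},\tilde\mV^{(2)},\tilde\mV^{(3)}\right)$ with the fixed-dimensional contraction preventing any $N_R$ inflation — exactly the paper's Lemmas \ref{lem: row-wise HOSVD}, \ref{lem: hat S row wise bound mode 3}, \ref{lem: hat omega bound}, \ref{lem: G and S}, \ref{lemma: S error}, \ref{lemma: check V error} and \ref{lem: hat G error}. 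Three points where your mechanism deviates from (or is less accurate than) the paper's are worth noting. First, the paper does not use a leave-one-out decoupling for the row-wise eigenvector bounds: it applies the \emph{deterministic} row-wise perturbation inequality of Lemma \ref{lem:svd ke} (Lemma F.1 of \cite{ke2022using}), which reduces everything to bounding $\|\hat\mQ^{(a)}-\mQ^{(a)}\|_{op}$ and the row norms $\|(\hat\mQ^{(a)}-\mQ^{(a)})_{r\star}\|_2$; the within-document dependence across the vocabulary that you flag as the main obstacle is then handled by a permutation-symmetrized Bernstein inequality (Lemma \ref{lemma: Berstein involving permutation}) applied to the cross terms $m\neq m'$ in the expansion of $\mathcal{Z}_{ijr}\mathcal{Z}_{ijr'}$ — a lighter device than leave-one-out, which would need fresh arguments here. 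Second, in the core bound the quarter-power term does not come from the projected noise: the HOSVD-core error satisfies $\|\hat{\mathcal{S}}-\mathcal{S}\cdot(\mO^{(1)},\mO^{(2)},\mO^{(3)})\|_F\lesssim\sqrt{N^{(1)}N^{(2)}\log N_R/M}$ (Lemma \ref{lemma: S error}) and, after contraction with the $\check\mV^{(a)}$ (whose Frobenius norms scale as $(N^{(1)})^{-1/2}$, $(N^{(2)})^{-1/2}$ and $O(1)$), contributes only $\sqrt{\log N_R/M}$; the $\left(\log N_R/(N^{(1)}N^{(2)}M)\right)^{1/4}$ term enters solely through $\|\check\mV^{(3)}\mO^{(3)}-\tilde\mV^{(3)}\|_F$ (Lemma \ref{lemma: check V error}), i.e.\ through the mode-3 vertex and $\mathbf{\hat q}_0$ errors. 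Third, your claim that the final clipping and renormalization "can only decrease" the $\ell_1$ distance is not quite right: clipping negatives is harmless, but row renormalization is not an $\ell_1$ contraction; the paper instead exploits that both $\hat{\mathcal{G}}$ and $\mathcal{G}$ have unit row sums to show (as in the computation leading to \eqref{eq:hat omega bound}) that normalization at most doubles the error. None of these affects the final rates, so your outline would compile into a correct proof once these three steps are implemented as in the paper.
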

The proof of this theorem is provided in Appendix~\ref{app:proof}. When document lengths $M$ is sufficiently large such that 
$M\gg N^{(1)}N^{(2)}$, the error in estimating $\mathcal{G}$ is thus primarily driven by the SVD error associated with the mode matrix $\mA^{(3)}$. Conversely, if 
$M\ll N^{(1)}N^{(2)}$,
the estimation error is dominated by the errors in the mode matrices $\mA^{(1)}$ and $\mA^{(2)}$. 

\vspace{0.4cm}

We also compare our results in Theorem \ref{theorem: main theorem} with the existing literature. For the matrices $\mA^{(1)}, \mA^{(2)}$, the rates in Equations \eqref{eq L A1} and \eqref{eq L A2} are consistent with the minimax rates provided in \cite{klopp2021assigning}, up to a logarithmic factor. For the topic-word matrix $\mA^{(3)}$, our rate only depends on the logarithm of the vocabulary size
 $R$,
 %Our rate exceeds the minimax rate $\sqrt{\frac{R}{N^{(1)}N^{(2)}M}}$ found in \cite{ke2022using} only under specific conditions: $R\leqslant M^{3/4}$ or $R\leqslant M<R^{4/3}$ with $N^{(1,2)}\geqslant \max\{MR^2,R^3,M^2R^5\}$. 
 and the rate in \eqref{eq L A3} aligns with the rate derived  in \cite{tran2023sparse} for high-frequency words. \par

However, our analysis is based on the condition that the lowest frequency of the entire vocabulary is bounded below by $\sqrt{\frac{\log N_R}{N^{(1)}N^{(2)}M}}$. 
%If $R^2\gg N^{(1)}N^{(2)} M$, this condition is  weaker than the assumption in (9) of \cite{ke2022using}.
If this condition is not realistic, an alternative is to assume weak $\ell_q$ sparsity on $\mA^{(3)}$ as in Assumption \ref{ass: weak lq sparsity}, as discussed in \cite{tran2023sparse}. 
%In this case, as discussed in \cite{tran2023sparse}, the crux of the method would consist in setting a (data-driven) frequency threshold $\tau$ such that estimated frequency of words with observed frequencies lower than $\tau$ are set to 0, and topic modeling is performed on the subset of words with high enough frequencies. %Following the arguments in \cite{tran2023sparse}, we could accommodate our estimating procedure under the following sparsity assumption:
\begin{assumption}[Column-wise $\ell_q$ sparsity]\label{ass: weak lq sparsity}Given $k\in[K^{(3)}]$, let the entries of each column $\mA^{(3)}_{\star k}$ be ordered as $\mA^{(3)}_{[1]k}\geqslant \cdots \geqslant \mA^{(3)}_{[R]k}$. For some $q\in(0,1)$ and $s_0>0$, the columns of $\mA^{(3)}$ satisfy
$$
\max_{k\in[K^{(3)}]}\left\{\max_{r\in[R]} r\left(\mA^{(3)}_{[r]k}\right)^q\right\}\leqslant s_0
$$
    
\end{assumption}
This assumption imposes a condition on the decay rate of the ordered entries in the columns of $\mA^{(3)}$. It allows most entries of $\mA^{(3)}$ to be small but nonzero, making it feasible to estimate these low-frequency entries as zero if they fall below a certain threshold. We set this threshold at $\tau=c'\sqrt{\frac{\log N_R}{N^{(1)}N^{(2)}M}}$ for some constant $c'>0$ ($c'=0.005$ as suggested in \cite{tran2023sparse}). Consequently, the estimation procedure previously outlined in Section \ref{sec: procedure for core tensor} operates only on a subset of the data 
$\mathcal{D}_{\mathcal{J}}\in\mathbb{R}^{N^{(1)}\times N^{(2)}\times |\mathcal{J}|}$
where the set of words $\mathcal{J}$ is defined as 
\begin{align}
    \mathcal{J}:=\left\{r\in[R]:\frac{1}{N^{(1)}N^{(2)}}\sum_{i\in[N^{(1)}],j\in[N^{(2)}]}\mathcal{Y}_{ijr}\geqslant c'\sqrt{\frac{\log N_R}{N^{(1)}N^{(2)}M}} \right\}
\end{align}
For the subset of word frequencies in the matrix $\mA^{(3)}_{\mathcal{J}^c\star}$, where $\mathcal{J}^c$ denotes the set of indices with frequencies below the threshold, we set these entries to zero.
\begin{lemma}[Sparse tensor topic modeling]\label{lemm: sparse ttm}Suppose the Assumption \ref{ass: min singular value},\ref{ass:identifiability},\ref{ass: min value A3} and \ref{ass: weak lq sparsity} are satisfied. If $M\geq c \log N_R$, and $s_0\left(\frac{\log N_R}{N^{(1)}N^{(2)}M}\right)^{\frac{1-q}{2}}=o(1)$, we then can show that, with  probability at least $1-o(N_R^{-1})$, the error in the third factor is such that:
\begin{align}\label{eq: rate sparse A3}
    \mathcal{L}(\hat \mA^{(3)},\mA^{(3)})\leq c^*\left(\frac{\log N_R}{N^{(1)}N^{(2)}M}\right)^{1/4}+s_0\left(\frac{\log N_R}{N^{(1)}N^{(2)}M}\right)^{\frac{1-q}{2}}
\end{align}
    
\end{lemma}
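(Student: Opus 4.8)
The plan is to split the total $\ell_1$ error over the rows of $\mA^{(3)}$ into the contribution of the \emph{retained} words $\mathcal{J}$ and that of the \emph{thresholded} words $\mathcal{J}^c$, and to control the two pieces by, respectively, the high-frequency estimation bound \eqref{eq L A3} already established for Theorem~\ref{theorem: main theorem} and a deterministic $\ell_q$-ball bias computation. Since $\hat\mA^{(3)}_{\mathcal{J}^c\star}=0$ by construction, for each topic $k$ one writes
\begin{align*}
\|\hat\mA^{(3)}_{\star k}-\mA^{(3)}_{\star k}\|_1
= \sum_{r\in\mathcal{J}}|\hat\mA^{(3)}_{rk}-\mA^{(3)}_{rk}|
\;+\;\sum_{r\in\mathcal{J}^c}\mA^{(3)}_{rk},
\end{align*}
and sums over the constant number of topics $k\in[K^{(3)}]$. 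The first sum is the estimation error of the procedure of Section~\ref{sec: procedure for core tensor} run on the sub-tensor $\mathcal{Y}_{\mathcal{J}}\in\mathbb{R}^{N^{(1)}\times N^{(2)}\times|\mathcal{J}|}$; the second is the approximation bias from zeroing out the low-frequency words.

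For the bias I would use Assumption~\ref{ass: weak lq sparsity} directly. Since the ordered entries satisfy $\mA^{(3)}_{[r]k}\leqslant (s_0/r)^{1/q}$, an entry drops below $\tau=c'\sqrt{\log N_R/(N^{(1)}N^{(2)}M)}$ once its rank exceeds $r_0\asymp s_0\tau^{-q}$, and the tail sum gives
\[
\sum_{r\,:\,\mA^{(3)}_{[r]k}<\tau}\mA^{(3)}_{[r]k}\;\lesssim\; s_0^{1/q}\sum_{r>r_0}r^{-1/q}\;\lesssim\; s_0^{1/q}\,r_0^{1-1/q}\;\asymp\; s_0\,\tau^{1-q},
\]
the tail converging because $1/q>1$. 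Substituting $\tau$ yields the claimed $s_0(\log N_R/(N^{(1)}N^{(2)}M))^{(1-q)/2}$ rate, uniformly over the $K^{(3)}$ columns, and the hypothesis $s_0\tau^{1-q}=o(1)$ guarantees this bias is negligible.

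For the estimation term, the key observation is that every word kept in $\mathcal{J}$ has document-averaged empirical frequency at least $\tau$, hence—up to constants relating $\frac{1}{N^{(1)}N^{(2)}}\sum_{ij}\mathcal{D}_{ijr}$ to $f_r$ through the well-conditioned average weights $\frac{1}{N^{(1)}N^{(2)}}\sum_{ij}\mW^{(3)}_{k(ij)}$—true frequency $f_r\gtrsim\tau$. The restricted vocabulary therefore meets exactly the minimum-frequency hypothesis of Theorem~\ref{theorem: main theorem}, and Assumptions~\ref{ass: min singular value}--\ref{ass: min value A3} transfer to $\mathcal{Y}_{\mathcal{J}}$, because truncation removes only $o(1)$ aggregate mass from $\mA^{(3)\top}\mA^{(3)}$ and from the signal, so the spectral lower bounds and the simplex geometry driving the vertex hunting are preserved up to $o(1)$. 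I would then invoke the row-wise HOSVD and SCORE analysis underlying \eqref{eq L A3}, now applied to the sub-problem, to obtain $\sum_k\sum_{r\in\mathcal{J}}|\hat\mA^{(3)}_{rk}-\mA^{(3)}_{rk}|\lesssim (\log N_R/(N^{(1)}N^{(2)}M))^{1/4}$, with the final column re-normalization perturbing this only at lower order.

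The main obstacle is that $\mathcal{J}$ is itself random, being defined through the noisy frequencies $\mathcal{Y}$, so the sub-tensor cannot be treated as fixed. I would resolve this by concentration: each $\frac{1}{N^{(1)}N^{(2)}}\sum_{ij}\mathcal{Y}_{ijr}$ is an average over $(i,j)$ of independent scaled $\mathrm{Binomial}(M,\mathcal{D}_{ijr})$ counts, so a Bernstein bound with a union bound over the $R$ words—valid with probability $1-o(N_R^{-1})$ precisely because $M\geqslant c\log N_R$—sandwiches $\mathcal{J}$ between the deterministic sets $\{f_r\geqslant 2\tau\}$ and $\{f_r\geqslant \tau/2\}$. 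On this event the two clean sets differ only by the band $\{f_r\in[\tau/2,2\tau]\}$; by the same $\ell_q$ counting there are at most $\asymp s_0\tau^{-q}$ such words, each contributing at most $O(\tau)$ whether kept (estimation error) or dropped (bias), so their total is again $\lesssim s_0\tau^{1-q}$ and folds into the bias term without changing the rate. Carrying this conditioning through the HOSVD analysis of the restricted problem while keeping the overall failure probability at $o(N_R^{-1})$ is the delicate bookkeeping step.
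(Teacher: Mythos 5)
Your proposal is correct and follows essentially the same route as the paper: the paper likewise splits the $\ell_1$ error over $\mathcal{J}$ and $\mathcal{J}^c$, sandwiches the random set $\mathcal{J}$ between deterministic oracle sets $\mathcal{J}_-\subseteq\mathcal{J}\subseteq\mathcal{J}_+$ (your $\{f_r\geqslant 2\tau\}$ / $\{f_r\geqslant \tau/2\}$ sandwich), applies the high-frequency bound of Lemma~\ref{lem: hat A3 error} to the retained words, and controls the truncation bias by $s_0\tau^{1-q}$. The only difference is presentational: the paper imports the sandwich event, the lower bound $\min_{r\in\mathcal{J}}f_r\gtrsim\tau$, and the tail bound $\|\mA^{(3)}_{\mathcal{J}^c\star}\|_1\lesssim \frac{1}{1-q}s_0\tau^{1-q}$ wholesale from Lemma E.1 of \cite{tran2023sparse} (restated as Proposition~\ref{prop: sparse}), whereas you re-derive these ingredients explicitly via Bernstein concentration and the $\ell_q$ tail count $r_0\asymp s_0\tau^{-q}$, including the band words $f_r\in[\tau/2,2\tau]$ that the cited proposition absorbs.
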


The first part of the error term in \eqref{eq: rate sparse A3} accounts for the errors associated with the words within the set $\mathcal{J}$ where $\min_{r\in\mathcal{J}}f_r\gtrsim \sqrt\frac{\log N_R}{N^{(1)}N^{(2)}M}$. In the strong sparsity regime where $s_0=O(1)$ and $0<q<1/2$, this component of the error dominates. The second part of the error term, on the right-hand side of the rate, comes from setting the entries corresponding to the words outside the set $\mathcal{J}$ to zero. This simplification is based on the assumption that these entries are below a certain threshold and can be safely approximated as zero. However, in some cases, especially when the number of such entries is large and $1/2<q<1$, this component can dominate the overall error. 
\par Since the estimation methods (e.g., HOSVD, core recovery in Section \ref{sec: procedure for core tensor}) utilize the subset 
$\mathcal{J}$ with a specified minimum frequency condition, the error rates for $\mA^{(1)}$, $\mA^{(2)}$ and the core tensor $\mathcal{G}$ remain unchanged.

\section{Experiments on synthetic data}\label{sec: experiments with synthetic data}
To validate the performance of our method, we present here the results of extensive experiments on synthetic datasets\footnote{The code for our method and all the experiments presented in this section can be found on Github at the following link: \url{https://github.com/yatingliu2548/tensor-topic-modeling}} where the ground truth is known. This allows us in particular to understand the method's performance as a function of the data dimensions and properties.

\paragraph{Data Generation Procedure.} We adapt the synthetic data generation procedure of \cite{ke2022using} and \cite{tran2023sparse} to the tensor setting. More specifically, we generate soft cluster assignments along the first and second modes by sampling $\mA^{(1)}$ and $\mA^{(2)}$ rowwise from a Dirichlet distribution with parameter $\mathbf{1}_{K^{(a)}}, a \in \{1,2\}$. We generate a core matrix $\G \in \R^{K^{(1)} \times K^{(2)} \times K^{(3)}}$ by sampling each slice $(k^{(1)},k^{(2)}$ from a Dirichlet distribution with parameter $\mathbf{1}_{K^{(3)}}$. Finally, we create the matrix $\mA^{(3)}$ (whose rows indicate the frequencies of the words in each topic) by sampling its entries from a uniform distribution and normalizing the columns appropriately. The data $\Y$ is then sampled from a multinomial distribution with parameters $\D = \G \cdot (\mA^{(1)}, \mA^{(2)}, \mA^{(3)})$ as \eqref{eq: model multinominal}.

\paragraph{Benchmarks} Throughout this section, we  compare the performance of our method against five different benchmarks:
\begin{itemize}[itemsep=0.01em]
    \item {\it Tensor-LDA:} we implement a fully Bayesian counterpart to our method (the extension of LDA for tensor data), where all latent factors and core $\mathcal{G}$ are endowed with Dirichlet priors. The posteriors for all latent $\mA^{(a)}, a \in [3]$ and $\mathcal{G}$ are fitted using Variational Bayes in Rstan \cite{guo2020package}. We provide further details on the model and implementation of this method in Appendix~\ref{app:benchmarks}.
        \item {\it Hybrid-LDA:} We also compare our method with a simple refinement of the basic LDA results to our setting: we begin by fitting an LDA model to the matricization of the data along the third mode, thereby allowing us to extract the latent topic matrix $\mA^{(3)}$ and a topic assignment matrix $\mW^{(3)} \in \R^{K^{(3)}\times N^{(1)}N^{(2)}}$. We subsequently apply a vertex hunting algorithm on the resulting topic assignment matrix $\mW^{(3)}$ to extract the remaining two factors, $\mA^{(1)}$ and $\mA^{(2)}$, and estimate the core using constrained linear regression. The full estimation procedure is  detailed in Appendix~\ref{app:benchmarks}.
       \item {\it Structural Topic Model (STM)\cite{roberts2014structural}:} Since the previous method (Hybrid LDA) is oblivious to existing correlations in topics amongst documents, we replace the Latent Dirichlet Allocation step in the previous bullet point by a Structural Topic Model (STM)\cite{roberts2014structural}. The main difference between the two models lies in the explicit encoding of correlations between documents in STM. To this end, it replaces the Dirichlet prior on the mixture matrix $\mW^{(3)}$ by a lognormal distribution:
       $$ \vtheta \sim \text{lognormal} (\mX\vbeta, \mathbf{\Sigma})$$
       where $\mX$ is a matrix of covariates and $\vbeta$ is a regression coefficient. In our case, we take $\mX$ to be a $N^{(1)} N^{(2)} \times (N^{(1)}+N^{(2)})$ matrix such that:
       $$ \forall i \in[N^{(1)}], j \in [N^{(2)}], \quad \mX_{(ij), l} = \begin{cases}
           1 \quad \text{ if } l = N^{(1)}  \\ 
           0 \quad \text{ if } l <N^{(1)} \text{ and }  l \neq N^{(2)} \\
           1 \quad \text{ if } l = N^{(1)} + N^{(2)}  \\ 
           0 \quad \text{ if } l >N^{(1)} \text{ and }  l \neq N^{(2)} + N^{(1)} \\
       \end{cases}$$
        \item {\it Non-Negative Tucker Decomposition (NTD):} To compare our method against another simple heuristic, we also apply a simple non-negative Tucker decomposition on our data, and normalize the estimated latents and core appropriately so that their rows (or columns when appropriate) lie on the simplex. 
    \item {\it Topic-Score TTM:} Finally, we further compare our method against a variant using the topic score method of \cite{ke2022using} to compute the topic matrix $\mA^{(3)}$, but using the same method as we propose to estimate the rest of the latent factors and core.  This allows us in particular to assess the efficiency of our proposed normalization (Step 1 of the HOSVD, Equation~\eqref{eq: spectral decom}) 
\end{itemize}

\subsection{Visualizing Topic Interactions}
We begin by illustrating the performance of our method on a toy example consisting of a $30\times 10\times 50$ tensor $\mathcal{D}$, shown in Figure \ref{fig:setting1_D_mixed} in Appndix~\ref{app:synthetic}. We take the number of words per document to be identical across documents and set it to $M=100$. Modes 1 and 2 index the documents, while mode 3 indexes the words. We let $\mathcal{Y}$ denote the observed tensor, and $\mathcal{D}$ to denote the  ground-truth one, which is built to exhibit the following structure:
%For the sake of clarity, this datast could be understood as an ideal data example from marketting, with documents indicating baskets of purchases.
%A real-world application of this kind of data structure in Figure \ref{fig:setting1_D_mixed} could be in customer behavior analysis for a retail company over time, specifically analyzing product purchase patterns across different customer segments. 
\begin{itemize}
    \item \textbf{First Mode (30 indices, 2 clusters)}: The first 10 indices belong to a first cluster, the last 10 to another, and the middle 10 users have split membership between clusters 1 and 2.
    %Represents 30 customer profiles. The first 10 users are loyal customers, the last 10 users are new customers, and the middle 10 users are transitioning customers (those shifting from new to loyal customers).

\item \textbf{Second Mode (10 indices, 2 clusters)}: indices along the second mode are split into two separate clusters, with indices 1 through 5 belonging to cluster 1 and 6 through 10 belonging to cluster 2.

\item \textbf{Third Mode (50 words, 3 topics)}: the third mode represents the topics. 
\end{itemize}

The results are shown in Figures \ref{fig:sim1_ours method}-\ref{fig:sim1_hybridLDA}, with the associated reconstruction errors $ \|\mathcal{D}-\mathcal{\hat D}\|_1$ detailed in the figure captions, where $\mathcal{\hat D} = \hat{\mathcal{G}} \cdot ( \hat{\mA}^{(1)}, \hat{\mA}^{(2)}, \hat{\mA}^{(3)})$ represents the reconstructed data tensor using the estimated components. We note that our method yields the smallest reconstruction $\ell_1$ error. 

\par All algorithms, (ours, NTD, tensor LDA, and hybrid-LDA) successfully identify the two groups along the first mode, as well as the mixed membership of the middle 10 indices.
The results in the second mode are more mixed. Hybrid LDA, shown in Figure \ref{fig:sim1_hybridLDA},  fails to correctly recognize clusters along the second mode. By contrast, Tucker-decomposition-based methods (ours in Figure \ref{fig:sim1_ours method}, Non-negative Tucker Decomposition (NTD) in Figure \ref{fig:sim1_NTD} and Tensor-LDA in Figure \ref{fig:sim1_TLDA}) successfully recover the mode-2 clusters. However,  TTM-HOSVD and NTD feature stronger membership assignments compared to Tensor-LDA. Moreover, the core tensors derived from our method (TTM-HOSVD) in Figure \ref{fig:sim1_ours method} and  Tensor-LDA in Figure \ref{fig:sim1_TLDA} reveal clear interactions between clusters along all modes. In particular, these methods show the first cluster of indices along the first mode switches from topic 1 to topic 2 for as the documents to which they correspond switch from cluster 1 to cluster 2 along mode 2.
In contrast, neither NTD nor hybrid-LDA shows such clear interaction patterns in the core. 
\begin{figure}[ht]
    \centering
    \begin{subfigure}[b]{0.10\textwidth}
    \includegraphics[width=\textwidth]{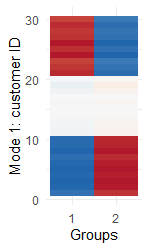}
    \end{subfigure}
    \begin{subfigure}[b]{0.10\textwidth}
    \includegraphics[width=\textwidth]{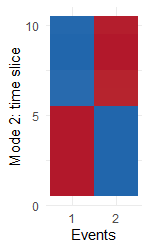}
    \end{subfigure}
    %\hspace{0.2cm}
    \begin{subfigure}[b]{0.10\textwidth}
    \includegraphics[width=\textwidth]{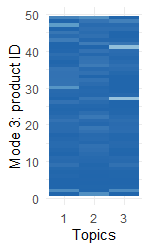}
    \end{subfigure}
    %\hspace{0.2cm}
    \begin{subfigure}[b]{0.22\textwidth}
    \includegraphics[width=\textwidth]{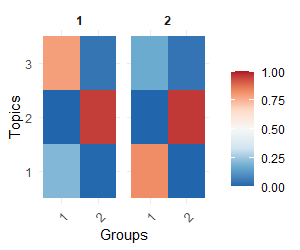}
    \end{subfigure}
    \caption{Tucker components $\mA^{(1)},\mA^{(2)},\mA^{(3)},\mathcal{G}$ (Left to Right respectively) estimated by our method with ranks $(2,2,3)$ for the tensor from Figure \ref{fig:setting1_D_mixed}. Note that the factor $\mA^{(1)}$ shows a clear clustering pattern. The estimated factor $\mA^{(2)}$ also exhibits two clusters. Factor $\mA^{(3)}$ shows the topics. The core tensor $\mathcal{G}$ highlights interactions between the clusters of multiple modes. Each slice in the last subfigure represents the clusters derived from $\mA^{(2)}$. The reconstruction error is $\|\mathcal{\hat D}-\mathcal{D}\|_1=22.687$}
    \label{fig:sim1_ours method}
\end{figure}

\begin{figure}[ht]
    \centering
    \begin{subfigure}[b]{0.10\textwidth}
    \includegraphics[width=\textwidth]{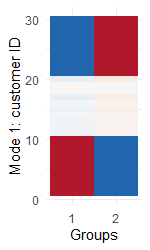}
    \end{subfigure}
    \begin{subfigure}[b]{0.10\textwidth}
    \includegraphics[width=\textwidth]{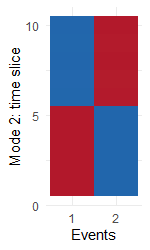}
    \end{subfigure}
    %\hspace{0.2cm}
    \begin{subfigure}[b]{0.10\textwidth}
    \includegraphics[width=\textwidth]{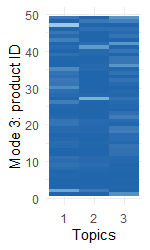}
    \end{subfigure}
    %\hspace{0.2cm}
    \begin{subfigure}[b]{0.22\textwidth}
    \includegraphics[width=\textwidth]{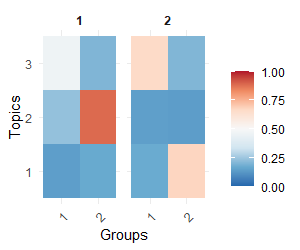}
    \end{subfigure}
    \caption{Tucker components $\mA^{(1)},\mA^{(2)},\mA^{(3)},\mathcal{G}$ (Left to Right respectively) estimated by Nonnegative Tucker decomposition (NTD) with ranks $(2,2,3)$. The reconstruction error is $\|\mathcal{\hat D}-\mathcal{D}\|_1=91.543$. }
    \label{fig:sim1_NTD}
\end{figure}

\begin{figure}[ht]
    \centering
    \begin{subfigure}[b]{0.10\textwidth}
    \includegraphics[width=\textwidth]{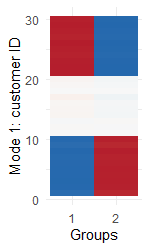}
    \end{subfigure}
    \begin{subfigure}[b]{0.10\textwidth}
    \includegraphics[width=\textwidth]{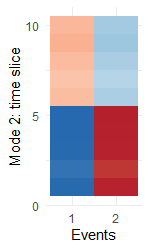}
    \end{subfigure}
    %\hspace{0.2cm}
    \begin{subfigure}[b]{0.10\textwidth}
    \includegraphics[width=\textwidth]{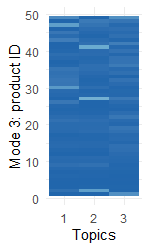}
    \end{subfigure}
    %\hspace{0.2cm}
    \begin{subfigure}[b]{0.22\textwidth}
    \includegraphics[width=\textwidth]{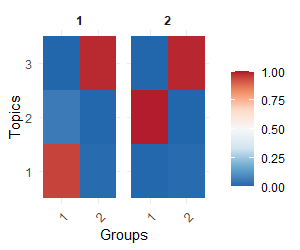}
    \end{subfigure}
    \caption{Tucker components $\mA^{(1)},\mA^{(2)},\mA^{(3)},\mathcal{G}$ (Left to Right respectively) estimated by Tensor LDA with ranks $(2,2,3)$. The reconstruction error is $\|\mathcal{\hat D}-\mathcal{D}\|_1=27.262$. }
    \label{fig:sim1_TLDA}
\end{figure}

\begin{figure}[ht]
    \centering
    \begin{subfigure}[b]{0.10\textwidth}
    \includegraphics[width=\textwidth]{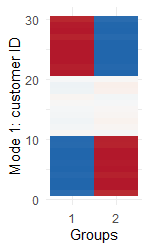}
    \end{subfigure}
    \begin{subfigure}[b]{0.10\textwidth}
    \includegraphics[width=\textwidth]{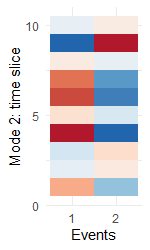}
    \end{subfigure}
    %\hspace{0.2cm}
    \begin{subfigure}[b]{0.10\textwidth}
    \includegraphics[width=\textwidth]{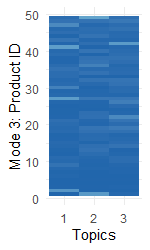}
    \end{subfigure}
    %\hspace{0.2cm}
    \begin{subfigure}[b]{0.22\textwidth}
    \includegraphics[width=\textwidth]{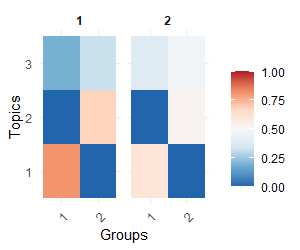}
    \end{subfigure}
    \caption{Tucker components $\mA^{(1)},\mA^{(2)},\mA^{(3)},\mathcal{G}$ (Left to Right respectively) estimated by Hybrid-LDA with ranks $(2,2,3)$. The reconstruction error is $\|\mathcal{\hat D}-\mathcal{D}\|_1=82.703$. }
    \label{fig:sim1_hybridLDA}
\end{figure}

\par Our first experiment highlights the importance of Tucker decomposition in topic modeling for multidimensional data. In particular, this approach allows us to understand interactions among latent factors (e.g. interactions between clusters of indices and topics).
%In Appendix \ref{app:syn:tucker}, we demonstrated that Tucker decomposition is particularly valuable in topic modeling where relationships extend beyond just words and documents (as in matrix LDA) to include additional dimensions such as time or external events. 
This approach offers a more comprehensive understanding of the latent structure within the data.
%Additionally, as shown in the analysis above, our method, which uses an SVD-based Tucker decomposition, successfully detects topic changes over time within specific customer segments. 

Figure \ref{fig:boxplot} further demonstrates that our HOSVD-based method is deterministic, ensuring consistent results across multiple runs. In contrast, tensor LDA, hybrid LDA, and NTD are sensitive to initialization and prone to getting stuck in local optima, often requiring multiple runs with different initializations to find a global solution. More specifically, hybrid LDA exhibits the largest variance in reconstruction error, as  the estimation of the matrix 
 $\mW^{(3)}$ is critical for determining  $\mA^{(1)}$, $\mA^{(2)}$. As shown in Appendix \ref{app:syn:tucker}, if the estimation of $\mW^{(3)}$  is sufficiently accurate, the rest of the procedure to estimate $\mA^{(1)}$ and $\mA^{(2)}$ can effectively capture the data patterns. However, since $\mW^{(3)}$ is nondeterministic in LDA, Hybrid-LDA often exhibits higher variance. The instability of tensor LDA is also evident, with significant variation in the reconstruction error and outlier points in Figure \ref{fig:boxplot}. Finally, while NTD is more stable compared to Bayesian methods, it exhibits the largest average reconstruction error.

\begin{figure}[H]
    \centering
    \includegraphics[width=0.5\linewidth]{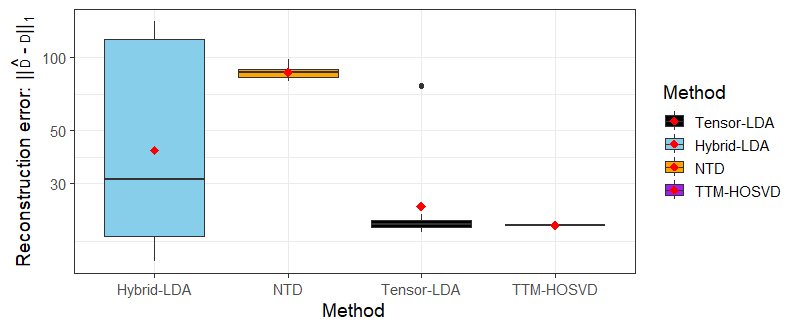}
    \caption{Boxplot of the reconstruction errors over 30 runs on the same dataset from Figure \ref{fig:setting1_D_mixed}. The middle line is the median value over 30 runs. The red point refers to the mean value.}
    \label{fig:boxplot}
\end{figure}
We provide the results for STM in Appendix \ref{app:syn:tucker}, since it is a variation on the Hybrid LDA approach. Overall, STM offers a much less variable alternative to LDA, but still yields a higher reconstruction rate than our method.

\subsection{Varying the data dimensions}

We now vary the data dimensions and assess the performance of our method and benchmarks in estimating the different components of the model. Figure~\ref{fig:synthetic_effect_q1} shows the estimators' respective errors as a function of the first dimension $N^{(1)}$ (with $N^{(2)}$ being equal to $N^{(1)}$) for various dictionary sizes, and Figure~\ref{fig:synthetic_effect_r} shows the error as a function of the dictionary size $R$ for various dimensions $(N^{(1)}, N^{(2)}).$ Overall, we observe that our method consistently performs better than all other benchmarks, especially in estimating the latent factors $\mA^{(a)}, a \in \{1,2,3\}.$ The naive NTD approach does not succeed in correctly estimating the latent factors. The performance of the tensor LDA method that we suggest in this paper seem to drop as the size of the dictionary increases. Overall, our method seems to be the most robust to varying dictionary sizes.

\begin{figure}[H]
    \centering
    \includegraphics[width=\linewidth]{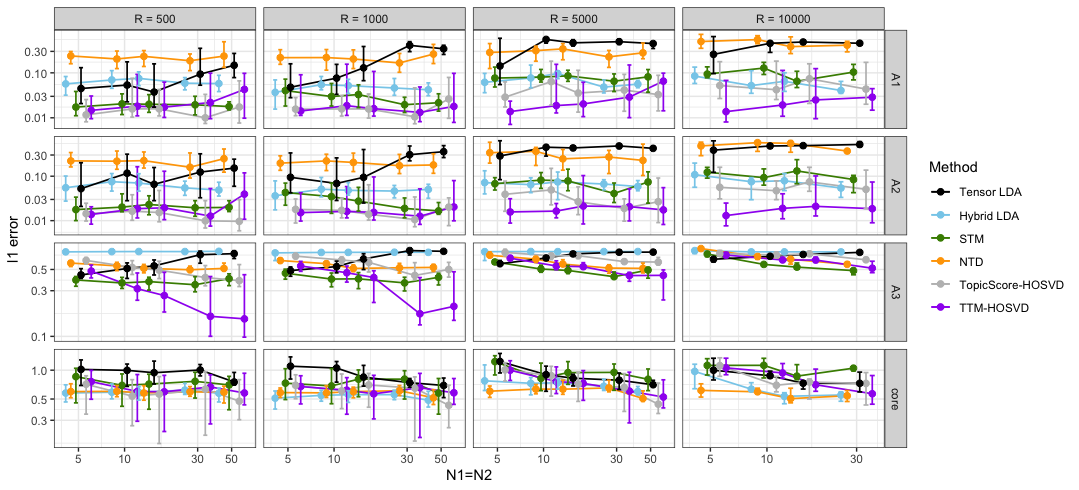}
    \caption{TTM's performance as a function of $N^{(1)}$, compared to various benchmarks. $M$ is here set to 10,000 and $K^{(1)}=K^{(2)}=2$ while $K^{(3)}=4$.}
    \label{fig:synthetic_effect_q1}
\end{figure}

\begin{figure}[H]
    \centering
    \includegraphics[width=\linewidth]{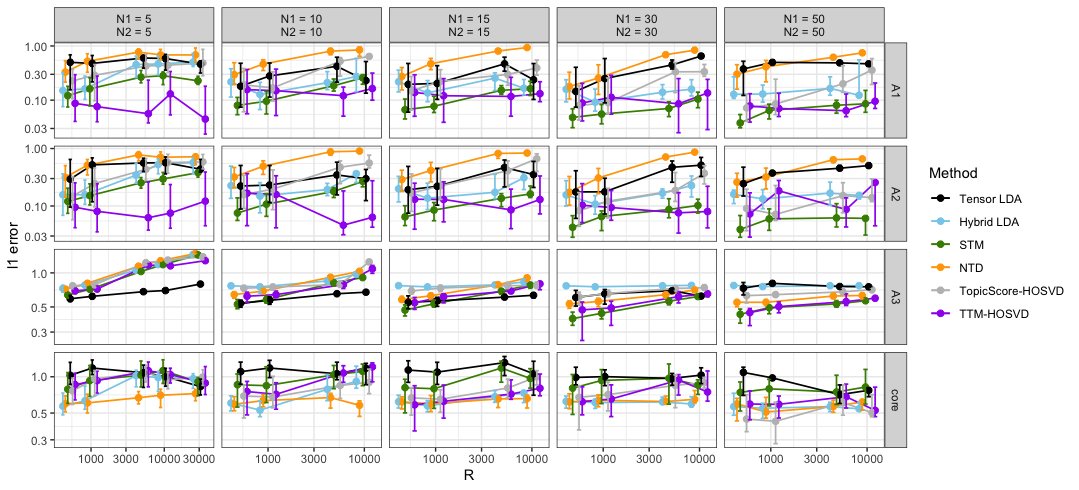}
    \caption{TTM's performance as a function of $R$, compared to various benchmarks. $M$ is here set to 10,000 and $K^{(1)}, K^{(2)}=2$ while $K^{(3)}=4$.}
    \label{fig:synthetic_effect_r}
\end{figure}

% \begin{figure}[h!]
%     \centering
%     \includegraphics[width=\linewidth]{}
%     \caption{TTM's performance as a function of $R$, compared to various benchmarks. $M$ is here set to 10,000 and $K^{(1)}=K^{(2)}=2$ while $K^{(3)}=4$.}
%     \label{fig:synthetic_effect_r}
% \end{figure}

We also evaluate the effect of the number of words per document $M$ in Figure~\ref{fig:synthetic_effect_M}, Our method and STM seem to have steeper slopes, indicating a more efficient use of the information than in other methods.

\begin{figure}[H]
    \centering
    \includegraphics[width=0.8\linewidth]{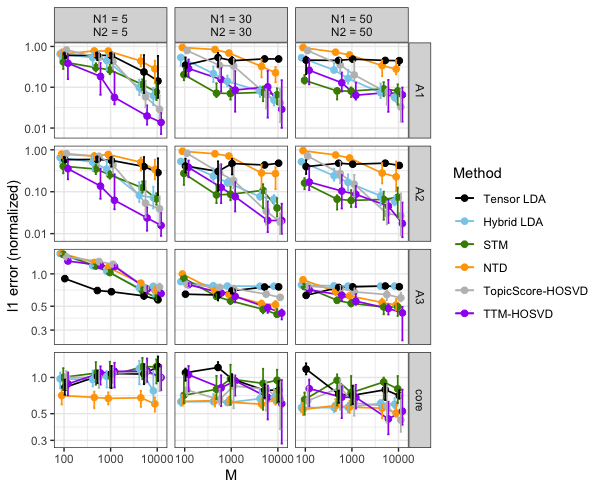}
    \caption{TTM's performance as a function of $M$, compared to various benchmarks. $R$ is here set to 500 and $K^{(1)}=K^{(2)}=2$ while $K^{(3)}=4$.}
    \label{fig:synthetic_effect_M}
\end{figure}

\section{Real data experiments}\label{sec: real data experiments}
In this section, we apply our method to real-world datasets, specifically focusing on text, microbiome as well as marketing data  as outlined in the Introduction (Section \ref{sec: introduction}).

% Since real datasets lack ground-truth factor matrices or core tensors, and may not have a clear choice for the number of topics $K^{(a)}$ for $a\in[3]$, we assess the performance of the different method through:
% \begin{enumerate}
%     \item[(a)] Topic resolution as a measure of topic consistency.

%     \item[(b)]Topic refinement and topic coherence by the package \texttt{alto} \citep{fukuyama2021multiscale} if $K^{(3)}$ is unknown. Fix $K^{(1)}$ and $K^{(2)}$, we fit the methods across multiple values of $K^{(3)}$ to track how topics %(refined classification) 
%     evolve as $K^{(3)}$ increases. Higher Topic refinement means fewer topic ancestors. A well-structured model will display a logical progression where broader topics gradually split into more specific subtopics. Topic coherence ensures that the model consistently identifies the same or closely related topics across different levels, demonstrating stability in topic extraction.
% \end{enumerate}

\subsection{ArXiv's paper Analysis}
For our first experiment, we consider a corpus of 5,094 research abstracts belonging to (at least) one of four categories: Computer Science, Mathematics, Physics, and Statistics from 2007 to 2024. \footnote{The data is available on Kaggle at \url{https://www.kaggle.com/datasets/Cornell-University/arxiv}} after some pre-processing of the data (See detail in Appendix \ref{appsec:arvix}). The structure of the obtained tensor is of dimensions (283, 18, 8837) where the first dimension represents sampled articles, the second dimension corresponds to the publication years, and the third dimension represents the vocabulary size. In this study, we aimed to analyzed scholarly articles from the arXiv repository to investigate trends in theoretical and applied research over 18 years. We run our Tensor Topic Modeling method of the dataset with ranks (2,3,4). (As described in \cite{ke2022using,klopp2021assigning,tran2023sparse}, the spectrum of $\mD^{(a)}$ is useful for estimating $K^{(a)}$. We note that it is often possible to determine the eigenvalue cutoff by inspecting the scree plot of $\mD^{(a)}$'s eigenvalues. Figure \ref{fig:articles: singular value} in the Appendix displays the scree plot for each mode with different values of $K^{(a)}$.)

\paragraph{Analysis and Discussion} 
We show the top 10 words for each of the four topics.  The results are presented in Figure \ref{subplot:arxiv ours:A3}, and \ref{fig: arxiv topic others}. For the topics of Computer Science, Math and Physics, the top 10 most representative words produced by our method agree with 50\%, 60\%, and 70\% of NTD's most representative words in the corresponding topics, respectively. Upon closer inspection, we find that some of the words produced by our method (such as ``quantum'') are more indicative of Physics. In contrast, the Bayesian methods show less agreement with the top 10 words derived from our method. Specifically, most of the top 10 words produced by Hybrid LDA are generic and could easily appear in other categories, reducing the specificity of the topics. With the inclusion of external information, STM improves upon LDA by extracting more exclusive words relevant to the topics, producing results more similar to those from our method. Meanwhile, Tensor-LDA includes many non-typical words such as ``mesh",``talk'', ``pinpoint", ``institutions", and ``beginequation" which are less representative of the core topics. This highlights the strength of our method in capturing more relevant and representative words for each topic, especially compared to other decomposition techniques.

\par To investigate the trends in theoretical and applied research across subject topics from 2007 to 2024, we further examine the core tensor in Figure \ref{subplot:arxiv ours:core} (with exact values provided in Table \ref{arvix table: core: ours} in the Appendix). Within the theoretical group, we observe that the proportion of statistics research increases over the time slice, although mathematics still holds a significant proportion. However, the proportion of mathematics gradually decreases over time (from 76\% to 69\% as shown in Table \ref{arvix table: core: ours}), suggesting a shift of focus towards statistics in recent years. Additionally, computer science is categorized purely to application, and its overall proportion continues to increase over the years. This shows the surge of computer science and ongoing technological advancements in recent years. Meanwhile, physics is categorized as purely theoretical and the proportion is stable over the time. Due to space limitations, we include the core values of the benchmarks in Appendix \ref{appsec:arvix}.

\begin{figure}[H]
    \centering
    \begin{subfigure}[b]{0.25\textwidth}
    \includegraphics[width=\textwidth]{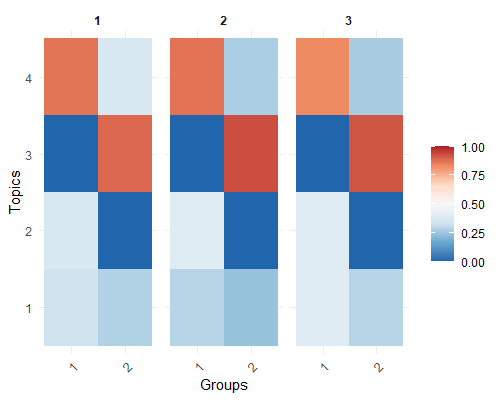}
    \subcaption{Core} \label{subplot:arxiv ours:core}
    \end{subfigure}\hspace{1.0cm}
    \begin{subfigure}[b]{0.22\textwidth}
\includegraphics[width=\textwidth]{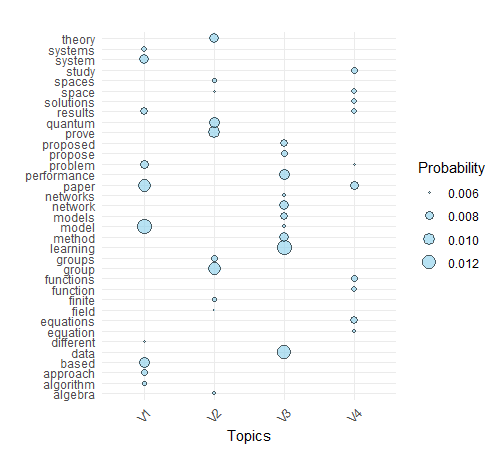} \subcaption{$\mA^{(3)}$}\label{subplot:arxiv ours:A3}
    \end{subfigure}
    %\hspace{0.2cm}
    \caption{The Tucker components $\mA^{(3)}$ (right) and the core tensor (left) are derived from our method (TTM-HOSVD). The values in the core tensor have been sqrt-transformed in the colorbar to enhance visual interpretation. Each dot represents the frequency of the corresponding words in a topic, with the size of the dots reflecting the frequency of the items. Based on the analysis, we can refer to the topics as: v1: Statistics, v2: Physics, v3: Computer Science, v4: Math. Group 1 refers to the theoretical cluster, while Group 2 corresponds to the application cluster (Group alignment using \textit{cosine} distance metric).}
    \label{fig: arxiv ours}
\end{figure}

\begin{figure}[H]
    \centering
    \begin{subfigure}[b]{0.2 \textwidth}
    \includegraphics[width=\textwidth]{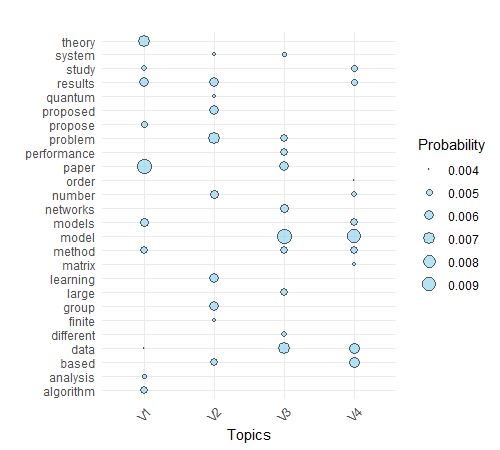}
    \subcaption{Hybrid-LDA}\label{fig: arxiv topic lda}
    \end{subfigure}
    \begin{subfigure}[b]{0.2 \textwidth}
\includegraphics[width=\textwidth]{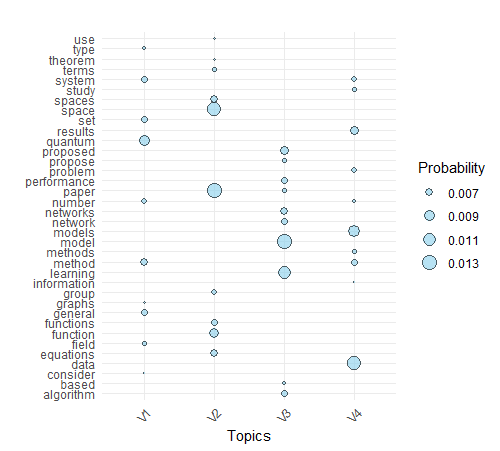} \subcaption{STM}\label{fig: arxiv topic stm}
    \end{subfigure}
        \begin{subfigure}[b]{0.2\textwidth}
\includegraphics[width=\textwidth]{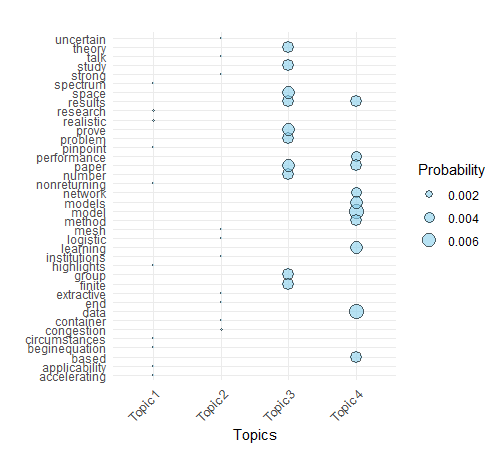} \subcaption{Tensor-LDA}\label{fig: arxiv topic tlda}
    \end{subfigure}
      \begin{subfigure}[b]{0.2 \textwidth}
\includegraphics[width=\textwidth]{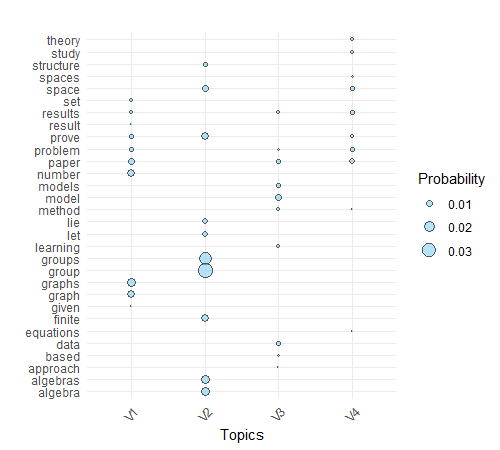} \subcaption{NTD}\label{fig: arxiv topic NTD}
    \end{subfigure}
    %\hspace{0.2cm}
    \caption{Tucker components $\mA^{(3)}$ derived from benchmarks. It is required to permute the topics for comparison. Hybrid-LDA: v1: Statistics, v2: Physics, v3: Computer Science, v4: Math; STM: v1: Physics, v2: Math, v3: Computer Science, v4: Statistics; Tensor-LDA: v1: Physics, v2: Statistics, v3: Math, v4: Computer Science; NTD: v1: Statistics, v2: Physics, v3: Computer Science, v4: Math.}
    \label{fig: arxiv topic others}
\end{figure}

\subsection{Sub-communities of the vaginal microbiota in non-pregnant women}
In this subsection, 
%we consider the application of our method to biological datasets. More specifically, 
we revisit the analysis of women vaginal microbiota presented by Symul et al. \cite{symul2023sub}.
This dataset consists of vaginal samples collected daily from 30 nonpregnant women enrolled at the University of Alabama, Birmingham. Each sample is represented here as a vector of counts of rRNA amplicon sequence variants (ASVs), agglomerated by taxonomic assignment (see \cite{symul2023sub}). 

As in \cite{symul2023sub}, we propose to analyze the data using topic models; In this context, each sample from individual $i$ on day $t$ can be viewed as a document, with ASV corresponding to words ($R = 1,338$). We construct the data tensor using a subset of vaginal samples (since many participants do not have a sample for every single day of the cycle) and align the samples based on the patient's menstrual cycle. Specifically, for each participant, we selected 11 samples corresponding to various stages of their menstrual cycle: 2 days from the menstrual phase, 3 days from the follicular phase, 2 days from the ovulation phase, and 4 days from the luteal phase. Keeping only participants for whom this construction does not produce missing entries, this results in a three-mode tensor of size $24\times 11\times 1,338$, as shown in the left plot of Figure \ref{fig:microbiota}.

\begin{figure}[H]
    \centering
    \begin{subfigure}[b]{0.35\textwidth}
    \includegraphics[width=\textwidth]{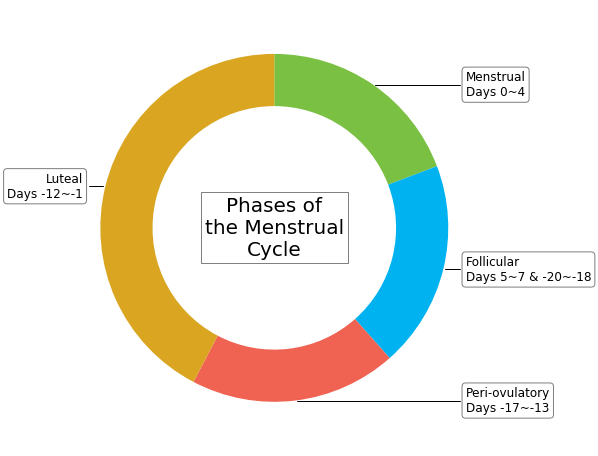}
    \end{subfigure}
    \hspace{1.5cm}
    \begin{subfigure}[b]{0.45\textwidth}
    \includegraphics[width=\textwidth]{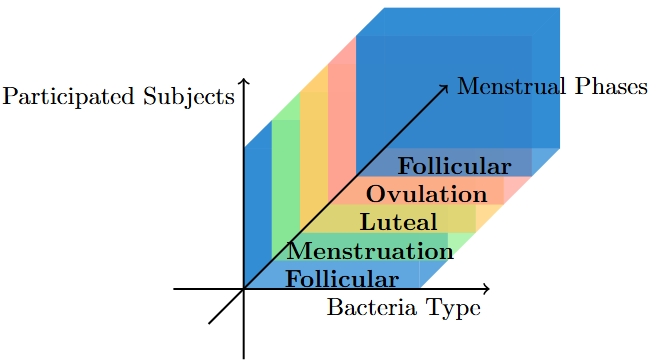}
    \end{subfigure}
    %\hspace{0.2cm}
 
\caption{Visualization of the construction of the vaginal microbiome tensor ($24\times 11\times 1,338$). Left: Menstrual cycle information. Cycles (28 days in total) were then aligned from day $-20$ (i.e. 20 days before menses) to day $+7$ (i.e. 7 days after the first day of menses). Right: The tensor represents data collected from 24 participants, each with vaginal samples taken across 11 days of the menstrual cycle. The participants include 3 individuals of Hispanic$/$Latino origin, 4 who identify as White, and 17 who identify as African American. The participants' ages range from 19 to 45.}
       \label{fig:microbiota}
\end{figure}

The first mode of the resulting tensor indexes the participants, the second indexes the cycle stage, and the third mode represents the different ASVs. Additionally, we have demographic information for each participant, including race and age.
% We propose to use here our tensor topic models to   investigating changes in vaginal microbiome communities over time and groups of participants. Specifically, we focus on:
% \begin{enumerate}[(i)]
% \item Accurate representation of vaginal microbiotas: we assess this through matrix $\mA^{(3)}$ by topic resolution, the topic refinement and topic coherence.

% \item Testing for the association between topic composition and demographic variables and menstrual phases: Dirichlet regression was used to test if race, age and menstrual phases were associated with differential topic proportions using matrix $\mW^{(3)}$. 
% \item How the dominance among sub-communities changes throughout the menstrual cycle across different groups of participants.
% \end{enumerate}
% To explore these questions, we compared our model with those benchmarks listed in Appendix \ref{app:benchmarks}.

\paragraph{TTM-HOSVD provides more accurate representation of vaginal microbiotas.} To evaluate the topics estimated by our method, we propose using two techniques: {\it (a) the refinement and coherence} of the topic hierarchy computed by the package \texttt{alto} \cite{fukuyama2021multiscale} as a proxy for topic stability. Developed as a diagnostic and visualization tool to select an appropriate number of topics for LDA, \texttt{alto} aligns topics as $K$ grows based on the document assignments $\mW$ in order to visualize the induced topic hierarchy, as well as the progressive splitting or recombination of topics as $K$ increases. In this setting, topics with better refinement (few ancestor per topic) and coherence (persistence of the topic as $K$ grows) are considered more stable; and {\it (b) the median topic resolution \cite{tran2023sparse}}, computed by splitting the dataset in half, fitting the model on each half, and evaluating the agreement between the recovered topics using a cosine similarity . 

Figure~\ref{fig: vaginal:topic refinement}  shows the refinement and coherence of the topics recovered by our method as $K^{(3)}$ increases, and compares them with those obtained by the hybrid LDA, STM, NTD and Tensor LDA approaches described in the previous section. Our method seems to provide topics with higher refinement (fewer ancestors per topic) and higher coherence. For instance, figure \ref{subfig: vaginal: refinement: hybrid-LDA} shows that hybrid LDA produces topics that have a tendency of recombining (e.g.  topic 7 merges with 4 at $K^{(3)}=6$).  The other figures (\ref{subfig: vaginal: refinement: ntd} and \ref{subfig: vaginal: refinement: tensor LDA}) show even worse rates of topic recombination as $K^{(3)}$ increases. 

%Similarly, in Figures \ref{subfig: vaginal: refinement: ntd} and \ref{subfig: vaginal: refinement: tensor LDA}, topics are widely dispersed, suggesting weaker topic refinement in those methods. Since our method begins to show more dispersion after $K^{(3)}=5$ (as seen in Figure \ref{subplot:vaginal:resolution:ours}), we set the $K^{(3)}=5$ for the further analysis in (ii) and (iii). Notably, three out of the five topics are dominated by one of the most common Lactobacillus species (L. crispatus, L. gasseri, L. iners), this match the results in \cite{symul2023sub}.

%\ref{subfig: vaginal: refinement: hybrid-SLDA}.

%To explore these questions, we ran our models with rank (3,4,5), where 3 corresponds to racial groups, 4 represents the number of menstrual phases defined in biology, and 5 corresponds to the number of vaginal sub-communities, which we selected from a range of 1 to 9. Based on the results shown in Figure \ref{fig: topic refinement}(a), five of these nine topics were dominated by one of the most common Lactobacillus species (L. crispatus, L. gasseri, L. iners, L. fornicalis, and L. jensenii). Consequently, we reduced the number of sub-communities to 5. 

Figure \ref{fig: vaginal: topic resolution} shows that our method also exhibits significantly better results in terms of median topic resolution than both LDA and NTD for up to $K^{(3)}=10$ topics -- at which point NTD begins to outperform all other methods. NTD relies on the minimization of the KL divergence between the observed data and the low-rank approximation --- a procedure which could perhaps allow it to better capture fine-grained patterns in the data, particularly when dealing with a larger number of topics. However, this comes at a significant computational cost compared to our method. 

The results for STM (Structural Topic Model)\cite{roberts2014structural} are presented in the Appendix  (Section \ref{sec:app:real_exp:vaginal}). However, this method performs the worst in both the topic refinement and topic resolution plots, which could indicate that the method places too much emphasis on enhancing relationships derived from the meta information. 

%NTD and  This outcome is expected because Tucker decomposition effectively separates bacterial categories from the influences of subjects and menstrual phases. Tucker decomposition, however, allows each dimension (e.g., bacterial categories, subjects, cycle days) to have its own independent lower-dimensional projection. The interactions among these projections are then captured in the core tensor, instead of embedded in $\mA^{(3)}$. Therefore, our method, which estimates vaginal subcommunities in the matrix $\mA^{(3)}$, provides more stable results under subsampling conditions In contrast, Matrix LDA imposes a structure where the estimated vaginal microbiotas are more closely tied to the sampling subjects, due to its matrix-based representation. 

\begin{figure}[ht]
    \centering
    \begin{subfigure}[b]{0.45\textwidth}
    \includegraphics[width=\textwidth]{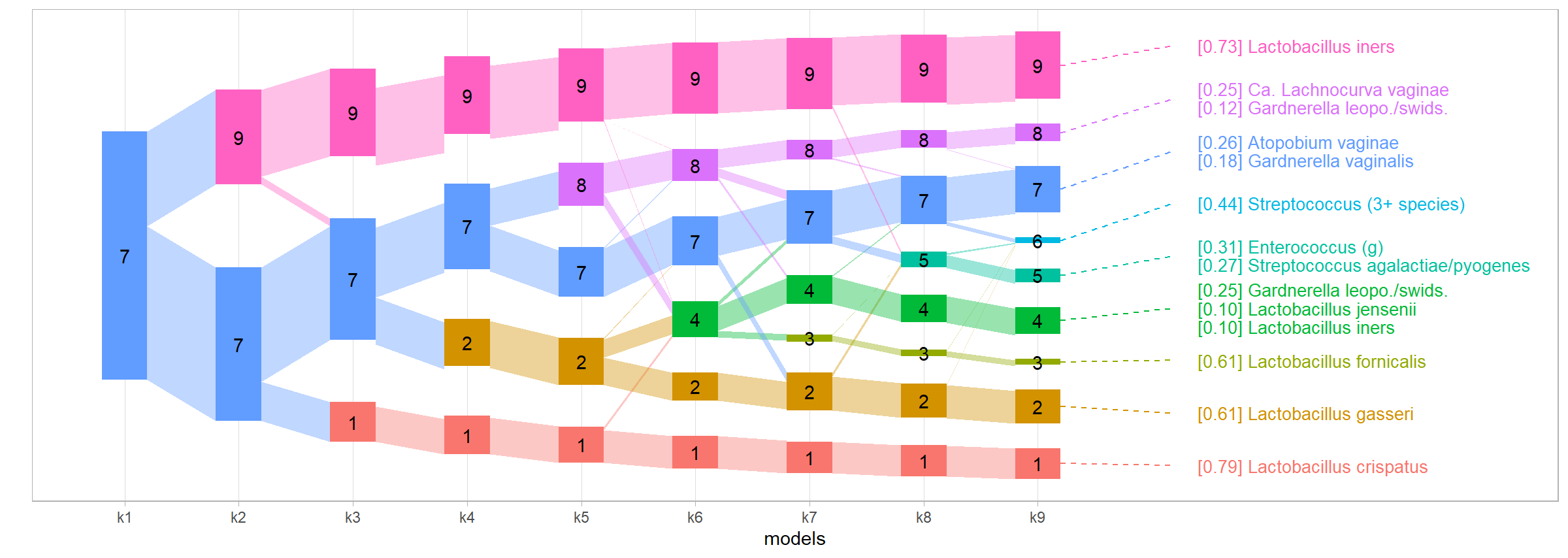}
    \subcaption{Our methods}\label{subplot:vaginal:resolution:ours}
    \end{subfigure}
    \begin{subfigure}[b]{0.45\textwidth}
    \includegraphics[width=\textwidth]{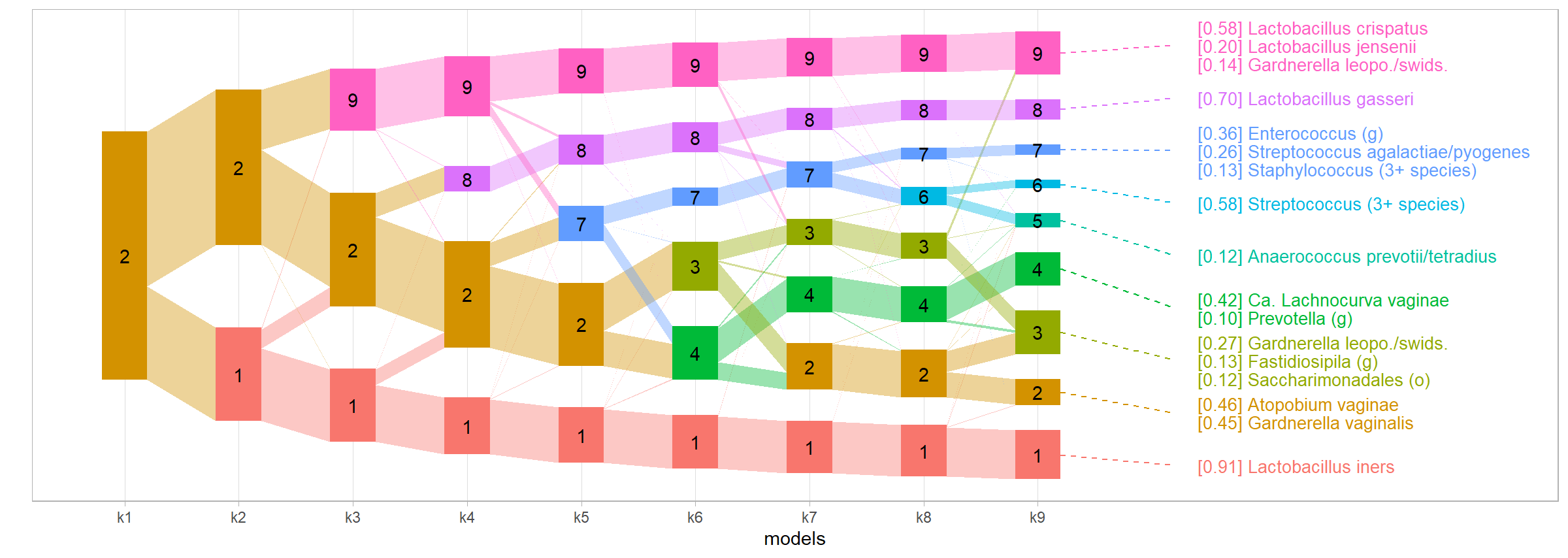}
\subcaption{Hybrid-LDA}\label{subfig: vaginal: refinement: hybrid-LDA}
    \end{subfigure}
    \\
    \begin{subfigure}[b]{0.4\textwidth}
    \includegraphics[width=\textwidth]{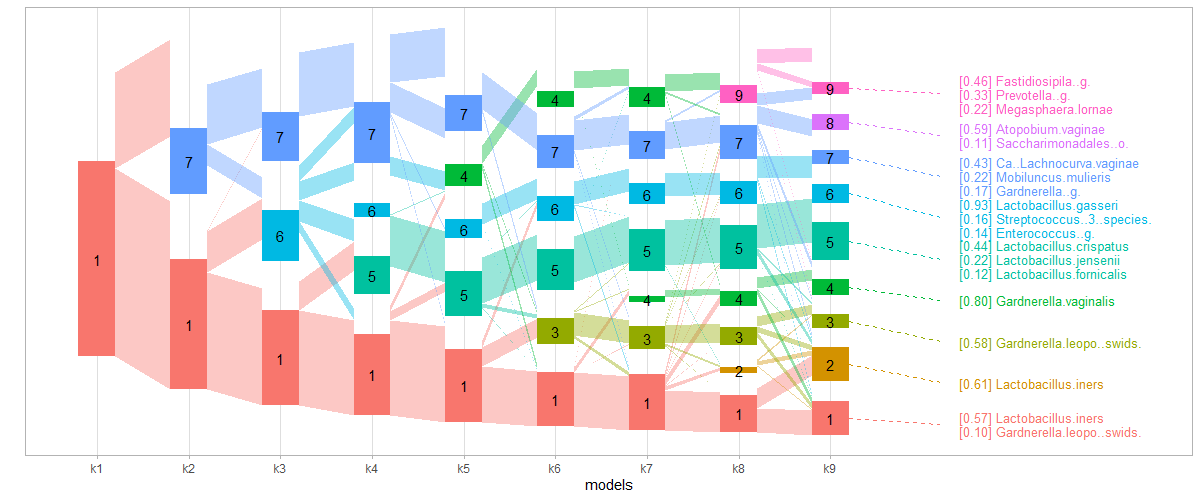} 
    \subcaption{NTD}\label{subfig: vaginal: refinement: ntd}
    \end{subfigure}
    \begin{subfigure}[b]{0.45\textwidth}
    \includegraphics[width=\textwidth]{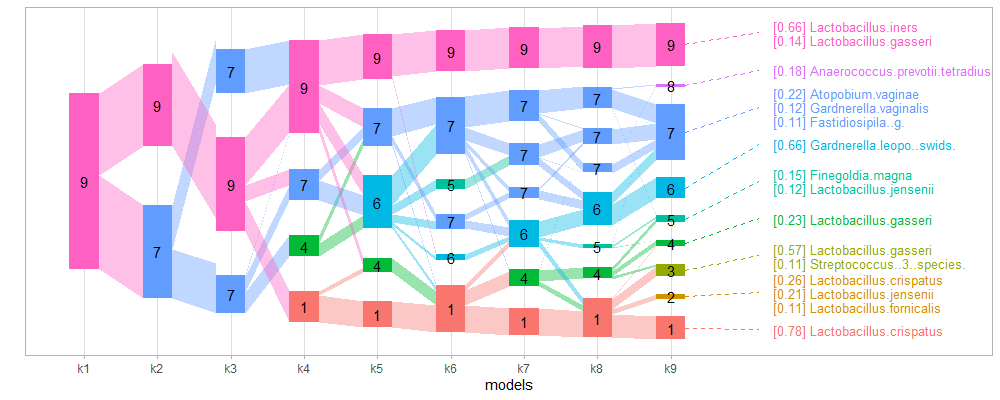}
\subcaption{Tensor-LDA}\label{subfig: vaginal: refinement: tensor LDA}
    \end{subfigure}
 
    %\hspace{0.2cm}
    \caption{Comparison of the refinement and coherence of topics recovered using our method (a), NTD (c) and LDA based methods (b),(d), in which we fix $K^{(1)}=3, K^{(2)}=4$.}
    \label{fig: vaginal:topic refinement}
\end{figure}

\begin{figure}[ht]
    \centering
    \begin{subfigure}
{0.5\textwidth}
    \includegraphics[width=\textwidth]{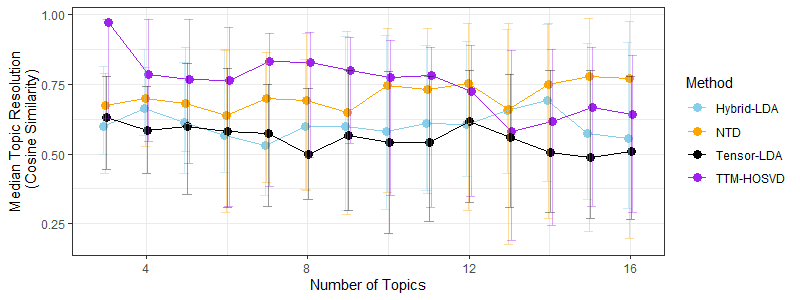}
    %\hspace{0.2cm}
    \subcaption{Median Topic Resolution}
    \label{fig: vaginal: topic resolution}
    \end{subfigure}
     \begin{subfigure}
{0.35\textwidth}
    \includegraphics[width=\textwidth]{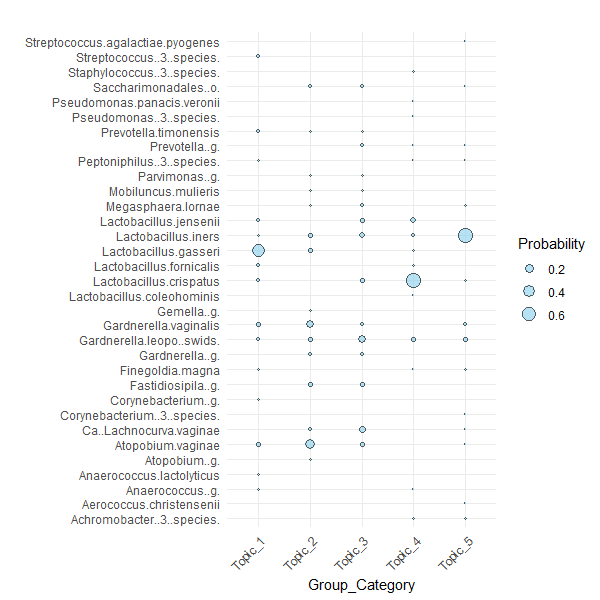}
    %\hspace{0.2cm} 
    \subcaption{$\mA^{(3)}$ by TTM-HOSVD}\label{fig:vaginal:ous A3}
    \end{subfigure}
 \caption{Left: Median Topic Resolution as a function of $K^{(3)}$ on the vaginal ecosystem data of Symul et al \cite{symul2023sub}. Vertical error bars represent the interquartile range for the average topic resolution scores over 25 trials. Right: Topic composition for $K^{(3)}$ = 5 topics. The proportion of each species ($y$-axis) within each topic ($x$-axis) is encoded by the dot size.}
    
\end{figure}
 
\paragraph{TTM-HOSVD reveals associations between topic composition and menstrual phases.} 
We here propose to test whether the recovered latent factors from $\mA^{(1)}$, $\mA^{(2)}$ and $\mW^{(3)}$ are associated with relevant covariates, such as demographic characteristics and menstrual phases. 
%To address the second question, we analyzed the estimated factor matrices $\mA^{(1)}$ and $\mA^{(2)}$. 
Following the procedure in \cite{symul2023sub}, we employed Dirichlet regression to analyze the estimated document-topic matrix $\hat \mW^{(3)}=\mathcal{M}_3(\mathcal{\hat G})\left(\hat \mA^{(1)}\times\hat \mA^{(2)} \right)^\top$. Our HOSVD-based approach (Figure \ref{subplot:dirichlet ours:W}) reveals that participants' race is significantly associated with topic proportions, a result consistent with that of the LDA-based approach of the original paper (shown in Figure 3(c) in \cite{symul2023sub}). However, compared to matrix LDA, our method identifies more significant points, demonstrating a more pronounced relationship between vaginal microbiota composition, demographic characteristics, and menstrual phases. 
(For comparison, we also reproduced the LDA-based results, which are presented in Appendix Figure \ref{fig: dirichlet others method}.) 

To further explore the ability of the data to project participants and cycle days into lower-dimensional representations that reflect demographic characteristics and menstrual phases, we analyzed the heatmaps of $\mA^{(1)}$ and $\mA^{(2)}$. The heatmap of $\mA^{(2)}$ estimated by our method in Figure \ref{subplot:vaginal_main:ours:heatmap:A2} shows that samples from the same menstrual phase exhibit similar loadings, successfully recovering the underlying menstrual phase structure. Additionally, the heatmap of  $\mA^{(1)}$ in Figure \ref{subplot:vaginal: heatmap: ours:A1} indicates that the groups are not solely associated with a single demographic feature. This finding aligns with the Dirichlet regression results, which reveal that the lower-dimensional representation of subjects is significantly associated with both age and racial information.

Figures \ref{subplot:dirichlet ours:A1} and \ref{subplot:dirichlet ours:A2} further validate the effectiveness of Tucker decomposition in this dataset. Significant associations in $\mA^{(1)}$ and $\mA^{(2)}$ are concentrated around their respective covariates, demonstrating that topic composition is conditionally independent given the lower-dimensional factors representing time (menstrual phases) and demographic features. These findings underscore the utility of Tucker decomposition in uncovering topics in multi-dimensional data, providing stronger and more interpretable signals compared to traditional matrix LDA-based approaches.

% We have also included the Dirichlet results for other benchmark methods and discuss the proportional recovery in matrix component $\mA^{(2)}$ for the menstrual phases in Section \ref{sec:app:real_exp:vaginal}. These results show that our method successfully recovers the expected menstrual phases using matrix $\mA^{(2)}$.

\begin{figure}[H]
    \centering
    \begin{subfigure}[b]{0.285\textwidth}
    \includegraphics[width=\textwidth]{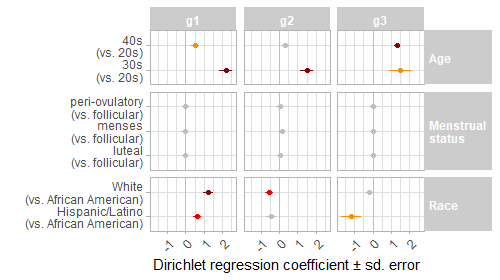}
    \subcaption{$\mA^{(1)}$} \label{subplot:dirichlet ours:A1}
    \end{subfigure}
    \begin{subfigure}[b]{0.285\textwidth}
    \includegraphics[width=\textwidth]{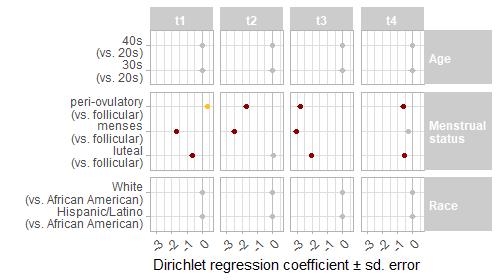}
\subcaption{$\mA^{(2)}$}\label{subplot:dirichlet ours:A2}
    \end{subfigure}
    \begin{subfigure}[b]{0.38\textwidth}
\includegraphics[width=\textwidth]{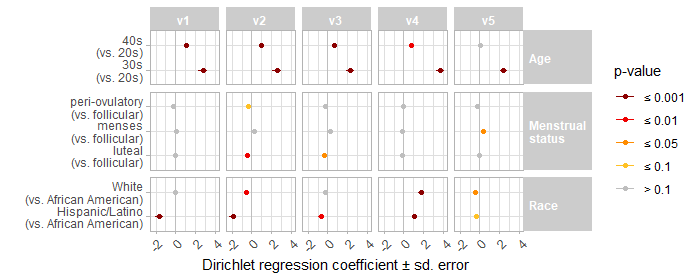}
    \subcaption{$\mW^{(3)}$}\label{subplot:dirichlet ours:W}
    \end{subfigure}
    %\hspace{0.2cm}
    \caption{Dirichlet regression estimated coefficients (x-axis) quantifying the associations between race, menstrual status, age (y-axis) and categorical proportions (horizontal panels). Colours indicate the statistical significance. The left two plots are corresponds to the categorical proportion for mode matrices $\mA^{(1)}$ and $\mA^{(2)}$, respectively, from HOSVD using our method. The subplot (c) is for estimated document topic matrix $\hat \mW^{(3)}$. Each column of the plot indicates the corresponding lower-dimensional clusters for each mode.}
    \label{fig: dirichlet}
\end{figure}
\begin{figure}[H]
    \centering
       \begin{subfigure}[b]{0.40\textwidth}
    \includegraphics[width=\textwidth]{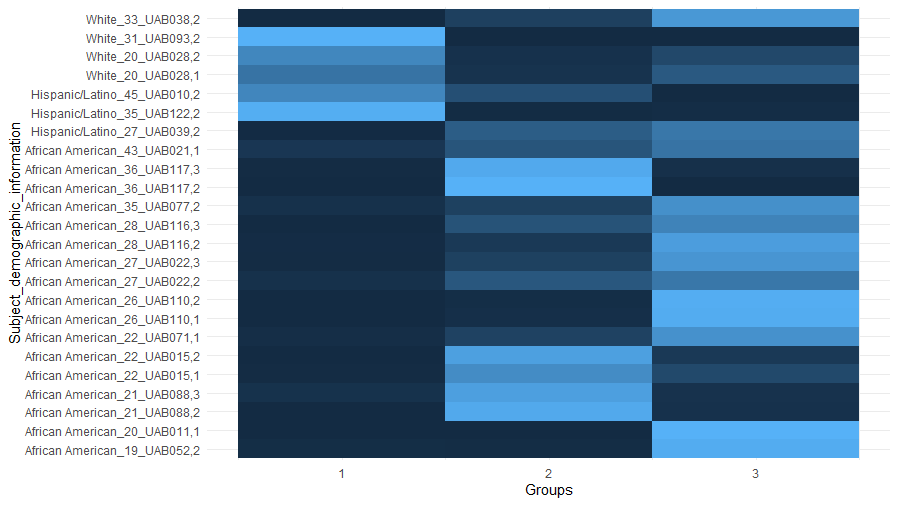}
    \subcaption{$\mA^{(1)}$} \label{subplot:vaginal: heatmap: ours:A1}
    \end{subfigure} \hspace{1.2cm} \begin{subfigure}[b]{0.31\textwidth}\includegraphics[width=\textwidth]{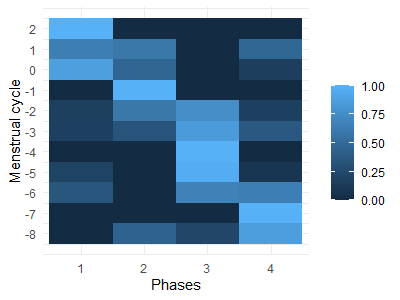}\subcaption{$\mA^{(2)}$}\label{subplot:vaginal_main:ours:heatmap:A2}\end{subfigure}
    \caption{Matrix components $\mA^{(1)}$ and  $\mA^{(2)}$ derived from HOSVD. The y-axis of $\mA^{(1)}$ labels each subject's racial and age information, with ``UAB0XX" denoting the subject ID and the following number indicating the menstrual cycle number. The y-axis of $\mA^{(2)}$ corresponds to the four menstrual phases: luteal, menstrual, follicular, and ovulatory, respectively. }
    \label{fig:vaginal A1 and A2 ours heatmap}
\end{figure}

\paragraph{TTM-HOSVD highlights dynamic changes in microbiome composition:}

%To further investigate how sub-community composition changes throughout the menstrual cycle across different participant groups, we present the
Figure \ref{subplot:vaginal_main:ours:heatmap:core} displays the heatmap of the core tensor derived from TTM-HOSVD, revealing key patterns in microbiome topic activity throughout the menstrual cycle. Notably, Topic 4, which is dominated by Lactobacillus crispatus (as shown in Figure \ref{fig:vaginal:ous A3}), emerges as the most active sub-community across the menstrual cycle in Group 3. This dominance is further reflected in Figure \ref{subplot:vaginal_main:uab022}, where Lactobacillus crispatus exhibits heightened activity in Subject UAB022, who is primarily associated with Group 3. The figure also highlights the reduced presence of microbiome Topic 2 in Group 3, as observed in UAB022. Additionally, Microbiome Topic 1 shows a noticeable decline during menstrual phases 3 and 4, which correspond to the luteal and ovulation phases. Figure \ref{subplot:vaginal_main:uab022} further reveals that the proportion of Lactobacillus gasseri (the dominant bacterium in microbiome Topic 1) peaks during the menstrual phase for Subject UAB022.

These findings demonstrate that our Tucker decomposition-based method effectively captures meaningful associations between microbiome topic activity.
%\par For Group 2 of participant subjects, microbiome topic 4 is stable but less frequent across the menstrual phases, consistent with the facet plot v4 in Figure \ref{subplot:vaginal_main:ours:individual}. It is important to note that the grouping of subjects does not entirely align with race, as age also plays a significant role in the clustering, as shown in Figure \ref{subplot:dirichlet ours:A1}. Therefore, while figure \ref{subplot:vaginal_main:ours:individual} illustrates topic proportion changes across menstrual phases for each racial group, it does not account for the additional significant variable of age. The core tensor grouping reflects a mixed clustering based on both race and age.

\begin{figure}[H]
    \centering
  
 \begin{subfigure}[b]{0.4\textwidth}
\includegraphics[width=\textwidth]{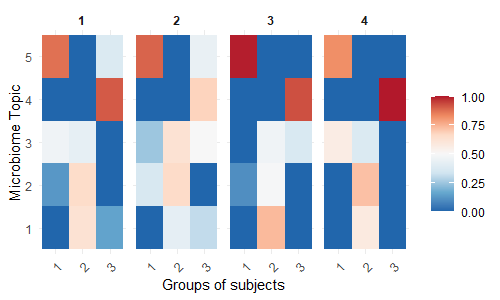}
\subcaption{Core tensor}\label{subplot:vaginal_main:ours:heatmap:core}
\end{subfigure} 
\hspace{1.0cm}
\begin{subfigure}[b]{0.37\textwidth}
\includegraphics[width=\textwidth]{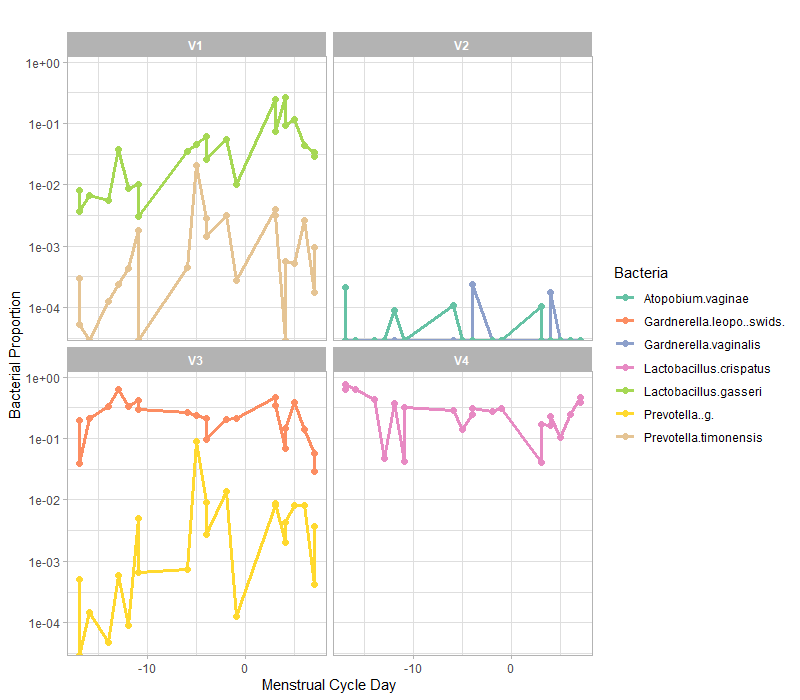}
\subcaption{UAB022}\label{subplot:vaginal_main:uab022}
\end{subfigure}
    \caption{Left: Core tensor derived from our method (TTM-HOSVD). Right: Some selected the bacterial proportions over the menstrual days for Subject ``UAB022" who is likely categorized to Group 3 in Subject. The bacteria have been categorized into topics based on the maximum value of the proportion in $\mA^{(3)}$ using the TTM-HOSVD method. The y-axis has been log-scaled. Each facet plot corresponds to a microbiome topic, with the topic numbers matching those in Figure \ref{fig:vaginal:ous A3}.}
\label{fig:vaginal:ours:Tucker heatmap}
\end{figure}

%\begin{figure}[H]
%    \centering
%    \begin{subfigure}[b]{0.25\textwidth}\includegraphics[width=\textwidth]{}\subcaption{}\label{subplot:vaginal_main:ours:individual}\end{subfigure}\hspace{1.0cm}\begin{subfigure}[b]{0.4\textwidth}\includegraphics[width=\textwidth]{IMG/real_data/microbiota/iii/uab028_white.png}\subcaption{UAB028}\label{subplot:vaginal_main:uab028_white}\end{subfigure}  \caption{Left: The topic proportions $\mW^{(3)}$ by TTM-HOSVD throughout the menstrual cycle (with day 0 marking the first day of menses). The thin, light-colored lines represent the topic proportions for each individual subject.The thick line represents the mean topic proportion for each group, categorized by race (Hispanic/Latino, African American, White). Right: Some selected the bacterial proportions over the menstrual days for Subject ``UAB028" who identifies as White. The bacteria have been categorized into topics based on the maximum value of the proportion in $\mA^{(3)}$ using the TTM-HOSVD method. The y-axis has been log-scaled. Each facet plot corresponds to a microbiome topic, with the topic numbers matching those in Figure\ref{fig:vaginal:ous A3}.}\end{figure}
To understand the gap in performance between the benchmarks and our method, we visualize the plots presented in this subsection for the benchmarks in Appendix \ref{sec:app:real_exp:vaginal}. These results show that our method outperforms the benchmarks in recovering the expected menstrual phases using matrix  $\mA^{(2)}$, and capturing the patterns in the core tensor.

\subsection{Market-Basket Analysis}

Finally, we propose deploying our method to the analysis of market data. More specifically, we consider the analysis of the Dunnhumby dataset\footnote{The dataset is publicly available at the following url: \url{https://www.kaggle.com/datasets/frtgnn/dunnhumby-the-complete-journey}.}, a public dataset tracking the purchases of 2,500 households who frequently shop over a two-year period in a retail store. After a light preprocessing of the data (see details in Appendix~\ref{app:market}), we retain a total of 176 households, whose purchases are observed at 26 time points (corresponding to sets of two weeks) in the year. Multiple purchases in a given period of two weeks are aggregated, yielding baskets (our ``documents'') consisting of an average 10 items.   The dictionary here consists of a list of 2,102 possible retail items. We run our Tensor Topic Modeling method of the dataset to extract $K^{(3)}=10$ retail ``topics'' (i.e. sets of items that are frequently bought together), as well as attempt classifying shoppers in $K^{(1)}=4$ categories and time points in $K^{(2)}=4$ clusters. The values for $K^{(1)}$ and $K^{(3)}$ were chosen using the scree plot of the corresponding matricized data, and the value of $K^{(2)}$ was chosen to match the 4 seasons.

\paragraph{Analysis of the results.}
The market-basket topics recovered by our TTM-HOSVD method are presented in Figure~\ref{fig:topics-hosvd}, where we show the top 30 words for each of the 10 topics. Due to space constraints, we present the results for STM,  Hybrid-LDA, as well as TopicScore-HOSVD in Appendix~\ref{app:market} (Figures~\ref{fig:market_topic_stm}, \ref{fig:market_lda2} and \ref{fig:market-topics-topic-score hosvd}).  We were unfortunately unable to run the Bayesian tensor LDA approach on a problem of this size, as this approach is unfortunately unable to scale to larger dictionaries and core sizes. We summarize the frequency of each category among the top 30 most common items in Table~\ref{tab:market} of the Appendix.

Overall, our method is able to detect interesting patterns. Topic 4, for instance, is highly enriched in the ``baby product" category (with items from this category making up as much as 40\% of the topic (Table~\ref{tab:market})), as it is enriched in items from the baby product category, as well as items from other categories targeted to children (e.g. baby juices or chocolate milk). Topic 9, on the other hand, is enriched in pet products (dog and cat food), with pet products representing  24\% of the topic. Topic 8 is the most enriched in snacks ( 28\% of the topic), while Topic 10 is enriched in Eggs \& Dairy (20\%), Topic 3 and 7 in ``fruits and vegetables'' (16.5\% and 15.8\% respectively), and Topic 6 in beverages (16\%). By contrast, the hybrid LDA model does not recover any "baby-product" or "beverage"-related topic, and most of its topics are either enriched in Candy\& Snacks, Meals,or Fruits and Vegetables (Figure~\ref{fig:market_lda2} of the Appendix) --- thereby shedding less light into customer-buying patterns.  STM fares better than the Hybrid-LDA approach, recovering a clear pet-product topic, but seems to mix Beverages and Baby product items (Figure~\ref{fig:market_topic_stm}).

Turning to the analysis of the latent factors along mode 2 ($\mA^{(2)}$), we observe that our proposed method returns cluster assignments that seem to be smooth in time (Figure~\ref{fig:market_time}). Topics 3 and 4, in particular, are complementary (first and second half of the year, respectively), while topic 1 seems to be at the midway point, and topic 2 serves as the transition between topic 3 and 4. LDA and STM, however, return patterns in the time dimensions, with Topic 4 spanning most of the year, and topic 2 and 3 spanning the extremities (Figure~\ref{fig:market_time}).

\begin{figure}[htbp]
    \centering
\includegraphics[width=\linewidth]{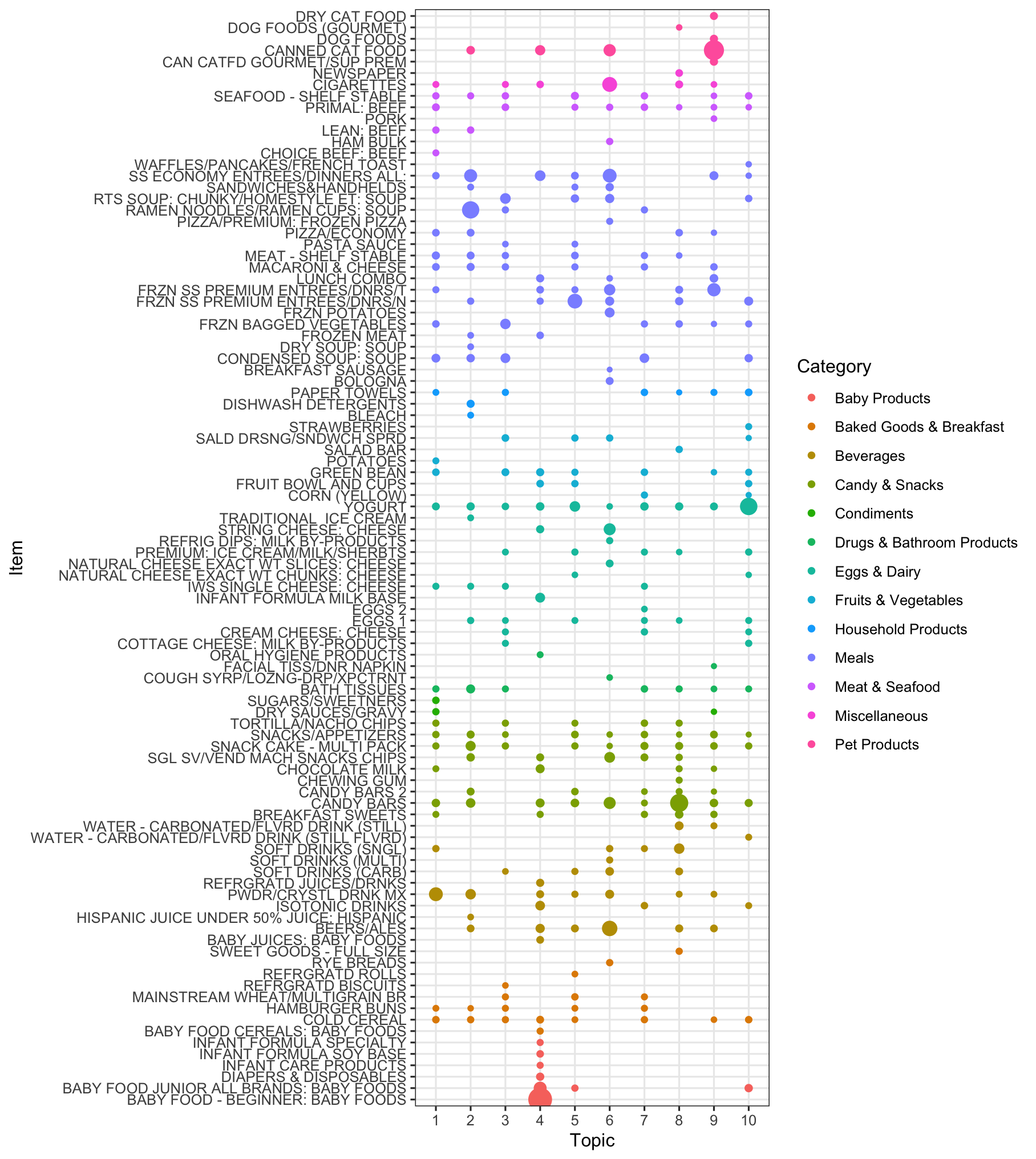}
    \caption{Topics recovered using our HO-SVD tensor topic modeling approach. Each dot represents the frequency of the corresponding item in the topic, with size reflecting the frequency of the item. Items are here divided in 13 categories.}
    \label{fig:topics-hosvd}
\end{figure}

\begin{figure}[ht]
    \centering
    \begin{subfigure}[b]{0.33\textwidth}
    \includegraphics[width=\linewidth]{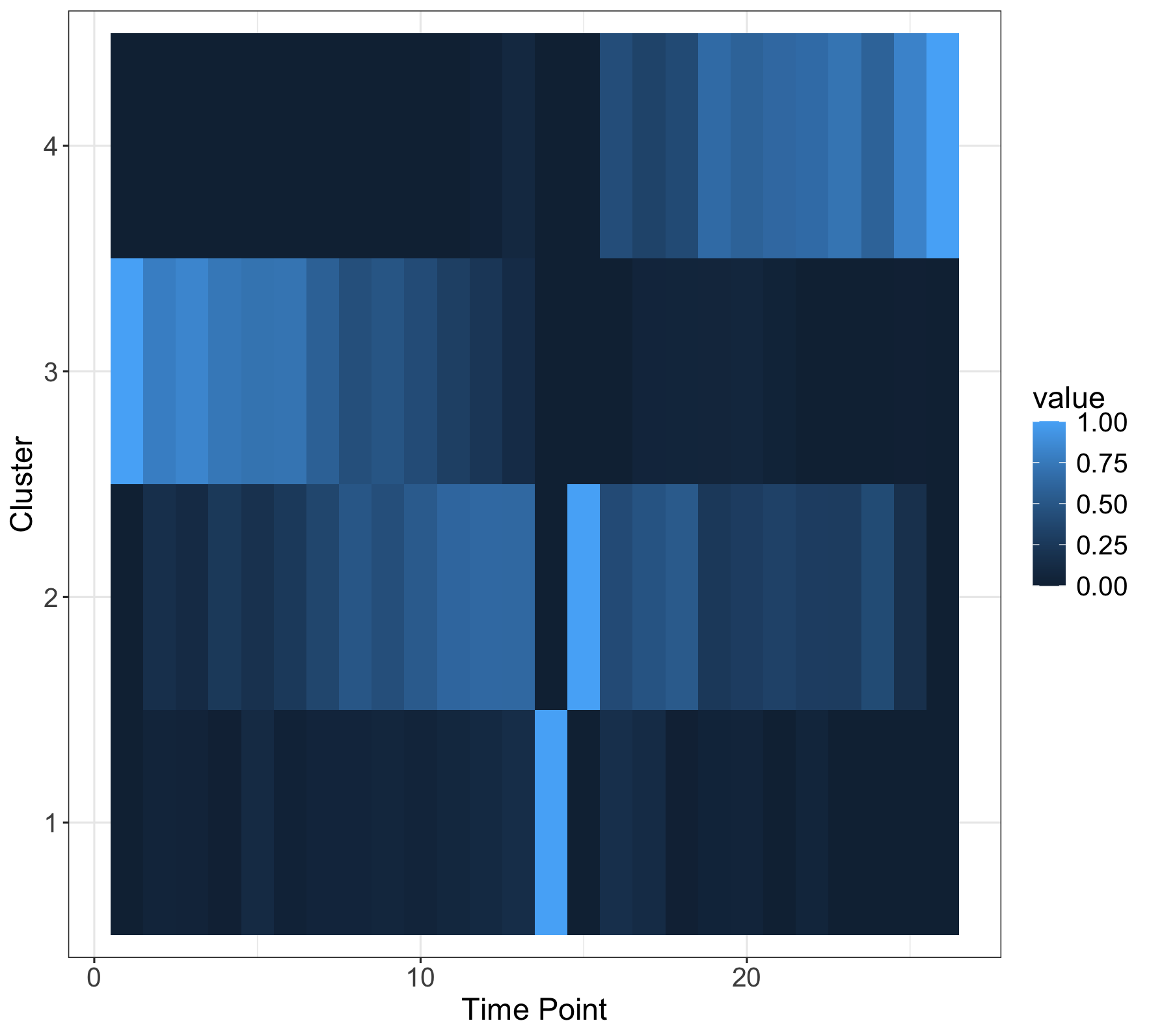}
    \caption{Clusters in the time dimension: HOSVD}
    \label{fig:enter-label}
    \end{subfigure}
        \begin{subfigure}[b]{0.33\textwidth}
    \includegraphics[width=\linewidth]{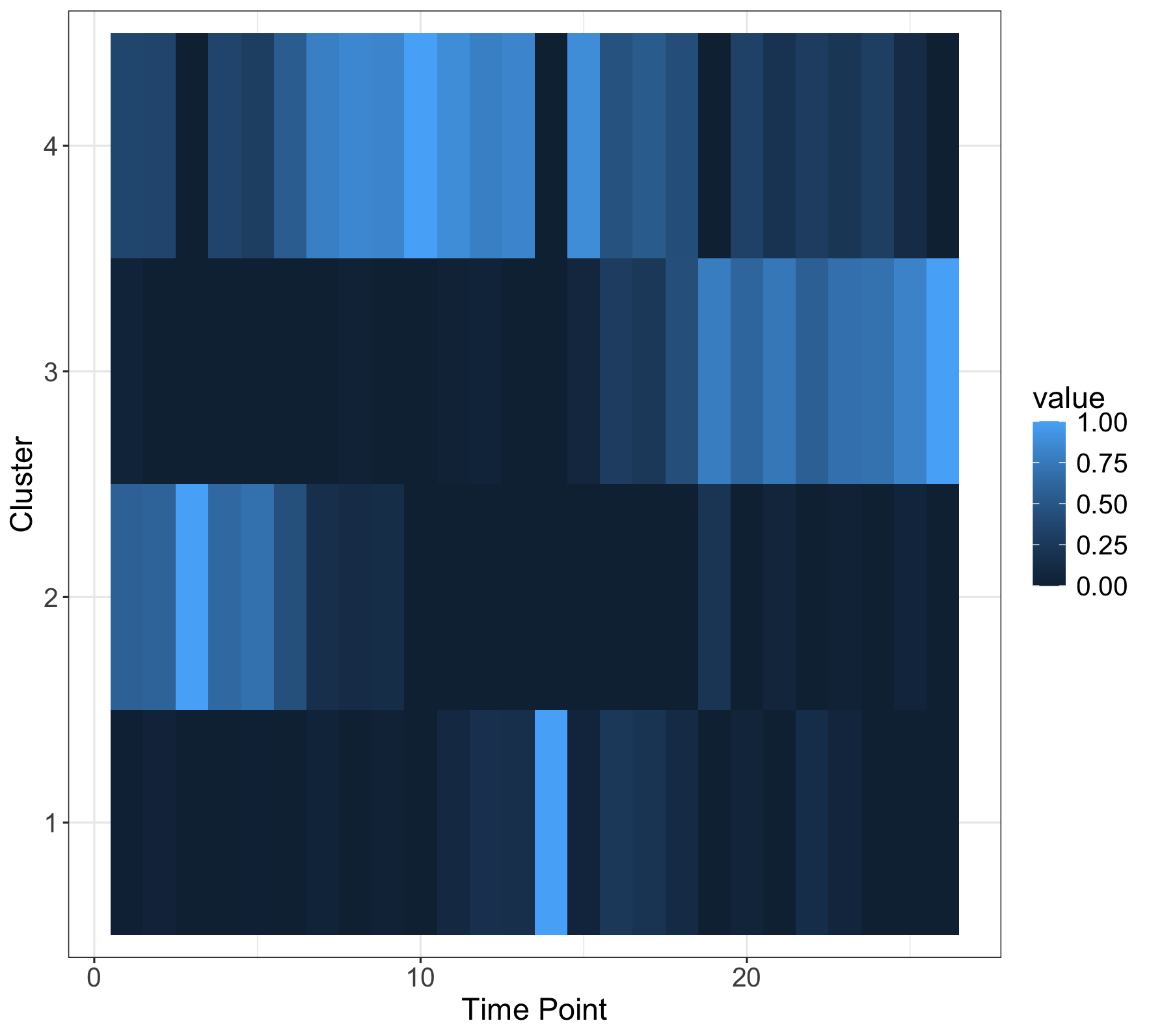}
    \caption{Clusters in the time dimension: STM}
    \label{fig:market_time}
    \end{subfigure}
    \caption{Estimated Latent Clusters along the time dimension.}
\end{figure}

\section{Related Literature}\label{sec:literature}
\subsection{Relationship to existing literature}\label{sec:relationship to literature}

Before further discussion, we briefly describe related work and emphasize the potential significance of tensor topic modeling approaches for handling modern datasets.

\paragraph{Topic Modeling.} Topic modelling approaches usually fall into on of two categories: Bayesian approaches, which typically involve variations around the Latent Dirichlet Allocation (LDA) model of \cite{blei2003latent}; and its frequentist counterpart,  probabilistic latent semantic indexing (pLSI). While the data generation mechanism of~\eqref{eq:plsi} is the same across methods, the two types of approaches differ mainly in the assumptions that they make on the data and in the tools that they use for inference. Beyond these two main categories, additional methods such as Latent Semantic Analysis and standard Non-negative Matrix Factorization have also been employed to solve the same problem. However, since their outputs are non longer probabilities and do not benefit from the same interpretability, we will not discuss them here.

Latent Dirichlet Allocation assumes that the topic assignment and topic matrices, $\mW$  and $\mA$ respectively, are random variables, which need to be endowed with appropriate priors (such as for instance, Dirichlet distributions \cite{blei2003latent}). In this setting, the goal of the inference is to adjust the posterior distribution of $\mA$ and $\mW$ given the observed data. This is typically achieved through a version of the variational Expectation-Maximization  (EM) algorithm, which is known to be fast and scalable.  Further refinements of LDA have subsequently been proposed to accommodate additional side information, such as accounting for correlation between documents by the same author, or longitudinal information in the analysis of microbiome data. For instance, 
in the matrix setting,
%the  Correlated Topic Model of \cite{blei2006correlated} attempts to account for correlations between topics by introducing a  logistic normal distribution that replaces the usual Dirichlet distrbution from which the topic proportions are usually sampled. The 
% the Dynamic Topic Modeling approach proposed by \cite{blei2006dynamic} assumes that documents are observed sequentially. Since consecutive documents can be assumed to share similar topics, the authors introduce an autoregressive structure on the topic proportions of the form $\mW_{i+1} = \text{softmax}(\et\mA_{i+1})$, where $\et\mA_{i+1} \sim N(\et\mA_{i}, \sigma^2 I)$. 
\cite{roberts2014structural} extend the usual LDA setting to structural LDA by incorporating document-level covariates $X_i$ and propose modeling correlations between documents by replacing the Dirichlet prior on each row of the mixture matrix $\mW$ with a logistic normal sampling step: $\mW_{i\star} = \text{softmax}(\veta_{i})$, where $\veta_{i} \sim N(\mX_{i}\vbeta, \mSigma)$.
However, fewer works have considered the tensor setting. Amongst the latter, most seem to have primarily focused on the supervised setting, where documents are paired with numerical ratings. For example, in the context of reviewer-item rating dyads (a similar context to our reviewer example),  FLDA \cite{agarwal2010flda} and FLAME \cite{wu2015flame} learn to explain document ratings using a latent variable model, where scores are explained by a combination of latent factors on the items, reviewers and topics. %propose to incorporate side information into the topic assignment matrix $W$ using exponential transformation functions (similar to the matrix case).
To our knowledge, the only adaptation of LDA to the tensor setting is found in the works by \cite{guo2013lda} and \cite{anandkumar2014tensor}. In \cite{guo2013lda}, the authors consider document corpora with a time dimension. The authors propose a method that automatically categorizes similar topics into groups over time using a two-step process, which fits topics to documents using LDA --- thereby yielding a topic tensor ---, and subsequently refines the topics by grouping them using a canonical polyadic (CP) decomposition. 
While this approach allows to understand correlations between topics through time, the two-step procedure does not allow the estimation of the topics to borrow strength across tie points. 
\cite{anandkumar2014tensor}, on the other hand, consider the application of LDA on tensor data, using the moments to recover the latent topics using a CP decomposition. 
This model consequently does not allow to understand the interactions between topics and latent time variables,   Furthermore, to the best of our knowledge, no Bayesian model for tensor topic modeling has yet been suggested in the literature.

Probabilistic Latent Semantic Indexing, on the other hand, can be viewed as a special instance of non-negative matrix factorization (NMF). Its consistency is generally based on a separability condition, also referred as ``anchor word'' or ``anchor document'' assumption \cite{ke2022using, bing2020optimal,klopp2021assigning} (definitions \ref{assumption: anchor doc} and \ref{def: anchor word}). In other words, anchor words act as signatures for a topic, while anchor documents act as ``archetypes'', in that they correspond to documents that have only a single topic. Either of the anchor assumptions is a sufficient condition for identifiability of parameters \cite{chen2023learning} in matrix pLSI. Existing methods \cite{ke2022using,arora2012learning,bing2020fast,tran2023sparse,klopp2021assigning} estimate the parameters of the model by first, identifying the anchors and subsequently using them to recover the remaining parameters, usually by expressing the remaining rows as convex combinations of the anchors. 

A major drawback of both types of approaches is that they are inherently designed to analyze data based on two-dimensional interactions between documents and topics. This can oversimplify complex datasets where documents also have temporal or spatial dimensions, or demographic characteristics leading to latent clusters. In other words, simply flattening all dimensions and neglecting the correlations between documents fails to accommodate their unique properties and relationships. This approach can result in a loss of important information that could substantially aid in topic inference. Moreover, while traditional topic models focus solely in reducing the vocabulary space by describing documents as mixture of topics, the automatic clustering of documents and time points might also be of scientific interest. In particular, we might be interested in understanding interactions between all latent components. For example, in our guiding example, reviewer and paper information can be treated as additional information that can be useful to improve the topic and mixed-membership estimation. In microbiome studies, temporal and demographic characteristics of samples may impact microbioa abundances. Understanding the interaction between types of patients (e.g. pregnant vs non-pregnant women) and the different abundances of microbiota might be of scientific significance. In this setting, representing the data as a tensor holds richer promise.

 The Nonnegative Matrix Factorization literature has a richer history in accommodating tensor data. Methods such as fixed NMF \cite{cichocki2007nonnegative} and direct NMF \cite{paatero1994positive} operate by slicing the tensor along one of the dimensions (for instance, time) and estimate the topic proportions over that particular dimension. However, it still fails to fully capture the nuanced information of that dimension since these methods assume a prior distribution of topic proportion over the third mode. For direct NMF, the data are treated as ``independent" across the side information, while the data in fixed NMF are assumed to share the same latent topics over the third dimension. It fails to capture the topic changes over third mode. 
 Therefore, it is crucial to employ a tensor decomposition method that captures the complete information and detects changes in topics across different dimensions.

\paragraph{Tensor Decompositions.} There are two main tensor decompositions: the  {\it CANDECOMP/ PARAPAC} (CP) decomposition and the {\it Tucker} decomposition. Due to the discrete and non-negative nature of count data, some studies have devised constrained version of these decompositions for the analysis of text data, namely non-negative CP \cite{ahn2021large} (NNCPD) or non-negative Tucker decomposition \cite{romeo2014multi} (NTD) in tensor topic modeling. 
CP can be viewed as a special case of Tucker decomposition, which require a super-diagonal core tensor. In tensor topic modeling, CP decomposition finds the analogy sharing by all dimensions, while tucker decomposition allows more complex interactions across modes. However, these tensor decomposition methods still face identifiability issue in tensor topic modeling. The uniqueness of NNCPD is based on Kruskal’s rank condition \cite{bhaskara2014uniqueness}. Unlike the separable condition alike anchor assumption, it is hard to interpret its meaning in pLSI model. MFLBM\cite{zheng2016topic} was proposed to estimate the tucker-based topic modeling by variational EM algorithm but as the matrix LDA, it do not explore the anchor conditions. Therefore, these algorithms face the identifiability issue and do not guarantee to produce consistent estimates of core tensor and three factor matrices.

\section{Conclusion}
In this paper, we introduce \textit{Tensor Topic modeling} (TTM), a HOSVD-based estimation procedure  to incorporate spatial, temporal, and other document-specific information, providing a more nuanced analysis for datasets that inherently span multiple dimensions. Our procedure is also shown to adapt effectively under the $\ell_q$-sparsity assumption \ref{ass: weak lq sparsity} on word-topic matrix $\mA^{(3)}$, which is exhibited by many real-world text datasets. By thresholding low-frequency words, we reduce the inclusion of specific, potentially identifiable terms, which helps mitigate such privacy risks \cite{manzonelli2024membership}. Our threshold HOSVD-based method can be potentially used as a pre-processing step in differentially private (DP) vocabulary selection to improve privacy without significantly impacting model performance. 

Based on this paper, some potential research directions emerge. First, investigating core sparsity within tensor decompositions could offer insights, particularly in enhancing computational efficiency and interpretability. For instance, exploring the AL$\ell_0$CORE approach \cite{hood2024all0core}, where the core tensor’s $\ell_0$-norm is constrained, allows only a limited number of non-zero entries (up to a user-defined budget $Q^*$). This sparse core structure is an adaptation of Tucker decomposition that maintains computational tractability similar to CP decomposition while benefiting from Tucker’s more flexible latent structure.

Also, the minimax-optimal $\ell_1$-error rate for the Tucker components with and without $\ell_q$-assumption \ref{ass: weak lq sparsity} remains an open problem.

\acks{The authors would like to acknowledge support for this project
from the National Science Foundation (NSF grant IIS-2238616). This work was completed in part with resources provided by the University of Chicago’s Research Computing Center. }

\vspace{4mm}
\newpage
\bibliographystyle{abbrv}
\bibliography{sample}

\vspace{4mm}
\newpage

\appendix
\section{Appendix: Proofs}\label{app:proof}
\section{Properties of the data}

\subsection{Tensor Topic Modeling}

In this paper, we generalize the usual matrix topic modeling framework assumed by Probabilistic Latent Semantic Indexing ($\mD=\mA\mW$) to accommodate the analysis of tensor count data assumed to have a latent Tucker factor decomposition ($\mathcal{D} = \mathcal{G} \cdot (\mA^{(1)}, \mA^{(2)}, \mA^{(3)})$). The crux of this approach consists in considering the matricized versions of $\mathcal{D}$ (as defined in Equation \eqref{eq:matricization} of the main text) along each axis: 
\begin{enumerate}
    \item $\mathcal{M}_1(\mathcal{D})=\mD^{(1)}=\mA^{(1)}\mW^{(1)}\in\mathbb{R}^{N^{(1)}\times (N^{(2)}R)}$, which we assume to have  rank $K^{(1)}$,
     \item $\mathcal{M}_2(\mathcal{D})=\mD^{(2)}=\mA^{(2)}\mW^{(2)}\in\mathbb{R}^{N^{(2)}\times (N^{(1)}R)}$, which we assume to have rank $K^{(2)}$,
      \item $\mathcal{M}_3(\mathcal{D})=\mD^{(3)}=\mA^{(3)}\mW^{(3)}\in\mathbb{R}^{R\times (N^{(1)}N^{(2)})}$, which we assume to have   rank $K^{(3)}$.
\end{enumerate}
Given the inherent structure of the problem, each one of the Tucker factor matrices $\mA^{(1)}, \mA^{(2)}, \mA^{(3)}$ belong to one of two matrix types. Letting $\mA\in\mathbb{R}^{n\times K}$ denote a general matrix, the factor matrices can be either considered as instances of either:
\begin{align}
 \text{Case 1: Row-Wise Stochastic Matrices: }\quad    &\sum_{k\in[K]} \mA_{ik}=1 \text{ for any }i\in[n]%,\text{ satisfying anchor document assumption \ref{def: anchor doc}} 
     \tag{C1}\label{case:1}\\
   \text{Case 2: Column-Wise Stochastic Matrices:}  \quad&\sum_{i\in[n]} \mA_{ik}=1 \text{ for any }k\in[K]%, \text{ satisfying anchor word assumption \ref{def: anchor word}}
    \tag{C2}\label{case:2}
\end{align}

\begin{remark}\label{remark:symmetry_mode12}
As highlighted in the main text (Equations~\eqref{eq:constraints A} and \eqref{eq:constraints_on_Ws}), the matrices $\mA^{(1)},\mA^{(2)}$, $\mathcal{M}_1(\mathcal{G})/K^{(2)}$ and $\mathcal{M}_2(\mathcal{G})/ K^{(1)}$ in our proposed tensor topic model are assumed to be row-wise stochastic (Case \eqref{case:1}), while the matrices $\mathcal{M}_3(\mathcal{G})$ and  $\mA^{(3)}$ are column-stochastic (Case \eqref{case:2}). This implies that, in the rest of the appendix, the estimation of $\mA^{(1)}$ and $\mA^{(2)}$ can be achieved symmetrically (using the same tools and methods), but the estimation of $\mA^{(3)}$ must be tailored to account for its peculiar structure.
\end{remark}
\subsection{Properties of the  one-hot word encoding vectors.}\label{app:propertiesone hot}

Throughout our theoretical analysis, it will often be easier to analyze the behavior of the observed empirical frequencies $\mY_{ir}$ of each term $r$ in the vocabulary in document $i$ by writing them as averages of one-hot word encoding  vectors.

\par Let $\{\mT_{im}:i\in[n],m\in[M]\}$ represent the set of independent $R$-dimensional  vectors corresponding to the one-hot word encoding vectors for each of the  $m$ words in in document $i$. Under the pLSI model: 
\begin{equation}\label{eq:def_T_im}
\mT_{im}\in\mathbb{R}^R \sim \text{Multinomial}(1,d_i)
\end{equation} 
for $\mathbf{d}_i\in\mathbb{R}^R$. We have $\sum_{r\in[R]}\mathbf{d}_i(r)=1$. For any fixed $r\in[R]$, we define  $\mX_{im}$ to be the centered version of $\mT_{im}$:
\begin{equation}\label{eq:def_X_im}
    \mX_{im}(r)=\mT_{im}(r)-d_i(r).
\end{equation} 
where $\mathbf{d}_i(r) = \mathbb{E}[\mT_{im}(r)].$
It is easy to show that the following properties on the moments of $\mX_{im}$ hold true:
\begin{itemize}
\item \textbf{First moment of $\mX_{im}$}: 
\begin{equation}\label{eq:first_moment_X_im}
|\mX_{im}(r)|\leqslant 1, \qquad \text{ and } \qquad \mathbb{E}[\mX_{im}(r)]=0 \qquad  %\text{ and }   \qquad \mathsf{Var}\left[X_{im}(r)\right]=d_{i}(r)-d_{i}(r)^2.
\end{equation} 
\item \textbf{Second moment of $\mX_{im}$ and $\mT_{im}$}: 
\begin{equation}\label{eq:second moment X im}
\mathbb{E}[\mT_{im}(r)^2]=\mathbf{d}_{i}(r),\qquad \mathbb{E}[\mT_{im}(r)^2]=\mathsf{Var}\left[\mX_{im}(r)\right]=\mathbf{d}_{i}(r)-\mathbf{d}_{i}(r)^2
\end{equation} 
and if $r \neq  l$:
\begin{equation}\label{eq:second moment X im_cross}
\mathbb{E}[\mT_{im}(r)\mT_{im}(l)]=0,\quad \mathbb{E}[\mX_{im}(r)\mX_{im}(l)]=-\mathbf{d}_{i}(r)\mathbf{d}_{i}(l).
\end{equation} 

% \begin{equation}\label{eq:second moment X im_cross}
% \text{and } \mathbb{E}[X_{im}(r)X_{im}(l)] = -d_i(j)d_i(l), \text{ so } \mathsf{Var}\left[ X_{im}(r)X_{im}(l)\right] = 0 
% \end{equation} 
%$$\mathsf{Var}\left[ X_{im}(r)X_{im}(l)\right]\leqslant d_{i}(r)d_{i}(l) \text{ if }r\neq l$$ 
\item \textbf{Fourth moment of $\mX_{im}$ and $\mT_{im}$}:

\begin{equation}\label{eq:fourth_moment_X_im_cross}
\begin{split}
\mathbb{E}[\mX_{im}(r)^4] &\leqslant \mathbf{d}_{i}(r) \text { and } \mathsf{Var}\left[\mX_{im}(r)^2\right]\leqslant \mathbf{d}_i(r),\\
    \text{And if } r\neq l,\hspace{3cm}& \\
    \quad \mathbb{E}[\mX_{im}(r)^2\mX_{im}(l)^2]&\leqslant \mathbf{d}_{i}(r)\mathbf{d}_{i}(l)%^2+d_{i}(l)d_{i}(r)^2-3d_{i}(r)^2d_{i}(l)^2
    ,\\
    \quad\mathsf{Cov}(\mX_{im}(r)^2,\mX_{im}(l)^2)&=\E[\mX_{im}(r)^2\mX_{im}(l)^2] -\E[\mX_{im}(r)^2]\E[\mX_{im}(l)^2]\leqslant  \mathbf{d}_{i}(r)\mathbf{d}_{i}(l)
\end{split}
\end{equation} 

\end{itemize}
\begin{remark}\label{remark: z multinomial}
    Recall the model in \eqref{eq: model multinominal}. Define $\mathcal{X}_{ijm}(r)=\mathcal{T}_{ijm}(r)-\mathbb{E}[\mathcal{T}_{ijm}(r)]$ where $M\mathcal{T}_{ijm}(\cdot)\in\mathbb{R}^{R}\sim \text{Multinomial}(M,\mathcal{D}_{ij\star})$ for any $i\in[N^{(1)}], j\in[N^{(2)}]$ and $r\in [R]$. Define:
    \begin{align}\label{eq: Z= 1/M sum Xijmr}
    \mathcal{Z}_{ijr}=\frac{1}{M}\sum_{m=1}^M \mathcal{X}_{ijm}(r)\quad\quad\text{ for } i\in[N^{(1)}],j\in[N^{(2)}],m\in[M]
    \end{align}
where $\mathcal{Z}:=\mathcal{Y}-\mathcal{D}$ with $\mathcal{Y}$ the observed tensor corpus, and  $\mathcal{D}$ its expectation. By the previous remarks (Equations~\eqref{eq:second moment X im} and \ref{eq:fourth_moment_X_im_cross}), noting that each document in the tensor $\Y$ is indexed by a tuple $(i,j)$, we directly deduce that for each entry of $\mathcal{Z}$:
\begin{equation}\label{eq: expection of Z}
\begin{aligned}
    \mathbb{E}[\mathcal{Z}_{i_1j_1r_1}\mathcal{Z}_{i_2j_2r_2}]=\frac{1}{M}
     \begin{cases}
     \mathcal{D}_{ijr}-\mathcal{D}_{ijr}^2 &\text{if }i_1=i_2=i,j_1=j_2=j,r_1=r_2=r\\
          -\mathcal{D}_{ijr_1}\mathcal{D}_{ijr_2}& \text{if } (i_1=i_2=i,j_1=j_2=j) \text{ but } r_1\neq r_2\\
     0 &\text{as soon as } (i_1\neq i_2) \text{ or } (j_1\neq j_2)\\
    \end{cases}    
\end{aligned}\end{equation}
Properties on the concentration and norms of $\mathcal{Z}$ are provided in Section \ref{Sec: noise analysis}
\end{remark}
\begin{lemma}\label{lem: Dijr less than fr}
Define $f_r:=\sum_{k\in[K^{(3)}]}\mA^{(3)}_{rk}$. For any $i\in[N^{(1)}],j\in[N^{(2)}]$ and $r\in[R]$, we have 
$$\mathcal{D}_{ijr}\leqslant f_r.$$
\end{lemma}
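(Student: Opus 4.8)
The plan is to expand $\mathcal{D}_{ijr}$ through its entry-wise Tucker form \eqref{eq: model defined entry-wise}, regroup the three summations so that the word-frequency weights $\mA^{(3)}_{rk^{(3)}}$ are factored out, and then bound the coefficient multiplying each $\mA^{(3)}_{rk^{(3)}}$ by $1$. Concretely, I would first rewrite
\[
\mathcal{D}_{ijr}=\sum_{k^{(3)}=1}^{K^{(3)}} \mA^{(3)}_{rk^{(3)}}\left(\sum_{k^{(1)}=1}^{K^{(1)}}\sum_{k^{(2)}=1}^{K^{(2)}} \mathcal{G}_{k^{(1)}k^{(2)}k^{(3)}}\,\mA^{(1)}_{ik^{(1)}}\mA^{(2)}_{jk^{(2)}}\right),
\]
and observe that the parenthesized factor is exactly $\mW^{(3)}_{k^{(3)}(ij)}$ as given in (W.3), i.e. the conditional probability $\P(\text{topic }k^{(3)}\mid\text{reviewer }i,\text{paper }j)$.

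The key step is to show that this inner coefficient lies in $[0,1]$. One route is to appeal directly to the probabilistic interpretation: by the constraint \eqref{eq:constraints_on_Ws} we have $\sum_{k^{(3)}}\mW^{(3)}_{k^{(3)}(ij)}=1$, and since every factor in its definition is nonnegative, each term satisfies $0\le\mW^{(3)}_{k^{(3)}(ij)}\le 1$. A self-contained alternative that avoids invoking \eqref{eq:constraints_on_Ws} is to use the core constraint $\sum_{k'}\mathcal{G}_{k^{(1)}k^{(2)}k'}=1$ from \eqref{eq:constraints A} together with nonnegativity to get $\mathcal{G}_{k^{(1)}k^{(2)}k^{(3)}}\le 1$, and then bound the inner double sum by $\sum_{k^{(1)},k^{(2)}}\mA^{(1)}_{ik^{(1)}}\mA^{(2)}_{jk^{(2)}}=\big(\sum_{k^{(1)}}\mA^{(1)}_{ik^{(1)}}\big)\big(\sum_{k^{(2)}}\mA^{(2)}_{jk^{(2)}}\big)=1$, using the row-stochasticity of $\mA^{(1)}$ and $\mA^{(2)}$ recorded in \eqref{eq:constraints A}.

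Finally, since every entry of $\mA^{(3)}$ is nonnegative, replacing each inner coefficient by its upper bound $1$ yields $\mathcal{D}_{ijr}\le\sum_{k^{(3)}}\mA^{(3)}_{rk^{(3)}}=f_r$, as claimed. There is no genuine analytic obstacle here; the only point requiring care is that the bound $\mW^{(3)}_{k^{(3)}(ij)}\le 1$ can be promoted to an upper bound on $\mathcal{D}_{ijr}$ precisely because the weights $\mA^{(3)}_{rk^{(3)}}$ are nonnegative, so the argument must explicitly invoke the nonnegativity of $\mA^{(3)}$ imposed in the Tucker model.
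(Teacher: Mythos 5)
Your proposal is correct and takes essentially the same route as the paper: the paper's proof likewise bounds $\mathcal{G}_{k^{(1)}k^{(2)}k^{(3)}}\leqslant 1$ (nonnegativity plus the sum-to-one constraint in \eqref{eq:constraints A}) and then collapses the sums over $k^{(1)},k^{(2)}$ via the row-stochasticity of $\mA^{(1)}$ and $\mA^{(2)}$, leaving $\sum_{k^{(3)}}\mA^{(3)}_{rk^{(3)}}=f_r$. Your regrouping through $\mW^{(3)}_{k^{(3)}(ij)}$ is only a cosmetic reordering of that same computation, and your explicit remark about needing nonnegativity of $\mA^{(3)}$ to promote the coefficient bound to a bound on $\mathcal{D}_{ijr}$ is a fair point the paper uses implicitly.
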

\begin{proof}
Recall that, in our definition of $\mathcal{D}$, for each entry $(i,j, r)$, we have
\begin{align*}
\mathcal{D}_{ijr}&=\sum_{k^{(1)}=1}^{K^{(1)}} \sum_{k^{(2)}=1}^{K^{(2)}} \sum_{k^{(3)}=1}^{K^{(3)}} \mathcal{G}_{k^{(1)}k^{(2)}k^{(3)}} \mA^{(1)}_{ik^{(1)}}\mA^{(2)}_{jk^{(2)}}\mA^{(3)}_{rk^{(3)}}\qquad \text{ where } \mathcal{G}_{k^{(1)}k^{(2)}k^{(3)}}  \leq 1\\
&\leqslant\sum_{k^{(1)}=1}^{K^{(1)}} \sum_{k^{(2)}=1}^{K^{(2)}} \sum_{k^{(3)}=1}^{K^{(3)}}\mA^{(1)}_{ik^{(1)}}\mA^{(2)}_{jk^{(2)}}\mA^{(3)}_{rk^{(3)}}=\sum_{k^{(3)}=1}^{K^{(3)}}\mA^{(3)}_{rk^{(3)}}=f_r
\end{align*}
where the last line follows from the fact that $\mA^{(1)}$ and $\mA^{(2)}$ are column-wise stochastic (entrywise non-negative) matrices (Case \ref{case:1}).
\end{proof}
\begin{lemma}[Maximum singular values of the factor matrices]\label{lemma: max singular values}
The eigenvalues of the factor matrices defined in Model~\eqref{eq: TTM} verify the following inequalities:
$$
\sigma_1\left(\frac{1}{N^{(1)}}\mA^{(1)\top }\mA^{(1)}\right)\leqslant 1,\qquad \sigma_1\left(\frac{1}{N^{(2)}}\mA^{(2)\top}\mA^{(2)}\right)\leqslant 1,\qquad \sigma_1\left(\mA^{(3)}\right)\leqslant \sqrt{K^{(3)}} 
$$ and 
$$\sigma_{1}\left(\frac{1}{K^{(1)}K^{(2)}}\mathcal{M}_a(\mathcal{G})^\top\mathcal{M}_a(\mathcal{G})\right)\leqslant 1 \text{ for any } a\in[3]$$
\end{lemma}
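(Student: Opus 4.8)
The plan is to reduce all four inequalities to one elementary fact: for any matrix $\mathbf{M}$ one has $\sigma_1(\mathbf{M}^\top\mathbf{M})=\sigma_1(\mathbf{M})^2=\|\mathbf{M}\|_{\mathrm{op}}^2\le\|\mathbf{M}\|_F^2$, so each bound follows once the Frobenius norm of the relevant object is controlled. The single observation that drives every Frobenius estimate is that $\mA^{(1)},\mA^{(2)},\mA^{(3)}$ and $\mathcal{G}$ all have entries in $[0,1]$ (they are probabilities/mixed-membership weights), so that $x^2\le x$ entrywise; combining this with the row- or column-sum constraints of \eqref{eq:constraints A} collapses the sum of squared entries into a clean count.

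First I would treat $\mA^{(1)}$ and $\mA^{(2)}$, which are row-stochastic (Case \ref{case:1}). Since $0\le\mA^{(1)}_{ik}\le1$, we get $\|\mA^{(1)}\|_F^2=\sum_{i,k}(\mA^{(1)}_{ik})^2\le\sum_{i,k}\mA^{(1)}_{ik}=\sum_{i=1}^{N^{(1)}}1=N^{(1)}$, using $\sum_k\mA^{(1)}_{ik}=1$. Hence $\sigma_1(\tfrac{1}{N^{(1)}}\mA^{(1)\top}\mA^{(1)})=\tfrac{1}{N^{(1)}}\|\mA^{(1)}\|_{\mathrm{op}}^2\le\tfrac{1}{N^{(1)}}\|\mA^{(1)}\|_F^2\le1$, and the identical argument with $N^{(2)}$ in place of $N^{(1)}$ handles $\mA^{(2)}$.

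For $\mA^{(3)}$, which is column-stochastic (Case \ref{case:2}), the same entrywise bound gives $\|\mA^{(3)}\|_F^2\le\sum_{r,k}\mA^{(3)}_{rk}=\sum_{k=1}^{K^{(3)}}1=K^{(3)}$, using $\sum_r\mA^{(3)}_{rk}=1$; taking square roots yields $\sigma_1(\mA^{(3)})=\|\mA^{(3)}\|_{\mathrm{op}}\le\|\mA^{(3)}\|_F\le\sqrt{K^{(3)}}$. Finally, for the core I would exploit that the Frobenius norm is invariant under matricization, so $\|\mathcal{M}_a(\mathcal{G})\|_F^2=\|\mathcal{G}\|_F^2$ for every $a\in[3]$ and the core bound need only be established once. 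Again $\mathcal{G}_{k^{(1)}k^{(2)}k^{(3)}}\in[0,1]$ gives $\|\mathcal{G}\|_F^2\le\sum_{k^{(1)},k^{(2)},k^{(3)}}\mathcal{G}_{k^{(1)}k^{(2)}k^{(3)}}$, and summing the constraint $\sum_{k^{(3)}}\mathcal{G}_{k^{(1)}k^{(2)}k^{(3)}}=1$ over the $K^{(1)}K^{(2)}$ pairs $(k^{(1)},k^{(2)})$ collapses this to $K^{(1)}K^{(2)}$; thus $\sigma_1(\tfrac{1}{K^{(1)}K^{(2)}}\mathcal{M}_a(\mathcal{G})^\top\mathcal{M}_a(\mathcal{G}))=\tfrac{1}{K^{(1)}K^{(2)}}\|\mathcal{M}_a(\mathcal{G})\|_{\mathrm{op}}^2\le1$ uniformly in $a$. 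There is no genuine obstacle here; the only points deserving care are recalling that the spectral norm of $\mathbf{M}^\top\mathbf{M}$ equals $\|\mathbf{M}\|_{\mathrm{op}}^2$ (with $\|\mathbf{M}\|_F^2$ serving only as the crude upper bound that makes the entrywise trick applicable) and that matricization preserves the Frobenius norm.
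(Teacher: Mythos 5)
Your proof is correct and follows essentially the same route as the paper: the paper bounds $\sigma_1(\mA^\top\mA)\leqslant\operatorname{tr}(\mA^\top\mA)=\|\mA\|_F^2$ and then uses $\mA_{ik}^2\leqslant\mA_{ik}$ together with the row/column-stochastic constraints, which is exactly your operator-norm-versus-Frobenius-norm argument in different notation. Your explicit remark that matricization preserves the Frobenius norm (so the core bound need only be checked once, summing $\sum_{k^{(3)}}\mathcal{G}_{k^{(1)}k^{(2)}k^{(3)}}=1$ over the $K^{(1)}K^{(2)}$ slices) merely makes precise what the paper leaves implicit.
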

\begin{proof}
For any matrix of the factor matrices $\mA$ such that each of its entries is less than 1:
$$
\sigma_1(\mA^\top \mA)\leqslant\text{tr}(\mA^\top \mA)=\sum_{k=1}^K \sum_{i=1}^n \mA_{ik}^2\leqslant\sum_{k=1}^K \sum_{i=1}^n \mA_{ik}
$$
To conclude, we note that this last double sum simplifies given the stochastic nature of the Tucker factor matrices in TTM (with $\mA^{(1)}, \mA^{(2)}$ and $\mathcal{M}_a(\mathcal{G})$ belonging to Case  \eqref{case:1} and $\mA^{(3)}$ belonging to Case \eqref{case:2}).

\end{proof}
\begin{lemma}[Maximum singular values of the matricizations of  $\D$]\label{lemma: singular value D}
Given any matricization $\mD^{(a)}$ of $\mathcal{D}$ along axis $a \in [3]$, we have for all $k\in[\max_{a\in[3]}K^{(a)}]$,
$$ c^* K^{(1)} K^{(2)}K^{(3)}N^{(1)}N^{(2)}\leqslant\lambda_k(\mD^{(a)}\mD^{(a)\top}) \leqslant K^{(1)} K^{(2)}K^{(3)}N^{(1)}N^{(2)}.$$
\end{lemma}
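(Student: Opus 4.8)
The plan is to exploit the factorization $\mD^{(a)} = \mA^{(a)}\,\mathcal{M}_a(\mathcal{G})\,(\mA^{(b)}\otimes \mA^{(c)})^\top$ (with $\{b,c\}=[3]\setminus\{a\}$, $b<c$, as in Equations \eqref{eq:matricization}) and to peel off the three Gram factors one at a time by means of the multiplicative singular-value inequalities. I would restrict attention to $k\in[K^{(a)}]$, which is the meaningful range: $\mD^{(a)}$ has rank $K^{(a)}$, so for larger indices the eigenvalue vanishes and only the nonzero part of the spectrum is at issue.

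First I would record the elementary product bound that is reused twice. For symmetric positive semidefinite matrices $A$ (invertible) and $B$ of equal size and any valid index $k$,
\[
\lambda_{\min}(A)\,\lambda_k(B)\;\le\;\lambda_k\!\left(A^{1/2}BA^{1/2}\right)\;\le\;\lambda_{\max}(A)\,\lambda_k(B),
\]
which follows by writing $\lambda_k(A^{1/2}BA^{1/2})=\sigma_k(B^{1/2}A^{1/2})^2$ and applying $\sigma_{\min}(A^{1/2})\sigma_k(B^{1/2})\le \sigma_k(B^{1/2}A^{1/2})\le \sigma_1(A^{1/2})\sigma_k(B^{1/2})$; the lower inequality is valid precisely because $A^{1/2}$ is square and invertible.

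For the main computation, set $\mathbf{P}_a:=(\mA^{(b)\top}\mA^{(b)})\otimes(\mA^{(c)\top}\mA^{(c)})$, using $(\mA^{(b)}\otimes \mA^{(c)})^\top(\mA^{(b)}\otimes \mA^{(c)})=\mathbf{P}_a$, and $\mathbf{G}_a:=\mathcal{M}_a(\mathcal{G})\,\mathbf{P}_a\,\mathcal{M}_a(\mathcal{G})^\top$, so that $\mD^{(a)}\mD^{(a)\top}=\mA^{(a)}\mathbf{G}_a\mA^{(a)\top}$. For $k\le K^{(a)}$ the nonzero eigenvalues of $\mA^{(a)}\mathbf{G}_a\mA^{(a)\top}$ coincide with those of $\mathbf{G}_a^{1/2}\mA^{(a)\top}\mA^{(a)}\mathbf{G}_a^{1/2}$ (both have exactly $K^{(a)}$ nonzero eigenvalues), so one application of the product bound peels off $\mA^{(a)\top}\mA^{(a)}$. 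A second application to $\mathbf{G}_a=\mathcal{M}_a(\mathcal{G})\mathbf{P}_a\mathcal{M}_a(\mathcal{G})^\top$ peels off $\mathbf{P}_a$, leaving $\lambda_k(\mathcal{M}_a(\mathcal{G})\mathcal{M}_a(\mathcal{G})^\top)$. Substituting the scales then finishes the argument: Lemma~\ref{lemma: max singular values} gives $\lambda_{\max}(\mA^{(a)\top}\mA^{(a)})\le N^{(a)}$ for $a\in\{1,2\}$ (resp. $\le K^{(3)}$ for $a=3$), $\lambda_{\max}(\mathcal{M}_a(\mathcal{G})\mathcal{M}_a(\mathcal{G})^\top)\le K^{(1)}K^{(2)}$, and, since the eigenvalues of a Kronecker product factorize, $\lambda_{\max}(\mathbf{P}_a)$ is the product of the two remaining scales; Assumption~\ref{ass: min singular value} supplies the matching lower bounds $\ge c^*N^{(a)}$, $\ge (c^*)^2K^{(3)}$, and $\ge c^*K^{(1)}K^{(2)}$. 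Multiplying the upper bounds yields exactly $K^{(1)}K^{(2)}K^{(3)}N^{(1)}N^{(2)}$, while multiplying the lower bounds yields the same order with a constant equal to a fixed power of $c^*$, which is absorbed into a fresh $c^*$.

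The main obstacle is the bookkeeping on the lower-bound side. Each peeling step requires the peeled factor to be invertible so that $\sigma_{\min}>0$, so I would verify up front that $\mathbf{P}_a$, $\mathbf{G}_a$, and $\mathcal{M}_a(\mathcal{G})\mathcal{M}_a(\mathcal{G})^\top$ are nonsingular — the first from the full column rank of $\mA^{(b)},\mA^{(c)}$, the last from Assumption~\ref{ass: min singular value}, and the middle as a consequence of the first two — and confirm that every index $k$ invoked stays within the nonzero spectrum of size $K^{(a)}$. The upper-bound chain is routine; it is the invertibility checks and the tracking of the $c^*$ powers that demand care.
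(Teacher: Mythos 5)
Your proposal is correct and follows essentially the same route as the paper's proof: factor $\mD^{(a)}\mD^{(a)\top}$ through $\mA^{(a)}$, $\mathcal{M}_a(\mathcal{G})$, and the Kronecker Gram matrix, peel off each factor via extremal-eigenvalue product bounds (using the mixed-product/Kronecker eigenvalue factorization), and plug in Lemma~\ref{lemma: max singular values} for the upper bounds and Assumption~\ref{ass: min singular value} for the lower bounds. If anything, your explicit Ostrowski-type sandwich $\lambda_{\min}(A)\lambda_k(B)\leqslant\lambda_k(A^{1/2}BA^{1/2})\leqslant\lambda_{\max}(A)\lambda_k(B)$, together with the cyclic identity for nonzero spectra and the invertibility checks, rigorously fills in the lower-bound step $\lambda_{K^{(a)}}(\mA\mW\mW^\top\mA^\top)\geqslant\lambda_{K^{(a)}}(\mA\mA^\top)\,\lambda_{K^{(a)}}(\mW\mW^\top)$ that the paper states without justification.
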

\begin{proof}
For any matrices $\mA$ and $\mB$ for which we can form the Kronecker product $\mA\otimes \mB$, the following holds true:

$$\lambda_1[(\mA\otimes \mB)(\mA\otimes \mB)^\top]=\lambda_1[(\mA\mA^\top)\otimes (\mB\mB^\top)]=\lambda_1(\mA\mA^\top)\cdot\lambda_1(\mB\mB^\top),$$ 
$$\text{and } \lambda_{\min}[(\mA\otimes \mB)(\mA\otimes \mB)^\top]=\lambda_{\min}[(\mA\mA^\top)\otimes (\mB\mB^\top)]=\lambda_{\min}(\mA\mA^\top)\cdot\lambda_{\min}(\mB\mB^\top),$$ 
In the previous equations, the first equality follows by the mixed-product property of the Kronecker product, and the second, by property of the extremal eigenvalues of the Kronecker product \cite{schacke2004kronecker}.
Moreover, $\lambda_1(\mA\mA^\top) = \lambda_1(\mA^\top \mA).$
Therefore, for any set of indices $a_1 \neq a_2 \neq a_3$ with $a_1, a_2, a_3 \in [3]$, the matrix $\mW^{(a_1)}=\mathcal{M}_{a_1}(\mathcal{G})\cdot(\mA^{(a_2)}\otimes \mA^{(a_3)})$ is such that:
\begin{align*}
\lambda_1(\mW^{(a_1)}\mW^{(a_1)\top})&=\lambda_1[\mathcal{M}_{a_1}(\mathcal{G})\cdot(\mA^{(a_2)}\otimes \mA^{(a_3)})(\mA^{(a_2)}\otimes \mA^{(a_3)})^\top \cdot\mathcal{M}_{a_1}(\mathcal{G})^\top]\\&\leqslant \lambda_1(\mathcal{M}_{a_1}(\mathcal{G})^\top\mathcal{M}_{a_1}(\mathcal{G}))\cdot \lambda_1(\mA^{(a_2)\top}\mA^{(a_2)})\cdot\lambda_1(\mA^{(a_3)\top}\mA^{(a_3)})
\end{align*}
Therefore,
$$\lambda_1(\mD^{(a_1)}\mD^{(a_1)\top})=\lambda_1(\mA^{(a_1)}\mW^{(a_1)}\mW^{(a_1)\top}\mA^{(a_1)\top})\leqslant \lambda_1(\mathcal{M}_{a_1}(\mathcal{G})^\top \mathcal{M}_{a_1}(\mathcal{G}))\Pi_{a=1}^3 \lambda_{1}(\mA^{(a)\top}\mA^{(a)})$$ Combining this with Lemma \ref{lemma: max singular values}, for any $a\in[3]$, we conclude that :
$$
\lambda_1(\mD^{(a)}\mD^{(a)\top})\leqslant K^{(1)} K^{(2)} K^{(3)} N^{(1)} N^{(2)}
$$
Using Assumption \ref{ass: min singular value}, we also have
\begin{equation}
\begin{split}
    \lambda_{K^{(a)}}(\mD^{(a)}\mD^{(a)\top})& =\lambda_{K^{(a)}}(\mA^{(a_1)}\mW^{(a_1)}\mW^{(a_1)\top}\mA^{(a_1)\top})\\
    & \geq \lambda_{K^{(a)}}(\mA^{(a_1)}\mA^{(a_1)\top}) \lambda_{K^{(a)}}(\mW^{(a_1)}\mW^{(a_1)\top})\\
    &\geqslant c^*K^{(1)} K^{(2)} K^{(3)} N^{(1)} N^{(2)}.\\
    \end{split}
\end{equation}
\end{proof}

\begin{lemma}[From Lemma F.2 of \cite{ke2022using}]\label{lem: eigen gap}
Let $\mathbf{\Theta}\in\mathbb{R}^{K\times K}$ be a symmetric matrix such that all entries are lower-bounded by a constant $c_0$. If $\|\mathbf{\Theta}\|_{op}\leqslant c^*$, then there exists some constant $c>0$ such that
$$\lambda_1(\mathbf{\Theta})-\lambda_2(\mathbf{\Theta})\geqslant c.$$
Furthermore, the entries of the unit-norm leading positive eigenvector of $\mathbf{\Theta}$ are all lower bounded by some constant $c_1>0$.
\end{lemma}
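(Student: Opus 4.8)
The plan is to view $\mathbf{\Theta}$ as an element of a fixed compact set of strictly positive symmetric matrices and to combine the Perron--Frobenius theorem with a compactness argument for the gap, together with a direct estimate for the eigenvector. First I would record that every entry of $\mathbf{\Theta}$ is trapped in a bounded interval: the lower bound $\Theta_{ij}\geqslant c_0$ is assumed, while for the upper bound, symmetry gives $|\Theta_{ij}|=|e_i^\top\mathbf{\Theta} e_j|\leqslant\|\mathbf{\Theta}\|_{op}\leqslant c^*$. Hence $\mathbf{\Theta}$ lies in $\mathcal{K}:=\{\mathbf{M}=\mathbf{M}^\top\in\mathbb{R}^{K\times K}: M_{ij}\in[c_0,c^*]\ \forall i,j\}$, which is closed and bounded, hence compact, in the finite-dimensional space of symmetric matrices (here it is essential that $K$ is a fixed constant, so that the constants produced below are allowed to depend on $K$). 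Every $\mathbf{M}\in\mathcal{K}$ has strictly positive entries, so by the Perron--Frobenius theorem its largest eigenvalue $\lambda_1(\mathbf{M})$ is real, simple, equals the spectral radius $\|\mathbf{M}\|_{op}$, and admits an eigenvector with strictly positive entries.

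For the gap, I would use that simplicity yields $\lambda_1(\mathbf{M})-\lambda_2(\mathbf{M})>0$ for every $\mathbf{M}\in\mathcal{K}$, since $\lambda_1$ strictly exceeds the modulus of all other eigenvalues. The ordered eigenvalues of a symmetric matrix are continuous (indeed Lipschitz, by Weyl's inequality) in the matrix entries, so $\mathbf{M}\mapsto\lambda_1(\mathbf{M})-\lambda_2(\mathbf{M})$ is a continuous, strictly positive function on the compact set $\mathcal{K}$. It therefore attains a positive minimum, and I set $c:=\min_{\mathbf{M}\in\mathcal{K}}\big(\lambda_1(\mathbf{M})-\lambda_2(\mathbf{M})\big)>0$, which bounds the gap for $\mathbf{\Theta}$ from below.

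For the eigenvector bound, I would avoid compactness and argue directly. Let $\mathbf{v}$ be the unit-norm leading eigenvector of $\mathbf{\Theta}$ with strictly positive entries. The eigen-equation gives, for each $i$,
\begin{equation*}
\lambda_1 v_i = \sum_{j=1}^K \Theta_{ij} v_j \geqslant c_0\sum_{j=1}^K v_j = c_0\|\mathbf{v}\|_1\geqslant c_0\|\mathbf{v}\|_2 = c_0,
\end{equation*}
where the second inequality uses $\|\mathbf{v}\|_1\geqslant\|\mathbf{v}\|_2$ and the final equality uses $\|\mathbf{v}\|_2=1$. Since $\lambda_1=\|\mathbf{\Theta}\|_{op}\leqslant c^*$, this yields $v_i\geqslant c_0/c^*=:c_1>0$ for every $i$, as claimed.

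The main obstacle is the uniformity of the gap bound: Perron--Frobenius delivers strict positivity of $\lambda_1-\lambda_2$ only pointwise, and upgrading this to a single uniform constant $c$ is exactly what the compactness of $\mathcal{K}$ and the continuity of the eigenvalue map supply. Both ingredients hinge on $K$ being fixed and on the two-sided entrywise bound $c_0\leqslant\Theta_{ij}\leqslant c^*$; were the dimension to grow or the upper bound absent, the gap could in principle degenerate, so these hypotheses are precisely what the argument exploits. The eigenvector estimate, by contrast, is self-contained and requires no continuity or compactness input.
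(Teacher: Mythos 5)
Your proof is correct. For the eigengap, your argument and the paper's are the same idea in two standard packagings: the paper proceeds by contradiction, extracting via Bolzano--Weierstrass a convergent subsequence of matrices whose gaps shrink to zero and obtaining a limit matrix with strictly positive entries but a degenerate top eigenvalue, contradicting Perron's theorem; you instead observe that $\mathbf{\Theta}$ lives in the compact set $\{\mathbf{M}=\mathbf{M}^\top: M_{ij}\in[c_0,c^*]\}$, that $\mathbf{M}\mapsto\lambda_1(\mathbf{M})-\lambda_2(\mathbf{M})$ is continuous (Weyl) and strictly positive there (Perron), and invoke the extreme value theorem. Both are non-quantitative and both use exactly the two hypotheses you flag (fixed $K$ and the two-sided entrywise bound, your upper bound $|\Theta_{ij}|\leqslant\|\mathbf{\Theta}\|_{op}$ being a small step the paper leaves implicit). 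Where you genuinely depart from the paper is the eigenvector bound: the paper again argues by contradiction along subsequences, getting a limit eigenvector with a zero entry and contradicting Perron's theorem, whereas your chain $\lambda_1 v_i=\sum_j\Theta_{ij}v_j\geqslant c_0\|\mathbf{v}\|_1\geqslant c_0$ together with $\lambda_1\leqslant c^*$ is direct, shorter, and quantitative, producing the explicit constant $c_1=c_0/c^*$ rather than an unspecified one. That explicitness is a real (if modest) improvement, since downstream uses of the lemma (e.g.\ the lower bound on $\tilde{\mV}^{(3)}_{\star 1}$ in the SCORE analysis) only need some constant, but an explicit one makes the dependence on $c_0$ and $c^*$ transparent; the compactness route, by contrast, extends more readily to situations where no clean closed-form estimate is available.
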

\begin{proof}
To make this manuscript self-contained, we provide here the proof of Lemma~\ref{lem: eigen gap}, which is an extract of the proof of Lemma F.2 in \cite{ke2022using}. The argumentation is essentially a proof by contradiction.

Consider a sequence $\{\mathbf{\Theta}^{(n)}\}$ of symmetric matrices such that all the elements are bounded below by $c_0$. Suppose that the gap between the first two eigenvalues of each of its elements goes to zero as $n \to \infty$. Since $\|\mathbf{\Theta}\|_{op}\leqslant c^*$, by the Bolzano-Weierstrass  theorem, there exists a subsequence $\{\mathbf{\Theta}^{(n_m)}\}_{m=1}^\infty$ such that $\lim_{m \to \infty}\mathbf{\Theta}^{(n_m)}= \mathbf{\Theta}_0$ for a fixed matrix $\mathbf{\Theta}_0$. By the assumptions on the sequence $\{\mathbf{\Theta}^{(n)}\}$, the matrix $\mathbf{\Theta}_0$ has entries that are all bounded below by $c_0$ and the gap between the first two eigenvalues of $\mathbf{\Theta}_0$ are zero. 
By Perron's theorem \cite{horn2012matrix}, such a matrix $\mathbf{\Theta}_0$ does not exist.

\par To prove the second point, let $\mathbf{\eta}_1^{(n)}$ be the unit-norm leading positive eigenvector of $\mathbf{\Theta}^{(n)}$. By Perron's Theorem \cite{horn2012matrix}, the leading eigenvector of strictly positive matrix is also strictly positive. Note $\mathbf{\eta}^{(n)}_1$ indicates the dependency of $n$. Assume the contradiction for some $k\in[K]$
$$
\lim_{n\to \infty} \inf \mathbf{\eta}_1^{(n)}(k)=0.
$$
This also implies that there exists a subsequence $\{n_m\}_{m=1}^{\infty}$ such that $\lim_{m\to \infty} \mathbf{\eta}_1^{(n_m)}(k)=0$ and $\lim_{m\to \infty}\mathbf{\Theta}^{n_m}\to \mathbf{\Theta}_0$. By the continuity, we have $ \mathbf{\eta}_1^{(n_m)}\to \mathbf{\eta}_0$ where $\mathbf{\eta}_0$ is unit norm leading eigenvector of $\mathbf{\Theta}_0$ up to a multiple of $\pm 1$ on $\mathbf{\eta}_1^{(n_m)}$. This would implies $\mathbf{\eta}_0(k)=0$ but contradicts with the claim that the leading eigenvector of strictly positive matrix $\mathbf{\Theta}_0$ is also all positive. This ends the proof.
\end{proof}

%\section{Concentration Inequalities}
\section{Basic Bernstein Concentration Inequalities}

\begin{lemma}[Bernstein inequality (Theorem 2.8.4, \cite{vershynin2018high})]\label{lemma: bernstein ineq} Let $X_1, \dots, X_n$ be independent random variables with $|X_i|\leqslant b$, $\mathbb{E}[X_i] = 0$ and $\text{Var}[X_i] \leq \sigma_i^2$ for all $i$. Let $\sigma^2 := n^{-1}\sum_{i=1}^n \sigma_i^2$. Then for any $t > 0$, 
$$ \mathbb{P}\left(n^{-1}\left|\sum_{i=1}^n X_i\right| \geq t \right) \leq 2 \exp\left(-\frac{nt^2/2}{\sigma^2 + bt/3}\right)$$
\end{lemma}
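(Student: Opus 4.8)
The plan is to establish this tail bound via the classical Chernoff / exponential-moment method, which converts the problem into a uniform control of the moment generating function (MGF) of each summand. First I would fix $\lambda > 0$ and apply Markov's inequality to $\exp(\lambda \sum_i X_i)$; by the independence of the $X_i$ this gives
$$\P\left(\sum_{i=1}^n X_i \geq nt\right) \leq e^{-\lambda n t} \prod_{i=1}^n \E\left[e^{\lambda X_i}\right].$$
The remainder of the argument rests on bounding each factor $\E[e^{\lambda X_i}]$ using the three hypotheses $|X_i| \leq b$, $\E[X_i] = 0$, and $\var(X_i) \leq \sigma_i^2$.

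The key step, and the main obstacle, is the MGF estimate. For any real $x$ with $|x| \leq b$ I would expand $e^{\lambda x} - 1 - \lambda x = \sum_{k \geq 2} (\lambda x)^k / k!$ and control the tail of the series using the elementary bound $k! \geq 2 \cdot 3^{k-2}$, together with $(\lambda x)^k \leq \lambda^2 x^2 (\lambda b)^{k-2}$ for $\lambda > 0$. Summing the resulting geometric series yields
$$e^{\lambda x} - 1 - \lambda x \leq \frac{\lambda^2 x^2}{2(1 - \lambda b/3)}$$
valid for $0 < \lambda < 3/b$. Taking expectations and invoking $\E[X_i] = 0$ and $\E[X_i^2] \leq \sigma_i^2$, then applying $1 + u \leq e^u$, gives $\E[e^{\lambda X_i}] \leq \exp\bigl(\tfrac{\lambda^2 \sigma_i^2}{2(1 - \lambda b/3)}\bigr)$. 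Multiplying over $i$ and recalling that $\sum_i \sigma_i^2 = n\sigma^2$, the product collapses into a single exponential whose argument is $\tfrac{\lambda^2 n \sigma^2}{2(1 - \lambda b/3)}$.

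With these pieces assembled I would substitute back to obtain
$$\P\left(\sum_{i=1}^n X_i \geq nt\right) \leq \exp\left(-\lambda n t + \frac{\lambda^2 n \sigma^2}{2(1 - \lambda b/3)}\right),$$
and then optimize the exponent over $\lambda \in (0, 3/b)$. The near-optimal choice $\lambda = t/(\sigma^2 + bt/3)$ makes the bracketed expression equal to $-\tfrac{nt^2/2}{\sigma^2 + bt/3}$, producing the one-sided tail. Finally, applying the identical argument to the variables $-X_i$, which satisfy the same three hypotheses, controls the lower tail, and a union bound over the two one-sided events introduces the factor of $2$ and yields the stated two-sided inequality. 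Since this is precisely Theorem 2.8.4 of \cite{vershynin2018high}, one may alternatively invoke it directly; the only bookkeeping to verify is that our normalization by $n^{-1}$ and our definition of the average variance $\sigma^2$ match the form in the reference.
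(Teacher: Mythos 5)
Your proof is correct: the Chernoff--MGF argument with the elementary bound $k!\geq 2\cdot 3^{k-2}$, the resulting estimate $\E[e^{\lambda X_i}]\leq \exp\bigl(\lambda^2\sigma_i^2/(2(1-\lambda b/3))\bigr)$ for $0<\lambda<3/b$, the choice $\lambda = t/(\sigma^2+bt/3)$ (which indeed lies in the admissible range and makes the exponent exactly $-\tfrac{nt^2/2}{\sigma^2+bt/3}$), and symmetrization via $-X_i$ for the two-sided bound is the standard derivation of Bernstein's inequality. The paper itself gives no proof of this lemma --- it imports the statement directly from Theorem 2.8.4 of \cite{vershynin2018high} --- and your argument coincides with the canonical one in that reference, with the stated normalized form following from the usual one by substituting $u=nt$ and $\sum_{i=1}^n\sigma_i^2 = n\sigma^2$.
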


\begin{lemma}[Matrix Bernstein Inequality \cite{tropp2015introduction}] \label{lemma: matrix bernstein} Let $\mX_1,\dots \mX_n\in\mathbb{R}^{p\times r}$ be independent random matrices such that $\|\mX_i\|_{op}\leqslant b$ and  $\mathbb{E}[\mX_i]=0$ for all $i$. Let $$\sigma^2=\max\{ \|\sum_{i=1}^n \mathbb{E}[\mX_i \mX_i^\top] \|_{op}, \|\sum_{i=1}^n \mathbb{E}[\mX_i ^\top \mX_i] \|_{op}\}.$$ Then, for all $t>0$, we have
$$\mathbb{P}\left(\left\|\frac{1}{n}\sum_{i=1}^n \mX_i\right\|_{op}\geqslant t\right)\leqslant (p+r)\exp\left(-\frac{t^2n^2/2}{\sigma^2+btn/3}\right)$$
\end{lemma}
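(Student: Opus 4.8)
The statement is the matrix Bernstein inequality of \cite{tropp2015introduction}, so the plan is to reproduce the standard matrix Laplace-transform argument. Write $\mathbf{S} := \sum_{i=1}^n \mX_i$; since the asserted bound concerns $\frac{1}{n}\mathbf{S}$, it is equivalent, after setting $u = nt$, to show
$$\P\left(\|\mathbf{S}\|_{op} \ge u\right) \le (p+r)\exp\left(-\frac{u^2/2}{\sigma^2 + bu/3}\right),$$
which I would establish and then specialize to $u=nt$. The first step is to reduce the rectangular problem to a Hermitian one through the self-adjoint dilation $\mathcal{H}(\mX) = \begin{bmatrix} \mathbf{0} & \mX \\ \mX^\top & \mathbf{0}\end{bmatrix} \in \R^{(p+r)\times(p+r)}$. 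This map is linear, preserves the operator norm ($\|\mathcal{H}(\mX)\|_{op} = \|\mX\|_{op}$, since its nonzero eigenvalues are $\pm$ the singular values of $\mX$), and satisfies $\mathcal{H}(\mX)^2 = \mathrm{diag}(\mX\mX^\top,\, \mX^\top\mX)$. Setting $\mathbf{Y}_i := \mathcal{H}(\mX_i)$, the $\mathbf{Y}_i$ are independent, Hermitian, mean zero, with $\|\mathbf{Y}_i\|_{op}\le b$, and $\lambda_{\max}(\sum_i \mathbf{Y}_i) = \|\mathbf{S}\|_{op}$, while $\big\|\sum_i \E[\mathbf{Y}_i^2]\big\|_{op} = \sigma^2$ by definition of $\sigma^2$ and the block-diagonal form of $\mathcal{H}(\mX_i)^2$.

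Next I would apply the matrix Laplace-transform (Markov) bound: for every $\theta>0$,
$$\P\Big(\lambda_{\max}\big(\textstyle\sum_i \mathbf{Y}_i\big)\ge u\Big) \le e^{-\theta u}\,\E\,\mathrm{tr}\,\exp\Big(\theta\textstyle\sum_i \mathbf{Y}_i\Big).$$
The central ingredient is the subadditivity of the matrix cumulant generating function, a consequence of Lieb's concavity theorem \cite{tropp2015introduction}, which yields
$$\E\,\mathrm{tr}\,\exp\Big(\theta\textstyle\sum_i \mathbf{Y}_i\Big) \le \mathrm{tr}\,\exp\Big(\textstyle\sum_i \log \E\, e^{\theta \mathbf{Y}_i}\Big).$$
This decouples the summands inside the trace exponential, reducing everything to a per-summand estimate of the matrix MGF $\E\,e^{\theta\mathbf{Y}_i}$.

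For the per-summand bound I would use the scalar inequality $e^s - 1 - s \le \tfrac{s^2/2}{1 - s/3}$ (valid for $0\le s <3$), lifted to matrices: writing $f(x) = (e^x-1-x)/x^2$, which is monotone increasing, the eigenvalue bound $\lambda_{\max}(\theta\mathbf{Y}_i)\le \theta b$ gives $\mathbf{Y}_i^2 f(\theta \mathbf{Y}_i)\preceq f(\theta b)\,\mathbf{Y}_i^2$. Taking expectations in $e^{\theta\mathbf{Y}_i} = \mathbf{I} + \theta\mathbf{Y}_i + \theta^2 \mathbf{Y}_i^2 f(\theta\mathbf{Y}_i)$, using $\E[\mathbf{Y}_i]=\mathbf{0}$ together with $\mathbf{I}+\mathbf{A}\preceq e^{\mathbf{A}}$ and operator monotonicity of the logarithm, yields $\log\E\,e^{\theta\mathbf{Y}_i}\preceq g(\theta)\,\E[\mathbf{Y}_i^2]$ with $g(\theta) = \frac{\theta^2/2}{1-b\theta/3}$ for $\theta\in(0,3/b)$. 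Summing and using $\mathrm{tr}\,\exp(\mathbf{A})\le (p+r)\,e^{\lambda_{\max}(\mathbf{A})}$ gives $\mathrm{tr}\,\exp\big(g(\theta)\sum_i\E[\mathbf{Y}_i^2]\big)\le (p+r)\exp(g(\theta)\sigma^2)$. Combining the pieces and choosing $\theta = u/(\sigma^2+bu/3)\in(0,3/b)$ collapses the exponent to $-\frac{u^2/2}{\sigma^2+bu/3}$, which is the claim after substituting $u=nt$. I expect the main obstacle to be the two genuinely matrix-analytic steps---the Lieb-concavity subadditivity of the cumulant generating function and the semidefinite per-summand MGF bound---since the surrounding trace and Markov manipulations are routine, and the scalar estimates (the series bound and the optimization over $\theta$) are elementary.
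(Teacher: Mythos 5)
Your proof is correct: it is precisely the matrix Laplace-transform argument of Tropp that the paper itself invokes by citation, since the paper states this lemma as a known result from \cite{tropp2015introduction} without reproducing a proof. All the key steps (self-adjoint dilation, Lieb-based subadditivity of the matrix cumulant generating function, the per-summand MGF bound with $g(\theta)=\frac{\theta^2/2}{1-b\theta/3}$, and the choice $\theta = u/(\sigma^2+bu/3)$ with $u=nt$) check out, including the verification that the dilation makes the bound two-sided with dimensional factor $p+r$.
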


\begin{lemma}[Bernstein Inequality with permutations]\label{lemma: Berstein involving permutation}
  Let $X_{1,1},\dots, X_{N_1,N_2}$ for all $i\in[N_1]$, $j\in[N_2]$  be independent random variables such that $|X_{i,j}|\leqslant 1 $, $\mathbb{E}[X_{i,j}]=0$ and $\text{Var}[X_{i,j}]=\sigma^2$. Define $Q_{i,M}= Q(X_{i,1},\dots, X_{i,N_2})=\sum_{m=1}^M X_{i,m}$ for some integer $M\leqslant N_2$, and for any permutation $\pi$ of $[N_2]$, define $Q_{i,\pi}=Q(X_{i,\pi(1)},\dots, X_{i,\pi(N_2)})$. With these notations, $Q_{i,M}$ is the identity permutation of $Q_{i,\pi}$. 
  
  Let $V=V(X_{1,1},\dots X_{N_1,N_2})$ be the random variable defined as $V=\frac{1}{N_2!}\sum_{i\in[N_1]}\sum_{\pi\in\Pi(N_2)} Q_{i,\pi}$ for some permutation $\pi$ on set $[N_2]$ (where $\Pi$ denotes all possible permutations of the set $[N_2]$).
  
  Then, for all $t>0$, we have
    $$\mathbb{P}(V\geqslant t)\leqslant \exp\left(-\frac{t^2/2}{MN_1\sigma^2+t/3}\right)$$
    
\end{lemma}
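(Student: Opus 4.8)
The plan is to first show that, despite its intricate definition, the statistic $V$ collapses to a simple reweighted sum of all the $X_{i,j}$, and then to apply the scalar Bernstein inequality (Lemma~\ref{lemma: bernstein ineq}) together with the single hypothesis $M\leq N_2$.

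First I would carry out the permutation average. Since $Q$ sums only its first $M$ arguments, we have $Q_{i,\pi}=\sum_{m=1}^M X_{i,\pi(m)}$. For a fixed slot $m\in[M]$ and a fixed target index $j\in[N_2]$, exactly $(N_2-1)!$ of the $N_2!$ permutations satisfy $\pi(m)=j$, so that
\[
\frac{1}{N_2!}\sum_{\pi\in\Pi(N_2)} X_{i,\pi(m)}=\frac{(N_2-1)!}{N_2!}\sum_{j=1}^{N_2}X_{i,j}=\frac{1}{N_2}\sum_{j=1}^{N_2}X_{i,j}.
\]
Summing over $m\in[M]$ and then over $i\in[N_1]$ yields the key identity
\[
V=\frac{M}{N_2}\sum_{i=1}^{N_1}\sum_{j=1}^{N_2}X_{i,j},
\]
i.e. $V$ is a sum of the $N_1N_2$ independent, mean-zero variables $Z_{i,j}:=\frac{M}{N_2}X_{i,j}$.

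Next I would verify the two inputs required by Bernstein's inequality for $V=\sum_{i,j}Z_{i,j}$. The boundedness constant is $|Z_{i,j}|\leq M/N_2\leq 1$, where the last step uses $M\leq N_2$; this produces the $t/3$ term in the denominator. For the variance proxy, $\sum_{i,j}\var(Z_{i,j})=N_1N_2\cdot\frac{M^2}{N_2^2}\sigma^2=\frac{N_1M^2}{N_2}\sigma^2$. Applying the one-sided form of Lemma~\ref{lemma: bernstein ineq} (a single tail, so no factor of $2$) gives $\mathbb{P}(V\geq t)\leq \exp\!\big(-\tfrac{t^2/2}{(N_1M^2/N_2)\sigma^2+t/3}\big)$. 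Finally, invoking $M\leq N_2$ once more to write $M^2/N_2\leq M$, the variance proxy is bounded by $MN_1\sigma^2$; since enlarging the denominator only enlarges the exponential bound, the claimed inequality follows.

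There is no deep obstacle here: the lemma is essentially a repackaging of Bernstein's inequality. The only points demanding care are the combinatorial bookkeeping in the permutation average (ensuring each index is counted with the correct multiplicity $(N_2-1)!$, which is what cancels the $M!$-style factors down to the clean weight $M/N_2$), and the observation that the hypothesis $M\leq N_2$ must be exploited \emph{twice} — once to control the Lipschitz constant $M/N_2\leq 1$ and once to relax the variance proxy $\frac{N_1M^2}{N_2}\sigma^2\leq MN_1\sigma^2$ into the stated form.
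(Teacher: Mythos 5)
Your proof is correct for the lemma as literally stated, but it takes a genuinely different route from the paper's. You exploit the fact that $Q$ is linear in its arguments: since each slot $m$ satisfies $\frac{1}{N_2!}\sum_{\pi}X_{i,\pi(m)}=\frac{1}{N_2}\sum_{j}X_{i,j}$ (each index is hit by exactly $(N_2-1)!$ permutations), the statistic collapses exactly to $V=\frac{M}{N_2}\sum_{i,j}X_{i,j}$, a rescaled sum of $N_1N_2$ independent centered bounded variables, and the one-sided Bernstein inequality finishes the job; your intermediate variance proxy $N_1M^2\sigma^2/N_2$ is even slightly sharper than the stated $MN_1\sigma^2$. (You are right to invoke the one-sided form without the factor of $2$; the paper's Lemma~\ref{lemma: bernstein ineq} is stated two-sided, but the one-sided version is standard and is in effect re-derived inside the paper's own argument.) The paper never collapses $V$: it applies Markov's inequality together with Jensen's inequality to push the exponential inside the permutation average, $\mathbb{E}[e^{sV}]\leqslant \frac{1}{N_2!}\sum_{\pi}\mathbb{E}[e^{sQ_{\pi}}]$, factors each fixed-permutation MGF using the independence of $X_{i,\pi(1)},\dots,X_{i,\pi(M)}$, and then optimizes $s$ Bennett-style. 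This difference matters for how the lemma is actually used: in the second summands of Lemmas~\ref{lemma: zijr square} and~\ref{lemma: zz}, the lemma is invoked with the summands under a fixed permutation being paired products $\mathcal{X}_{ij\pi(2m-1)}(r)\mathcal{X}_{ij\pi(2m)}(r)$, so the permutation average yields a degenerate U-statistic proportional to $\sum_{a\neq b}\mathcal{X}_{ija}(r)\mathcal{X}_{ijb}(r)$ rather than a sum of independent terms, and your linearity collapse is unavailable there; the paper's MGF/Jensen argument, by contrast, only requires that each $Q_{i,\pi}$ be, for fixed $\pi$, a sum of $M$ independent bounded centered variables, and therefore carries over verbatim to those applications. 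In short, your argument is a shorter and marginally sharper proof of the statement as written, while the paper's proof is the more robust one that actually supports the lemma's downstream uses.
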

\begin{proof}
Consider first the case where $N_1=1$ (fix $i$), so $V$ can simply be rewritten as: $V=\frac{1}{N_2!}\sum_{\pi\in\Pi(N_2)} Q_{\pi}$.

    By Markov's inequality and the convexity of the exponential function, we have for any $s,t>0$,
    \begin{align}\label{eq: prob v in bernstein permute}
        \mathbb{P}\left(V\geq t\right)\leqslant e^{-st}\mathbb{E}[e^{sV}]\leqslant e^{-st}\left(\frac{1}{N_2!}\sum_{\pi\in\Pi(N_2)}\mathbb{E}[e^{sQ_{\pi}}]\right). 
    \end{align}
    Let $G(x)=\frac{e^x-1-x}{x^2}$, which is an increasing function of $x$.
  Expanding the exponential, we have:
    \begin{align*}
        \mathbb{E}\left[e^{sX_m}\right]&=\mathbb{E}\left[1+sX_m+\frac{s^2X_m^2}{2}+\dots\right]\\
        &=\mathbb{E}\left[1+s^2X_m^2\left(\frac{e^{sX_m}-1-sX_m}{s^2X_m^2}\right)\right]\quad\quad \quad(\text{ since } \mathbb{E}[X_m]=0)\\
        &=\mathbb{E}\left[1+s^2X_m^2G(sX_m)\right]\\
        &\leqslant \mathbb{E}\left[1+s^2X_m^2G(s) \right]\quad\quad\quad( \text{ since } |X_m|\leqslant 1)\\
        &=1+s^2\sigma^2G(s)\leqslant e^{s^2\sigma^2G(s)}
    \end{align*}
    This inequality holds for any $m\in[M]$ and permutation ${\pi}$.  Since the variables $X_{m}$ for $m\in[M]$  are independent, $\E[e^{sQ_{\pi}}] = \prod_{m=1}^M \E[e^{sX_{\pi(m)}}]$ and we are able to rewrite \eqref{eq: prob v in bernstein permute} as:
   \begin{align}\label{eq: bernstein permutation indep sum}\mathbb{P}\left(V\geqslant t\right)&\leqslant \exp(-st+Ms^2\sigma^2G(s))=\exp\left\{-st+M\sigma^2\left(e^s-1-s\right)\right\}
   \end{align}
   Let $s=\log\left(1+\frac{t}{M\sigma^2}\right)>0$, then: 
   \begin{align*}
       \mathbb{P}\left(V\geqslant t\right)&\leqslant \exp\left\{-t\log\left(1+\frac{t}{M\sigma^2}\right)+M\sigma^2\left(\frac{t}{M\sigma^2}-\log\left(1+\frac{t}{M\sigma^2}\right)\right)\right\}\\
       &= \exp\left\{-M\sigma^2\left[\left(1+\frac{t}{M\sigma^2}\right)\log\left(1+\frac{t}{M\sigma^2}\right)-\frac{t}{M\sigma^2}\right]\right\}
   \end{align*}
   Let $H(x)=(1+x)\log(1+x)-x$ and note that $H(x)\geqslant \frac{3x^2}{6+2x}$. Evaluating the function $H(x)$ at  $x=\frac{t}{M\sigma^2}$, the previous bound guarantees:
   $$
   \mathbb{P}\left(V\geqslant t\right)\leqslant
  \exp\left\{-M\sigma^2\left(\frac{3t^2}{M^2\sigma^4(6+2t/(M\sigma^2))}\right)\right\} =\exp\left\{-\frac{t^2/2}{M\sigma^2+t/3}\right\}.$$
  
  We now consider the case where $N_1>1$. Since all $\{X_{i,m}\}_{i\in[N^{(1)}],m\in [M]}$ are independent, for any value of the tuple $(i,m)$, \eqref{eq: prob v in bernstein permute} can be rewritten as:  
  $$\P[V\geq t] \leq e^{-st}\E[e^{sV}] =  e^{-st}\E[e^{s \sum_{i \in [N_1]} \frac{1}{N_2!}\sum_{\pi} Q_{i\pi}}] =  e^{-st}\prod_{i \in [N_1]}\E[e^{s \frac{1}{N_2!}\sum_{\pi} Q_{i\pi}}]
 \leq  e^{-st} e^{N_1Ms^2\sigma^2G(s)},$$
 where the last inequality follows by applying Equations \eqref{eq: prob v in bernstein permute} and \eqref{eq: bernstein permutation indep sum} to each of the $i$ terms in the product. Therefore, the result follows by a similar reasoning to the case where $N_1=1$ by replacing $M$ with $MN_1$.
\end{proof}

\section{\texorpdfstring{Properties of the matricized noise $\mZ^{(a)} = \mathcal{M}_{(a)}(\mathcal{Y} - \mathcal{D})$}%
{Properties of the matricized noise Z(a) = Ma(Y - D)}}%
\label{Sec: noise analysis}

\subsection{Properties of the moments of the matricized noise}

Let $N_R=\max\{N^{(1)},N^{(2)},R\}$.
\begin{lemma}\label{lemma: expectation of zz} Assuming the multinomial model of TTM in Equation~\eqref{eq: TTM} and letting $\mZ^{(a)} = \mathcal{M}_{(a)}(\mathcal{Y} - \mathcal{D}) $ denote the matricization of the noise $\mathcal{Z}$ along each axis $a\in [3]$, the following equalities hold:
    \begin{itemize}
    \item $\mathbb{E}[\mZ^{(1)}\mZ^{(1)\top}]=\frac{1}{M}\text{diag}\left(N^{(2)}-\|\mD^{(1)}_{1\star}\|_2^2\dots,N^{(2)}-\|\mD^{(1)}_{N^{(1)}\star}\|_2^2\right)\in\mathbb{R}^{N^{(1)}\times N^{(1)}}$
    \item  $\mathbb{E}[\mZ^{(2)}\mZ^{(2)\top}]=\frac{1}{M}\text{diag}\left(N^{(1)}-\|\mD^{(2)}_{1\star}\|_F^2\dots,N^{(1)}-\|\mD^{(2)}_{ N^{(2)}\star}\|_2^2\right)\in\mathbb{R}^{N^{(2)}\times N^{(2)}}$
    \item $\mathbb{E}[\mZ^{(3)}\mZ^{(3)\top}]=\frac{1}{M}\left[\text{diag}\left(\mD^{(3)}\mathbf{1}_{N^{(1)}N^{(2)}}\right)-\mD^{(3)}\mD^{(3)\top}\right]\in\mathbb{R}^{R\times R}$
\end{itemize}
\end{lemma}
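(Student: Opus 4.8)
The plan is to reduce everything to an entry-wise computation driven by the second-moment formula for the noise tensor recorded in Equation~\eqref{eq: expection of Z}. For each mode $a$ I would write the $(s,t)$ entry of $\mZ^{(a)}\mZ^{(a)\top}$ as a sum over the shared column index, push the expectation inside the finite sum, and then read off from \eqref{eq: expection of Z} which covariance terms survive. The two facts I will lean on are the per-document normalization $\sum_{r}\mathcal{D}_{ijr}=1$ from Equation~\eqref{eq:constraints A}, and the fact that $\mZ^{(a)}\mZ^{(a)\top}$ is simply a Gram-type matrix formed from the rows of the matricized noise.

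For the mode-1 case, recall that $\mZ^{(1)}(i,(jr))=\mathcal{Z}_{ijr}$, so that
\begin{equation*}
\big(\mZ^{(1)}\mZ^{(1)\top}\big)_{i_1 i_2}=\sum_{j=1}^{N^{(2)}}\sum_{r=1}^{R}\mathcal{Z}_{i_1 j r}\,\mathcal{Z}_{i_2 j r}.
\end{equation*}
Because the two factors share the same combined index $(jr)$, every term automatically has matching second and third coordinates $j_1=j_2=j$ and $r_1=r_2=r$. By \eqref{eq: expection of Z} such a term has zero expectation whenever $i_1\neq i_2$, so $\mathbb{E}[\mZ^{(1)}\mZ^{(1)\top}]$ is diagonal. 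On the diagonal ($i_1=i_2=i$) each summand contributes $\tfrac1M(\mathcal{D}_{ijr}-\mathcal{D}_{ijr}^2)$, and summing gives $\tfrac1M\big(\sum_{j,r}\mathcal{D}_{ijr}-\sum_{j,r}\mathcal{D}_{ijr}^2\big)$; the first inner sum equals $N^{(2)}$ by the per-document normalization, and the second equals $\|\mD^{(1)}_{i\star}\|_2^2$, which yields the claimed first identity. The mode-2 identity then follows by the same argument after exchanging the roles of the first two modes (Remark~\ref{remark:symmetry_mode12}).

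The mode-3 case is where the structure genuinely differs, and this is the only conceptual point worth flagging. Now $\mZ^{(3)}(r,(ij))=\mathcal{Z}_{ijr}$, so
\begin{equation*}
\big(\mZ^{(3)}\mZ^{(3)\top}\big)_{r_1 r_2}=\sum_{i=1}^{N^{(1)}}\sum_{j=1}^{N^{(2)}}\mathcal{Z}_{ijr_1}\,\mathcal{Z}_{ijr_2},
\end{equation*}
where now the shared column index is $(ij)$, which forces $i_1=i_2$ and $j_1=j_2$ but leaves $r_1,r_2$ free to differ. Applying \eqref{eq: expection of Z}, the diagonal terms $r_1=r_2=r$ contribute $\tfrac1M\sum_{i,j}(\mathcal{D}_{ijr}-\mathcal{D}_{ijr}^2)=\tfrac1M\big[(\mD^{(3)}\mathbf{1}_{N^{(1)}N^{(2)}})_r-(\mD^{(3)}\mD^{(3)\top})_{rr}\big]$, while for $r_1\neq r_2$ the off-diagonal multinomial covariance $-\mathcal{D}_{ijr_1}\mathcal{D}_{ijr_2}$ now survives and sums to $-\tfrac1M(\mD^{(3)}\mD^{(3)\top})_{r_1 r_2}$. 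Assembling the two cases gives $\tfrac1M[\text{diag}(\mD^{(3)}\mathbf{1}_{N^{(1)}N^{(2)}})-\mD^{(3)}\mD^{(3)\top}]$, as claimed.

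There is no deep obstacle here: the entire content is careful index bookkeeping, so the ``hard part'' is merely keeping straight what the column index of each matricization bundles together. For modes $1$ and $2$ the column index bundles the remaining document mode \emph{together with} the vocabulary mode, so cross terms are forced onto the multinomial diagonal ($r_1=r_2$) and the expectation is purely diagonal; for mode $3$ the column index bundles the two document modes and leaves the vocabulary indices free, so the off-diagonal covariances $-\mathcal{D}_{ijr_1}\mathcal{D}_{ijr_2}$ are retained and produce the extra $-\mD^{(3)}\mD^{(3)\top}$ term.
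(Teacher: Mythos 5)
Your proposal is correct and follows essentially the same route as the paper's own proof: an entry-wise expansion of each Gram matrix, application of the second-moment formula \eqref{eq: expection of Z} to identify which covariance terms survive, and the normalization $\sum_r \mathcal{D}_{ijr}=1$ to simplify the diagonal in modes 1 and 2. Your remark on why mode 3 retains the cross-term $-\mD^{(3)}\mD^{(3)\top}$ while modes 1 and 2 do not is exactly the distinction the paper's proof exploits.
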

\begin{proof}

\begin{itemize}
\item Consider first the case of mode-1 matricization, with $\mZ^{(1)} = \mathcal{M}_1(\mathcal{Z})$. By definition, $        \forall i_1, i_ 2 \in [N^{(1)}]$:
    \begin{equation*}
    \begin{split}
\E[(\mathcal{M}_1(\mathcal{Z})\mathcal{M}_1(\mathcal{Z})^\top)_{i_1i_2}]&=\E[\sum_{j\in [N^{(2)}]}\sum_{r \in [R]}  \mathcal{Z}_{i_1jr}\mathcal{Z}_{i_2jr}]\\
       & =\frac{1}{M} \begin{cases}0 \text{ if } i_1 \neq i_2\\
       \sum_{j\in [N^{(2)}]}\sum_{r \in [R]} \left(\mathcal{D}_{i_1jr}-\mathcal{D}^2_{i_1jr}\right) \qquad \text{ }
       \end{cases}\\
        & =\frac{1}{M} \begin{cases}0 \text{ if } i_1 \neq i_2\\
       N^{(2)} - \|\mathcal{D}^2_{i_1, \cdot}\|_2^2 \qquad \text{ otherwise}
       \end{cases}\\
           \end{split}
        \end{equation*} 
        where the penultimate equality follows by applying Remark A.1 (Equation \eqref{eq: expection of Z}), and the last line follows from the properties of $\mathcal{D}$ established in Equation \eqref{eq:constraints A}.

\item Since for mode-2 matricization, the structure of the problem is identical to that of mode-1 matricization, it suffices to replace $N^{(1)}$ by $N^{(2)}.$

\item For mode-3 matricization, for any $ r_1, r_2 \in [R]$, we note that by Equation~\eqref{eq: expection of Z}:
    \begin{equation}
    \begin{split}
\E[(\mathcal{M}_3(\mathcal{Z})\mathcal{M}_3(\mathcal{Z})^\top)_{r_1r_2}]&=\E[\sum_{i\in [N^{(1)}]} \sum_{j\in [N^{(2)}]} \mathcal{Z}_{ijr_1}\mathcal{Z}_{ijr_2}]\\
       & = \frac{1}{M}\begin{cases}
       -\sum_{i\in [N^{(1)}]} \sum_{j\in [N^{(2)}]} \mathcal{D}_{ijr_1}\mathcal{D}_{ijr_2} \text{ if } r_1 \neq r_2\\
       \sum_{i\in [N^{(1)}]} \sum_{j\in [N^{(2)}]} \left(\mathcal{D}_{ijr_1}-\mathcal{D}^2_{ijr_1}\right) \qquad \text{ if }r_1 = r_2
       \end{cases}\\
           \end{split}
        \end{equation} 
    Which concludes the  proof.
    \end{itemize}
\end{proof}

\begin{lemma}\label{lemma: op norm of expectation of zz} Under the same assumptions as Lemma \ref{lemma: expectation of zz},  we have:
\begin{align*}
    c^*\frac{N^{(2)}}{M}\leqslant&\|\mathbb{E}[\mZ^{(1)}\mZ^{(1)\top}]\|_{op}\leqslant \frac{N^{(2)}}{M},  \quad  c^*\frac{N^{(1)}}{M}\leqslant\|\mathbb{E}[\mZ^{(2)}\mZ^{(2)\top}]\|_{op}\leqslant \frac{N^{(1)}}{M},\\
    &\quad \text{and }c^* \frac{N^{(1)} N^{(2)}}{M}\leqslant \|\mathbb{E}[\mZ^{(3)}\mZ^{(3)\top}]\|_{op}\leqslant \frac{N^{(1)} N^{(2)}}{M}
\end{align*}
    
\end{lemma}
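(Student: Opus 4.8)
The plan is to read the operator norms directly off the closed forms for $\mathbb{E}[\mZ^{(a)}\mZ^{(a)\top}]$ established in Lemma~\ref{lemma: expectation of zz}, reducing everything to elementary estimates on the entries of $\mathcal{D}$. For $a\in\{1,2\}$ the matrix $\mathbb{E}[\mZ^{(a)}\mZ^{(a)\top}]$ is \emph{diagonal}, so its operator norm is exactly its largest diagonal entry. For $a=3$ I would write $M\,\mathbb{E}[\mZ^{(3)}\mZ^{(3)\top}]=\mathbf{A}-\mathbf{B}$ with $\mathbf{A}=\text{diag}(\mD^{(3)}\mathbf{1}_{N^{(1)}N^{(2)}})$ and $\mathbf{B}=\mD^{(3)}\mD^{(3)\top}\succeq 0$, noting that $\mathbf{A}-\mathbf{B}\succeq 0$ because it is a genuine second-moment matrix.

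First I would dispatch the upper bounds, which are immediate. For $a=1$ the $i$-th diagonal entry is $\tfrac1M(N^{(2)}-\|\mD^{(1)}_{i\star}\|_2^2)\le N^{(2)}/M$ since the subtracted term is nonnegative; the mode-$2$ case is identical with $N^{(1)}$. For $a=3$, since $\mathbf{B}\succeq 0$ we have $0\preceq \mathbf{A}-\mathbf{B}\preceq\mathbf{A}$, hence $\|\mathbb{E}[\mZ^{(3)}\mZ^{(3)\top}]\|_{op}\le \tfrac1M\|\mathbf{A}\|_{op}=\tfrac1M\max_r\sum_{i,j}\mathcal{D}_{ijr}\le N^{(1)}N^{(2)}/M$, the last step using $\mathcal{D}_{ijr}\le 1$.

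The substance of the lemma lies in the matching lower bounds, which all reduce to showing that the \emph{collision deficit} $1-\|\mathcal{D}_{ij\star}\|_2^2=\sum_{r\ne s}\mathcal{D}_{ijr}\mathcal{D}_{ijs}$ is bounded below by a constant. For $a\in\{1,2\}$ I would bound the operator norm (the maximal diagonal entry) below by the average diagonal entry, which equals $\tfrac1{M N^{(1)}}\sum_{i,j}(1-\|\mathcal{D}_{ij\star}\|_2^2)$; a uniform bound $1-\|\mathcal{D}_{ij\star}\|_2^2\ge c$ then yields $\ge cN^{(2)}/M$ (resp.\ $cN^{(1)}/M$). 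For $a=3$ the trace is too weak (it loses a factor $R$), so instead I would test $\mathbf{A}-\mathbf{B}$ against a single coordinate $e_{r^*}$, giving $\|\mathbb{E}[\mZ^{(3)}\mZ^{(3)\top}]\|_{op}\ge \tfrac1M(\mathbf{A}-\mathbf{B})_{r^*r^*}=\tfrac1M\sum_{i,j}\mathcal{D}_{ijr^*}(1-\mathcal{D}_{ijr^*})$, and then exhibit a sufficiently \emph{frequent} word $r^*$: by pigeonhole on $\sum_{k}\sum_{i,j}\mW^{(3)}_{k(ij)}=N^{(1)}N^{(2)}$ some topic $k^*$ carries weight at least $N^{(1)}N^{(2)}/K^{(3)}$, while Assumption~\ref{ass: min value A3} forces $\|\mA^{(3)}_{\star k^*}\|_2^2=(\mA^{(3)\top}\mA^{(3)})_{k^*k^*}\ge c^*$ and hence $\max_r\mA^{(3)}_{rk^*}\ge c^*$; taking $r^*$ to be this word gives $\sum_{i,j}\mathcal{D}_{ijr^*}\gtrsim N^{(1)}N^{(2)}$.

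The main obstacle, and the only place where the structural assumptions are genuinely needed, is ruling out degeneracy of the collision deficit: $1-\|\mathcal{D}_{ij\star}\|_2^2$ (resp.\ $\mathcal{D}_{ijr^*}(1-\mathcal{D}_{ijr^*})$) vanishes precisely when $\mathcal{D}_{ij\star}=\mA^{(3)}\mW^{(3)}_{\star(ij)}$ collapses to a point mass on a single word. I would argue this cannot happen uniformly: writing $\|\mathcal{D}_{ij\star}\|_2^2=\mathbf{w}^\top(\mA^{(3)\top}\mA^{(3)})\mathbf{w}$ with $\mathbf{w}=\mW^{(3)}_{\star(ij)}$ a probability vector, a point mass would require an active topic that is itself a point mass. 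If \emph{all} topics were point masses they would either be supported on distinct words, forcing $(\mA^{(3)\top}\mA^{(3)})_{kk'}=0$ and violating the off-diagonal bound of Assumption~\ref{ass: min value A3}, or share a word, forcing $\mathrm{rank}(\mA^{(3)})<K^{(3)}$ and violating $\sigma_{K^{(3)}}(\mA^{(3)})\ge c^*\sqrt{K^{(3)}}$ from Assumption~\ref{ass: min singular value}. Quantifying this dichotomy -- converting the spectral lower bound on $\mA^{(3)}$ together with the entrywise bound of Assumption~\ref{ass: min value A3} into a uniform constant $c>0$ with $1-\|\mathcal{D}_{ij\star}\|_2^2\ge c$ -- is the delicate step; the resulting constant may depend on $K^{(3)}$, which is admissible since all $K^{(a)}$ are treated as fixed. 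With this non-degeneracy in hand, the lower bounds follow by combining it with the averaging argument (for $a\in\{1,2\}$) and the frequent-word argument (for $a=3$) above.
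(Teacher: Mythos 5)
Your upper bounds and the mechanical skeleton of your lower bounds coincide with the paper's proof: for modes $1$ and $2$ the covariance is diagonal by Lemma~\ref{lemma: expectation of zz}, so the operator norm is the largest diagonal entry, and for mode $3$ the paper likewise uses positivity of $\mathbf{A}-\mathbf{B}$ against an arbitrary unit vector for the upper bound and tests against a single coordinate vector $e_{r_0}$ for the lower bound. Your pigeonhole selection of a frequent word $r^*$ is an addition (the paper tests an arbitrary $r_0$), and your averaging of the diagonal in modes $1$--$2$ is a mild refinement, but the route is the same.

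The divergence is in how the lower-bound constant is produced, and there your plan has a genuine gap. The dichotomy you propose (all topics are point masses on distinct words, contradicting Assumption~\ref{ass: min value A3}, or share a word, contradicting $\sigma_{K^{(3)}}(\mA^{(3)})\geq c^*\sqrt{K^{(3)}}$) only excludes the case where \emph{every} topic is degenerate; it cannot deliver the uniform bound $1-\|\mathcal{D}_{ij\star}\|_2^2\geq c$ that you then invoke. Concretely, with $K^{(3)}=2$, the columns $\mA^{(3)}_{\star 1}=e_1$ and $\mA^{(3)}_{\star 2}=(c^*,1-c^*,0,\dots,0)^\top$ satisfy Assumption~\ref{ass: min value A3} and the singular-value condition, yet a document whose topic-weight vector is $e_1$ (attainable when a core slice $\mathcal{G}_{k_1k_2\star}$ is a point mass, which Assumption~\ref{ass: min singular value} does not forbid) has collision deficit exactly zero; relatedly, in mode $3$ your pigeonhole gives $\sum_{ij}\mathcal{D}_{ijr^*}\gtrsim N^{(1)}N^{(2)}$ but not that $\mathcal{D}_{ijr^*}$ stays away from $1$ on a constant fraction of documents, which the factor $(1-\mathcal{D}_{ijr^*})$ requires. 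In fairness, the paper does not derive this step from the assumptions either: its proof simply takes $c^*$ to be any constant below $N^{(a)}-\max_i\|\mD^{(a)}_{i\star}\|_2^2$ (modes $1$--$2$), resp.\ sets $c^*=\min_r \mathcal{D}_{ijr}(1-\mathcal{D}_{ijr})$ (mode $3$) --- a quantity that can vanish under the stated assumptions, e.g.\ an anchor word of topic $k$ has $\mathcal{D}_{ijr}=0$ in any document carrying no weight on topic $k$. So your instinct that the non-degeneracy of the collision deficit is the crux is correct and more honest than the paper's treatment, but the sketched repair would fail; closing it requires an explicit non-degeneracy hypothesis on $\D$ (entries, or the deficits $1-\|\mathcal{D}_{ij\star}\|_2^2$, bounded away from the extremes), which both your argument and the paper's end up tacitly assuming.
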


\begin{proof}
    The proof is a direct consequence of Lemma \ref{lemma: expectation of zz}, and follows trivially for the mode 1 and 2 matricizations of $\mathcal{Z}$ given the diagonal nature of the covariance matrix, with $c^*$ any constant such that $c^* \leq N^{(a)} - \max_{i} \|\mathcal{D}^{(a)}_{i,\cdot}\|_{2}$ for $ a \in \{1,2\}.$

    For our mode-3 matricization of $\mathcal{Z}$, we note that:
$$ \E[(\mathcal{M}_3(\mathcal{Z})\mathcal{M}_3(\mathcal{Z})^\top)] = \frac{1}{M}\text{diag}\left(\sum_{i\in [N^{(1)}]} \sum_{j\in [N^{(2)}]} \mathcal{D}_{ijr_1}\right) - \frac{1}{M}\mathcal{M}^{(3)}(\mathcal{D})\mathcal{M}^{(3)}(\mathcal{D})^T $$
For any $\mathbf{u} \in \mathbb{R}^{R}$ such that $\| \mathbf{u}\|_2=1:$
    \begin{align*}
       \mathbf{u}^T  \mathbb{E}[\mZ^{(3)}\mZ^{(3)}]\mathbf{u} &= \frac{1}{M}\sum_{r_1 \in \R } \sum_{i\in [N^{(1)}]} \sum_{j\in [N^{(2)}]} \left(\mathcal{D}_{ijr_1} \mathbf{u}_{r_1}^2\right) -\sum_{r_1, r_2 \in \R }\frac{1}{M}\sum_{i\in [N^{(1)}]} \sum_{j\in [N^{(2)}]} \mathcal{D}_{ijr_1}\mathcal{D}_{ijr_2} \mathbf{u}_{r_1}\mathbf{u}_{r_2}  \\
       &=  \frac{1}{M}\sum_{r_1 \in \R } \sum_{i\in [N^{(1)}]} \sum_{j\in [N^{(2)}]} \mathcal{D}_{ijr_1}\mathbf{u}_{r_1}^2 - \frac{1}{M}\sum_{i\in [N^{(1)}]} \sum_{j\in [N^{(2)}]} \left( \sum_{r_1 \in \R } \mathcal{D}_{ijr_1} \mathbf{u}_{r_1} \right)^2   \\
              &\leq   \frac{1}{M}\sum_{r_1 \in \R } \sum_{i\in [N^{(1)}]} \sum_{j\in [N^{(2)}]} \mathcal{D}_{ijr_1}\mathbf{u}_{r_1}^2\\
            &\leq   \frac{1}{M}\sum_{r_1 \in \R } \sum_{i\in [N^{(1)}]} \sum_{j\in [N^{(2)}]} \mathcal{D}_{ijr_1} \quad \text{ since } \sum_{r_1} \mathbf{u}_{r_1}^2 =1\\
            &\leq  \frac{N^{(1)}N^{(2)}}{M}
 \end{align*}
    
Therefore: $$\| \mathbb{E}[\mZ^{(3)}\mZ^{(3)\top}] \|_{op} \leq 
 \frac{N^{(1)}N^{(2)}}{M}$$
We also have, for any $r_0\in [R]$, letting $\tilde{\mathbf{u}}$ be the indicator vector of $r_0$ (ie $\tilde{\mathbf{u}}_{r_0} = 1$, and $\tilde{\mathbf{u}}_r =0 $ for any $r\neq r_0$
):

\begin{align*}
\|\mathbb{E}[\mZ^{(3)}\mZ^{(3)\top}]\|_{op} &\geq  \frac{1}{M}\sum_{r_1 \in R } \sum_{i\in [N^{(1)}]} \sum_{j\in [N^{(2)}]} \mathcal{D}_{ijr_1}\tilde{\mathbf{u}}_{r_1}^2 - \frac{1}{M}\sum_{i\in [N^{(1)}]} \sum_{j\in [N^{(2)}]} \left( \sum_{r_1 \in \R } \mathcal{D}_{ijr_1} \tilde{\mathbf{u}}_{r_1} \right)^2 \\
&= \frac{1}{M} \sum_{i\in [N^{(1)}]} \sum_{j\in [N^{(2)}]} \mathcal{D}_{ijr_0} - \frac{1}{M}\sum_{i\in [N^{(1)}]} \sum_{j\in [N^{(2)}]} \left(  \mathcal{D}_{ijr_0}  \right)^2 \\
&\geq c^*\frac{N^{(1)}N^{(2)}}{M} 
 \end{align*}
with $c^* = \min_{r} \mathcal{D}_{ijr}(1-\mathcal{D}_{ijr}). $

\end{proof}

\subsection{Concentration bounds of the matricized noise}

\begin{lemma}[Concentration of the maximum of the entries in $\Z.$]\label{lemma: max Z}
Suppose $\frac{\log(N_R)}{M} \leq c_0$ for some constant $c_0$.
Then with probability at least $1-o(N_R^{-1})$, for every $i\in[N^{(1)}],j\in[N^{(2)}]$ and $r\in[R]$, we have
$$
|\Z_{ijr}|\leqslant \sqrt{\frac{\log(N_R)}{M}}
$$
\end{lemma}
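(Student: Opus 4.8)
The plan is to treat each entry $\mathcal{Z}_{ijr}$ as a normalized sum of independent, bounded, mean-zero random variables, apply the scalar Bernstein inequality (Lemma~\ref{lemma: bernstein ineq}) coordinate-wise, and then close with a union bound over all $N^{(1)}N^{(2)}R$ entries. Concretely, fix a triple $(i,j,r)$. By Equation~\eqref{eq: Z= 1/M sum Xijmr} we may write $\mathcal{Z}_{ijr} = \frac{1}{M}\sum_{m=1}^{M}\mathcal{X}_{ijm}(r)$, where the summands are independent across $m$ (the one-hot encodings of distinct word positions in a document are independent), satisfy $|\mathcal{X}_{ijm}(r)| \le 1$ and $\mathbb{E}[\mathcal{X}_{ijm}(r)] = 0$ by Equation~\eqref{eq:first_moment_X_im}, and have variance $\var[\mathcal{X}_{ijm}(r)] = \mathcal{D}_{ijr}(1-\mathcal{D}_{ijr}) \le \mathcal{D}_{ijr} \le f_r \le 1$ by Equation~\eqref{eq:second moment X im} and Lemma~\ref{lem: Dijr less than fr}.

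First I would apply Lemma~\ref{lemma: bernstein ineq} with $n = M$, envelope $b = 1$, and variance proxy $\sigma^2 \le 1$, at a threshold $t = C^*\sqrt{\log(N_R)/M}$ for an absolute constant $C^*$ to be fixed. This gives
$$\mathbb{P}\bigl(|\mathcal{Z}_{ijr}| \ge t\bigr) \le 2\exp\left(-\frac{M t^2/2}{\sigma^2 + t/3}\right) \le 2\exp\left(-\frac{(C^*)^2\log(N_R)/2}{\sigma^2 + t/3}\right).$$
The key observation is that the hypothesis $\log(N_R)/M \le c_0$ forces $t = C^*\sqrt{\log(N_R)/M} \le C^*\sqrt{c_0}$, so the linear-in-$t$ term in the Bernstein denominator is bounded by a constant; combined with $\sigma^2 \le 1$, the denominator is $O(1)$ and the exponent is of order $(C^*)^2\log(N_R)$. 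Choosing $C^*$ large enough relative to $c_0$ makes this exponent at least $(4+\epsilon)\log(N_R)$, whence $\mathbb{P}(|\mathcal{Z}_{ijr}| \ge t) \le 2 N_R^{-(4+\epsilon)}$.

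Finally I would take a union bound. Since $N^{(1)}N^{(2)}R \le N_R^{3}$, the probability that any entry exceeds $t$ is at most $2 N_R^{3}\cdot N_R^{-(4+\epsilon)} = 2 N_R^{-(1+\epsilon)} = o(N_R^{-1})$, which is the claimed event; on its complement every entry obeys the stated bound (with the absolute constant absorbed under the paper's convention that constants may change from line to line).

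I expect the main obstacle to be precisely the bookkeeping of constants in the last two steps. Because there are on the order of $N_R^{3}$ entries and the worst-case per-coordinate variance $\mathcal{D}_{ijr}(1-\mathcal{D}_{ijr})$ need not vanish for high-frequency words, the Bernstein exponent must be driven strictly above $4\log(N_R)$; this is exactly where both the multiplicative constant in the threshold and the regime assumption $\log(N_R)/M \le c_0$ (which keeps the heavy-tail correction $t/3$ bounded, so the tail behaves sub-Gaussianly) are essential. Everything else reduces to a direct substitution into Lemma~\ref{lemma: bernstein ineq}.
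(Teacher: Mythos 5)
Your proposal is correct and is essentially identical to the paper's own proof: both write $\mathcal{Z}_{ijr}$ as the average of $M$ independent, bounded, mean-zero one-hot-encoding variables, apply the scalar Bernstein inequality (Lemma~\ref{lemma: bernstein ineq}) with $b=1$ and $\sigma^2\leqslant 1$ at the threshold $t=C^*\sqrt{\log(N_R)/M}$, and close with a union bound over all $N^{(1)}N^{(2)}R\leqslant N_R^3$ entries, using the hypothesis $\log(N_R)/M\leqslant c_0$ to keep the $t/3$ term in the Bernstein denominator bounded. Your bookkeeping of the required exponent (strictly above $3\log N_R$ in the paper's accounting, which your $(4+\epsilon)\log N_R$ comfortably covers) matches the paper's argument, so there is nothing to correct.
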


\begin{proof}
Recall that, in Remark~\ref{remark: z multinomial}, we defined $\Z_{ijr}$ as the average of $M$ i.i.d centered variables: $\mathcal{Z}_{ijr}=\frac{1}{M}\sum_{m\in[M]}\mathcal{X}_{ijm}(r)$ where $|\mathcal{X}_{ijm}(r)|\leqslant 1$ and $\mathbb{E}[\mathcal{X}_{ijm}(r)]=0$. 
We further note that by direct application  of Equation~\eqref{eq:second moment X im}, $\text{Var}[\mathcal{X}_{ijm}(r)]\leqslant \mathcal{D}_{ijr}\leqslant 1$%given by Lemma \ref{lem: Dijr less than fr}
. By Bernstein's inequality \cite{vershynin2018high} (Lemma \ref{lemma: bernstein ineq}), we then have, for any $t>0$:
$$
\mathbb{P}\left(|\mathcal{Z}_{ijr}|\geqslant t\right)\leqslant 2\exp\left(-\frac{Mt^2/2}{1+t/3}\right)
$$
Taking $t=C^*\sqrt{\frac{\log(N_R)}{M}}$ for some constant $C^*$, we have, by a simple union bound:
\begin{equation}
    \begin{split}
        \P[\exists (i,j,r) : |\mathcal{Z}_{ijr}|\geqslant C^*\sqrt{\frac{\log(N_R)}{M}}] &\leq 2 N^{(1)}N^{(2)} R\exp\left(-\frac{ \frac{(C^*)^2}{2} \log(N_R)}{1+ \frac{C^*}{3} \sqrt{\frac{\log(N_R)}{M}}}\right)\\
         &\leq \exp\left(3 \log(N_R) -\frac{ \frac{(C^*)^2}{2} \log(N_R)}{1+ \frac{C^*}{3} \sqrt{\frac{\log(N_R)}{M}}}\right)\\
         &\leq \exp( -(C_0 -3) \log(N_R)).
    \end{split}
\end{equation}
for some constant $C_0$, assuming that the ratio $\frac{\log(N_R)}{M}$ is bounded above by a constant $c_0.$
%if $\min_r f_r\geqslant C^*\sqrt{\frac{\log(N_R)}{M}}$
This concludes the proof.
\end{proof}

\begin{lemma}[Concentration of the operator norm of the matricized noise]\label{lem: Z l2 no}
%Suppose $M\geqslant \log(N_R)$
Suppose $\frac{\log(N_R)}{M} \leq c_0$ for some constant $c_0$. Then, with probability at least $1-o(N_R^{-1})$:
    \begin{equation}\label{eq:norm op Z1}
        \begin{split}
              &   \|\mZ^{(1)}\|_{op} \leqslant C^*\sqrt{\frac{N^{(1)}N^{(2)        }\log(N_R)}{M}} \\
   &    \|\mZ^{(2)}\|_{op}\leqslant C^*\sqrt{\frac{N^{(1)}N^{(2)        }\log(N_R)}{M}} 
         \\
         &\|\mZ^{(3)}\|_{op}\leqslant C^*\sqrt{\frac{N^{(1)}N^{(2)        }\log(N_R)}{M}}.    
        \end{split}
    \end{equation}%&\text{ and }\|\mZ^{(3)}_{r\star}\|\leqslant C^*\sqrt{\frac{N^{(1)}N^{(2)        }\log(N_R)}{M}} 

    Moreover, with the same probability:
        \begin{equation}      \label{eq:bound norm Z1}  
\|\mZ^{(1)}_{i\star}\|_{2}\leqslant C^*\sqrt{\frac{  N^{(2)}\log(N_R)}{M}}\qquad \text{ and } \qquad \|\mZ^{(2)}_{j\star}\|_{2}\leqslant C^*\sqrt{\frac{  N^{(1)}\log(N_R)}{M}} 
        \end{equation}
\end{lemma}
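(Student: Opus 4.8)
The plan is to prove all five bounds with a single tool, the Matrix Bernstein inequality (Lemma~\ref{lemma: matrix bernstein}), applied to a decomposition of the matricized noise into independent rank-one summands at the level of individual words. By Remark~\ref{remark: z multinomial} (Equation~\eqref{eq: Z= 1/M sum Xijmr}), we may write $\mathcal{Z}_{ijr} = \frac{1}{M}\sum_{m=1}^M \mathcal{X}_{ijm}(r)$, where the centered one-hot vectors $\{\mathcal{X}_{ijm}(\cdot)\}$ are independent across the triples $(i,j,m)$, since words are sampled independently both across documents and within a document. For a fixed mode $a$, let $\mathbf{G}_{ijm}^{(a)}$ denote the mode-$a$ matricization of the tensor that is zero except in the slice $(i,j,\cdot)$, where it equals $\mathcal{X}_{ijm}(\cdot)$; then $\mZ^{(a)} = \frac{1}{M}\sum_{i,j,m} \mathbf{G}_{ijm}^{(a)}$ is a sum of $N^{(1)}N^{(2)}M$ independent, mean-zero matrices. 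Each $\mathbf{G}_{ijm}^{(a)}$ is rank one with $\|\mathbf{G}_{ijm}^{(a)}\|_{op} = \|\mathcal{X}_{ijm}(\cdot)\|_2 \le \sqrt{2}$ (a centered one-hot vector has squared $\ell_2$ norm at most $2$), so each summand $\tfrac{1}{M}\mathbf{G}_{ijm}^{(a)}$ has operator norm at most $\sqrt{2}/M$.

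The crucial point is that this word-level granularity keeps the uniform bound $b = \sqrt{2}/M$ negligibly small, so the Matrix Bernstein variance proxy governs the rate. That proxy is $\|\mathbb{E}[\mZ^{(a)}\mZ^{(a)\top}]\|_{op}$ together with the transpose term $\frac{1}{M^2}\|\sum_{ijm}\mathbb{E}[\mathbf{G}_{ijm}^{(a)\top}\mathbf{G}_{ijm}^{(a)}]\|_{op}$; the cross terms vanish because distinct words are independent and centered, so $\mathbb{E}[\mZ^{(a)}\mZ^{(a)\top}] = \frac{1}{M^2}\sum_{ijm}\mathbb{E}[\mathbf{G}_{ijm}^{(a)}\mathbf{G}_{ijm}^{(a)\top}]$. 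The first term is bounded by $\frac{N^{(1)}N^{(2)}}{M}$ via Lemmas~\ref{lemma: expectation of zz} and~\ref{lemma: op norm of expectation of zz}, and the second is a block-diagonal matrix whose blocks are bounded by $\frac{N^{(1)}\vee N^{(2)}}{M} \le \frac{N^{(1)}N^{(2)}}{M}$; hence $\sigma^2 \le \frac{N^{(1)}N^{(2)}}{M}$ for every mode. Plugging $s = C^*\sqrt{\frac{N^{(1)}N^{(2)}\log N_R}{M}}$ into Lemma~\ref{lemma: matrix bernstein}, the hypothesis $\frac{\log N_R}{M}\le c_0$ forces the $bs$ term to be dominated by $\sigma^2$, so the exponent is of order $\frac{s^2}{\sigma^2} \asymp (C^*)^2\log N_R$. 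Taking $C^*$ large enough to absorb the ambient dimension factor $(p+r) \le 2N_R^2$ and union-bounding over the three modes yields \eqref{eq:norm op Z1} with probability $1 - o(N_R^{-1})$.

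For the row-wise bounds \eqref{eq:bound norm Z1}, the naive estimate $\|\mZ^{(1)}_{i\star}\|_2 \le \|\mZ^{(1)}\|_{op}$ is too weak, losing a factor $\sqrt{N^{(1)}}$, so I apply the same machinery to a single row. Fixing $i$, the row $\mZ^{(1)}_{i\star} \in \mathbb{R}^{N^{(2)}R}$ equals $\frac{1}{M}\sum_{j,m}\mathbf{v}_{jm}^\top$, where $\mathbf{v}_{jm}$ is the vector supported on block $j$ equal to $\mathcal{X}_{ijm}(\cdot)$; these are $N^{(2)}M$ independent $1\times(N^{(2)}R)$ matrices of operator norm $\le \sqrt{2}/M$. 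Now the variance proxy shrinks to $\sigma^2 \le \frac{N^{(2)}}{M}$, because only the documents $(i,\cdot)$ contribute and, writing $\mathbf{z}_{ij} := (\mathcal{Z}_{ij1},\dots,\mathcal{Z}_{ijR})$, we have $\sum_j \mathbb{E}\|\mathbf{z}_{ij}\|_2^2 \le \frac{N^{(2)}}{M}$ by Equation~\eqref{eq:second moment X im}. Matrix Bernstein with $s = C^*\sqrt{\frac{N^{(2)}\log N_R}{M}}$ then gives the bound for each $i$, and a union bound over $i\in[N^{(1)}]$ (and symmetrically over $j\in[N^{(2)}]$ for $\mZ^{(2)}$) preserves the probability $1-o(N_R^{-1})$.

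I expect the main obstacle to be handling the dependence structure correctly: because the entries of a single document are coupled through the multinomial constraint, one cannot treat the $\mathcal{Z}_{ijr}$ as independent scalars, and a document-level decomposition would force $b = \|\mathbf{z}_{ij}\|_2 \le 2$, a bound too coarse to yield the stated rate when $M \gg N^{(1)}N^{(2)}$. Choosing the word-level decomposition and verifying the two variance terms --- in particular the vanishing of the cross terms and the transpose-side block-diagonal bound --- via Lemmas~\ref{lemma: expectation of zz} and~\ref{lemma: op norm of expectation of zz} is therefore the technical crux; once these are in place, the concentration is a routine application of Lemma~\ref{lemma: matrix bernstein}.
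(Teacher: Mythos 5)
Your proposal is correct, and it reaches both conclusions by a genuinely different decomposition than the paper's. For the operator-norm bounds, the paper writes $\mZ^{(a)\top}=\frac{1}{M}\sum_{m=1}^{M}\mX^{(a)}_m$, i.e.\ a sum of only $M$ independent summands, each of which stacks all $N^{(1)}N^{(2)}$ documents at word position $m$; this forces the coarse uniform bound $\|\mX^{(a)}_m\|_{op}\leq \sqrt{2N^{(1)}N^{(2)}}$ and a variance proxy $\sigma^2\leq N^{(1)}N^{(2)}M$ obtained by bounding $\|\sum_m \E[\mX_m\mX_m^\top]\|_{op}$ through its Frobenius norm. You instead split all the way down to word-level rank-one pieces indexed by $(i,j,m)$, getting the tiny uniform bound $\sqrt{2}/M$ and a variance proxy computed directly from $\E[\mZ^{(a)}\mZ^{(a)\top}]$ (the paper's Lemmas on the expectation and operator norm of $\mZ^{(a)}\mZ^{(a)\top}$) together with the block-diagonal transpose side. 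Both routes land on the same exponent of order $(C^*)^2\log N_R$ because the variance term dominates under $\log N_R/M\leq c_0$ in either setup, so for \eqref{eq:norm op Z1} the two proofs are interchangeable; yours is marginally tighter in the variance (you could even keep $\max(N^{(1)},N^{(2)})/M$ rather than relaxing to $N^{(1)}N^{(2)}/M$) but buys nothing at the stated rate. Where your route genuinely pays off is the row-wise bounds \eqref{eq:bound norm Z1}: the paper proves these by entrywise scalar Bernstein plus a union bound, which as written only controls $\max_{j,r}|\mathcal{Z}_{ijr}|$ and, aggregated naively in $\ell_2$, gives $\sqrt{N^{(2)}R\log N_R/M}$ — off by a factor $\sqrt{R}$ unless one additionally exploits $\sum_r \mathcal{D}_{ijr}=1$ (as the paper does elsewhere, in its lemma on $\sum_{j,r}\mathcal{Z}_{ijr}^2$, though even that yields the claimed rate only in some regimes). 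Your argument — matrix Bernstein applied to the single row $\mZ^{(1)}_{i\star}=\frac{1}{M}\sum_{j,m}\mathbf{v}_{jm}^\top$, where the scalar-side variance $\frac{1}{M^2}\sum_{j,m}\E\|\mathcal{X}_{ijm}\|_2^2\leq N^{(2)}/M$ follows precisely from the multinomial constraint $\sum_r \mathcal{D}_{ijr}=1$ — delivers $C^*\sqrt{N^{(2)}\log N_R/M}$ in all regimes, uniformly over $i$ after a union bound. So your treatment of the row bounds is not just different but more complete than the paper's own sketch, and your identification of the word-level decomposition (keeping $b=\sqrt{2}/M$ small while letting the variance govern) as the technical crux is exactly right.
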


\begin{proof}
\textbf{Analysis of Mode-1 and Mode-2 matricization:} 
Consider first the random matrix $\mZ^{(1)}\in\mathbb{R}^{N^{(1)}\times (N^{(2)}R)}$. By Remark \ref{remark: z multinomial}, the rows of the matrix $\mZ^{(1)}$ are independent, and, as before, we write $\mZ^{(1)}$ as an average of $M$ variables:
$$
\mZ^{(1)\top}=\frac{1}{M}\sum_{m=1}^M \mX^{(1)}_m
$$
where the set of matrices $\{\mX^{(1)}_m\}_{m\in[M]}\in\mathbb{R}^{N^{(2)}R\times N^{(1)}}$  are independent matrices  such that $\mX^{(1)}_m(jr,i)=\mathcal{X}_{ijm}(r)$. The product $\mX^{(1)}_m \mX^{(1)\top}_m$ has entries $\left(\mX^{(1)}_m \mX^{(1)\top}_m\right)(j_1r_1,j_2r_2)=\sum_{i\in[N^{(1)}]}\mathcal{X}_{ij_1m}(r_1)\mathcal{X}_{ij_2m}(r_2)$. By \eqref{eq: expection of Z}:
\begin{equation*}
    \mathbb{E}[\left(\mX^{(1)}_m \mX^{(1)\top}_m\right)(j_1r_1,j_2r_2)]=\begin{cases}
        -\sum_{i\in[N^{(1)}]}\mathcal{D}_{ijr_1}\mathcal{D}_{ijr_2} &\text{ if } j_1=j_2=j, r_1\neq r_2\\
       \sum_{i\in[N^{(1)}]} (\mathcal{D}_{ijr}-\mathcal{D}_{ijr}^2)&\text{ if } j_1=j_2=j, r_1=r_2=r\\
        0 & \text{ otherwise }
     \end{cases}
\end{equation*}
Therefore:

\begin{align*}
\left\|\sum_{m}\mathbb{E}[\mX^{(1)}_m\mX^{(1)\top}_m]\right\|_{op}&=\left\|\sum_{m=1}^M\mathbb{E}[\mX^{(1)}_m\mX^{(1)\top}_m]\right\|_{op}\leqslant\left\|\sum_{m=1}^M\mathbb{E}[\mX^{(1)}_m\mX^{(1)\top}_m]\right\|_F\leqslant\sum_{m=1}^M\left\|\mathbb{E}[\mX^{(1)}_m\mX^{(1)\top}_m]\right\|_F\\
&\leqslant M \sqrt{\sum_{j\in [N^{(2)}], r \in[R]} \left( \sum_{i \in [N^{(1)}]} \mathcal{D}_{ijr}(1-\mathcal{D}_{ijr}) \right)^2+\sum_{j \in [N^{(2)}]}\sum_{r_1\neq r_2}\left(\sum_{i \in [N^{(1)}]} \mathcal{D}_{ijr_1}\mathcal{D}_{ijr_2}\right)^2}\\
&\leqslant M \sqrt{N^{(1)}} \sqrt{\sum_{i,j,r}  \mathcal{D}_{ijr}^2(1-\mathcal{D}_{ijr})^2+\sum_{i,j}\sum_{r_1\neq r_2}(\mathcal{D}_{ijr_1}\mathcal{D}_{ijr_2})^2} \text{ by Cauchy Schwarz}\\
&\leqslant M \sqrt{N^{(1)}} \sqrt{\sum_{i,j,r} ( \mathcal{D}_{ijr}^2-2\mathcal{D}_{ijr}^3 + \mathcal{D}_{ijr}^4)+\sum_{i,j}\sum_{r_1\neq r_2}(\mathcal{D}_{ijr_1}\mathcal{D}_{ijr_2})^2}\\
&= M \sqrt{N^{(1)}}\sqrt{\sum_{i,j,r}  \mathcal{D}_{ijr}^2(1-2\mathcal{D}_{ijr})+\sum_{i,j}\sum_{r_1, r_2}(\mathcal{D}_{ijr_1}\mathcal{D}_{ijr_2})^2}\\
&= M \sqrt{N^{(1)}}\sqrt{\sum_{i,j,r}  \mathcal{D}_{ijr}^2(1-2\mathcal{D}_{ijr})+\sum_{i,j}(\sum_{r_1}\mathcal{D}_{ijr_1}^2)^2}\\
&\leq M \sqrt{N^{(1)}}\sqrt{\sum_{i,j,r}  \mathcal{D}_{ijr}+\sum_{i,j}(\sum_{r_1}\mathcal{D}_{ijr_1})^2}\\
&\leqslant M \sqrt{N^{(1)}}\sqrt{\sum_{i,j} 1+\sum_{i,j}1} \qquad \text{since } \sum_r \D_{ijr} = 1\\
&\leqslant  M N^{(1)} \sqrt{2 N^{(2)}}\leqslant  M N^{(1)} N^{(2)}.
\end{align*}
On the other hand, we have $[\mX^{(1)\top}_m\mX^{(1)}_m]_{i_1i_2}=\sum_{j,r}\mathcal{X}_{i_1jm}(r)\mathcal{X}_{i_2jm}(r)$. %Note $$\|\mathbb{E}[\mathbf{ \tilde X}_m]^\top \mathbb{E}[\mathbf{ \tilde X}_m]\|=\|\mathbb{E}[\mathbf{ \tilde X}_m^\top\mathbf{ \tilde X}_m]-\text{Var}[\mathbf{ \tilde X}_m]\|.$$ 
We have, by \eqref{eq: expection of Z}:

$$\mathbb{E}[\mX^{(1)\top}_m\mX^{(1)}_m]=\text{diag}\left(\sum_{jr}(\mathcal{D}_{1jr}-\mathcal{D}_{1jr}^2),\dots,\sum_{jr}(\mathcal{D}_{N^{(1)}jr}-\mathcal{D}_{N^{(1)}jr}^2)\right).$$ Thus 
%then $$\mathbb{E}[X_m^\top X_m]=\text{diag}(N^{(2)}-\|\mathcal{D}_{1\star\star }\|_F^2,\dots,N^{(2)}-\|\mathcal{D}_{N^{(1)}\star \star}\|_F^2)$$
$\|\mathbb{E}[\mX^{(1)\top}_m\mX^{(1)}_m]\|_{op}\leqslant N^{(2)}$ and $\|\sum_m\mathbb{E}[\mX^{(1)\top}_m\mX^{(1)}_m]\|_{op}\leqslant N^{(2)}M$. 

%To show Inequality~\ref{eq:bound norm Z1}  on the norm of the rows, 
We further note that for any $i \in [N^{(1)}]$, $$\|(\mX^{(1)}_{m})_{i\star }\|^2_2=\sum_{j,r}\mathcal{X}_{ijm}(r)^2=\sum_{j,r}(\mT_{ijm}(r)-\mathbb{E}[\mT_{ijm}(r)])^2\leqslant 2N^{(2)}.$$

 Thus, for any unit norm vector $\mathbf{u}\in\mathbb{R}^{N^{(1)}},  \mathbf{v} \in\mathbb{R}^{N^{(2)}R}$,
 $$\mathbf{u}^\top (\mX^{(1)}_{m}) \mathbf{v}  = \sum_i \sum_{j,r} \mathbf{u}_i\mathcal{X}_{ijm}(r)  \mathbf{v} _{(j_r)} \leqslant  \sqrt{\sum_{i} \mathbf{u}_i^2} \sqrt{\sum_i ( (X_m)_{i \star}  \mathbf{v}  )^2}%\leqslant  \sqrt{\sum_i \| (X_m)_{i \star}\|^2 \| \mathbf{v}  \|^2} 
 \leqslant \sqrt{2N^{(1)} N^{(2)}} .$$

It follows that
$$\|\mX^{(1)}_m\|_{op}^2=\sup_{\mathbf{u}\in\mathcal{S}^{N^{(1)}-1}, \mv\in\mathcal{S}^{N^{(2)}R-1}}\|\mathbf{u}^\top \mX^{(1)}_{m}\mv\|_2^2\leqslant 2N^{(2)} N^{(1)} $$
By the Matrix Bernstein inequality (Lemma \ref{lemma: matrix bernstein}), we have 
$$
\mathbb{P}\left(\left\|\frac{1}{M}\sum_{m=1}^M \mX^{(1)}_m\right\|_{op}\geqslant t\right)\leqslant (N^{(1)} + N^{(2)}R) \exp\left(-\frac{t^2 M^2}{2( \sigma^2 +LtM/3)}\right)
$$
where $\sigma^2 =  N^{(1)}N^{(2)}M$ and $\mL =  \sqrt{2N^{(1)}N^{(2)}}$.
\par Choosing $t=C^*\sqrt{\frac{N^{(1)} N^{(2)}\log (N_R)}{M}}$ for some constant $C^*$, and noting that $N^{(1)} + N^{(2)}R \leq 2N_R^2$, the previous inequality becomes:
\begin{equation*}
    \begin{split}
        \mathbb{P}\left(\left\|\frac{1}{M}\sum_{m=1}^M \mX^{(1)}_m\right\|_{op}\geqslant C^*\sqrt{\frac{N^{(1)} N^{(2)}\log (N_R)}{M}}\right)&\leqslant  2N_R^2\exp\left(-\frac{(C^*)^2MN^{(1)} N^{(2)}\log (N_R)}{2( N^{(1)}N^{(2)}M + \frac{\sqrt{2}}{3}\sqrt{N^{(1)}N^{(2)}M\log(N_R)} })\right)\\
        &=  \exp\left(2\log(N_R) + \log(2)-\frac{\frac{(C^*)^2}{2}\log (N_R)}{ 1+ \frac{\sqrt{2}}{3} \sqrt{\frac{\log(N_R)}{N^{(1)}N^{(2)}M}}}\right).\\
    \end{split}
\end{equation*}
Therefore, for $C^*$ large enough, the result in the first line of Equation~\eqref{eq:norm op Z1} holds. 
\par We now turn to the bound on the row norm $\|\mZ^{(1)}_{i\star}\|_2$ (Equation~\eqref{eq:bound norm Z1}). In this case, we note that for any tuple $i,j, r$, since $|\mathcal X_{imj}(r) | \leq 1$ and $\text{Var}(\mathcal X_{imj}(r)) \leq 1$ (see Remark \ref{remark: z multinomial}),  by Bernstein's inequality (Lemma ~\ref{lemma: bernstein ineq}), for a choice of $t  = C^* \sqrt{\frac{ \log(N_R)}{M}}$ with $C^*$ a constant:

$$ \P[ | \frac{1}{M} \sum_{m=1}^M \mathcal X_{ijm}(r) | > C^* \sqrt{\frac{\log(N_R)}{M}} ] \leq 2 \exp ( - \frac{(C^*)^2\log(N_R)/2 }{1 + \frac{C^*}{3} \sqrt{\frac{ \log(N_R) }{M}}}  )$$
Equation~\eqref{eq:bound norm Z1} follows by a simple union bound, with a choice of $C^*$ large enough and provided that the ratio $\frac{\log(N_R)}{M} \leq c_0$.

\par The results for $\mZ^{(2)}$ follow directly from those on $\mZ^{(1)}$, due to the symmetry of the roles played by the first two dimensions.
\vspace{1cm}
\noindent\\
\textbf{Analysis of Mode-3 matricization:} 
We now adapt the proof to show the results for $\mZ^{(3)}$, due to its row-stochastic properties (matrix of type \ref{case:2}). In this case, the matrix $\mZ^{(3)}\in\mathbb{R}^{R\times (N^{(1)}N^{(2)})}$ has independent column vectors.  We set define matrix $\mX^{(3)}_m\in\mathbb{R}^{R\times N^{(1,2)}} $ such that $[\mX^{(3)}_m \mX^{(3)\top}_m](r_1,r_2)=\sum_{ij}\mathcal{X}_{ijm}(r_1)\mathcal{X}_{ijm}(r_2)$. Then, by Remark \ref{remark: z multinomial}, Equation~\eqref{eq: expection of Z}:
$$\mathbb{E}[\mX^{(3)}_m \mX^{(3)\top}_m]=\text{diag}(\mD^{(3)}\mathbf{1}_{N^{(1,2)}})-\mD^{(3)}\mD^{(3)\top},$$
which, %by the same arguments as Lemma~\ref{lemma: op norm of expectation of zz}  implies that $\|\mathbb{E}[X_m X_m^\top ]\|_{op} \leq N^{(1)} N^{(2)}$. Furthermore, 
Lemma \ref{lemma: singular value D}, implies $\|\sum_{m=1}^M\mathbb{E}[\mX^{(3)}_m \mX^{(3)\top}_m]\|_{op}\leqslant M \|(\mD^{(3)}\mD^{(3)\top}\|_{op} \leq K^{(1,2,3)} N^{(1,2)}M . $
Furthermore,
$$
\mathbb{E}[\mX^{(3)\top}_m \mX^{(3)}_m]=\text{diag}\left(\{1-\sum_{r\in[R]}\mathcal{D}^2_{ijr}\}_{(i,j)\in[N^{(1)}]\times [N^{(2)}]}\right)
$$
By the same arguments as above, we have $\|\sum_{m\in[M]}\mathbb{E}[\mX^{(3)}_m \mX^{(3)\top}_m]\|\leqslant M$. We then define: 
$\sigma^2=K^{(1)}K^{(2)}K^{(3)} N^{(1)}N^{(2)}\lesssim  N^{(1)}N^{(2)} M$. 
To apply the matrix Bernstein, we now need to bound the operator norm $\| \mX^{(3)}_m\|$.  Since, by definition of $\mathcal{X}$, for each column $(i,j)$,
 $\|(\mX^{(3)}_m)_{\star (i,j)}\|_2^2=\sum_r (\mathcal{X}_{ijm}(r))^2\leqslant 2$, it is easy to see that $\|\mX^{(3)}_m\|_{op}^2\leq \|\mX^{(3)}_m\|_{F}^2 \leqslant 2N^{(1)} N^{(2)}$ as well.  The result on the operator norm of $\mZ^{(3)}$ in Equation~\eqref{eq:norm op Z1} thus follows by similar arguments to mode-1 matricization, using the Matrix Bernstein's inequality.
 
 % To bound the $\ell_2$ norm of the row $\mZ^{(3)}_{r\star}$, we note that:
 % $$\frac{1}{N^{(1)}N^{(2)}}\|\mZ^{(3)}_{r\star}\|_2^2=\frac{1}{N^{(1)}N^{(2)}M^2} \sum_{ij}\left[\sum_{m_1}\mathcal X_{ijm_1}(r)\sum_{m_2}\mathcal X_{ijm_2}(r)\right]$$
 % we use the Lemma \ref{lemma: zz for 3}.

 %to obtain $\sigma^2\leqslant f_r N^{(1,2)} M$ and $\mL\leqslant \sqrt{N^{(1,2)}}$. Set $t=C^*\sqrt{\frac{f_r N^{(1,2)}\log(N_R)}{M}}$ if $f_r\geq \log (N_R)/M$, or $t=\frac{\sqrt{N}}{M}$
\end{proof}

% \begin{corollary}\label{corollary: |Z|less than |G| }Under the condition in Lemma \ref{lem: Z l2 no}, for any mode matricization of $\mathcal{D}$, we have
% $$
% \|\mZ^{(a)}\|\leqslant c\|\mD^{(a)}\|
% $$
% for some constant $c>0$.
% \end{corollary}
% \begin{proof}
%     This corollary can be easily obtained by the results from Lemma \ref{lem: Z l2 no} and \ref{lemma: singular value D}
% \end{proof}

\begin{lemma}\label{lemma: sumij zij}
Suppose $f_r\geqslant {\frac{\log(N_R)}{N^{(1)}N^{(2)}M}}$. Then with probability at least $1-o(N_R^{-1})$,  the following inequality holds:
\begin{align}
   \left|\frac{1}{N^{(1)}N^{(2)}}\sum_{i,j=1}^{N^{(1)},N^{(2)}}\mathcal{Z}_{ijr}\right|\leqslant C^*\sqrt{\frac{f_r\log(N_R)}{N^{(1)}N^{(2)}M}} \text{ for all }r\in[R]\label{eq:average_X}
% \max_ {j\in[N^{(2)}]}\left|\frac{1}{N^{(1)}}\sum_{i,r=1}^{N^{(1)},R}\mathcal{Z}_{ijr}\right|\geqslant C^*\sqrt{\frac{\log(N_R)}{N^{(1)}M}} \label{eq:max_X_over_j}\\
% \max_ {i\in[N^{(1}]}\left|\frac{1}{N^{(2)}}\sum_{j,r=1}^{N^{(2)},R}\mathcal{Z}_{ijr}\right|\geqslant C^*\sqrt{\frac{\log(N_R)}{N^{(2)}M}}\label{eq:max_X_over_i}\\
    % \mathbb{P}\left(\max_ {j\in[N^{(2)}]}\left|\frac{1}{N^{(1)}}\sum_{i,r=1}^{N^{(1)},R}\mathcal{Z}_{ijr}\right|\geqslant C^*\sqrt{\frac{\log(N_R)}{N^{(1)}M}} \right)\leqslant \frac{1}{N_R}\\\mathbb{P}\left(\max_ {i\in[Q^{(1}]}\left|\frac{1}{N^{(2)}}\sum_{j,r=1}^{N^{(2)},R}\mathcal{Z}_{ijr}\right|\geqslant C^*\sqrt{\frac{\log(N_R)}{N^{(2)}M}} \right)\leqslant \frac{1}{N_R}
\end{align}

\end{lemma}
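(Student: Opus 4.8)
The plan is to apply the Bernstein-type concentration inequality developed earlier (Lemma \ref{lemma: bernstein ineq}), exploiting the fact that the quantity of interest is an average of $N^{(1)}N^{(2)}M$ independent, centered, bounded random variables. First I would expand the sum using the representation from Remark \ref{remark: z multinomial}: since $\mathcal{Z}_{ijr}=\frac{1}{M}\sum_{m=1}^M \mathcal{X}_{ijm}(r)$, we have
\begin{equation*}
\frac{1}{N^{(1)}N^{(2)}}\sum_{i,j}\mathcal{Z}_{ijr}=\frac{1}{N^{(1)}N^{(2)}M}\sum_{i,j,m}\mathcal{X}_{ijm}(r).
\end{equation*}
The summands $\mathcal{X}_{ijm}(r)$ are independent across all triples $(i,j,m)$, are centered ($\mathbb{E}[\mathcal{X}_{ijm}(r)]=0$ by Equation \eqref{eq:first_moment_X_im}), and satisfy $|\mathcal{X}_{ijm}(r)|\leqslant 1$.

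The key quantitative input is the variance bound. By Equation \eqref{eq:second moment X im}, $\mathsf{Var}[\mathcal{X}_{ijm}(r)]=\mathcal{D}_{ijr}-\mathcal{D}_{ijr}^2\leqslant \mathcal{D}_{ijr}$, and by Lemma \ref{lem: Dijr less than fr}, $\mathcal{D}_{ijr}\leqslant f_r$. Thus each summand has variance at most $f_r$, and the averaged variance $\sigma^2$ over the $n:=N^{(1)}N^{(2)}M$ terms is likewise bounded by $f_r$. Applying Lemma \ref{lemma: bernstein ineq} with $b=1$ and this variance proxy gives, for any $t>0$,
\begin{equation*}
\mathbb{P}\left(\left|\frac{1}{n}\sum_{i,j,m}\mathcal{X}_{ijm}(r)\right|\geqslant t\right)\leqslant 2\exp\left(-\frac{n t^2/2}{f_r + t/3}\right).
\end{equation*}
I would then choose $t=C^*\sqrt{\frac{f_r\log(N_R)}{N^{(1)}N^{(2)}M}}$, so that $nt^2 = (C^*)^2 f_r\log(N_R)$, making the leading term in the exponent of order $f_r\log(N_R)$.

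The main technical point, and the place where the hypothesis $f_r\geqslant \frac{\log(N_R)}{N^{(1)}N^{(2)}M}$ is used, is to verify that the linear term $t/3$ in the Bernstein denominator does not dominate the variance term $f_r$. With the chosen $t$, the ratio $t/f_r = C^*\sqrt{\frac{\log(N_R)}{f_r N^{(1)}N^{(2)}M}}$, and the lower bound on $f_r$ guarantees this ratio is $O(1)$; hence $f_r+t/3\lesssim f_r$ and the exponent is at least $c(C^*)^2\log(N_R)$ for some absolute constant $c>0$. Taking $C^*$ large enough makes this exceed, say, $2\log(N_R)$. A union bound over all $r\in[R]\leqslant N_R$ then yields the claim with probability at least $1-o(N_R^{-1})$. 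The only subtlety to handle carefully is tracking that the lower bound on $f_r$ is exactly what keeps the sub-exponential (linear) regime of Bernstein from degrading the Gaussian-type rate; aside from this, the argument is a routine union bound.
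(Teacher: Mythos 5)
Your proposal is correct and follows essentially the same route as the paper's own proof: the identical decomposition $\mathcal{Z}_{ijr}=\frac{1}{M}\sum_m\mathcal{X}_{ijm}(r)$, the same variance bound $\mathsf{Var}[\mathcal{X}_{ijm}(r)]\leqslant \mathcal{D}_{ijr}\leqslant f_r$, the same application of Bernstein's inequality with the same choice of $t$, and the same union bound over $r\in[R]\leqslant N_R$. Your explicit remark that the hypothesis $f_r\geqslant \frac{\log(N_R)}{N^{(1)}N^{(2)}M}$ is precisely what keeps the linear Bernstein term from dominating is the same role it plays (implicitly) in the paper's argument.
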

\begin{proof}
    We note again that $\mathcal{Z}_{ijr}=\frac{1}{M}\sum_m \mathcal{X}_{ijm}(r)$ where $\mathcal{X}_{ijm}(r)$ is independent across $i\in[N^{(1)}]$, $j\in[N^{(2)}]$ and $m\in[M]$. Moreover, each $\mathcal{X}_{ijm}(r)$ is such that $|\mathcal{X}_{ijm}(r)|\leqslant 1$ and $\mathbb{E}[\mathcal{X}_{ijm}(r)]=0$, $\text{Var}[\mathcal{X}_{ijm}(r)]=\mathcal{D}_{ijr}-\mathcal{D}_{ijr}^2\leqslant\mathcal{D}_{ijr}\leqslant f_r$.
    
    Therefore, by Berstein's inequality (Lemma \ref{lemma: bernstein ineq}), for any $t>0$:
     $$ \mathbb{P}\left(\frac{1}{N^{(1)}N^{(2)}M}\left|\sum_{i,j,m} \mathcal{X}_{ijm}(r)\right| \geq t \right) \leq 2 \exp\left(-\frac{N^{(1)}N^{(2)}Mt^2/2}{f_r + t/3}\right)$$
    Choosing $t=C^*\sqrt{\frac{f_r\log(N_R)}{N^{(1)}N^{(2)}M}}$, the previous equation becomes:
    \begin{equation}
        \begin{split}
\mathbb{P}\left(\frac{1}{N^{(1)}N^{(2)}M}\left|\sum_{i,j,m} \mathcal{X}_{ijm}(r)\right| \geq C^*\sqrt{\frac{f_r\log(N_R)}{N^{(1)}N^{(2)}M}} \right) &\leq 2 \exp\left(-(C^*)^2\frac{\log(N^{(r)})/2}{1 + \frac{C^*}{3}\sqrt{\frac{\log(N_R)}{f_rN^{(1)}N^{(2)}M}}}\right)\\
&\leq \exp(-(C^* \log(N_R))
        \end{split}
    \end{equation}
    as long as $f_r\geqslant\frac{\log(N_R)}{N^{(1)}N^{(2)}M}$. By application of a simple union bound along the $R\leq N_R$ words, we prove equation~\eqref{eq:average_X}.
    
    % Equations~\ref{eq:max_X_over_j} and \ref{eq:max_X_over_i} also follow by application of a simple union bound along the first and second axis of $\Z$.
   % \par Next, for the last two inequality, we observe that $\sum_r \mathcal{X}_{ijm}(r)$ is independent across $i,j,m$ as well with $\mathbb{E}[\sum_r \mathcal{X}_{ijm}(r)]=0$, and $\text{Var}[\sum_r \mathcal{X}_{ijm}(r)]\leqslant\sum_r\mathcal{D}_{ijr}=\sum_{r}\mathbb{P}(r|i,j)=1$. Since $\mathcal{X}_{ijm}(r)=\mT_{ijm}(r)-\mathbb{E}[\mT_{ijm}(r)]$ and $\mT_{ijm}$ has only one entry equal to $1$ at some $r\in[R]$ and $\sum_r \mathbb{E}[\mT_{ijm}(r)]=\sum_r\mathcal{D}_{ijr}=1$, we have  $|\sum_r \mathcal{X}_{ijm}(r)|=0$. We then also apply Bernstein's inequality to get $$ \mathbb{P}\left(\frac{1}{Q M}\left|\sum_{q,m} X_{qsm}(r)\right| \geq t \right) \leq 2 \exp\left(-QMt^2/2\right)$$Let $t=C^*\sqrt{\frac{\log(N_R)}{QM}}$ and use union bound again to get the result.
\end{proof}
\begin{lemma} \label{lemma: zijr square} Suppose $f_r\geqslant \sqrt{\frac{\log(N_R)}{N^{(1)}N^{(2)}M}}$ for all $r\in[R]$. Then, with probability at least $1-o(N_R^{-1})$,
     \begin{align}
      &\left|\frac{1}{N^{(2)}}\sum_{j,r=1}^{N^{(2)},R}\mathcal{Z}_{ijr}^2-\frac{1}{M} (1 - \frac{1}{N^{(2)}}\sum_{j, r} \D_{ijr}^2)\right|\leqslant C^*\sqrt{\frac{\log(N_R)}{N^{(2)}M}} \label{eq:z2 over jr}\\
      &\left|\frac{1}{N^{(1)}}\sum_{i,r=1}^{N^{(1)},R}\mathcal{Z}_{ijr}^2-\frac{1}{M} (1 - \frac{1}{N^{(1)}}\sum_{i, r} \D_{ijr}^2)\right|\leqslant C^*\sqrt{\frac{\log(N_R)}{N^{(1)}M}} \label{eq:z2_over_ir}\\
      &\left|\frac{1}{N^{(1)}N^{(2)}}\sum_{i,j=1}^{N^{(1)},N^{(2)}}\mathcal{Z}_{ijr}^2-\frac{1}{M}\cdot\frac{1}{N^{(1)} N^{(2)}}\sum_{i,j}(\mathcal{D}_{ijr}-\mathcal{D}_{ijr}^2)\right|\leqslant C^*\sqrt{\frac{f_r\log(N_R)}{N^{(1)}N^{(2)}M}} +C^*\sqrt{\frac{f_r^2\log(N_R)}{N^{(1)}N^{(2)}M}} \label{eq:z2_over_ij}
      \end{align}
\end{lemma}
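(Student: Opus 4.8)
The plan is to treat each of the three inequalities as a concentration statement for a sum of \emph{independent} per-document quantities and to apply the scalar Bernstein inequality (Lemma~\ref{lemma: bernstein ineq}). For~\eqref{eq:z2 over jr} I fix $i$ and write $\frac{1}{N^{(2)}}\sum_{j,r}\mathcal{Z}_{ijr}^2=\frac{1}{N^{(2)}}\sum_{j}Y_{j}$ with $Y_j:=\sum_{r}\mathcal{Z}_{ijr}^2=\|\mathcal{Y}_{ij\star}-\mathcal{D}_{ij\star}\|_2^2$; the summands $Y_j$ are independent across $j$ since documents are sampled independently in~\eqref{eq: model multinominal}. The first step is to verify the centering: by the second-moment formula~\eqref{eq: expection of Z}, $\mathbb{E}[\mathcal{Z}_{ijr}^2]=\frac{1}{M}(\mathcal{D}_{ijr}-\mathcal{D}_{ijr}^2)$, and since $\sum_r\mathcal{D}_{ijr}=1$ by~\eqref{eq:constraints A} this gives $\frac{1}{N^{(2)}}\sum_{j,r}\mathbb{E}[\mathcal{Z}_{ijr}^2]=\frac{1}{M}(1-\frac{1}{N^{(2)}}\sum_{j,r}\mathcal{D}_{ijr}^2)$, exactly the quantity subtracted in~\eqref{eq:z2 over jr}. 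Inequalities~\eqref{eq:z2_over_ir} and~\eqref{eq:z2_over_ij} are handled identically, summing over $i$ (resp.\ over $(i,j)$ for a fixed word $r$), with summand $\mathcal{Z}_{ijr}^2$ in the last case.

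The crux is to obtain sharp enough almost-sure bounds and variance proxies for the summands that Bernstein yields the target rate. The naive bound $Y_j\le 2$ (valid since $\mathcal{Y}_{ij\star},\mathcal{D}_{ij\star}$ are probability vectors) is far too crude: it forces the conclusion only in the restrictive regime $M\lesssim N^{(2)}/\log(N_R)$. To avoid this I work on the high-probability event of Lemma~\ref{lemma: max Z}, on which $|\mathcal{Z}_{ijr}|\le\sqrt{\log(N_R)/M}$ uniformly (formally, via the truncated summands $Y_j\wedge 2\sqrt{\log(N_R)/M}$, which coincide with $Y_j$ on that event). There $Y_j\le\sqrt{\log(N_R)/M}\,\|\mathcal{Z}_{ij\star}\|_1\le 2\sqrt{\log(N_R)/M}$, furnishing the effective Bernstein bound $b$, while $\mathrm{Var}(Y_j)\le\mathbb{E}[Y_j^2]\lesssim\sqrt{\log(N_R)/M}\cdot\mathbb{E}[Y_j]\lesssim\sqrt{\log(N_R)}/M^{3/2}$. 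For~\eqref{eq:z2_over_ij} the summand is $\mathcal{Z}_{ijr}^2$; expanding $\mathcal{Z}_{ijr}=\frac{1}{M}\sum_m\mathcal{X}_{ijm}(r)$ (Remark~\ref{remark: z multinomial}) and invoking the moment bounds~\eqref{eq:second moment X im} and~\eqref{eq:fourth_moment_X_im_cross} gives $\mathbb{E}[\mathcal{Z}_{ijr}^4]\le \frac{f_r}{M^3}+\frac{3f_r^2}{M^2}$, whose two pieces produce the two terms on the right-hand side of~\eqref{eq:z2_over_ij}, while Lemma~\ref{lemma: max Z} supplies $\mathcal{Z}_{ijr}^2\le\log(N_R)/M$.

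With these ingredients the final step applies Lemma~\ref{lemma: bernstein ineq} to $\frac{1}{N^{(2)}}\sum_j(Y_j-\mathbb{E}Y_j)$ (and the analogous centered sums), takes $t=C^*\sqrt{\log(N_R)/(N^{(2)}M)}$ (resp.\ the rates stated in~\eqref{eq:z2_over_ir},~\eqref{eq:z2_over_ij}), and concludes by a union bound over the free index $i$ (resp.\ $j$, resp.\ $r$), each ranging over at most $N_R$ values. A short bookkeeping check shows that both the variance-dominated branch and the $b\,t$-dominated branch of the Bernstein exponent exceed $c\log(N_R)$ under $\log(N_R)/M\le c_0$ (as in Lemma~\ref{lemma: max Z}) and $f_r\gtrsim\sqrt{\log(N_R)/(N^{(1)}N^{(2)}M)}$, together with a mild lower bound $N^{(a)}\gtrsim(\log N_R)^2$; in particular the stated right-hand sides are conservative, since the fourth-moment computation actually buys extra factors of $M^{-1/2}$.

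The step I expect to be the main obstacle is precisely the control of the almost-sure bound and variance of the squared noise $\mathcal{Z}_{ijr}^2$ and of $\|\mathcal{Z}_{ij\star}\|_2^2$: squared multinomial deviations are heavy-tailed, their per-document values can be as large as an $O(1)$ constant, and Bernstein applied with that worst-case bound degrades the rate by a factor tied to $M$. The resolution — conditioning on the uniform bound from Lemma~\ref{lemma: max Z} to replace the $O(1)$ range by $O(\sqrt{\log(N_R)/M})$, and estimating the variance through the fourth-moment inequality~\eqref{eq:fourth_moment_X_im_cross} rather than through the range — is what makes the $\sqrt{\log(N_R)/(N^{(\cdot)}M)}$-type rates attainable across parameter regimes.
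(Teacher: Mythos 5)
Your proposal is correct in substance, but it takes a genuinely different route from the paper's proof. The paper never works with document-level summands: it expands each square as in Equation~\eqref{eq: z square expansion}, splitting $\mathcal{Z}_{ijr}^2$ into the diagonal part $\frac{1}{M^2}\sum_m \mathcal{X}_{ijm}(r)^2$ and the cross part over $m\neq m'$; the diagonal part is a sum over $N^{(a)}M$ (resp.\ $N^{(1)}N^{(2)}M$) independent, $O(1)$-bounded variables to which the ordinary Bernstein inequality (Lemma~\ref{lemma: bernstein ineq}) applies directly, while the cross part is rewritten as an average over permutations of pairings $\mathcal{X}_{ij\pi(2m-1)}(r)\mathcal{X}_{ij\pi(2m)}(r)$ and controlled by the bespoke permutation-Bernstein inequality (Lemma~\ref{lemma: Berstein involving permutation}). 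Because this exploits the $M$ independent words \emph{within} each document, the effective sample size is $N^{(a)}M$ rather than $N^{(a)}$, the range term in the Bernstein exponent is harmless, and no lower bound on $N^{(a)}$ is needed. Your route --- truncating at the document level on the event of Lemma~\ref{lemma: max Z} so the range parameter drops from $O(1)$ to $O(\sqrt{\log(N_R)/M})$, bounding the variance via the fourth-moment estimates, and applying plain Bernstein over only the $N^{(a)}$ (resp.\ $N^{(1)}N^{(2)}$) independent documents --- is more elementary (it avoids the decoupling lemma entirely), and your centering computation matches the paper's exactly. What it costs is visible precisely where you flagged it: in the $bt$-branch your exponent is only of order $\sqrt{N^{(a)}}$ (and of order $f_r\sqrt{N^{(1)}N^{(2)}}/\sqrt{\log N_R}$ for \eqref{eq:z2_over_ij} at the sharp rate), so you must add the side condition $N^{(a)}\gtrsim(\log N_R)^2$, which the lemma as stated does not carry; in regimes where one tensor dimension is small relative to $(\log N_R)^2$ (e.g.\ $N^{(2)}=O(\log N_R)$ with $R$ enormous, so $N_R=R$), your argument degrades while the paper's survives. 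Two bookkeeping items deserve one line each in a full write-up: the mean shift induced by truncation, $|\mathbb{E}[Y_j\wedge b]-\mathbb{E}[Y_j]|\leqslant 2\,\mathbb{P}(Y_j>b)$, is negligible only after enlarging the constant in Lemma~\ref{lemma: max Z} so the per-document failure probability is $N_R^{-c}$ for large $c$; and your observation that the stated right-hand sides are conservative (the diagonal contribution actually carries an extra factor of $M^{-1/2}$ or better) is consistent with the paper's own bookkeeping, where the diagonal term is similarly dominated by the cross term.
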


\begin{proof}
We note that for the squared entries $\mathcal{Z}_{ijr}^2$, the following equalities hold:
    \begin{equation}\label{eq: z square expansion}
    \begin{aligned}
    \mathcal{Z}_{ijr}^2&=\frac{1}{M^2}\sum_{m}\sum_{m'} \mathcal{X}_{ijm}(r)\mathcal{X}_{ijm'}(r)\\
    &=\frac{1}{M^2}\sum_{m} \mathcal{X}_{ijm}(r)^2+\frac{1}{M^2}\sum_{m\neq m'} \mathcal{X}_{ijm}(r)\mathcal{X}_{ijm'}(r)\\
    &=\frac{1}{M^2}\sum_{m} \mathcal{X}_{ijm}(r)^2+\frac{M-1}{M}\frac{1}{M(M-1)}\sum_{m\neq m'} \mathcal{X}_{ijm}(r)\mathcal{X}_{ijm'}(r)\\
    &=\frac{1}{M^2}\sum_{m} \mathcal{X}_{ijm}(r)^2+\frac{M-1}{M}\frac{1}{M!}\sum_{\pi\in{\Pi}[M]}\frac{1}{\lfloor M/2\rfloor}\sum_{m\in \lfloor M/2\rfloor} \mathcal{X}_{ij\pi(2m-1)}(r)\mathcal{X}_{ij\pi(2m)}(r)
    \end{aligned}
     \end{equation}
     We first note that by Remark~\ref{remark: z multinomial} Equation \eqref{eq: expection of Z}, $$\E[\Z_{ijr}^2] = \frac{1}{M} \D_{ijr}(1-\D_{ijr}). \qquad \eqref{eq: prob v in bernstein permute}  $$
Moreover, for the first term, leftmost in the sum above, we propose using Bernstein's inequality  (Lemma \ref{lemma: bernstein ineq}), while we will consider Lemma \ref{lemma: Berstein involving permutation} to bound the second summand. The use of Bernstein inequalities requires characterizing the means and variances of the corresponding random variables $\mathcal{X}_{ijm}(r)^2$. 

Consequently, we split the analysis depending on the matricization mode:
\begin{itemize}
    \item \textbf{Mode 1 \& 2 matricization.} 
As highlighted in Remark~\ref{remark:symmetry_mode12}, Modes 1 and 2 exhibit similar behaviours. We analyse here mode 2, and the results for mode 1 follow by replacing $N^{(2)}$ with $N^{(1)}$. 
\begin{enumerate}
\item {\bf Expectation:} In this case, given observation \eqref{eq: prob v in bernstein permute} above, the expectation $\frac{1}{N^{(1)}}\sum_{i,r}\Z_{ijr}^2 $ simplifies to:
\begin{equation*}
    \begin{split}
\frac{1}{N^{(1)}}\sum_{i,r}\mathbb{E}[\Z_{ijr}^2]&=   \frac{1}{N^{(1)}}\sum_{i,r}   \frac{1}{M} \D_{ijr}(1-\D_{ijr})  = \frac{1}{M} - \frac{1}{MN^{(1)}}\sum_{i,r}\D_{ijr}^2. 
    \end{split}
\end{equation*}
    \item {\textbf{First summand:}}
    Define $\tilde{\Q}_{ijm}=\sum_{r\in[R]}\mathcal{X}_{ijm}^2(r)$. It is easy to see  that $|\tilde{\Q}_{ijm}|\leqslant 2$. Indeed, letting $r_0$ denote the index for which $ \T_{ijm}(r_0) = 1$, we have: 
    $$\tilde{\Q}_{ijm} = \sum_{r\in[R]} \big( \T_{ijm}(r)  -2 \T_{ijm}(r)\D_{ijr} + \D_{ijr}^2) \leq 1 -2 \D_{ijr_0}  + 1 \leq 2.$$
    Furthermore, we have:
    $\mathbb{E}[\tilde{\Q}_{ijm}]=\sum_{r}\mathcal{D}_{ijr}(1-\mathcal{D}_{ijr})$, and 
    \begin{equation*}
\begin{split}
    \text{Var}[\tilde{\Q}_{ijm}] &= \sum_{r,r'} \E[ \mathcal{X}^2_{ijm}(r) \mathcal{X}^2_{ijm}(r')] - \E[\tilde{Q}_{ijm}]^2\\
&\leq \sum_{r}\mathcal{D}_{ijr}(1-\mathcal{D}_{ijr})(1-3\mathcal{D}_{ijr}(1-\mathcal{D}_{ijr}))\\
&+\sum_{r\neq r'}\mathcal{D}_{ijr}\mathcal{D}_{ijr'} \left( -7\mathcal{D}_{ijr}\mathcal{D}_{ijr'} + 3(\mathcal{D}_{ijr}+\mathcal{D}_{ijr'} ) -1\right)\\
&\leq \sum_{r}\mathcal{D}_{ijr} +2\sum_{r, r'}\mathcal{D}_{ijr}\mathcal{D}_{ijr'} \\
&\leqslant 3
\end{split}
    \end{equation*} 
    Therefore, by Bernstein's inequality (Lemma \ref{lemma: bernstein ineq}), for any $t>0$: 
$$\mathbb{P}\left(\left|\frac{1}{N^{(1)}M}\sum_{i,m}\tilde{\Q}_{ijm}-\mathbb{E}[\frac{1}{N^{(1)}M}\sum_{i,m}\tilde{\Q}_{ijm}]\right|\geq t\right)\leqslant 2 \exp\left(-\frac{N^{(1)}M t^2/2}{3+2t/3}\right)$$
Choosing $t=C^*\sqrt{\frac{\log (N_R)}{N^{(1)}M}}$ for some constant $C^*$, we obtain:
\begin{equation}\label{eq:1st summand}
\mathbb{P}\left(\left|\frac{1}{N^{(1)}M}\sum_{i,m}\tilde{\Q}_{ijm}-\mathbb{E}[\frac{1}{N^{(1)}M}\sum_{i,m}\tilde{\Q}_{ijm}]\right|\geq C^*\sqrt{\frac{\log (N_R)}{N^{(1)}M}} \right)\leqslant 2 \exp\left(-C \log(N_R)\right).
\end{equation}
Therefore, by a simple union bound, provided that $c$ is large enough, there exists a constant ${C}$ such that:
$$\mathbb{P}\left(\forall j \in [N^{(2)}]: \left|\frac{1}{N^{(1)}M}\sum_{i,m}\tilde{\Q}_{ijm}-\mathbb{E}[\frac{1}{N^{(1)}M}\sum_{i,m}\tilde{\Q}_{ijm}]\right|\geq c\sqrt{\frac{\log (N_R)}{N^{(1)}M}} \right)\leqslant  e^{-{C} \log(N_R)}.$$
    \item {\textbf{Second summand:}} {For the terms involving $m\neq m'$}, we define $\Q_{ijmm'}=\sum_{r}\mathcal{X}_{ijm}(r)X_{ijm'}(r)$. In this case, it is also possible to verify that, for fix $i,j,m,m'$,
    $$|\Q_{ijmm'}| \leqslant 2,\qquad \mathbb{E}[\Q_{ijmm'}]=0,$$ 
        \begin{equation*}
\begin{split}
    \text{Var}[{\Q}_{ijmm'}] &= \sum_{r,r'} \E[ \mathcal{X}_{ijm}(r) \mathcal{X}_{ijm'}(r)\mathcal{X}_{ijm}(r')\mathcal{X}_{ijm'}(r')] - \E[{\Q}_{ijmm'}]^2\\
    &= \sum_{r,r'} \E[ \mathcal{X}_{ijm}(r) \mathcal{X}_{ijm}(r')]\E[  \mathcal{X}_{ijm'}(r)\mathcal{X}_{ijm'}(r')]\\
&= \sum_{r}\mathcal{D}_{ijr}^2(1-\mathcal{D}_{ijr})^2 -\sum_{r\neq r'}\mathcal{D}_{ijr}^2\mathcal{D}_{ijr'}^2\\
&\leqslant 1.
\end{split}
    \end{equation*} 

Applying Lemma \ref{lemma: Berstein involving permutation}, we obtain that, with probability $1-\mO(\frac{1}{N_R})$ 
\begin{equation}\label{eq:2nd summand}
    \left|\frac{1}{N^{(1)}M!}\sum_{i, \pi}\sum_{m}\Q_{ij,\pi}\right|\leqslant C^* \sqrt{\frac{ M\log (N_R)}{N^{(1)}}}.
\end{equation}
By a simple union bound argument, we thus have:
$$ \P[\forall m, m':\quad    \left|\frac{1}{N^{(1)}M!}\sum_{i, \pi}\sum_{m}\Q_{ij,\pi}\right|\leqslant C^* \sqrt{\frac{ M\log (N_R)}{N^{(1)}}}]\leq \exp(-C\log(N_R)). $$
Adding Equations~\eqref{eq:1st summand} and \eqref{eq:2nd summand} yields the results in \eqref{eq:z2 over jr} and subsequently, by substitution of $i$ to $j$ and $N^{(1)}$ by $N^{(2)}$, in \eqref{eq:z2_over_ir}.

\end{enumerate}
\item \textbf{Mode 3 matricization} 
\begin{enumerate}
    \item {\textbf{First summand:}} we have, for any tuple $i,j,r$: $|\mathcal{X}_{ijm}(r)^2|\leqslant 2$, $\mathbb{E}[\mathcal{X}_{ijm}(r)^2]=\mathcal{D}_{ijr}-\mathcal{D}_{ijr}^2$ and $\text{Var}[\mathcal{X}_{ijm}(r)^2]\leqslant \mathcal{D}_{ijr}\leqslant f_r$ (see Appendix~\ref{app:propertiesone hot}, noting that here documents are indexed by the tuple $(i,j)$).
    
    By direct application of the Bernstein Inequality (Lemma \ref{lemma: bernstein ineq}), as long as $f_r\geqslant\frac{\log (N_R)}{MN^{(1)}N^{(2)}}$, then with probability at least $1-\mO(\frac{1}{N_R}):$
    \begin{equation}\label{eq:1st summand mode3}
    \left|\frac{1}{MN^{(1)}N^{(2)}}\sum_{ijm}\mathcal{X}_{ijm}(r)^2-\mathbb{E}[\frac{1}{MN^{(1)}N^{(2)}}\sum_{ijm}\mathcal{X}_{ijm}(r)^2]\right|\leqslant C^*\sqrt{\frac{f_r\log (N_R)}{MN^{(1)}N^{(2)}}}.
        \end{equation}
    \item {\textbf{Second summand:}} For $m\neq m'$, we consider $\Q^r_{ijm} = \X_{ij(2m-1)}(r)\X_{ij(2m)}(r)$.
    It is easy to see that for any fixed $r$, the $\Q^r$s are such that, for any tuple of indices $(i,j,m)$:
    $|\Q^r_{ijm}| = |\X_{ij(2m-1)}(r)\X_{ij(2m)}(r)|\leqslant 1$, $\E[\Q^r_{ijm}]=\mathbb{E}[\X_{ij(2m-1)}(r)\X_{ij(2m)}(r)]=0$ and $\text{Var}[\X_{ij(2m-1)}(r)\X_{ij(2m)}(r)]=(\mathcal{D}_{ijr}-\mathcal{D}_{ijr}^2)^2\leqslant \mathcal{D}_{ijr}\mathcal{D}_{ijr}\leqslant  f_r^2$. Let $\tilde M=\lfloor M/2\rfloor$. Applying Lemma \ref{lemma: Berstein involving permutation}, for any  $t>0$:
     $$   \mathbb{P}\left(\left|\frac{1}{N^{(1,2)}}\frac{1}{M!}\sum_{\pi\in \Pi[M]}\sum_{i,j}\sum_{m=1}^{\tilde M}\X_{ij(2m-1)}(r)\X_{ij(2m)}(r)\right|\geqslant t\right)\leqslant\exp\left(-\frac{N^{(1,2)}t^2/2}{ \tilde M f_r^2+t/3}\right)$$
Choosing $t=C^*\sqrt{\frac{f_r^2\tilde M\log (N
_R)}{N^{(1,2)}}}$ as long as  $f_r\geqslant c^*\sqrt{\frac{\log (N_R)}{MN^{(1)}N^{(2)}}}$, we obtain a bound on the second term that is similar to \eqref{eq:1st summand mode3}, and adding the two, we obtain result~\eqref{eq:z2_over_ij}. 
\end{enumerate}
\end{itemize}

\end{proof}
%\begin{lemma}\end{lemma}Given $r\in[R]$, we apply the result in to bound the $\ell_2$ norm of$\mZ^{(3)}_{r}$:\begin{align*}\|\mZ^{(3)}_{r\star}\|_2^2=&\sum_{i,j=1}^{N^{(1)},N^{(2)}}\mathcal{Z}_{ijr}^2\\=&\sum_{i,j=1}^{N^{(1)},N^{(2)}} \mathcal{Z}_{ijr}^2-\frac{1}{M}\sum_{i,j=1}^{N^{(1)},N^{(2)}} \left(\mathcal{D}_{ijr}\mathcal{D}_{ijr}^2\right)+\frac{1}{M}\sum_{i,j=1}^{N^{(1)},N^{(2)}} \left(\mathcal{D}_{ijr}\mathcal{D}_{ijr}^2\right)\\\overset{\text{\eqref{}}}{\leqslant}& \sqrt{\frac{f_rN^{(1)}N^{(2)}\log N_R}{M}}+\frac{f_rN^{(1)}N^{(2)}}{M}\end{align*}

\begin{lemma}\label{lemma: zz} Suppose $f_r\geqslant \sqrt{\frac{\log(N_R)}{N^{(1)}N^{(2)}M}}$ for all $r\in[R]$. Then, with probability at least $1-o(N_R^{-1})$:
     \begin{align}
     &  \left|\frac{1}{N^{(2)}}\sum_{j,r=1}^{N^{(2)},R}\mathcal{Z}_{ijr}\mathcal{Z}_{i'jr}\right|\leqslant C^*\sqrt{\frac{\log(N_R)}{N^{(2)}M}} \text{ for all } i\neq i' \label{eq:z cross over jr}\\
     &  \left|\frac{1}{N^{(1)}}\sum_{i,r=1}^{N^{(1)},R}\mathcal{Z}_{ijr}\mathcal{Z}_{ij'r}\right|\leqslant C^*\sqrt{\frac{\log(N_R)}{N^{(1)}M}} \text{ for all } j\neq j' \label{eq:z cross over ir}\\
     &
    \left|\frac{1}{N^{(1)}N^{(2)}}\sum_{i,j=1}^{N^{(1)},N^{(2)}}\mathcal{Z}_{ijr}\mathcal{Z}_{ijr'}+\frac{1}{M}\mathcal{D}_{ijr}\mathcal{D}_{ijr'}  \right|\leqslant C^*\sqrt{\frac{f_r f_{r'}\log(N_R)}{N^{(1)}N^{(2)}M}} \text{ for all } r\neq r'
     \end{align}
     
\end{lemma}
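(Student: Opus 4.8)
The plan is to reduce each of the three inner-product sums to a sum of independent, mean-zero, bounded random variables and then invoke the Bernstein inequalities (Lemmas~\ref{lemma: bernstein ineq} and \ref{lemma: Berstein involving permutation}) together with a union bound over the $O(N_R^2)$ index pairs, exactly in the spirit of the proof of Lemma~\ref{lemma: zijr square}. The key structural distinction is that the two bounds over distinct documents (the cross-reviewer sum with $i\neq i'$ and the cross-paper sum with $j\neq j'$) behave differently from the cross-word sum ($r\neq r'$): in the first two cases the two noise factors come from independent documents and each summand has mean zero, whereas in the third case the two factors share the same document $(i,j)$, so the diagonal ($m=m'$) contribution carries the nonzero mean $-\frac1M\mathcal{D}_{ijr}\mathcal{D}_{ijr'}$ recorded in Eq.~\eqref{eq: expection of Z}, which is precisely the centering term appearing in the statement.

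For the first two bounds I would fix $i\neq i'$ and condition on the randomness of reviewer $i'$, whose word variables are independent of reviewer $i$'s. Writing $\mathcal{Z}_{ijr}=\frac1M\sum_m\mathcal{X}_{ijm}(r)$, the sum becomes $\frac{1}{N^{(2)}}\sum_{j,r}\mathcal{Z}_{i'jr}\mathcal{Z}_{ijr}=\frac{1}{N^{(2)}M}\sum_{j,m}U_{jm}$ with $U_{jm}:=\sum_r\mathcal{Z}_{i'jr}\mathcal{X}_{ijm}(r)$. Conditionally on reviewer $i'$ these $U_{jm}$ are independent across $(j,m)$, have mean zero, and (using the one-hot structure) satisfy $|U_{jm}|\leq 2\max_r|\mathcal{Z}_{i'jr}|$ and $\text{Var}(U_{jm})\leq\sum_r\mathcal{Z}_{i'jr}^2\,\mathcal{D}_{ijr}$. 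The crux is to control these on a high-probability event for reviewer $i'$: Lemma~\ref{lemma: max Z} gives $\max_r|\mathcal{Z}_{i'jr}|\lesssim\sqrt{\log N_R/M}$, and Lemma~\ref{lemma: zijr square} (Eq.~\eqref{eq:z2 over jr}) gives $\frac{1}{N^{(2)}}\sum_{j,r}\mathcal{Z}_{i'jr}^2\lesssim\frac1M$, so the averaged conditional variance is of order $1/M$ rather than the naive $R/M$. Feeding these into Bernstein's inequality (Lemma~\ref{lemma: bernstein ineq}) at the deviation level $\sqrt{\log N_R/(N^{(2)}M)}$ yields the bound for each fixed pair; a union bound over the at most $N_R^2$ pairs $(i,i')$ and over the reviewer-$i'$ good events gives the uniform statement, and the $j\neq j'$ case is identical by the symmetry of modes 1 and 2 (Remark~\ref{remark:symmetry_mode12}).

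For the third bound I would follow verbatim the mode-3 portion of Lemma~\ref{lemma: zijr square}: expand $\mathcal{Z}_{ijr}\mathcal{Z}_{ijr'}=\frac{1}{M^2}\sum_{m,m'}\mathcal{X}_{ijm}(r)\mathcal{X}_{ijm'}(r')$ and split into the diagonal $m=m'$ and off-diagonal $m\neq m'$ parts. The diagonal sum $\frac{1}{M^2}\sum_m\mathcal{X}_{ijm}(r)\mathcal{X}_{ijm}(r')$ has mean $-\frac1M\mathcal{D}_{ijr}\mathcal{D}_{ijr'}$ by Eq.~\eqref{eq:second moment X im_cross}, bounded increments, and variance $\lesssim\mathcal{D}_{ijr}\mathcal{D}_{ijr'}\leq f_rf_{r'}$, so Bernstein controls its centered fluctuation; the off-diagonal part is a mean-zero degenerate bilinear form in the word pairs, handled by the permutation Bernstein inequality (Lemma~\ref{lemma: Berstein involving permutation}) with the same variance proxy $f_rf_{r'}$. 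Under the hypothesis $f_r\geq\sqrt{\log N_R/(N^{(1)}N^{(2)}M)}$ the variance term dominates and both pieces are $\lesssim\sqrt{f_rf_{r'}\log N_R/(N^{(1)}N^{(2)}M)}$; a union bound over the $O(R^2)$ word pairs finishes the argument.

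I expect the main obstacle to be the first two bounds, specifically the verification that the conditional variance of the bilinear cross-document sum is of order $1/M$ and not $R/M$. Naively bounding each $\mathcal{Z}_{i'jr}^2$ by its uniform envelope $\log N_R/M$ would lose a spurious factor of $R$; avoiding this requires feeding the sharp second-moment control of Lemma~\ref{lemma: zijr square} into the conditional Bernstein step, and arranging the conditioning so that the good event for reviewer $i'$ leaves the independence and distribution of reviewer $i$'s variables undisturbed.
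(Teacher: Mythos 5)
Your treatment of the third bound ($r\neq r'$) coincides with the paper's proof: the same split of $\mathcal{Z}_{ijr}\mathcal{Z}_{ijr'}$ into the diagonal $(m=m')$ and off-diagonal $(m\neq m')$ parts as in \eqref{eq: z square expansion}, with the scalar Bernstein inequality (Lemma \ref{lemma: bernstein ineq}) absorbing the nonzero diagonal mean $-\frac{1}{M}\mathcal{D}_{ijr}\mathcal{D}_{ijr'}$ and the permutation Bernstein inequality (Lemma \ref{lemma: Berstein involving permutation}) handling the $m\neq m'$ part, both with variance proxy $f_rf_{r'}$. For the two cross-document bounds, however, you take a genuinely different route. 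The paper never conditions: it keeps both documents random, defines $\Q_{jmm'}=\sum_r\mathcal{X}_{ijm}(r)\mathcal{X}_{i'jm'}(r)$, notes that independence of documents $i$ and $i'$ makes every such product mean zero with $|\Q_{jmm'}|\leqslant 1$ (Cauchy--Schwarz against the one-hot structure) and variance $\sum_{r_1,r_2}\mathcal{D}_{ijr_1}\mathcal{D}_{ijr_2}\mathcal{D}_{i'jr_1}\mathcal{D}_{i'jr_2}=O(1)$, and then applies scalar Bernstein to the $m=m'$ terms and the permutation-decoupling lemma to the $m\neq m'$ terms, exactly as it did for the squares in Lemma \ref{lemma: zijr square}. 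Your alternative --- condition on document $i'$, view the sum as a conditionally linear form $\sum_{j,m}U_{jm}$ in document $i$'s word indicators with frozen coefficients $\mathcal{Z}_{i'jr}$, and control the envelope and conditional variance via Lemmas \ref{lemma: max Z} and \ref{lemma: zijr square} --- is also valid: the $U_{jm}$ are conditionally independent across $(j,m)$ with mean zero, the envelope $|U_{jm}|\leqslant 2\max_r|\mathcal{Z}_{i'jr}|$ and the variance bound $\sum_r\mathcal{Z}_{i'jr}^2\mathcal{D}_{ijr}$ check out, and the good event for $i'$ is measurable with respect to $i'$'s data alone, so document $i$'s conditional law is undisturbed, as you correctly flag. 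What each approach buys: the paper's decoupling argument is self-contained (it needs no second-moment input and no conditioning bookkeeping, and treats all three bounds uniformly), while yours dispenses with the permutation lemma for the cross-document terms entirely and yields a far stronger exponent (of order $M\log N_R$, or $\sqrt{N^{(1,2)}M\log N_R}$ in the large-$M$ regime, rather than $\log N_R$), making the union bound over the $O(N_R^2)$ pairs trivial.

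One quantitative correction to your variance step: \eqref{eq:z2 over jr} gives $\frac{1}{N^{(2)}}\sum_{j,r}\mathcal{Z}_{i'jr}^2\lesssim \frac{1}{M}+\sqrt{\frac{\log N_R}{N^{(2)}M}}$, not $\frac{1}{M}$; the deviation term dominates once $M\gtrsim N^{(2)}/\log N_R$. This does not break your argument --- at the target level $t=C\sqrt{\log N_R/(N^{(2)}M)}$ the conditional variance is then $\sigma^2\lesssim t$, and Bernstein's exponent is still of order $nt\asymp\sqrt{N^{(2)}M\log N_R}\gg \log N_R$ --- but the claim ``of order $1/M$'' should be replaced by this two-term bound. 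Alternatively you can avoid Lemma \ref{lemma: zijr square} altogether via the elementary estimate $\sum_r\mathcal{Z}_{i'jr}^2\leqslant \max_r|\mathcal{Z}_{i'jr}|\cdot\sum_r|\mathcal{Z}_{i'jr}|\leqslant 2\max_r|\mathcal{Z}_{i'jr}|\lesssim\sqrt{\log N_R/M}$ on the event of Lemma \ref{lemma: max Z}, which also suffices since the paper assumes $M\gtrsim\log N_R$.
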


\begin{proof}
We rewrite the product $\mathcal{Z}_{ijr}\mathcal{Z}_{i'jr}$, $\mathcal{Z}_{ijr}\mathcal{Z}_{ij'r}$ and $\mathcal{Z}_{ijr}\mathcal{Z}_{ijr'}$ for $i\neq i'$, $j\neq j'$, and $r\neq r'$ in a similar way to Equation \eqref{eq: z square expansion}.
Like in the proof of Lemma \ref{lemma: zijr square}, we use Bernstein inequalities  (Lemma \ref{lemma: bernstein ineq} and \ref{eq: bernstein permutation indep sum}) to link two terms involved in \eqref{eq: z square expansion}.
\begin{itemize}
    \item \textbf{Mode 1 \& 2 matricization} 
Since mode-1 and  mode-2 matricizations (Equations \eqref{eq:z cross over jr} and \eqref{eq:z cross over ir} respectively) have similar behaviors, we provide here the proof for mode-1 matricization.
\begin{enumerate}
\item Adapting Equation \eqref{eq: z square expansion}, we have for all $i \neq i'$:
\begin{equation*}
    \begin{split}
         \left|\frac{1}{N^{(2)}}\sum_{j,r=1}^{N^{(2)},R}\mathcal{Z}_{ijr}\mathcal{Z}_{i'jr}\right|&=\left|\frac{1}{M^2 N^{(2)}}\sum_{j,r=1}^{N^{(2)},R} \sum_{m,m'}^{M}\mathcal{X}_{ijm}(r)\mathcal{X}_{i'jm'}(r) \right|
    \end{split}
\end{equation*}
For a fixed tuple $i,i'$ such that $i\neq i'$, let $\Q_{jmm'}=\sum_{r\in[R]}\mathcal{X}_{ijm}(r)\mathcal{X}_{i'jm'}(r)$.
Then:
\begin{equation*}
    \begin{split}
         \left|\frac{1}{N^{(2)}}\sum_{j,r=1}^{N^{(2)},R}\mathcal{Z}_{ijr}\mathcal{Z}_{i'jr}\right|&=\left|\frac{1}{M^2 N^{(2)}}\sum_{j}^{N^{(2)}} \sum_{m,m'=1}^{M}\Q_{jmm'}\right|\\
         &=\left|\frac{1}{M^2 N^{(2)}}\sum_{j}^{N^{(2)}} \sum_{m}^{M}\Q_{jmm} + \frac{1}{M^2 N^{(2)}}\sum_{j}^{N^{(2)}} \sum_{m\neq m'}^{M}\Q_{jmm'}\right|
    \end{split}
\end{equation*}

    \item  By direct application of Cauchy-Schwarz:
    $$|\Q_{jmm}|\leq \sqrt{\sum_r\mathcal{X}_{ijm}(r)^2} \sqrt{\sum_r \mathcal{X}_{ijm'}(r)^2} \leq \sqrt{\sum_r\mathcal{X}_{ijm}(r)} \sqrt{\sum_r\mathcal{X}_{ijm'}(r)}= 1.$$
    By independence of the variables $\mathcal{X}_{ijm}(r)$ for $i \neq i'$, we have: 
    $\mathbb{E}[\Q_{jmm}]=0$, and 
    \begin{equation*}
    \begin{aligned}
    \text{Var}[\Q_{jmm}]=&\sum_{r_1}\sum_{r_2}\text{Cov}\left(\mathcal{X}_{ijm}(r_1)\mathcal{X}_{i'jm}(r_1),\mathcal{X}_{ijm}(r_2)\mathcal{X}_{i'jm}(r_2)\right)\\
    \leqslant&\sum_{r_1}\sum_{r_2}\mathcal{D}_{ijr_1}\mathcal{D}_{ijr_2}\mathcal{D}_{i'jr_1}\mathcal{D}_{i'jr_2}\leqslant c
        \end{aligned}
\end{equation*}
    By Bernstein's inequality Lemma \ref{lemma: bernstein ineq}, we have 
$$\mathbb{P}\left(\left|\frac{1}{N^{(2)}M}\sum_{j,m}\Q_{ii'jm}\right|\geq t\right)\leqslant 2 \exp\left(-\frac{N^{(2)}M t^2/2}{c+t/3}\right)$$
We then choose $t=c\sqrt{\frac{\log (N_R)}{N^{(2)}M}}$ to bound the first term.
    \item For $m\neq m'$, $\Q_{jmm'}=\sum_{r}\mathcal{X}_{ijm}(r)X_{i'jm'}(r)$ also verifies $|\Q_{jmm'}|\leqslant 1$. $\mathbb{E}[\Q_{jmm'}]=0$, $\text{Var}[\Q_{jmm'}]\leqslant c$.
By application of Lemma \ref{lemma: Berstein involving permutation}, we obtain that with probability at least $1-o(N_R^{-1})$: 
$$\left|\frac{1}{N^{(2)}M!}\sum_{j, \pi}\sum_{m}Q_{i'ij,\pi}\right|\leqslant C^* \sqrt{\frac{ M\log (N_R)}{N^{(2)}}}$$

\end{enumerate}
\item \textbf{Mode 3 matricization}. We treat the case of Mode 3 matricization separately, due to its row-stochastic structure.
\begin{enumerate}
    \item $|\mathcal{X}_{ijm}(r)\mathcal{X}_{ijm}(r')|\leqslant 1$, $\mathbb{E}[\mathcal{X}_{ijm}(r)\mathcal{X}_{ijm}(r')]=-\mathcal{D}_{ijr}\mathcal{D}_{ijr'}$ and $\text{Var}[\mathcal{X}_{ijm}(r)\mathcal{X}_{ijm}(r')]\leqslant  f_rf_{r'}$. By Lemma \ref{lemma: bernstein ineq}, if $f_r\geqslant\sqrt{\frac{\log (N_R)}{MN^{(1)}N^{(2)}}}$, with high probability,
    $$\left|\frac{1}{MN^{(1)}N^{(2)}}\sum_{ijm}\mathcal{X}_{ijm}(r)\mathcal{X}_{ijm}(r')-\mathbb{E}[\star]\right|\leqslant C^*\sqrt{\frac{f_{r'}f_r\log (N_R)}{MN^{(1)}N^{(2)}}}$$
    \item For $m\neq m'$, consider $\X_{ij(2m-1)}(r)\X_{ij(2m)}(r')$ such that
    $|\X_{ij(2m-1)}(r)\X_{ij(2m)}(r')|\leqslant 1$, and $\mathbb{E}[\X_{ij(2m-1)}(r)X_{ij(2m)}(r')]=0$ and $\text{Var}[\X_{ij(2m-1)}(r)\X_{ij(2m)}(r')]\leqslant \mathcal{D}_{ijr}\mathcal{D}_{ijr'}\leqslant  f_rf_{r'}$. Let $\tilde M=\lfloor M/2\rfloor$. Applying Lemma \ref{lemma: Berstein involving permutation}, 
     $$   \mathbb{P}\left(\left|\frac{1}{N^{(1,2)}}\frac{1}{M!}\sum_{\pi\in \Pi[M]}\sum_{i,j}\sum_{m=1}^{\tilde M}\X_{ij(2m-1)}(r)\X_{ij(2m)}(r')\right|\geqslant t\right)\leqslant\exp\left(-\frac{N^{(1,2)}t^2/2}{ \tilde M f_rf_{r'}+t/3}\right)$$
Let $t=C^*\sqrt{\frac{f_rf_{r'}\tilde M\log (N
_R)}{N^{(1,2)}}}$ if  $f_r\geqslant c^*\sqrt{\frac{\log (N_R)}{MN^{(1)}N^{(2)}}}$ to bound the second term. 
\end{enumerate}
 \end{itemize}
\end{proof}
\begin{lemma}\label{lemma: zz-Ezz l2 bound}
    
Under the condition in Lemma \ref{lemma: zijr square}, with probability at least $1-o(N_R^{-1})$, we have
    \begin{align}
       \|\mZ^{(1)}\mZ^{(1)\top}-\mathbb{E}[\mZ^{(1)}\mZ^{(1)\top}]\|_F\leqslant C^*\left(N^{(1)}\sqrt{\frac{N^{(2)}\log N_R}{M}}\right) \label{eq:norm z1z1t} \\
       \|\mZ^{(2)}\mZ^{(2)\top}-\mathbb{E}[\mZ^{(2)}\mZ^{(2)\top} ]\|_F\leqslant C^*\left(N^{(2)}\sqrt{\frac{N^{(1)}\log N_R}{M}}\right)\label{eq:norm z2z2t}\\
       \|\mZ^{(3)}\mZ^{(3)\top}-\mathbb{E}[\mZ^{(3)}\mZ^{(3)\top} ]\|_F\leqslant C^*\left(\sqrt{\frac{N^{(1,2)}\log N_R}{M}}\right) \label{eq:norm z3z3t}
    \end{align}
\end{lemma}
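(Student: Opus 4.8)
The plan is to reduce the three Frobenius‑norm bounds to the entry‑wise deviation estimates already established in Lemmas~\ref{lemma: zijr square} and~\ref{lemma: zz}, and then to aggregate these by summing squares over all entries of the square matrix $\mZ^{(a)}\mZ^{(a)\top}$. The first step is to split the entries of $\mZ^{(a)}\mZ^{(a)\top}$ into the diagonal entries, which have the form $\sum \mathcal{Z}_{\cdot}^2$, and the off‑diagonal entries, which have the form $\sum \mathcal{Z}_{\cdot}\mathcal{Z}_{\cdot}$. I would then observe that the centering constants appearing in Lemmas~\ref{lemma: zijr square} and~\ref{lemma: zz} coincide exactly with the corresponding entries of $\mathbb{E}[\mZ^{(a)}\mZ^{(a)\top}]$ computed in Lemma~\ref{lemma: expectation of zz} (e.g.\ for mode $1$ the diagonal centering $\tfrac1M(N^{(2)}-\|\mD^{(1)}_{i\star}\|_2^2)$ matches, and the off‑diagonal expectation is $0$). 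Consequently each entry of the deviation matrix is controlled by exactly one pointwise bound, and I would condition throughout on the intersection of the events in those two lemmas, which holds with probability $1-o(N_R^{-1})$.

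For modes $1$ and $2$, multiplying the bounds \eqref{eq:z2 over jr} and \eqref{eq:z cross over jr} by $N^{(2)}$ shows that every diagonal \emph{and} every off‑diagonal entry of the mode‑$1$ deviation matrix is at most $C^*\sqrt{N^{(2)}\log(N_R)/M}$ in absolute value. Since $\|\cdot\|_F^2$ is the sum of squared entries, and there are $N^{(1)}$ diagonal and $N^{(1)}(N^{(1)}-1)$ off‑diagonal entries, the off‑diagonal terms dominate and give $\|\mZ^{(1)}\mZ^{(1)\top}-\mathbb{E}[\mZ^{(1)}\mZ^{(1)\top}]\|_F^2 \le C(N^{(1)})^2\,N^{(2)}\log(N_R)/M$; taking square roots yields \eqref{eq:norm z1z1t}, and the bound \eqref{eq:norm z2z2t} follows by the symmetric roles of the first two modes (Remark~\ref{remark:symmetry_mode12}).

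For mode $3$ the same scheme applies but the pointwise bounds are weighted by the word frequencies $f_r$: scaling \eqref{eq:z2_over_ij} and the cross bound in Lemma~\ref{lemma: zz} by $N^{(1)}N^{(2)}$ gives a diagonal deviation $\lesssim \sqrt{f_r N^{(1)}N^{(2)}\log(N_R)/M}$ (using $f_r\le 1$ so that the $\sqrt{f_r}$ term dominates the $f_r$ term) and an off‑diagonal deviation $\lesssim \sqrt{f_r f_{r'} N^{(1)}N^{(2)}\log(N_R)/M}$. Summing the squares then produces the factor $\sum_r f_r + \sum_{r\neq r'} f_r f_{r'}$. The key point is that, because each column of $\mA^{(3)}$ sums to $1$, one has $\sum_r f_r = K^{(3)}$ and hence $\sum_{r\neq r'} f_r f_{r'}\le (K^{(3)})^2$, both of which are constants under the fixed‑$K^{(3)}$ convention. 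This collapses the weighted sum to a constant multiple of $N^{(1)}N^{(2)}\log(N_R)/M$, giving \eqref{eq:norm z3z3t}.

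I expect the main subtlety to be the mode‑$3$ aggregation: unlike modes $1$ and $2$, where a uniform per‑entry bound over $O((N^{(a)})^2)$ entries suffices, here the naive count would give a spurious $R$ or $R^2$ factor, and the bound only closes because the frequency weights $f_r$ are summable to the constant $K^{(3)}$. The remaining work is routine verification that the centering terms align with $\mathbb{E}[\mZ^{(a)}\mZ^{(a)\top}]$ and bookkeeping of the constants $C^*$.
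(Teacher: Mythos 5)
Your proposal is correct and follows essentially the same route as the paper's proof: both reduce $\|\mZ^{(a)}\mZ^{(a)\top}-\mathbb{E}[\mZ^{(a)}\mZ^{(a)\top}]\|_F^2$ to a sum of squared entrywise deviations controlled by Lemmas~\ref{lemma: zijr square} and~\ref{lemma: zz} (whose centerings match the entries of $\mathbb{E}[\mZ^{(a)}\mZ^{(a)\top}]$ from Lemma~\ref{lemma: expectation of zz}), and for mode~3 both close the bound via the summability $\sum_r f_r = K^{(3)}$ rather than a raw count over $R^2$ entries. One minor slip: $f_r\leqslant K^{(3)}$, not $f_r\leqslant 1$, but since $K^{(3)}$ is fixed the domination $f_r\lesssim\sqrt{f_r}$ still holds up to constants and the argument goes through unchanged.
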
 
\begin{proof}
We first start with Equation~\eqref{eq:norm z1z1t}. We have:
$$\|\mZ^{(1)}\mZ^{(1)\top} -\mathbb{E}[\mZ^{(1)}\mZ^{(1)\top}]\|_F^2=\sum_{i,i'\in[N^{(1)}]}\left(\sum_{j,r}\mathcal{Z}_{ijr}\mathcal{Z}_{i'jr}-\mathbb{E}[\mZ^{(1)}\mZ^{(1)\top}]\right)^2.$$
%Note $\|\mZ^{(1)}\mZ^{(1)\top}\|_F^2=\sum_{i\in[N^{(1)}]} \left(\sum_{j,r}\mathcal{Z}_{ijr}^2\right)^2+\sum_{i\neq i'} \left(\sum_{j,r}\mathcal{Z}_{ijr}\mathcal{Z}_{i'jr}\right)^2$. 
By Lemmas \ref{lemma: zijr square} and \ref{lemma: zz}, with probability at least $1-o(N_R^{-1})$, we have
    \begin{align*}
        \|\mZ^{(1)}\mZ^{(1)\top}-\mathbb{E}[\mZ^{(1)}\mZ^{(1)\top} ]\|_F^2&\lesssim \sum_{i\in[N^{(1)}]}\frac{N^{(2)}\log N_R}{M}+\sum_{i\neq i'} \frac{N^{(2)}\log N_R}{M}\\
        &\lesssim \left(N^{(1)}\right)^2\frac{N^{(2)}\log N_R}{M}.
%\leqslant&\sum_{i\in[N^{(2)}]}C^*\frac{N^{(2)}\log(N_R)}{M}+2\left(\frac{1}{M^2}\sum_{j,r,m}\mathcal{D}_{ijr}(1-\mathcal{D}_{ijr})\right)^2\\&+\sum_{i\neq i} C^*\frac{N^{(2)}\log N_{R}}{M}\\\lesssim&\left(N^{(1)}\right)^2\frac{N^{(2)}\log(N_R)}{M}+\frac{N^{(1)}(N^{(2)})^2}{M^2}
    \end{align*}
 Equation~\eqref{eq:norm z2z2t} is obtained in a similar fashion, due to the symmetry in the roles of $\mZ^{(2)}$ and $\mZ^{(1)}.$
 Finally, for Equation ~\eqref{eq:norm z3z3t}, by Lemmas \ref{lemma: zijr square} and \ref{lemma: zz}:
    \begin{align*}
        \|\mZ^{(3)}\mZ^{(3)\top}-\mathbb{E}[\mZ^{(3)}\mZ^{(3)\top}]\|_F^2\lesssim&\sum_{r\in[R]} \frac{(f_r+f_r^2)N^{(1)} N^{(2)}\log(N_R)}{M}+\sum_{r\neq r'} \frac{N^{(1)} N^{(2)}f_rf_{r'}\log N_{R}}{M}\\
        \lesssim& N^{(1)} N^{(2)}\frac{\log(N_R)}{M}
        %\sum_{r\in[R]}C^*\frac{(f_r+f_r^2)N^{(1,2)}\log(N_R)}{M}+2\left(\frac{1}{M^2}\sum_{i,j,m}\mathcal{D}_{ijr}(1-\mathcal{D}_{ijr})\right)^2\\&+\sum_{r\neq r'} C^*\frac{N^{(1,2)}f_rf_{r'}\log N_{R}}{M}+\left(\frac{1}{{M}}\sum_{i,j}\mathcal{D}_{ijr}\mathcal{D}_{ijr'}\right)^2\\\lesssim&\left(N^{(1,2)}\right)\frac{\log(N_R)}{M}+\left(\frac{N^{(1,2)}}{M}\right)^2
    \end{align*}
\end{proof}

\begin{proof}
    The proof of this lemma follows directly from Lemma \ref{lemma: zz-Ezz l2 bound}
\end{proof}
\begin{lemma}\label{lemma: dz} Given $f_r\geqslant \frac{\log(N_R)}{N^{(1)}N^{(2)}M}$ for all $r$, with probability at least $1-o(N_R^{-1})$
\begin{align*}
   \left| \frac{1}{N^{(1)}N^{(2)}}\sum_{i,j=1}^{N^{(1)},N^{(2)}}\mathcal{Z}_{ijr} \mW^{(3)}_{k,(ij)} \right|&\leqslant C^*\sqrt{\frac{f_r\log(N_R)}{N^{(1)}N^{(2)}M}}\text{ for all } r\in[R], k\in[K^{(3)}]
  %\\| \frac{1}{N^{(2)}}\sum_{jr}\mathcal{Z}_{ijr} \mW^{(2)}_{k,(jr)} |&\leqslant C^*\sqrt{\frac{\log(N_R)}{N^{(2)}M}}\\| \frac{1}{N^{(1)}}\sum_{ir}\mathcal{Z}_{ijr} \mW^{(1)}_{k,(ir)} |&\leqslant C^*\sqrt{\frac{\log(N_R)}{N^{(1)}M}}
\end{align*}
\end{lemma}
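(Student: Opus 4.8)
The plan is to treat the target quantity as a normalized sum of independent, bounded, mean-zero random variables and apply Bernstein's inequality (Lemma~\ref{lemma: bernstein ineq}) directly, in close parallel to the proof of Lemma~\ref{lemma: sumij zij}. First I would recall from Remark~\ref{remark: z multinomial} that $\mathcal{Z}_{ijr} = \frac{1}{M}\sum_{m=1}^M \mathcal{X}_{ijm}(r)$, where the variables $\mathcal{X}_{ijm}(r)$ are independent across $i\in[N^{(1)}]$, $j\in[N^{(2)}]$ and $m\in[M]$, with $|\mathcal{X}_{ijm}(r)|\leqslant 1$, $\mathbb{E}[\mathcal{X}_{ijm}(r)]=0$, and $\mathrm{Var}[\mathcal{X}_{ijm}(r)] = \mathcal{D}_{ijr}-\mathcal{D}_{ijr}^2 \leqslant \mathcal{D}_{ijr}$. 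This lets me rewrite the target as
\[
\frac{1}{N^{(1)}N^{(2)}}\sum_{i,j}\mathcal{Z}_{ijr}\mW^{(3)}_{k,(ij)} = \frac{1}{N^{(1)}N^{(2)}M}\sum_{i,j,m}\mathcal{X}_{ijm}(r)\,\mW^{(3)}_{k,(ij)},
\]
an average of $N^{(1)}N^{(2)}M$ independent terms.

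The key observation is that $\mW^{(3)}_{k,(ij)}$ is a \emph{deterministic} weight equal to $\mathbb{P}[\text{topic }k\,|\,\text{reviewer }i,\text{ paper }j]$ from (W.3), and is therefore bounded by $1$. Consequently each summand $Y_{ijm}:=\mathcal{X}_{ijm}(r)\,\mW^{(3)}_{k,(ij)}$ is mean zero with $|Y_{ijm}|\leqslant 1$, and $\mathrm{Var}[Y_{ijm}] = (\mW^{(3)}_{k,(ij)})^2\,\mathrm{Var}[\mathcal{X}_{ijm}(r)] \leqslant \mathcal{D}_{ijr}\leqslant f_r$ by Lemma~\ref{lem: Dijr less than fr}. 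Hence the average variance proxy is bounded above by $f_r$.

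Next I would apply Lemma~\ref{lemma: bernstein ineq} with $n = N^{(1)}N^{(2)}M$, $b=1$, $\sigma^2 = f_r$, and $t = C^*\sqrt{\frac{f_r\log(N_R)}{N^{(1)}N^{(2)}M}}$. Substituting and dividing numerator and denominator of the exponent by $f_r$ gives
\[
\frac{nt^2/2}{\sigma^2 + bt/3} = \frac{(C^*)^2\log(N_R)/2}{1 + \frac{C^*}{3}\sqrt{\frac{\log(N_R)}{f_r N^{(1)}N^{(2)}M}}}.
\]
The hypothesis $f_r\geqslant \frac{\log(N_R)}{N^{(1)}N^{(2)}M}$ forces the square root in the denominator to be at most $1$, so the denominator is bounded by the constant $1+C^*/3$ and the exponent is at least $C\log(N_R)$ for $C^*$ chosen large enough, producing a tail probability of order $N_R^{-C}$ for a single pair $(r,k)$.

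Finally I would close with a union bound over the $R\leqslant N_R$ choices of $r$ and the fixed number $K^{(3)}$ of topics; this inflates the bound only by a factor $N_R K^{(3)} = O(N_R)$, which a large enough $C^*$ absorbs, yielding the stated $1-o(N_R^{-1})$ probability. There is no genuine obstacle here: the only point requiring care is recognizing that $\mW^{(3)}_{k,(ij)}$ can be folded into the Bernstein constants precisely because it is deterministic and bounded by $1$, so that the variance proxy remains $f_r$ and the lower bound on $f_r$ is exactly what keeps the exponent's denominator constant.
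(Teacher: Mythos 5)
Your proposal is correct and follows essentially the same route as the paper's own proof: both express $\mathcal{Z}_{ijr}$ through the centered one-hot variables $\mathcal{X}_{ijm}(r)$, fold the deterministic weights $\mW^{(3)}_{k,(ij)}\leqslant 1$ into the Bernstein constants with variance proxy $\mathcal{D}_{ijr}\leqslant f_r$, and apply Lemma~\ref{lemma: bernstein ineq} with the same choice of $t$ (the paper merely rescales each summand by $N^{(1,2)}$ instead of normalizing the sum, which is an equivalent bookkeeping choice). Your explicit union bound over $r\in[R]$ and $k\in[K^{(3)}]$ just tidies up a step the paper leaves implicit.
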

\begin{proof}
Note
$$ \sum_{ij}\mathcal{Z}_{ijr} \mW^{(3)}_{k,(ij)}=\frac{1}{N^{(1,2)}M} \sum_{ijm}N^{(1,2)}\mathcal{X}_{ijm} (r)\mW^{(3)}_{k,(ij)}$$
We have $|N^{(1,2)}\mathcal{X}_{ijm}(r) \mW^{(3)}_{k,(ij)}|\leqslant N^{(1,2)}$, $\mathbb{E}[N^{(1,2)}\mathcal{X}_{ijm}(r) \mW^{(3)}_{k,(ij)}]=0$ and $\text{Var}[N^{(1,2)}\mathcal{X}_{ijm}(r) \mW^{(3)}_{k,(ij)}]\leqslant (N^{(1,2)})^2 D_{ijr}\leqslant (N^{(1,2)})^2 f_r$. Applying Berstein's inequality again, we have
$$
\mathbb{P}\left(\left|\sum_{ij} Z_{ijr}\mW^{(3)}_{k,ij}\right|\geqslant t\right)\leqslant 2 \exp\left(-\frac{M t^2}{N^{(1,2)}f_r+t/3}\right)
$$
Let $t=C^*\sqrt{\frac{N^{(1)}N^{(2)}f_r\log(N_R)}{M}}$ if $f_r\geqslant \frac{\log(N_R)}{N^{(1)}N^{(2)}M}$.
%\par The last two inequalities are proved similarly. Consider the first one, we have independent variables $\sum_r \mathcal{X}_{ijm}(r)\mW_{k,(jr)}$ for each $i$. Note that $$|\sum_r \mathcal{X}_{ijm}(r)\mW_{k,(jr)}|\leqslant|\sum_r \mathcal{T}_{ijm}(r)\mW_{k,(jr)}|+|\sum_r \mathbb{E}[\mathcal{T}_{ijm}(r)]\mW_{k,(jr)}|\leqslant2.$$ Also, we have $$\text{Var}[\sum_r \mathcal{X}_{ijm}(r)\mW_{k,(jr)}]\leqslant\text{Var}[\sum_r \mathcal{X}_{ijm}(r)]\leqslant\sum_r \mathcal{D}_{ijr}+\sum_{r\neq r'} \mathcal{D}_{ijr}\mathcal{D}_{ijr'}\leqslant 2.$$ Then, $$\mathbb{P}\left(\left|\frac{1}{N^{(2)}}\sum_{jr} Z_{ijr}\mW_{k,(jr)}\right|\geqslant t\right)\leqslant 2 \exp\left(-\frac{ N^{(2)}M t^2}{1+t/3}\right)$$Let $t=C^*\sqrt{\frac{\log(N_R)}{N^{(2)}M}}$ to get the result.
\end{proof}

\section{Analysis of the SVD Procedure}\label{Sec: SVD analysis}

Let $\mD = \mA\mW \in \R^{ p \times n}$ be matrix as defined in the standard pLSI model provided in  Equation~\eqref{eq:plsi}, and denote its SVD as $\mD = \mathbf{{\Xi}} \mathbf{{\Lambda}}\mathbf{\mB}^\top $, with $ \mathbf{{\Xi}}^\top \mathbf{{\Xi}}  = \mB^{\top}\mB =\mathbf{I}_{K} $ and $\mathbf{{\Lambda}}=\text{diag}(\lambda_1,\dots \lambda_K)$. Let $\tilde{\mV} \in\mathbb{R}^{K\times K}$  be the matrix defined as $\tilde \mV=\mW\mB{\mathbf{\Lambda}}^{-1}$.

\begin{lemma}[Adaptation of Lemma D.2 in \cite{ke2022using}]\label{lemma:v_tilde_properties} Suppose the Assumption \ref{ass: min singular value} is satisfied. The following properties hold true for $\tilde{\mV}$:
\begin{enumerate}
    \item The matrix $\tilde{\mV}=\mW\mB \mathbf{\Lambda}^{-1}\in\mathbb{R}^{K\times K}$ is invertible, unique and such that $\tilde{\mV}=(\mathbf{\Xi}^\top \mA)^{-1}$.
    \item $\tilde{\mV}\tilde{\mV}^\top=(\mA^\top \mA)^{-1}$, and thus the singular values of $\tilde{\mV}$ are the inverses of the singular values of $\mA$.
    \item $\tilde{\mV}_{\star 1},\dots \tilde{\mV}_{\star K}$ are right eigenvectors of $\mW\mW^\top \mA^\top \mA$ associated with eigenvalues $\lambda^{2}_k$(these eigenvectors are not necessarily orthogonal with each other)
    \item Finally, we have $\mathbf{\mathbf{\Xi}}=\mA\tilde{\mV}$, and $\|\mathbf{\mathbf{\Xi}}_{j\star}\|_2\leqslant \|\mA_{j\star}\|_2 \sigma_{K}(\mA)^{-1}$.
\end{enumerate}
\end{lemma}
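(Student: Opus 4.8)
The plan is to derive all four claims from a single structural identity, $\mathbf{\Xi}=\mA\tilde{\mV}$, after which each item reduces to elementary linear algebra. First I would record the rank bookkeeping: under Assumption~\ref{ass: min singular value} both $\mA$ and $\mW$ have full rank $K$, so $\mD=\mA\mW$ has rank exactly $K$ and all $K$ retained singular values are strictly positive; in particular $\mathbf{\Lambda}$ is invertible. From $\mB^\top\mB=\mathbf{I}_K$ we get $\mD\mB=\mathbf{\Xi}\mathbf{\Lambda}$, and substituting $\mD=\mA\mW$ gives $\mA\mW\mB=\mathbf{\Xi}\mathbf{\Lambda}$, i.e. $\mathbf{\Xi}=\mA(\mW\mB\mathbf{\Lambda}^{-1})=\mA\tilde{\mV}$. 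This is the first assertion of claim~4 and the engine for everything else.

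For claim~1, I would left-multiply $\mathbf{\Xi}=\mA\tilde{\mV}$ by $\mathbf{\Xi}^\top$ and use $\mathbf{\Xi}^\top\mathbf{\Xi}=\mathbf{I}_K$ to obtain $(\mathbf{\Xi}^\top\mA)\tilde{\mV}=\mathbf{I}_K$. Since both factors are $K\times K$, a one-sided inverse is two-sided, so $\tilde{\mV}$ is invertible with $\tilde{\mV}^{-1}=\mathbf{\Xi}^\top\mA$, giving $\tilde{\mV}=(\mathbf{\Xi}^\top\mA)^{-1}$. Uniqueness follows from the full column rank of $\mA$: if $\mA\tilde{\mV}=\mA\tilde{\mV}'$ then $\tilde{\mV}=\tilde{\mV}'$. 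For claim~2, I would instead read the orthonormality relation as $\mathbf{I}_K=\mathbf{\Xi}^\top\mathbf{\Xi}=\tilde{\mV}^\top(\mA^\top\mA)\tilde{\mV}$, whence $\mA^\top\mA=(\tilde{\mV}^\top)^{-1}\tilde{\mV}^{-1}=(\tilde{\mV}\tilde{\mV}^\top)^{-1}$; inverting yields $\tilde{\mV}\tilde{\mV}^\top=(\mA^\top\mA)^{-1}$, and comparing eigenvalues of the two sides shows the singular values of $\tilde{\mV}$ are the reciprocals of those of $\mA$.

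Claim~3 is the only item requiring a short chain of substitutions. Starting from $\mA^\top\mA\tilde{\mV}=\mA^\top(\mA\tilde{\mV})=\mA^\top\mathbf{\Xi}$, I would left-multiply by $\mW\mW^\top$ and use $\mW^\top\mA^\top=\mD^\top=\mB\mathbf{\Lambda}\mathbf{\Xi}^\top$ together with $\mathbf{\Xi}^\top\mathbf{\Xi}=\mathbf{I}_K$ to compute $\mW\mW^\top\mA^\top\mathbf{\Xi}=\mW\mB\mathbf{\Lambda}=\tilde{\mV}\mathbf{\Lambda}^2$, the last step using $\mW\mB=\tilde{\mV}\mathbf{\Lambda}$. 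Read columnwise, this is exactly $(\mW\mW^\top\mA^\top\mA)\tilde{\mV}_{\star k}=\lambda_k^2\,\tilde{\mV}_{\star k}$. Finally, for the row bound in claim~4, I would write $\mathbf{\Xi}_{j\star}=\mA_{j\star}\tilde{\mV}$ and bound $\|\mathbf{\Xi}_{j\star}\|_2\le\|\mA_{j\star}\|_2\,\|\tilde{\mV}\|_{op}$, then invoke claim~2 to identify $\|\tilde{\mV}\|_{op}=\sigma_1(\tilde{\mV})=\sigma_K(\mA)^{-1}$.

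The manipulations are all routine once $\mathbf{\Xi}=\mA\tilde{\mV}$ is in hand; the only genuinely load-bearing point — and where the hypotheses must be used carefully — is the invertibility bookkeeping. I must confirm that the retained SVD truly has rank $K$ (so $\mathbf{\Lambda}^{-1}$ exists), that $\mathbf{\Xi}^\top\mA$ is invertible rather than merely admitting a one-sided inverse, and that $\mA^\top\mA$ is invertible for claim~2. All three reduce to $\sigma_K(\mA)>0$ and $\sigma_K(\mW)>0$, which are guaranteed by Assumption~\ref{ass: min singular value}, so I would state this rank condition explicitly at the outset to license every inversion downstream.
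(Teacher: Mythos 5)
Your proposal is correct and follows essentially the same route as the paper's proof: derive $\mathbf{\Xi}=\mA\tilde{\mV}$ from $\mD\mB=\mathbf{\Xi}\mathbf{\Lambda}$, then read off items 1, 2 and 4 from the orthonormality relation $\mathbf{\Xi}^\top\mathbf{\Xi}=\mathbf{I}_K$. Your only deviation is in item 3, where you compute $(\mW\mW^\top\mA^\top\mA)\tilde{\mV}=\tilde{\mV}\mathbf{\Lambda}^2$ directly via $\mW\mB=\tilde{\mV}\mathbf{\Lambda}$ and $\mD^\top=\mB\mathbf{\Lambda}\mathbf{\Xi}^\top$, rather than starting from the eigenvector equation $\mD\mD^\top\boldsymbol{\xi}_k=\lambda_k^2\boldsymbol{\xi}_k$ and cancelling $\mA^\top\mA$ as the paper does; both are valid, and your version is, if anything, slightly cleaner (the paper's displayed computation for this item contains a typographical extra factor of $\mW$ that your derivation avoids).
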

\begin{proof}
The proof for the previous lemma is essentially a readaptation of Lemma D.2 in \cite{ke2022using}, which we provide here to make this manuscript self-contained, but we emphasize that the original proof in a more general setting is in \cite{ke2022using}.
    \begin{enumerate}
    \item By definition:
    $$ \mD  =  \mathbf{\mathbf{\Xi}} \mathbf{\Lambda} \mB^\top ,$$ so we write:
    $$
        \mathbf{\Xi}  = \mD \mB \mathbf{\Lambda}^{-1} =  \mA (\mW \mB \mathbf{\Lambda}^{-1}) = \mA \tilde{\mV}.\qquad \eqref{eq: prob v in bernstein permute}
$$
    Since: $$\mathbf{\Xi}^\top \mathbf{\Xi} = \mathbf{I}_K = \mathbf{\Xi}^{\top} \mA \tilde{\mV}=(\mathbf{\Xi}^{\top} \mA) \tilde{\mV}, $$ we deduce that $\tilde{\mV}$ is unique and non singular, and that it is the inverse of 
    $\mathbf{\Xi}^{\top} \mA$. 
    \item 
In this case, plugging in $\mathbf{\Xi} = \mA\tilde{\mV}$ in $\mathbf{\Xi}^{\top} \mathbf{\Xi} = \mathbf{I}_K$, we obtain:
$$ \tilde{\mV}^{\top} \mA^{\top} \mA \tilde{\mV}= I_K$$
Multiplying left and right by $\tilde{\mV}$ and $\tilde{\mV}^{\top}$ respectively:
$$ (\tilde{\mV} \tilde{\mV}^{\top}) \mA^{\top} \mA (\tilde{\mV} \tilde{\mV}^{\top})= \tilde{\mV} \tilde{\mV}^{\top} \implies \tilde{\mV} \tilde{\mV}^{\top} = (\mA^{\top} \mA)^{-1}.$$
    \item Let $\lambda_k$ denote the singular values of $\mD=\mA\mW$. Then, for any column $\mathbf{\xi}_k$ of the matrix $\mathbf{\Xi}$, by definition:
    $$ \mD \mD^T \xi_k = \lambda_k^2 \mathbf{\xi}_k $$
    where by \eqref{eq: prob v in bernstein permute}, $\mathbf{\xi}_k =  \mA \tilde{\mV}_{\cdot k}.$
        Therefore:
    $$ \mD \mD^T \xi_k =  \mA\mW\mW^T\mW\mA^T \mA \tilde{\mV}_{\cdot k} = \lambda_k^2 \mA \tilde{\mV}_{\cdot k} $$
        $$ \implies    \mA^T\mA\mW\mW^T\mW\mA^T \mA \tilde{\mV}_{\cdot k} = \lambda_k^2 \mA^T\mA \tilde{\mV}_{\cdot k} $$
                $$ \implies    \mW^T\mW\mA^T \mA \tilde{\mV}_{\cdot k} = \lambda_k^2 \tilde{\mV}_{\cdot k} $$
                where the last line follows from the fact that $\mA^T\mA$ is assumed to be non-singular and invertible.
    \item By \eqref{eq: prob v in bernstein permute}, $\mathbf{\Xi}=\mA\tilde{\mV}$. The inequality follows by definition of the operator norm of $\tilde{\mV}$, and by application of point 2 in which we noted that the singular values of $\tilde{\mV}$ are the inverses of the singular values of $\mA$.
\end{enumerate}
\end{proof}

\subsection{Properties of the SVD}
\begin{proposition}[Lemma 5.1 in \cite{lei2015consistency}]\label{prop: sintheta} Let $\mQ\in\mathbb{R}^{n\times n}$ be a symmetric matrix with rank $K$, and $\hat \mQ\in\mathbb{R}^{n\times n}$ be any symmetric matrix. Let $\hat \mU,\mU\in\mathbb{R}^{n\times K}$ be $K$ leading left singular vector matrices of $ \hat \mQ$ and $ \mQ$ respectively. Then there exists a $K\times K$ orthogonal matrix $\mO$ such that $$ \|\hat \mU\mO -\mU\|_F\leqslant \frac{2\sqrt{2K}\| \hat \mQ-\mQ\|_{op}}{\lambda_K( \mQ)}$$
\end{proposition}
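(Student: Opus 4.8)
The plan is to derive this bound as a direct consequence of the Davis--Kahan $\sin\Theta$ theorem, the only problem-specific input being the rank-$K$ structure of $\mQ$. First I would write the spectral decomposition $\mQ = \mU \mathbf{\Lambda}_K \mU^\top$, where $\mathbf{\Lambda}_K = \text{diag}(\lambda_1(\mQ),\dots,\lambda_K(\mQ))$ collects the $K$ nonzero eigenvalues (equivalently, the $K$ leading singular values, which for the relevant applications $\mQ = \mY\mY^\top$ is a positive semidefinite matrix). The key observation is that since $\text{rank}(\mQ)=K$, every eigenvalue beyond the $K$-th vanishes, so $\lambda_{K+1}(\mQ)=0$. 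Hence the spectral gap separating the invariant subspace spanned by $\mU$ from its orthogonal complement is exactly $\lambda_K(\mQ) - \lambda_{K+1}(\mQ) = \lambda_K(\mQ)$, which is precisely the denominator appearing in the claimed bound.

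With the gap identified, I would invoke the $\sin\Theta$ theorem. Letting $\Theta$ denote the diagonal matrix of principal angles between the column spaces of $\mU$ and $\hat\mU$, the operator-norm form of Davis--Kahan gives $\|\sin\Theta(\hat\mU,\mU)\|_{op} \le \|\hat\mQ-\mQ\|_{op}/\lambda_K(\mQ)$. Since there are at most $K$ nonzero principal angles, passing to the Frobenius norm costs a factor $\sqrt{K}$, yielding $\|\sin\Theta\|_F \le \sqrt{K}\,\|\hat\mQ-\mQ\|_{op}/\lambda_K(\mQ)$. The remaining step is the standard conversion from subspace distance to basis distance: choosing $\mO$ via the Procrustes alignment (the orthogonal factor in the SVD of $\mU^\top\hat\mU$) gives $\min_{\mO}\|\hat\mU\mO-\mU\|_F \le \sqrt{2}\,\|\sin\Theta\|_F$, which is where the additional $\sqrt{2}$ enters. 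In practice I would simply cite the variant of Davis--Kahan due to Yu, Wang and Samworth, which packages exactly these steps and produces the constant $2^{3/2}\sqrt{K}$; substituting $\lambda_{K+1}(\mQ)=0$ and noting $2^{3/2}\sqrt{K} = 2\sqrt{2K}$ then delivers the stated inequality verbatim.

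The main subtlety, rather than any heavy computation, lies in correctly identifying the eigengap: the bound is clean precisely because rank$(\mQ)=K$ forces $\lambda_{K+1}(\mQ)=0$, so no separate control of $\hat\mQ$'s trailing eigenvalues is needed. A secondary point worth flagging is that the conclusion holds only up to the orthogonal matrix $\mO$, which is intrinsic: one is comparing $K$-dimensional subspaces, and individual singular vectors are determined only up to rotation (and sign), so the alignment by $\mO$ is unavoidable and must be carried through all subsequent uses of this lemma.
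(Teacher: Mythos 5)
Your proof is correct and follows essentially the same route as the paper's source for this result: the paper states the proposition without proof, citing Lemma 5.1 of Lei and Rinaldo, whose argument is exactly your combination of a Davis--Kahan $\sin\Theta$ bound (with the rank-$K$ structure forcing $\lambda_{K+1}(\mQ)=0$, so the eigengap is $\lambda_K(\mQ)$), the $\sqrt{K}$ operator-to-Frobenius conversion, and the $\sqrt{2}$ Procrustes alignment. Your final appeal to the Yu--Wang--Samworth variant is self-contained and delivers the constant $2\sqrt{2K}$ exactly, which also quietly repairs your slightly-too-strong intermediate claim that Davis--Kahan gives $\|\sin\Theta\|_{op}\leq\|\hat\mQ-\mQ\|_{op}/\lambda_K(\mQ)$ with no factor of $2$.
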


%\begin{lemma}[Corollary 4 of \cite{klopp2021assigning}]\label{lem: svd olga} Let $\hat \mQ$ and $ \mQ$ be matrices with rank K. Let $\hat U,U$ be left singular vector matrices of $ \hat \mQ$ and $ \mQ$ respectively. Then there exists a matrix $\mO$ such that $$ \|\hat UO -U\|_F\leqslant \frac{2\sqrt{2K}(\| \mQ\|_{op}+\|\hat \mQ\|_{op})\| \hat \mQ- \mQ\|_{op}}{\lambda_K( GG^\top)}.$$Furthermore, if $\|\hat \mQ- \mQ\|_{op}\leqslant\frac{1}{2}\lambda_K( \mQ)$, then we have $$\|(\hat UO-U)_{j\star }\|_2\leqslant C^*\frac{\sqrt{K}\kappa^2( \mQ)\|\hat G_{j\star}\|_2\|\hat \mQ- \mQ\|_{op}}{\lambda_K( \mQ \mQ^\top)}+\frac{\|(\hat \mQ- \mQ)_{j\star }\|_2}{\lambda_{K}( \mQ)}    $$\end{lemma}

\begin{lemma}[Lemma F.1 of \cite{ke2022using}]\label{lem:svd ke}   Let $\mQ$ and $\hat \mQ$ be $n \times n$ symmetric matrices with $\text{rank}(\mQ) = K$, and assume $\mQ$ is positive semi-definite. For $1 \leq k \leq K$, let $\lambda_k$ and $\hat \lambda_k$ be the $k^\text{th}$ largest eigenvalues of $\mQ$ and $\hat \mQ$ respectively, and let $\mathbf{u}_k$ and $\mathbf{\hat u}_k$ be the $k^\text{th}$ eigenvectors of $\mQ$ and $\hat \mQ$. Fix $1 \leq s \leq k \leq K$. For some $c \in (0,1)$, suppose that: $$\min(\lambda_{s-1} - \lambda_s, \lambda_k - \lambda_{k+1}, \min_{l \in [K]} |\lambda_l|) \geq c\|\mQ\|_2, \quad \|\hat \mQ-\mQ\|_2 \leq (c/3)\|\mQ\|_2,$$
where by convention, if $s = 1$, we take $\lambda_{s-1} - \lambda_s = \infty$.
Write $\mU= [\mathbf{u}_s, \dots, \mathbf{u}_k], \hat \mU = [\mathbf{\hat u}_s, \dots, \mathbf{\hat u}_k]$ and $\mU^* = [\mathbf{u}_1, \dots, \mathbf{u}_K]$. There exists an orthonormal matrix $\mO$ such that for all $1 \leq j \leq n$,   \begin{align}
 \label{eq: rowwise ke lemma }   
\|(\hat \mU\mO - \mU)_{j*}\|_2 \leq \frac{5}{c\|\mQ\|_2}(\|\hat \mQ-\mQ\|_2\|\mU^*_{j*}\|_2 + \sqrt{K}\|(\hat \mQ-\mQ)_{j*}\|_2) \end{align}
\end{lemma}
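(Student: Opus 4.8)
The plan is to derive a first-order row-wise expansion of the perturbed eigenvectors directly from the eigenvalue equation, and then to bound separately a \emph{signal} term (carrying the global perturbation $\|\hat\mQ-\mQ\|_2$ against the local leverage $\|\mU^*_{j*}\|_2$) and a \emph{noise} term (carrying only the local row $\|(\hat\mQ-\mQ)_{j*}\|_2$). Write $\mathbf{E}=\hat\mQ-\mQ$, let $\mathbf{\Lambda}^*=\mathrm{diag}(\lambda_1,\dots,\lambda_K)$ collect the nonzero eigenvalues of $\mQ$ so that $\mQ=\mU^*\mathbf{\Lambda}^*\mU^{*\top}$, and let $\hat{\mathbf{\Lambda}}=\mathrm{diag}(\hat\lambda_s,\dots,\hat\lambda_k)$ collect the relevant perturbed eigenvalues, so that $\hat\mQ\hat\mU=\hat\mU\hat{\mathbf{\Lambda}}$. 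First I would invoke Weyl's inequality together with the hypotheses $\min_l|\lambda_l|\ge c\|\mQ\|_2$ and $\|\mathbf{E}\|_2\le (c/3)\|\mQ\|_2$ to guarantee that $\hat{\mathbf{\Lambda}}$ is invertible with $\|\hat{\mathbf{\Lambda}}^{-1}\|_{op}\le\big((2c/3)\|\mQ\|_2\big)^{-1}$; the separation conditions $\lambda_{s-1}-\lambda_s,\ \lambda_k-\lambda_{k+1}\ge c\|\mQ\|_2$ isolate the eigenvalue block indexed by $s,\dots,k$ from the rest of the spectrum.

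Next I would rewrite the eigen-equation as the identity $\hat\mU=\mQ\hat\mU\hat{\mathbf{\Lambda}}^{-1}+\mathbf{E}\hat\mU\hat{\mathbf{\Lambda}}^{-1}$ and substitute $\mQ=\mU^*\mathbf{\Lambda}^*\mU^{*\top}$, which gives $\hat\mU=\mU^*\mathbf{G}+\mathbf{E}\hat\mU\hat{\mathbf{\Lambda}}^{-1}$ with $\mathbf{G}:=\mathbf{\Lambda}^*(\mU^{*\top}\hat\mU)\hat{\mathbf{\Lambda}}^{-1}$. Writing $\mU=\mU^*\mathbf{P}$, where $\mathbf{P}=[\mathbf{e}_s,\dots,\mathbf{e}_k]\in\mathbb{R}^{K\times(k-s+1)}$ is the coordinate selection matrix, extracting the $j$-th row and right-multiplying by the alignment matrix $\mO$ yields the exact decomposition
\begin{equation*}
(\hat\mU\mO-\mU)_{j*}=\mU^*_{j*}(\mathbf{G}\mO-\mathbf{P})+(\mathbf{E}\hat\mU\hat{\mathbf{\Lambda}}^{-1}\mO)_{j*}.
\end{equation*}
The noise term is immediate: since $\hat\mU$ has orthonormal columns and $\mO$ is orthonormal, its norm is at most $\|\mathbf{E}_{j*}\|_2\,\|\hat{\mathbf{\Lambda}}^{-1}\|_{op}\le\tfrac{3}{2c\|\mQ\|_2}\|\mathbf{E}_{j*}\|_2$, which I would loosen to the claimed $\tfrac{\sqrt{K}}{c\|\mQ\|_2}\|\mathbf{E}_{j*}\|_2$ (valid as $\sqrt K\ge 1$ and absorbed by the overall constant).

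The crux is the signal term, where I must show $\|\mathbf{G}\mO-\mathbf{P}\|_{op}\lesssim\|\mathbf{E}\|_2/\|\mQ\|_2$ for a suitable orthonormal $\mO$. Here I would split $\mathbf{H}:=\mU^{*\top}\hat\mU$ into its in-block rows $\mathbf{H}_{\mathrm{in}}$ (indices $s,\dots,k$) and its off-block rows $\mathbf{H}_{\mathrm{out}}$. A Davis--Kahan $\sin\Theta$ argument (Proposition~\ref{prop: sintheta}), applied with the eigengap separating the block $[s,k]$ from the remaining eigenvalues, bounds the off-block projection by $\|\mathbf{H}_{\mathrm{out}}\|_{op}\lesssim\|\mathbf{E}\|_2/\|\mQ\|_2$, while $\mathbf{H}_{\mathrm{in}}$ lies within the same order of an orthonormal matrix whose polar factor I take as $\mO$. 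On the in-block rows the diagonal scaling $\mathbf{\Lambda}^*(\cdot)\hat{\mathbf{\Lambda}}^{-1}$ cancels to leading order because $\hat\lambda_l=\lambda_l+O(\|\mathbf{E}\|_2)$ for $l\in[s,k]$, so $\mathbf{G}\mO$ reduces to $\mathbf{P}$ up to $O(\|\mathbf{E}\|_2/\|\mQ\|_2)$; on the off-block rows the eigenvalue ratios are $O(1)$ but multiply the already-small $\mathbf{H}_{\mathrm{out}}$. Combining these gives $\|\mathbf{G}\mO-\mathbf{P}\|_{op}\lesssim\|\mathbf{E}\|_2/\|\mQ\|_2$, hence the signal term is $\lesssim\|\mathbf{E}\|_2\|\mU^*_{j*}\|_2/\|\mQ\|_2$.

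The main difficulty is the bookkeeping required to land the explicit constant $5/c$: one must simultaneously (i) track the two-sided eigenvalue perturbation in the block-by-block cancellation inside $\mathbf{\Lambda}^*(\mU^{*\top}\hat\mU)\hat{\mathbf{\Lambda}}^{-1}$, and (ii) pin down the alignment $\mO$ so that the in-block rotation error and the off-block $\sin\Theta$ term collapse into a single $\|\mathbf{E}\|_2/\|\mQ\|_2$ estimate with controlled constants. The block (rather than full-rank) nature of $\mU$ — a strict subset $s,\dots,k$ of the leading eigenvectors — is precisely what forces the two separation hypotheses and the careful in-block/off-block split; this is the only genuinely delicate step, as the noise term and the final summation of the two contributions are routine.
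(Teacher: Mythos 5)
The paper offers no proof of this lemma to compare against: it is imported verbatim as Lemma F.1 of \cite{ke2022using}, with only the citation. Your reconstruction follows what is essentially the original Ke--Wang route --- the resolvent identity $\hat \mU = \mQ\hat\mU\hat{\mathbf{\Lambda}}^{-1} + \mathbf{E}\hat\mU\hat{\mathbf{\Lambda}}^{-1}$, the row-wise split into a signal term carried by $\mU^*_{j\star}$ and a noise term carried by $\mathbf{E}_{j\star}$, Weyl's inequality for invertibility of $\hat{\mathbf{\Lambda}}$, and a $\sin\Theta$ alignment --- so the architecture is sound, and your noise-term bound $\frac{3}{2c\|\mQ\|_2}\|\mathbf{E}_{j\star}\|_2$ is correct (it even shows the $\sqrt{K}$ in the statement is not needed on this route).

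Two points need repair before this proves the inequality as stated. First, in the off-block part of the signal term you assert that the eigenvalue ratios are $O(1)$; they are only $O(1/c)$, since the best available bound is of the form $\lambda_l \|\hat{\mathbf{\Lambda}}^{-1}\|_{op} \le \|\mQ\|_2 \cdot \frac{3}{2c\|\mQ\|_2} = \frac{3}{2c}$. Combined with the Davis--Kahan bound $\|\mathbf{H}_{\mathrm{out}}\|_{op} \lesssim \|\mathbf{E}\|_2/(c\|\mQ\|_2)$, your sketch delivers $\|\mathbf{G}\mO - \mathbf{P}\|_{op} = O\bigl(\|\mathbf{E}\|_2/(c^2\|\mQ\|_2)\bigr)$ --- a $1/c^2$ dependence, not the advertised $5/c$. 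Since $c$ is an absolute constant everywhere this lemma is invoked (Lemma \ref{lem: row-wise HOSVD}), the weaker form would suffice for the paper's downstream conclusions, but it does not establish the lemma with its explicit constant, which you yourself flag as the delicate step without resolving it. Second, the in-block cancellation should not be argued entrywise from $\hat\lambda_l = \lambda_l + O(\|\mathbf{E}\|_2)$, because $\mathbf{\Lambda}^*(\cdot)\hat{\mathbf{\Lambda}}^{-1}$ mixes distinct eigenvalues across the block; the clean fix is the exact commutator identity $\mathbf{\Lambda}_{\mathrm{in}}\mathbf{H}_{\mathrm{in}} - \mathbf{H}_{\mathrm{in}}\hat{\mathbf{\Lambda}} = -\mU^\top \mathbf{E}\hat\mU$ (from $\mQ\mU = \mU\mathbf{\Lambda}_{\mathrm{in}}$ and $\hat\mQ\hat\mU = \hat\mU\hat{\mathbf{\Lambda}}$), which yields $\|\mathbf{G}_{\mathrm{in}} - \mathbf{H}_{\mathrm{in}}\|_{op} \le \|\mathbf{E}\|_2 \|\hat{\mathbf{\Lambda}}^{-1}\|_{op}$ with fully tracked constants. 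A minor further caution: Proposition \ref{prop: sintheta} in this paper covers only the leading-$K$ eigenspace, so for an interior block $s \le l \le k$ you must invoke the general block form of Davis--Kahan --- which is precisely why the hypotheses impose both gaps $\lambda_{s-1}-\lambda_s$ and $\lambda_k - \lambda_{k+1}$.
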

\vspace{0.5cm}

\subsection{Accuracy of the HOSVD decomposition}\label{sec: HOSVD}
Given the mode-$a$ matricization $\mD^{(a)}=\mathcal{M}_a(\mathcal{D})\in\mathbb{R}^{n_1\times n_2}$ for arbitrary dimensions $n_1\in\{N^{(1)},N^{(2)},R\}$ and $n_2\in\{N^{(1)}R,N^{(2)}R,N^{(1)}N^{(2)}\}$, define the matrix $\mL^{(a)}\in\mathbb{R}^{n_1\times n_1}$ and the scalar $\alpha$ such that
\begin{align*}
    \mL^{(a)}:=\begin{cases}
        0_{N^{(1)}\times N^{(1)}}%N^{(2)}/M\cdot I_{N^{(1)}\times N^{(1)}} 
        &\text{ if } a=1\\
        0_{N^{(2)}\times N^{(2)}}%N^{(1)}/M\cdot I_{N^{(2)}\times N^{(2)}} 
        &\text{ if } a=2\\
         \frac{1}{M}\text{diag}(\mY^{(3)}\mathbf{1}_{N^{(1,2)}})%N^{(1,2)}/M\cdotI_{R\times R}
         &\text{ if } a=3
    \end{cases} \quad \quad
     \alpha^{(a)}:=\begin{cases}
       0&\text{ if } a\in\{1,2\}\\
        \frac{1}{M}&\text{ if } a=3
    \end{cases}
\end{align*}
Next, for each mode of matricization,  we define a pair of $(\mQ^{(a)},\hat \mQ^{(a)})$ to adapt Proposition \ref{prop: sintheta} to our tensor setting. Define 
\begin{align}\label{def: hat mQ mQ}
    \mQ^{(a)}=(1-\alpha^{(a)})\mD^{(a)}(\mD^{(a)})^\top\quad\text{and}\quad \hat{\mQ}^{(a)}=\mY^{(a)}(\mY^{(a)})^\top-\mL^{(a)}
\end{align}
Recall that the $\mathbf{\Xi}_{\star k}$ is the $k$-th left singular vector of $\mD$, and $ \mathbf{\hat\Xi}_{\star k}$ is the $k$-th left singular vector of the observed frequency matrix $Y$. Equivalently, $\mathbf{\Xi}_{\star k}$ and $\mathbf{\hat \Xi}_{\star k}$ are  the $k$-th eigenvectors of $\mQ$ and $\hat \mQ$, respectively.
\par To apply Proposition \ref{prop: sintheta}, we need to study in particular the difference $\|\mQ^{(a)}-\hat{\mQ}^{(a)}\|_{op}$. Since $\mY^{(a)} = 
 \mD^{(a)}  - \mZ^{(a)}$, we re-write the difference as:
\begin{align}
    \hat \mQ^{(a)}-\mQ^{(a)}=&\mY^{(a)}(\mY^{(a)})^\top-\mL^{(a)}  - (1-\alpha^{(a)})\mD^{(a)}(\mD^{(a)})^\top\notag\\
    =&(\mD^{(a)} + \mZ^{(a)})(\mD^{(a)} + \mZ^{(a)})^\top-\mL^{(a)}  - (1-\alpha^{(a)})\mD^{(a)}(\mD^{(a)})^\top\notag\\
=&\mD^{(a)}\mZ^{(a)\top}+\mZ^{(a)}\mD^{(a)\top}+\mZ^{(a)}\mZ^{(a)\top}-\mathbb{E}[\mZ^{(a)}\mZ^{(a)\top}]\notag\\
&+\mathbb{E}[\mZ^{(a)}\mZ^{(a)\top}]-\mL^{(a)}+\alpha \mD^{(a)}\mD^{(a)^\top} \label{eq: hat mQ-mQ}
\end{align}
\begin{lemma}\label{lem: HOSVD error} Under the Assumption \ref{ass: min singular value}, if $M\geqslant\log N_R/c_0$ for some constant $c_0>0$, then with probability at least $1-o(N_R^{-1})$
\begin{align}
    \|\mathbf{{\hat \Xi}}^{(1)} \mO^{(1)}-\mathbf{\Xi}^{(1)}\|\leqslant C^*\sqrt{\frac{\log N_R}{M}}\label{eq: hat xi 1}\\
    \|{\mathbf{\hat \Xi}}^{(2)} \mO^{(2)}-\mathbf{\Xi}^{(2)}\|\leqslant C^*\sqrt{\frac{\log N_R}{M}}\label{eq: hat xi 2}
\end{align}
If we further assume $f_r\geqslant\sqrt{\frac{\log N_R}{N^{(1)}N^{(2)}M}}$, we have 
\begin{align}\label{eq: hat xi 3}
    \|\mathbf{\mathbf{\hat \Xi}}^{(3)} \mO^{(3)}-\mathbf{\Xi}^{(3)}\|\leqslant C^*\sqrt{\frac{\log N_R}{N^{(1,2)}M}}
\end{align}
\end{lemma}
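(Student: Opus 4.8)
The plan is to invoke the Davis--Kahan-type perturbation bound of Proposition~\ref{prop: sintheta} on the symmetric pairs $(\mQ^{(a)},\hat\mQ^{(a)})$ defined in \eqref{def: hat mQ mQ}, since $\mathbf{\Xi}^{(a)}$ and $\mathbf{\hat\Xi}^{(a)}$ are precisely the leading $K^{(a)}$ eigenvectors of $\mQ^{(a)}$ and $\hat\mQ^{(a)}$. The proposition yields, for each $a\in[3]$,
\[
\|\mathbf{\hat\Xi}^{(a)}\mO^{(a)}-\mathbf{\Xi}^{(a)}\|\leqslant \frac{2\sqrt{2K^{(a)}}\,\|\hat\mQ^{(a)}-\mQ^{(a)}\|_{op}}{\lambda_{K^{(a)}}(\mQ^{(a)})},
\]
so the task reduces to lower bounding the denominator and upper bounding the numerator. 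For the denominator, since $\mQ^{(a)}=(1-\alpha^{(a)})\mD^{(a)}\mD^{(a)\top}$ with $\alpha^{(a)}\in\{0,1/M\}$ and $1-\alpha^{(a)}\geqslant 1/2$ for $M$ large, Lemma~\ref{lemma: singular value D} gives $\lambda_{K^{(a)}}(\mQ^{(a)})\geqslant \tfrac12 c^*K^{(1,2,3)}N^{(1,2)}\gtrsim N^{(1,2)}$ (recall the $K^{(a)}$ are fixed constants).

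For the numerator I would expand $\hat\mQ^{(a)}-\mQ^{(a)}$ via \eqref{eq: hat mQ-mQ} and bound each summand in operator norm. For $a\in\{1,2\}$ the terms $\mL^{(a)}$ and $\alpha^{(a)}$ vanish, leaving $\mD^{(a)}\mZ^{(a)\top}+\mZ^{(a)}\mD^{(a)\top}+\big(\mZ^{(a)}\mZ^{(a)\top}-\E[\mZ^{(a)}\mZ^{(a)\top}]\big)+\E[\mZ^{(a)}\mZ^{(a)\top}]$. The cross term is controlled by submultiplicativity together with $\|\mD^{(a)}\|_{op}\lesssim\sqrt{N^{(1,2)}}$ (Lemma~\ref{lemma: singular value D}) and $\|\mZ^{(a)}\|_{op}\lesssim\sqrt{N^{(1,2)}\log N_R/M}$ (Lemma~\ref{lem: Z l2 no}), giving the dominant contribution $\lesssim N^{(1,2)}\sqrt{\log N_R/M}$. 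The fluctuation term is dominated through $\|\cdot\|_{op}\leqslant\|\cdot\|_F$ and Lemma~\ref{lemma: zz-Ezz l2 bound}, while the diagonal mean term is of smaller order by Lemma~\ref{lemma: op norm of expectation of zz}. Dividing by $\lambda_{K^{(a)}}(\mQ^{(a)})\gtrsim N^{(1,2)}$ produces the $\sqrt{\log N_R/M}$ rate of \eqref{eq: hat xi 1}--\eqref{eq: hat xi 2}.

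The delicate part, and the main obstacle, is the third mode, where the naive submultiplicative estimate $\|\mD^{(3)}\|_{op}\|\mZ^{(3)}\|_{op}\lesssim N^{(1,2)}\sqrt{\log N_R/M}$ is too lossy by a factor $\sqrt{N^{(1,2)}}$ to reach the sharper target \eqref{eq: hat xi 3}. To recover this factor I would exploit the factorization $\mD^{(3)}=\mA^{(3)}\mW^{(3)}$ and the fact that, because $\mA^{(3)}$ is column-stochastic, $\|\mA^{(3)}\|_{op}\leqslant\sqrt{K^{(3)}}$ is a constant (Lemma~\ref{lemma: max singular values}). Then $\|\mD^{(3)}\mZ^{(3)\top}\|_{op}\leqslant\sqrt{K^{(3)}}\,\|\mW^{(3)}\mZ^{(3)\top}\|_F$, and the entries of $\mW^{(3)}\mZ^{(3)\top}$ are exactly the contractions $\sum_{ij}\mathcal{Z}_{ijr}\mW^{(3)}_{k(ij)}$ bounded in Lemma~\ref{lemma: dz} by $C^*\sqrt{N^{(1,2)}f_r\log N_R/M}$; summing over $k,r$ and using $\sum_r f_r=K^{(3)}$ gives $\|\mW^{(3)}\mZ^{(3)\top}\|_F\lesssim\sqrt{N^{(1,2)}\log N_R/M}$, hence $\|\mD^{(3)}\mZ^{(3)\top}\|_{op}\lesssim\sqrt{N^{(1,2)}\log N_R/M}$. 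For the remaining terms I would verify that $\mL^{(3)}$ is designed precisely so that, using the explicit form in Lemma~\ref{lemma: expectation of zz}, the combination $\E[\mZ^{(3)}\mZ^{(3)\top}]-\mL^{(3)}+\alpha^{(3)}\mD^{(3)}\mD^{(3)\top}$ telescopes to $-\tfrac1M\mathrm{diag}(\mZ^{(3)}\mathbf{1}_{N^{(1,2)}})$, a diagonal matrix whose operator norm is controlled by Lemma~\ref{lemma: sumij zij} and is of smaller order, while $\mZ^{(3)}\mZ^{(3)\top}-\E[\mZ^{(3)}\mZ^{(3)\top}]$ is again handled by Lemma~\ref{lemma: zz-Ezz l2 bound}. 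Dividing the resulting numerator $\lesssim\sqrt{N^{(1,2)}\log N_R/M}$ by $\lambda_{K^{(3)}}(\mQ^{(3)})\gtrsim N^{(1,2)}$ gives \eqref{eq: hat xi 3}; the extra hypothesis $f_r\geqslant\sqrt{\log N_R/(N^{(1,2)}M)}$ is exactly what Lemmas~\ref{lemma: dz}, \ref{lemma: sumij zij} and \ref{lemma: zz-Ezz l2 bound} require, and the probability $1-o(N_R^{-1})$ follows by intersecting the high-probability events of all cited concentration lemmas.
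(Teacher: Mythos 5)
Your proposal is correct and follows essentially the same route as the paper: Proposition~\ref{prop: sintheta} applied to the pairs in \eqref{def: hat mQ mQ}, the decomposition \eqref{eq: hat mQ-mQ}, the eigenvalue lower bound from Lemma~\ref{lemma: singular value D}, and for mode 3 the same key step of exploiting $\mD^{(3)}=\mA^{(3)}\mW^{(3)}$ together with Lemma~\ref{lemma: dz} to avoid the lossy submultiplicative bound, plus the telescoping $\E[\mZ^{(3)}\mZ^{(3)\top}]-\mL^{(3)}+\tfrac1M\mD^{(3)}\mD^{(3)\top}=-\tfrac1M\mathrm{diag}(\mZ^{(3)}\mathbf{1}_{N^{(1,2)}})$ controlled by Lemma~\ref{lemma: sumij zij}. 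The only cosmetic differences are that for modes 1--2 the paper bounds $\|\hat\mQ^{(a)}-\mQ^{(a)}\|_{op}\leqslant(\|\mY^{(a)}\|_{op}+\|\mD^{(a)}\|_{op})\|\mZ^{(a)}\|_{op}$ directly rather than centering $\mZ^{(a)}\mZ^{(a)\top}$, and for mode 3 the paper writes $\mD^{(3)}\mZ^{(3)\top}$ as a sum of rank-one terms over $k$ rather than your operator-norm factorization, both of which reduce to the same estimates.
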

\begin{proof}
The proof idea is to control data error $\|\hat \mQ^{(a)}-\mQ^{(a)}\|$ and apply the results in Proposition \ref{prop: sintheta}.
\begin{itemize}
    \item \textbf{Mode 1 \& 2 matricization:} Since the analyses of mode 1 and 2 are identical, we only provide a detailed analysis of mode 1 here. Note that $\alpha=0$ and $\mL=0$ in mode 1 and mode 2 matricization. Therefore, we re-write 
    \eqref{eq: hat mQ-mQ} as
    $$
    \hat \mQ^{(1)}-\mQ^{(1)}=(\mY^{(1)}-\mD^{(1)})\mY^{(1)\top}+\mD^{(1)}(\mY^{(1)}-\mD^{(1)})^\top
    $$
 By Lemma \ref{lem: Z l2 no}, with probability at least $1-o(N_R^{-1})$:
    \begin{align*}
        \|\hat \mQ^{(1)}-\mQ^{{(1)}}\|_{op}&\leqslant \left(\|\mY^{(1)}\|_{op}+\|\mD^{(1)}\|_{op}\right)\|\mZ^{(1)}\|_{op}\leqslant \left(\|\mD^{(1)}\|_{op}+\|\mD^{(1)}\|_{op}+\|\mZ^{(1)}\|_{op}\right)\|\mZ^{(1)}\|_{op}\\
       & \leqslant C^*\sqrt{N^{(1,2)}}\cdot \sqrt{\frac{N^{(1,2)}\log N_R}{M}}=C^*N^{(1,2)}\sqrt{\frac{\log N_R}{M}}
    \end{align*}
Therefore, using proposition \ref{prop: sintheta}, under assumption \ref{ass: min singular value}, we then obtain the results in \eqref{eq: hat xi 1} and \eqref{eq: hat xi 2}.

\item\textbf{Mode-3 matricization:} We use the decomposition explicited in  \eqref{eq: hat mQ-mQ} to get an upper bound of $\|\hat \mQ^{(3)}-\mQ^{(3)}\|$. With probability at least $1-o(N_R^{-1})$,  the following holds true:
    \begin{enumerate}
        \item \textbf{Analysis of $\mD^{(3)}(\mZ^{(3)})^\top+\mZ^{(3)}(\mD^{(3)})^\top$}: We have: 
$$\mD^{(3)}(\mZ^{(3)})^\top=\sum_{k}\mA^{(3)}_{\star k} (\mZ^{(3)}(\mW^{(3)}_{k\star})^\top)^{\top}.$$
Using Lemma \ref{lemma: dz} and the fact that $\sum_{k} \|\mA_{\star k}\|_2\leqslant \sum_{k} \|\mA_{\star k}\|_1=K^{(3)}$ and $\sum_r f_r\leqslant K^{(3)}$
\begin{align*}
    \|\mD^{(3)}(\mZ^{(3)})^\top+\mZ^{(3)}(\mD^{(3)})^\top\|_F&\leqslant 2\|\mD^{(3)}(\mZ^{(3)})^\top\|_F\\&\leqslant2\sum_k\|\mA^{(3)}_{\star k}\|_2\|\mZ^{(3)}(\mW^{(3)}_{k\star})^\top\|_2\\
    &=2\sum_k\|\mA^{(3)}_{\star k}\|_2\sqrt{\sum_r(
\sum_{ij}\mathcal{Z}_{ijr}\mW^{(3)}_{k,ij})^2}\\
&\leqslant C^*\sqrt{\frac{N^{(1)}N^{(2)}\log(N_R)}{M}}
\end{align*}
        \item \textbf{Analysis of $\|\mZ^{(3)}(\mZ^{(3)})^\top-\mathbb{E}[\mZ^{(3)}(\mZ^{(3)})^\top]\|_{op}$}: We use the upper bound given by Lemma \ref{lemma: zz-Ezz l2 bound}:
        $$
        \|\mZ^{(3)}(\mZ^{(3)})^\top-\mathbb{E}[\mZ^{(3)}(\mZ^{(3)})^\top]\|\leqslant C^*\left(\sqrt{\frac{N^{(1,2)}\log N_R}{M}}\right)
        $$
        \item \textbf{Analysis of $\|\mathbb{E}[\mZ^{(3)}(\mZ^{(3)})^\top]-\mL^{(3)}+\frac{1}{M}\mD^{(3)}(\mD^{(3)})^\top\|$}: We use Lemmas \ref{lemma: expectation of zz} and \ref{lemma: sumij zij}:
        $$
        \|\mathbb{E}[\mZ^{(3)}(\mZ^{(3)})^\top]-\mL^{(3)}+\frac{1}{M}\mD^{(3)}(\mD^{(3)})^\top\|=\max_r\frac{1}{M}|\sum_{ij}\mathcal{Z}_{ijr}|\leqslant \max_r C^*\left(\sqrt{\frac{f_r N^{(1,2)}\log N_R}{M}}\right)
        $$
Using the fact that $f_r\leqslant K^{(3)}$, we have
$$\|\hat \mQ^{(3)}-\mQ^{(3)}\|\leqslant\sqrt{\frac{N^{(1,2)}\log N_R}{M}}.$$
Applying proposition \ref{prop: sintheta}, we immediately obtain \eqref{eq: hat xi 3}

    \end{enumerate}
    
\end{itemize}
\end{proof}
%\begin{lemma}\label{lem hosvd}Suppose the observed tensor $Y$ is generated from model \eqref{eq: model multinominal} such that $   M_{ij}\mathcal{Y}_{ij\star}\sim\text{Multinomial}\left(M_{ij},\mathcal{D}_{ij\star}\right)$. Under the assumption in Lemma \ref{lem: Z l2 no}, there exists an orthogonal matrix $\mO$ such that with probability at least $1-o(N_R^{-1})$, \begin{align} \label{eq: hosvd  singular bound in multinomial model}   \max_{a\in[3]}\|\mathbf{\mathbf{\hat \Xi}}^{(a)}_{\text{hosvd}}\mO^{(a)}-\mathbf{\Xi}^{(a)}\|\leqslant C^*\frac{N^{(1,2)}\sqrt{\log N_R/M}}{\lambda_K(DD^\top)}  \end{align}\end{lemma}
%\begin{remark}Under the assumption \ref{ass: min singular value}, we have the results in Lemma \ref{lemma: singular value D} and thus $\max_{a\in[3]}\|\mathbf{\mathbf{\hat \Xi}}^{(a)}_{\text{hosvd}}\mO^{(a)}-\mathbf{\Xi}^{(a)}\|\leqslant C^*\sqrt{\log N_R/M}$. However, the bound in \eqref{eq: hosvd singular bound in multinomial model} is suboptimal when $\lambda_K(DD^\top)$ is not greater than $c^* N^{(1,2)}$ for a constant $c^*>0$.\end{remark}\begin{proof}By Weyl's inequality, we have $$\lambda_{1}(Y)\leqslant\|Z\|+\lambda_{1}(D)\stackrel{\text{Lemma \ref{lem: Z l2 no}}}{\lesssim} \sqrt{N^{(1,2)}}$$  with probability at least $1-o(N_R^{-1})$.  Using lemma \ref{lem: svd olga}, we have  \begin{align*}     \|\mathbf{\mathbf{\hat \Xi}} \mO-\mathbf{\Xi}\|\leqslant C^*\frac{N^{(1,2)}\sqrt{\log N_R/M}}{\lambda_K(DD^\top)}   \end{align*}\end{proof}

\subsection{Accuracy of the HOOI procedure}
For the purpose of keeping this appendix clear and (almost) self-contained, we summarize the HOOI procedure as follows.
Let $\mathbf{\hat \Xi}_0=\mathbf{\hat \Xi}_{\text{hosvd}}$. %Consider the observed data $\tilde Y$ such that $\hat \mQ=\tilde Y\tilde \mY^\top$ in \eqref{def, hat \mQ \mQ}
Calculate and repeat until convergence: 
\begin{align*}
    \mathbf{\hat \Xi}_t^{(1)}&=\textbf{SVD}_{K_1}\left(\mY^{(1)}\cdot \left(\mathbf{\hat \Xi}_{t-1}^{(2)\top}\otimes \mathbf{\hat \Xi}_{t-1}^{(3)\top} \right)\right)\\
      \mathbf{\hat \Xi}_t^{(2)}&=\textbf{SVD}_{K_2}\left(\mY^{(2)}\cdot \left(\mathbf{\hat \Xi}_{t-1}^{(1)\top}\otimes \mathbf{\hat \Xi}_{t-1}^{(3)\top} \right)\right)\\
        \mathbf{\hat \Xi}_t^{(3)}&=\textbf{SVD}_{K_3}\left( \mY^{(3)}\cdot \left(\mathbf{\hat \Xi}_{t-1}^{(1)\top}\otimes \mathbf{\hat \Xi}_{t-1}^{(2)\top} \right)\right)\\
        t &\leftarrow t+1
\end{align*}

\begin{lemma}\label{lemma: hooi svd bound}
Define $L_t:=\max_{a=1,2,3} \left\|\sin \Theta (\mathbf{\hat \Xi}^{(a)}_t, \mathbf{\Xi}^{(a)})\right\|,t=0,1,2,\dots$. Whenever
$$
\lambda_K(\mD\mD^\top)\geqslant c^*N^{(1,2)},
$$
after at most $t_{\max}=C^*\log\left(\frac{N^{(1,2)}}{\lambda_K(\mD\mD^\top)}\vee 1\right)$ iterations in algorithm, the following upper bounds hold with high probability,
\begin{align*}
    \max_{a\in[3]}\|\mathbf{\hat \Xi}^{(a)}_{\text{hooi}}\mO^{(a)}-\mathbf{\Xi}^{(a)}\|\leqslant C^*\sqrt{\frac{\log N_R}{M}}
\end{align*}
\end{lemma}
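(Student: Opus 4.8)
The plan is to derive a one-step contraction for $L_t=\max_{a\in[3]}\|\sin\Theta(\mathbf{\hat\Xi}^{(a)}_t,\mathbf{\Xi}^{(a)})\|$ and to iterate it starting from the HOSVD initialization supplied by Lemma~\ref{lem: HOSVD error}. Fix a mode $a$ and let $b,c$ denote the other two modes. In matricized form the $t$-th update extracts the top-$K^{(a)}$ left singular subspace of $\mathbf{M}^{(a)}_t:=\mY^{(a)}\hat P_{t-1}$, where $\hat P_{t-1}:=\mathbf{\hat\Xi}^{(b)}_{t-1}\otimes\mathbf{\hat\Xi}^{(c)}_{t-1}$ has orthonormal columns; the natural noiseless comparison is $\mD^{(a)}P$ with $P:=\mathbf{\Xi}^{(b)}\otimes\mathbf{\Xi}^{(c)}$. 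The structural fact I would invoke is that the row space of $\mD^{(a)}$ lies in the range of $P$ (the Tucker/HOSVD projection property), so that $\mD^{(a)}=\mD^{(a)}PP^\top$; hence $\mD^{(a)}P$ has left singular vectors exactly $\mathbf{\Xi}^{(a)}$ and the same nonzero singular values as $\mD^{(a)}$, giving $\sigma_{K^{(a)}}(\mD^{(a)}P)=\sigma_{K^{(a)}}(\mD^{(a)})\gtrsim\sqrt{N^{(1,2)}}$ by Lemma~\ref{lemma: singular value D}.

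I would then decompose the perturbation as
\[
\mathbf{M}^{(a)}_t-\mD^{(a)}P=\mD^{(a)}(\hat P_{t-1}-P)+\mZ^{(a)}P+\mZ^{(a)}(\hat P_{t-1}-P),
\]
aligning $\mathbf{\hat\Xi}^{(b)}_{t-1},\mathbf{\hat\Xi}^{(c)}_{t-1}$ to $\mathbf{\Xi}^{(b)},\mathbf{\Xi}^{(c)}$ by their optimal rotations so that $\|\hat P_{t-1}-P\|_{op}\lesssim L_{t-1}$ and $\|P^\top\hat P_{t-1}-\mathbf I\|_{op}\lesssim L_{t-1}^2$. Using $\mD^{(a)}=\mD^{(a)}PP^\top$ yields $\|\mD^{(a)}(\hat P_{t-1}-P)\|_{op}=\|\mD^{(a)}P(P^\top\hat P_{t-1}-\mathbf I)\|_{op}\lesssim\sigma_1(\mD^{(a)})L_{t-1}^2\lesssim\sqrt{N^{(1,2)}}L_{t-1}^2$; the projected noise is controlled crudely by $\|\mZ^{(a)}P\|_{op}\le\|\mZ^{(a)}\|_{op}\lesssim\sqrt{N^{(1,2)}\log N_R/M}$ via Lemma~\ref{lem: Z l2 no} (this is exactly the place where the statement demands only the HOSVD-level rate, so no gain from projecting out the ambient dimension is needed); and $\|\mZ^{(a)}(\hat P_{t-1}-P)\|_{op}\lesssim\|\mZ^{(a)}\|_{op}L_{t-1}$. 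Feeding these into the $\sin\Theta$ bound of Proposition~\ref{prop: sintheta}, applied to the Gram matrices $\mathbf{M}^{(a)}_t\mathbf{M}^{(a)\top}_t$ and $\mD^{(a)}\mD^{(a)\top}$ with eigengap of order $N^{(1,2)}$, and writing $\delta:=\sqrt{\log N_R/M}$, I obtain the recursion $L_t\le C(\delta+\delta L_{t-1}+L_{t-1}^2)$ on the event of probability $1-o(N_R^{-1})$ where the noise bounds hold.

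To close, I would run an inductive bootstrap. Lemma~\ref{lem: HOSVD error} gives $L_0\lesssim\delta$, and $M\ge C\log N_R$ makes $\delta$ smaller than any fixed constant; as long as $L_{t-1}$ stays below a small absolute constant $\delta'$ the quadratic term obeys $L_{t-1}^2\le\delta'L_{t-1}$, so the recursion collapses to the genuine contraction $L_t\le C\delta+\tfrac12 L_{t-1}$ once $\delta,\delta'$ are taken small enough. Iterating gives $L_t\le 2C\delta+2^{-t}L_0$; under the hypothesis $\lambda_K(\mD\mD^\top)\ge c^*N^{(1,2)}$ the prescribed $t_{\max}=C^*\log(\tfrac{N^{(1,2)}}{\lambda_K(\mD\mD^\top)}\vee 1)$ is an $O(1)$ number of steps, after which the transient $2^{-t}L_0$ is dominated by the floor and $L_{t_{\max}}\lesssim\delta$. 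Passing from $\|\sin\Theta\|$ to $\min_{\mO^{(a)}}\|\mathbf{\hat\Xi}^{(a)}\mO^{(a)}-\mathbf{\Xi}^{(a)}\|$ (equivalent up to the constant $K^{(a)}$) and maximizing over $a\in[3]$ gives the stated bound $\max_a\|\mathbf{\hat\Xi}^{(a)}_{\mathrm{hooi}}\mO^{(a)}-\mathbf{\Xi}^{(a)}\|\le C^*\sqrt{\log N_R/M}$.

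The main obstacle I anticipate is the bookkeeping of the three mode-factor rotations across iterations, so that the per-mode $\sin\Theta$ distances genuinely control both $\|\hat P_{t-1}-P\|_{op}$ and the second-order quantity $\|P^\top\hat P_{t-1}-\mathbf I\|_{op}\lesssim L_{t-1}^2$, together with verifying at every step that the perturbation stays below the eigengap so that Proposition~\ref{prop: sintheta} applies and the recursion remains valid; the requisite operator-norm noise bounds are already furnished by Lemmas~\ref{lem: Z l2 no}, \ref{lemma: zz-Ezz l2 bound} and \ref{lemma: dz}.
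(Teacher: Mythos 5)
Your proof is correct, and it follows the same high-level strategy as the paper (a one-step $\sin\Theta$ contraction iterated from the HOSVD initialization of Lemma~\ref{lem: HOSVD error}, with noise controlled by Lemma~\ref{lem: Z l2 no} and the signal strength by Lemma~\ref{lemma: singular value D}), but the perturbation decomposition is genuinely different. You compare $\mY^{(a)}\hat P_{t-1}$ against the \emph{fixed} target $\mD^{(a)}P$ with $P=\mathbf{\Xi}^{(b)}\otimes\mathbf{\Xi}^{(c)}$, which forces you to handle the signal misalignment term $\mD^{(a)}(\hat P_{t-1}-P)$; your key move --- exploiting $\mD^{(a)}=\mD^{(a)}PP^{\top}$ so that, after aligning by the Kronecker product of the per-mode rotations, this term is second order, $\lesssim \sigma_1(\mD^{(a)})L_{t-1}^2$ --- yields the quadratic recursion $L_t\leq C(\delta+\delta L_{t-1}+L_{t-1}^2)$ and requires the inductive bootstrap you describe. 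The paper instead follows the Zhang--Xia analysis: it compares $\mY^{(a)}\hat P_t$ against the \emph{moving} target $\mD_t^{(a)}=\mD^{(a)}\hat P_t$ (whose left singular space is still $\mathrm{span}(\mathbf{\Xi}^{(a)})$), so by Wedin the perturbation is purely the projected noise $\mZ_t^{(a)}=\mZ^{(a)}\hat P_t$ with $\|\mZ_t^{(a)}\|_{op}\leq 2\|\mZ^{(a)}\|_{op}(1+L_t)$, and the signal change appears only in the denominator through $\sigma_{K^{(a)}}(\mD_t^{(a)})\geq\sigma_{K^{(a)}}(\mD^{(a)})(1-L_t^2)$; this gives the linear recursion $L_{t+1}\leq 2\|\mZ^{(1)}\|/\bigl(\sigma_K(\mD^{(1)})(1-L_t)\bigr)$, closed via $L_{t+1}-\epsilon\leq 4\epsilon(L_t-\epsilon)$ with $\epsilon=\sqrt{\log N_R/M}$, so no quadratic term and no rotation bookkeeping on the Kronecker factor is needed (right-multiplication of the comparison matrix plays no role since the target moves with the iterate). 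What each buys: the paper's version is shorter and avoids exactly the alignment subtleties you flag as the main obstacle (your $\|P^{\top}\hat P_{t-1}\mO-\mathbf{I}\|\lesssim L_{t-1}^2$ step needs the polar/SVD alignment per mode and the invariance of the left singular subspace under right rotation, all of which you would have to spell out), while your fixed-reference version is self-contained --- it does not import Zhang--Xia's inequalities (30)--(31) --- and makes explicit why the HOSVD warm start $L_0\lesssim\delta$ and the well-conditioning $\sigma_1(\mD^{(a)})/\sigma_{K^{(a)}}(\mD^{(a)})\leq C$ are what tame the quadratic term. Both routes terminate identically: under $\lambda_K(\mD\mD^{\top})\geq c^*N^{(1,2)}$ the prescribed $t_{\max}$ is $O(1)$ and the floor $C^*\sqrt{\log N_R/M}$ dominates, and the passage from $\|\sin\Theta\|$ to $\|\mathbf{\hat\Xi}^{(a)}\mO^{(a)}-\mathbf{\Xi}^{(a)}\|$ costs only a constant depending on the fixed $K^{(a)}$.
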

\begin{proof}
Define 
\begin{equation}\label{eq:def_z1t}
    \mZ_t^{(1)}:=\mZ^{(1)}\left(\mathbf{\hat \Xi}^{(2)\top}_t\otimes \mathbf{\hat \Xi}^{(3)\top}_t\right)\in \R^{N^{(1)} \times (K^{(2)} K^{(3)}) },
\end{equation} and $\mZ_t^{(2)}$ and $\mZ_t^{(3)}$ in a similar fashion:

$$\mZ_t^{(2)}:=\mZ^{(2)}\left(\mathbf{\hat \Xi}^{(1)\top}_t\otimes \mathbf{\hat \Xi}^{(3)\top}_t\right)\in \R^{N^{(2)} \times (K^{(1)} K^{(3)}) },$$
$$\mZ_t^{(3)}:=\mZ^{(3)}\left(\mathbf{\hat \Xi}^{(1)\top}_t\otimes \mathbf{\hat \Xi}^{(2)\top}_t\right)\in \R^{R \times (K^{(1)} K^{(2)} ) }.$$
%Let$$Z_U^*=\max_{a\in[3]}\max_{U_1\in\mathbb{R}^{n_2\times K_2},U_2\in\mathbb{R}^{n_3\times K_3}}\frac{\|\mZ^{(a)}(U_1\otimes U_2)\|}{\|U_1\|\|U_2\|}$$ we have
We follow the proof of Theorem 1 and inequality (31) in \cite{zhang2018tensor}. Similar to \cite{zhang2018tensor}, we define:
$$ \mY^{(1)}_t = \mathcal{M}_1\left(\Y \times_2 \mathbf{\hat \Xi}^{(2)}_t\times_3 \mathbf{\hat \Xi}^{(3)}_t\right) =  \mY^{(1)} \cdot \left(\mathbf{\hat \Xi}^{(2)\top}_t\otimes \mathbf{\hat \Xi}^{(3)\top}_t\right) \in \R^{N^{(1)} \times (K^{(2)} K^{(3)}) }$$
$$ \mD^{(1)}_t = \mathcal{M}_1\left(\D \times_2 \mathbf{\hat \Xi}^{(2)}_t\times_3 \mathbf{\hat \Xi}^{(3)}_t\right) =  \mD^{(1)} \cdot \left(\mathbf{\hat \Xi}^{(2)\top}_t\otimes \mathbf{\hat \Xi}^{(3)\top}_t\right)\in \R^{N^{(1)} \times (K^{(2)} K^{(3)}) }.$$
By Wedin's $\sin \Theta$ theorem \cite{wedin1972perturbation}, we know that:

\begin{equation}\label{eq:bound_sinTheta}
    \| \sin \Theta (\mathbf{\hat{\Xi}}^{(t+1)}, \mathbf{\Xi}^{(1)})\| \leq \frac{\| \mZ_t^{(1)}\|_{op}}{\sigma_r(\mD_t^{(1)})}.
\end{equation}
We thus need to bound the numerator and denominator in the previous equation.

\paragraph{Bound on $\| \mZ_{t+1}^{(1)}\|_{op}$.}
We have, by definition of $\mZ^{(1)}_{t+1}$ (Equation ~\eqref{eq:def_z1t}):

\begin{equation}\label{eq:bound_z1}
    \begin{aligned}
    \|\mZ^{(1)}_{t+1}\|_{op}& = \| \mZ^{(1)} \cdot (\mathbf{\hat \Xi}^{(2)}_{t}\otimes \mathbf{\hat \Xi}^{(3)}_t) \|_{op} \\
    & \leq \| \mZ^{(1)}\|_{op} \|\mathbf{\hat \Xi}^{(2)}_t\otimes \mathbf{\hat \Xi}^{(3)}_t \|_{op} \\
    & = \| \mZ^{(1)}\|_{op} \| ( \mathbf{\hat\Xi}^{(2)}_t-\mathbf{\Xi}^{(2)})\otimes \mathbf{\hat \Xi}^{(3)}_t  + \mathbf{\Xi}^{(2)}\otimes (\mathbf{\hat \Xi}^{(3)}_t  -\mathbf{\Xi}^{(3)}) + \mathbf{\Xi}^{(2)} \otimes \mathbf{\Xi}^{(3)}\|_{op} \\
    & \leq \| \mZ^{(1)}\|_{op} \left(2 L_{t}  +1 \right) \\
    &\leqslant 2\| \mZ^{(1)}\|_{op} (1+L_t).  
    \end{aligned}
\end{equation}

By the proof of Equation (31) in \cite{zhang2018tensor}, we have:
\begin{equation}
    \begin{split}
        \| \mZ^{(1)}_{t+1}\|_{op} &\leq \|  \mZ^{(1)} (\mathbf{\Xi}^{(2)}\otimes \mathbf{\Xi}^{(3)}) \|_{op} +  \|  \mZ^{(1)} \cdot(P_{\mathbf{\Xi}^{(2)}_{\perp}} \mathbf{\hat \Xi}_{t}^{(2)}\otimes P_{\mathbf{\Xi}^{(3)}} \mathbf{\hat \Xi}_{t}^{(3)}) \|_{op} \\
        &+  \| \mZ^{(1)} \cdot(P_{\mathbf{\Xi}^{(2)}} \mathbf{\hat \Xi}_{t}^{(2)}\otimes P_{\mathbf{\Xi}^{(3)}_{\perp}} \mathbf{\hat \Xi}_{t}^{(3)}) \|_{op}+ \| \mZ^{(1)} \cdot(P_{\mathbf{\Xi}^{(2)}_{\perp}} \mathbf{\hat \Xi}_{t}^{(2)}\otimes P_{\mathbf{\Xi}^{(3)}_{\perp}} \mathbf{\hat \Xi}_{t}^{(3)}) \|_{op}\\
    &\leq \| \mZ^{(1)} (\mathbf{\Xi}^{(2)}\otimes \mathbf{\Xi}^{(3)}) \|_{op} +  \| \mZ^{(1)} \cdot(P_{\mathbf{\Xi}^{(2)}_{\perp}} \mathbf{\hat \Xi}_{t}^{(2)}\otimes P_{\mathbf{\Xi}^{(3)}} \mathbf{\hat \Xi}_{t}^{(3)}) \|_{op} \\
        &+  \| \mZ^{(1)} \cdot(P_{\mathbf{\Xi}^{(2)}} \mathbf{\hat \Xi}_{t}^{(2)}\otimes P_{\mathbf{\Xi}^{(3)}_{\perp}} \mathbf{\hat \Xi}_{t}^{(3)}) \|_{op}+ \| \mZ^{(1)} \cdot(P_{\mathbf{\Xi}^{(2)}_{\perp}} \mathbf{\hat \Xi}_{t}^{(2)}\otimes P_{\mathbf{\Xi}^{(3)}_{\perp}} \mathbf{\hat \Xi}_{t}^{(3)}) \|_{op}\\
    \end{split}
\end{equation}

\paragraph{Bound on $\sigma_r(\mD_1^{(t)})$.} To this end, by the same arguments as the proof of Equation 30 in \cite{zhang2018tensor}, we obtain:
\begin{equation}\label{eq:bound_sigma_k1_D1}
    \sigma_{K^{(1)}}(\mD_t^{(1)}) \geq \sigma_{K^{(1)}}(\mD^{(1)}) \cdot (1-L_t^2).
\end{equation}

% By Weyl's inequality, we have:
% \begin{equation}
%     \begin{split}
%    \big| \sigma_r(D_{t+1}^{(1)}) - \sigma_r\left( D^{(1)}\cdot(\mathbf{\Xi}^{(2)}\otimes  \mathbf{\Xi}^{(3)})\right)  \big|& \leq  \| D_{t+1}^{(1)} - D^{(1)} \cdot \left(\mathbf{\Xi}^{(2)}\otimes  \mathbf{\Xi}^{(3)}\right) \|_{op} \\
%     &= \| D^{(1)}\cdot \left(  \left(\mathbf{\hat \Xi}_t^{(2)}\otimes \mathbf{\hat \Xi}_t^{(3)}\right) - \left(\mathbf{\Xi}^{(2)}\otimes  \mathbf{\Xi}^{(3)}\right)\right) \|_{op} \\%- \sigma_r( D^{(1)}\left(\mathbf{\Xi}^{(2)}\otimes  \mathbf{\Xi}^{(3)}\right) )\\
%         &\leq \sigma_{1}(D^{(1)}) \| \left(\mathbf{\hat \Xi}_t^{(2)}- \mathbf{\Xi}^{(2)}\right)\otimes \mathbf{\hat \Xi}_t^{(3)} + \mathbf{\Xi}^{(2)}\otimes  (\mathbf{\hat \Xi}_t^{(3)}- \mathbf{\Xi}^{(3)}) - 2\left(\mathbf{\Xi}^{(2)}\otimes  \mathbf{\Xi}^{(3)}\right)  \|_{op} \\%- \sigma_r( D^{(1)}\left(\mathbf{\Xi}^{(2)}\otimes  \mathbf{\Xi}^{(3)}\right) )\\
%     &\leq \| D^{(1)}\|_{op}  (2 L_{t} +2).%- \sigma_r( D^{(1)} \left(\mathbf{\Xi}^{(2)}\otimes  \mathbf{\Xi}^{(3)}\right))\\ 
%         \end{split}
% \end{equation}
% Therefore:
% \begin{equation}
%     \begin{split}
%   \sigma_r(D_{t+1}^{(1)})  & \leq \sigma_r\left( D^{(1)}\cdot(\mathbf{\Xi}^{(2)}\otimes  \mathbf{\Xi}^{(3)})\right)- \| D^{(1)}\|_{op}  (2 L_{t} +2)\\
%         \end{split}
% \end{equation}
% We note that: $  \sigma_r(D_{t+1}^{(1)}) \leq  \sigma_r(D{(1)}), $ so the previous equation becomes:
% $$   \sigma_r(D_{t+1}^{(1)})  \geq \sigma_r(D{(1)}) - \| D^{(1)}\|_{op}  (2 L_{t} +2)$$

Therefore, combining Equations~\ref{eq:bound_z1} and \ref{eq:bound_sigma_k1_D1}, equation~\ref{eq:bound_sinTheta} becomes:

$$ L_{t+1}\leq  \frac{2\|\mZ^{(1)}\|_{op} (1+L_t)}{\sigma_K( \mD^{(1)})(1-L_t^2)}=  \frac{2\|\mZ^{(1)}\|}{\sigma_K(\mD^{(1)})(1-L_t)}.$$

Using inequality (32) in \cite{zhang2018tensor} and $L_0\leqslant 1/2$ from HOSVD, we apply Lemmas \ref{lemma: singular value D} and \ref{lem: Z l2 no} with sintheta theorem  
By induction, we can sequentially prove that for all $t\geqslant 0$, 
$$
L_{t+1}\stackrel{\text{w.h.p}}{\lesssim}(1+L_t)\sqrt{\frac{\log N_R}{M}}\leqslant 1/2 
$$
This then yields, with high probability, let $\epsilon=\sqrt{\frac{\log N_R}{M}}$
\begin{align*}
    &L_{t+1}-\epsilon\leqslant4\epsilon(L_t-\epsilon) \text{ as } \epsilon\leqslant 1/2\\
    \Rightarrow \quad\quad &L_{t_{\max}}-\epsilon\leqslant (4\epsilon)^{t_{\max}}(L_0-\epsilon)\\
    \Rightarrow \quad\quad &L_{t_{\max}}\leqslant \epsilon+\frac{L_0}{2^{t_{\max}}}
\end{align*}
 %Lemma \ref{lem: svd lemma cai} and Section \ref{}, let $z_{uv}=u^\top Z v$ for any unit vector $u,v$. As the function $\frac{x}{x^2-y^2}$ is a decreasing function on $x$ and increasing function on $y$ when $x>y\geqslant0$, 
From the results in HOSVD, we have with high probability, \begin{align*}
 |L_0|\leqslant C^*\epsilon
 %=\frac{\lambda_K^4z_{uv}^2/\lambda_K^2}{(\lambda_K^4+c_m^2\|L\|^2-2c_m\lambda_K^2\|L\|)}
 \end{align*}
%By $\frac{x}{x+y}\leqslant 1+\frac{x}{y}$ and $\frac{1}{x-y}\leqslant\frac{1}{x}$ for $x>y>0$,
%we then have$$|L_0|\leqslant \epsilon+\frac{z_{uv}\sqrt{\|L\|}}{\lambda_K^2}$$
Thus,
$$L_{t_{\max}}\leqslant \epsilon+\cdot \frac{C^*}{2^{t_{\max}}}\cdot \frac{N^{(1,2)}\sqrt{\log N_R/M}}{\lambda_K(\mD\mD^\top)}.$$
Let $t_{\max}\geqslant C^*\left(\log(\frac{N^{(1,2)}}{\lambda_K(\mD\mD^\top)}\vee 1)\right)$. Therefore we have the following upper bound for $\ell_2$ distance for $\mathbf{\hat \Xi}_{t_{\max}}$,
$$
\max_{a\in[3]}\|\mathbf{\hat \Xi}^{(a)}_{\text{hooi}}\mO^{(a)}-\mathbf{\Xi}^{(a)}\|\leqslant L_{t_{\max}}\leqslant C^*\sqrt{\frac{\log N_R}{M}}
$$
\end{proof}

\subsection{Row-wise HOSVD bound}\label{sec: row-wise HOSVD}
\begin{lemma}\label{lem: row-wise HOSVD}
    Suppose the assumption \ref{ass: min singular value} is satisfied and $M\geqslant \log N_R/c_0$ for some constant $c_0>0$, then with probability at least $1-o(N_R^{-1})$, for every $i\in[N^{(1)}]$, $j\in[N^{2}]$, there exist orthogonal matrices $\mO^{(1)}\in\mathbb{O}_{K^{(1)},K^{(1)}}$, $\mO^{(2)}\in\mathbb{O}_{K^{(2)}, K^{(2)}}$
    \begin{align}
    E_{1}:=&\|(\mathbf{\mathbf{\hat \Xi}}^{(1)} \mO^{(1)}-\mathbf{\Xi}^{(1)})_{i\star}\|_2\lesssim
\sqrt{\frac{\log (N_R)}{N^{(1)}M}}\tag{$E 1$}\label{eq: E 1}\\
E_{2}:=&\|(\mathbf{\mathbf{\hat \Xi}}^{(2)} \mO^{(2)}-\mathbf{\Xi}^{(2)})_{j\star}\|_2\lesssim
\sqrt{\frac{\log (N_R)}{N^{(2)}M}}\tag{$E 2$}\label{eq: E 2}
\end{align}
Furthermore, if the assumption \ref{ass: min value A3} is further satisfied, and $f_r\geqslant c^*\sqrt{\frac{\log N_R}{N^{(1)}N^{(2)}M}}$ for all $r\in[R]$, we have probability at least $1-o(N_R^{-1})$, there exists an orthogonal matrices $\mO^{(3)}=\text{diag}(o,\mO^*)$ where $o\in\{\pm1\}$ and orthonormal matrix $\mO^*\in\mathbb{O}_{K-1,K-1}$,
\begin{align}
  E_{3}(r):=\|(\mathbf{\mathbf{\hat \Xi}}^{(3)} \mO^{(3)}-\mathbf{\Xi}^{(3)})_{r\star}\|_2\lesssim
\sqrt{\frac{f_r\log (N_R)}{N^{(1)}N^{(2)}M}} \text{ for all } r\in[R]\tag{$E 3$}\label{eq: E 3}
\end{align}
\end{lemma}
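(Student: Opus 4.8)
The plan is to apply the row-wise eigenvector perturbation bound of Lemma \ref{lem:svd ke} to the pairs $(\mQ^{(a)},\hat{\mQ}^{(a)})$ defined in \eqref{def: hat mQ mQ}, for each mode $a\in\{1,2,3\}$. First I would verify its hypotheses. By Lemma \ref{lemma: singular value D}, every nonzero eigenvalue of $\mQ^{(a)}$ satisfies $\lambda_k(\mQ^{(a)})\asymp N^{(1,2)}\asymp\|\mQ^{(a)}\|_{op}$, so both $\min_l|\lambda_l|\geq c\|\mQ^{(a)}\|_{op}$ and $\lambda_{K^{(a)}}-\lambda_{K^{(a)}+1}=\lambda_{K^{(a)}}\geq c\|\mQ^{(a)}\|_{op}$ hold. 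Moreover, the operator-norm bounds established inside Lemma \ref{lem: HOSVD error} give $\|\hat{\mQ}^{(a)}-\mQ^{(a)}\|_{op}\lesssim N^{(1,2)}\sqrt{\log N_R/M}$, which is $\leq(c/3)\|\mQ^{(a)}\|_{op}$ once $M\geq C\log N_R$; this is precisely where the hypothesis $M\geq\log N_R/c_0$ enters.

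With the hypotheses in place, Lemma \ref{lem:svd ke} delivers, for each row index,
\begin{equation*}
\|(\mathbf{\hat\Xi}^{(a)}\mO^{(a)}-\mathbf{\Xi}^{(a)})_{j\star}\|_2\lesssim\frac{1}{\|\mQ^{(a)}\|_{op}}\Big(\|\hat{\mQ}^{(a)}-\mQ^{(a)}\|_{op}\,\|\mathbf{\Xi}^{(a)}_{j\star}\|_2+\sqrt{K^{(a)}}\,\|(\hat{\mQ}^{(a)}-\mQ^{(a)})_{j\star}\|_2\Big),
\end{equation*}
so the task reduces to controlling $\|\mathbf{\Xi}^{(a)}_{j\star}\|_2$ and the perturbation row norm $\|(\hat{\mQ}^{(a)}-\mQ^{(a)})_{j\star}\|_2$. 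For the first factor, Lemma \ref{lemma:v_tilde_properties}(4) with Assumption \ref{ass: min singular value} gives $\|\mathbf{\Xi}^{(a)}_{i\star}\|_2\leq\|\mA^{(a)}_{i\star}\|_2\,\sigma_{K^{(a)}}(\mA^{(a)})^{-1}\lesssim 1/\sqrt{N^{(a)}}$ for $a\in\{1,2\}$ (rows stochastic, so $\|\mA^{(a)}_{i\star}\|_2\leq 1$), and $\|\mathbf{\Xi}^{(3)}_{r\star}\|_2\lesssim f_r$ for $a=3$ (since $\|\mA^{(3)}_{r\star}\|_2\leq\|\mA^{(3)}_{r\star}\|_1=f_r$ and $\sigma_{K^{(3)}}(\mA^{(3)})\gtrsim 1$). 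Inserted into the first summand these yield the leading terms $\sqrt{\log N_R/(N^{(a)}M)}$ and $f_r\sqrt{\log N_R/(N^{(1,2)}M)}\leq\sqrt{f_r\log N_R/(N^{(1,2)}M)}$ (using $f_r\leq K^{(3)}=O(1)$, hence $f_r\lesssim\sqrt{f_r}$).

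For the perturbation row norm I would use the decomposition \eqref{eq: hat mQ-mQ}. For modes $1,2$: the cross terms $(\mD^{(a)}\mZ^{(a)\top})_{i\star}$ and $(\mZ^{(a)}\mD^{(a)\top})_{i\star}$ are bounded by $\|\mD^{(a)}_{i\star}\|_2\|\mZ^{(a)}\|_{op}$ and $\|\mZ^{(a)}_{i\star}\|_2\|\mD^{(a)}\|_{op}$, using $\|\mD^{(1)}_{i\star}\|_2\leq\sqrt{N^{(2)}}$ together with \eqref{eq:norm op Z1} and \eqref{eq:bound norm Z1} of Lemma \ref{lem: Z l2 no}; the centered term $(\mZ^{(a)}\mZ^{(a)\top}-\mathbb{E}[\mZ^{(a)}\mZ^{(a)\top}])_{i\star}$ is controlled entrywise by Lemmas \ref{lemma: zijr square} and \ref{lemma: zz}; and the diagonal expectation (Lemma \ref{lemma: expectation of zz}) contributes a negligible $O(N^{(2)}/M)$. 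Dividing each by $\|\mQ^{(a)}\|_{op}\asymp N^{(1,2)}$ leaves a quantity dominated by $\sqrt{\log N_R/(N^{(a)}M)}$, giving \eqref{eq: E 1} and \eqref{eq: E 2}.

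The mode-$3$ analysis is the main obstacle, since I must track the frequency factor $f_r$ through every term and additionally pin down the block structure $\mO^{(3)}=\text{diag}(o,\mO^*)$. Writing $\mathcal{D}_{ijr}=\sum_k\mA^{(3)}_{rk}\mW^{(3)}_{k,(ij)}$, the cross terms $(\mD^{(3)}\mZ^{(3)\top})_{r\star}$ and $(\mZ^{(3)}\mD^{(3)\top})_{r\star}$ reduce to sums of the quantities bounded in Lemma \ref{lemma: dz}, yielding row norms $\lesssim\sqrt{N^{(1,2)}f_r\log N_R/M}$ (using $\sum_{r'}f_{r'}=K^{(3)}=O(1)$); the centered quadratic term is bounded via \eqref{eq:z2_over_ij} and the third display of Lemma \ref{lemma: zz}, again of order $\sqrt{N^{(1,2)}f_r\log N_R/M}$; and the residual diagonal correction $\mathbb{E}[\mZ^{(3)}\mZ^{(3)\top}]-\mL^{(3)}+\tfrac{1}{M}\mD^{(3)}\mD^{(3)\top}$ is governed by $\tfrac{1}{M}|\sum_{ij}\mathcal{Z}_{ijr}|$ through Lemma \ref{lemma: sumij zij}, which is of lower order. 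Dividing by $\|\mQ^{(3)}\|_{op}\asymp N^{(1,2)}$ produces $\sqrt{f_r\log N_R/(N^{(1,2)}M)}$, i.e. \eqref{eq: E 3}. To obtain the block form of $\mO^{(3)}$, I would invoke Lemma \ref{lem:svd ke} twice — once with $s=k=1$ to align the leading eigenvector and once with $s=2,\,k=K^{(3)}$ for the remaining coordinates — which requires the spectral gap $\lambda_1(\mQ^{(3)})-\lambda_2(\mQ^{(3)})\gtrsim\|\mQ^{(3)}\|_{op}$; this gap, and the positivity reducing the leading alignment to a single sign $o\in\{\pm1\}$, follow from Assumption \ref{ass: min value A3} through Lemma \ref{lem: eigen gap} and Perron's theorem. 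The careful bookkeeping of the $f_r$ factors across all four pieces of \eqref{eq: hat mQ-mQ}, made uniform in $r$ by the frequency hypothesis $f_r\gtrsim\sqrt{\log N_R/(N^{(1,2)}M)}$, is the crux of the argument.
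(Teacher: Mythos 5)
Your proposal is correct and follows essentially the same route as the paper's proof: verify the eigenvalue and perturbation hypotheses of Lemma \ref{lem:svd ke} via Lemma \ref{lemma: singular value D} and the operator-norm bounds from Lemma \ref{lem: HOSVD error}, bound $\|\mathbf{\Xi}^{(a)}_{j\star}\|_2$ through Lemma \ref{lemma:v_tilde_properties}, control the perturbation rows of \eqref{eq: hat mQ-mQ} with Lemmas \ref{lem: Z l2 no}, \ref{lemma: dz}, \ref{lemma: zijr square}, \ref{lemma: zz} and \ref{lemma: sumij zij}, and for mode $3$ split the spectrum into the leading eigenvector and the remaining $K^{(3)}-1$, with the gap $\lambda_1-\lambda_2\gtrsim N^{(1,2)}$ supplied by Assumption \ref{ass: min value A3} through Lemma \ref{lem: eigen gap}, yielding $\mO^{(3)}=\text{diag}(o,\mO^*)$. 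Your only deviations are cosmetic and harmless: for modes $1,2$ you split $\hat{\mQ}^{(a)}-\mQ^{(a)}$ into four pieces where the paper keeps $\mY^{(a)}$ in a two-term cross decomposition, and for mode $3$ you use the slightly weaker bound $\|\mathbf{\Xi}^{(3)}_{r\star}\|_2\lesssim f_r$ instead of the paper's $\sqrt{f_r/K^{(3)}}$, which you correctly repair via $f_r\lesssim\sqrt{f_r}$ since $f_r\leq K^{(3)}=O(1)$.
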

\begin{proof}
 To obtain the bound above with high probability, we use Lemma \ref{lem:svd ke} and need to study $\|\mU^{*(a)}_{j\star}\|$ and $\|\hat \mQ^{(a)}_{j\star}-\mQ^{(a)}_{j\star}\|$ for each $a$.
%Let $\mathbf{\Lambda}^2$ and $\hat \mathbf{\Lambda}^2$ be respectively eigenvalue matrices of $\mQ$ and $\hat \mQ$ in \eqref{def, hat \mQ \mQ}.
Given the results from Section \ref{sec: HOSVD}, we have with probability at least $1-o(N_R^{-1})$,
\begin{align}\label{eq: hat mQ-mQ l2 norm}
\max_{a\in[3]}\|\hat \mQ^{(a)}-\mQ^{(a)}\|\lesssim N^{(1,2)}\sqrt{\frac{\log N_R}{M}}\leqslant c^*(1-\alpha) N^{(1,2)}\stackrel{\text{Lemma \ref{lemma: singular value D}}}{\leqslant} \lambda_K(\mQ^{(a)})
\end{align}
\begin{itemize}
    \item \textbf{Mode 1 \& 2 matricization:} We here provide the analysis of mode 1, and that of mode 2 alike. Let $s=1$ and $k=K$ in both mode 1 and mode 2. From the Lemma \ref{lemma: singular value D} and \eqref{eq: hat mQ-mQ l2 norm}, we verify the conditions in Lemma \ref{lem:svd ke} with $\hat \mU^{(1)}=\mathbf{\mathbf{\hat \Xi}}^{(1)}$, and $\mU^{(1)}=\mathbf{\Xi}^{(1)}$.
    \begin{enumerate}
        \item Let $\hat \mU^{(1)}=\mathbf{\mathbf{\hat \Xi}}^{(1)}$, $\mU^{*(1)}=\mU^{(1)}=\mathbf{\Xi}^{(1)}$. We have
$$\|\mathbf{\Xi}^{(1)}_{i\star}\|_2^2\leqslant\|\mA^{(1)}_{i\star}\|_2^2\sigma_K(\mA^{(1)})^{-2}\leqslant\|\mA^{(1)}_{i\star}\|_1\sigma_K(\mA^{(1)})^{-2}\leqslant 1/N^{(1)}
$$
        \item  For $\|(\hat \mQ^{(1)}-\mQ^{(1)})_{i\star }\|$, we have
        \begin{align*}
           \|(\hat \mQ^{(1)}-\mQ^{(1)})_{i\star }\|=& \|\mY^{(1)}(\mY^{(1)}-\mD^{(1)})_{i\star}+(\mY^{(1)}-\mD^{(1)}) \mD^{(1)}_{i\star}\| \\
\leqslant &\|\mY^{(1)}\|\|\mZ^{(1)}_{i\star}\|+\|\mZ^{(1)}\|\|\mD^{(1)}_{i\star}\|  \\
\leqslant & \sqrt{N^{(1,2)}}\|\mZ^{(1)}_{i\star}\|+\sqrt{\frac{N^{(1,2)}\log N_R}{M}}\|\mD^{(1)}_{i\star}\| %\text{ By Wely's inequality and Lemmas \ref{lemma: singular value D}}, \ref{lem: Z l2 no}
\\\leqslant & N^{(2)}\sqrt{\frac{N^{(1)}\log N_R}{M}}
\end{align*}
where the last second inequality is given by Wely's inequality and Lemmas \ref{lemma: singular value D}, \ref{lem: Z l2 no} and the last inequality is from the results of Lemma \ref{lem: Z l2 no} and the fact that 
$$\|\mD^{(1)}_{i\star}\|_2^2=\sum_{jr}P(r|i,j)^2\leqslant N^{(2)}$$
    \end{enumerate}
We then use Lemma \ref{lem:svd ke} with two bounds above, with probability at least $1-o(N_R^{-1})$,
\begin{align*}
    \|(\mathbf{\mathbf{\hat \Xi}}^{(1)} \mO^{(1)}-\mathbf{\Xi}^{(1)})_{i\star }\|&\leqslant \sqrt\frac{\log N_R}{M}\cdot \sqrt{\frac{1}{N^{(1)}}}+N^{(2)}\sqrt{\frac{N^{(1)}\log N_R}{M}}\cdot \frac{1}{N^{(1,2)}}\\
   & \leqslant C^*\sqrt{\frac{\log N_R}{N^{(1)}M}}
\end{align*}
    \item\textbf{Mode 3 matricization:} We divide the eigenvectors of $\mQ^{(3)}$ into two groups. The first group only includes the leading eigenvector $\mathbf{\Xi}^{(3)}_{\star 1}$ as $s=1, k=1$, while the second group contains the left $K-1$ eigenvectors as $s=2$ and $k=K^{(3)}$ in lemma \ref{lem:svd ke}. We need to figure out the lower bound of the gap of the first two eigenvalues of $\mQ^{(3)}$.  Let $n=N^{(1)}N^{(2)}$. Define 
$\mathbf{\Theta}=\frac{1}{n}\mW\mW^\top \mA^\top \mA $, where $\mW=\mW^{(3)}$ and $\mA=\mA^{(3)}$.
Note that $\mW=\mathcal{M}_3(\mathcal{G})(\mA^{(1)}\otimes \mA^{(2)})^\top$, we have $$\sigma_K(\frac{1}{n}\mW\mW^\top)\geqslant\frac{1}{N^{(1)}}\sigma_{\min}(\mA^{(1)\top} \mA^{(1)})\cdot \frac{1}{N^{(2)}}\sigma_{\min}(\mA^{(2)\top} \mA^{(2)}) \cdot \sigma_K(\mathcal{M}(\mathcal{G})^\top\mathcal{M}(\mathcal{G}) )\geqslant c^* K^{(1)}K^{(2)}$$
On the other side, by lemma \ref{lemma: max singular values}, we have
$$
\sigma_1(\frac{1}{n}\mW\mW^\top)\leqslant  K^{(1)}K^{(2)}
$$
By assumptions \ref{ass: min singular value} and \ref{ass: min value A3}, we have
\begin{align}
    \mathbf{\Theta}_{k_1k_2 }&=\sum_{k=1}^K\frac{1}{n}(\mW\mW^\top)_{k_1 k} \cdot (\mA^\top \mA)_{k k_2}\notag\\
   & \geqslant \min_{s,s'}(\mA^\top \mA)_{s,s'} \cdot \sum_{k=1}^K \frac{1}{n}(\mW\mW^\top)_{k_1 k}\notag\\
   &\geqslant \min_{s,s'}(\mA^\top \mA)_{s,s'} \cdot \frac{1}{n}(\mW\mW^\top)_{k_1 k_1}\notag\\
   &\geqslant \min_{s,s'\in[K^{(3)}]}(\mA^\top \mA)_{s,s'} \cdot\sigma_K(\frac{1}{n}\mW\mW^\top )\geqslant c^*\label{eq: entry of theta 3}
\end{align}
Also, 
$$
\|\mathbf{\Theta}\|\leqslant \|\mA\|_{op}^2\|\frac{1}{n}\mW\mW^\top\|_{op}\leqslant \prod_{i=1}^3 K^{(i)}
$$
Applying lemma \ref{lem: eigen gap}, we have
$$
\lambda_1(\mathbf{\Theta})-\lambda_2(\mathbf{\Theta})\geqslant c^*
$$
Note that 
$$
\lambda_i(\mQ^{(3)})=\left(1-\frac{1}{M}\right)\lambda_i(\mA\mW\mW^\top \mA^\top)=\left(1-\frac{1}{M}\right)\lambda_i(\mW\mW^\top \mA^\top \mA)
$$
so we have
$$
\lambda_1(\mQ^{(3)})-\lambda_2(\mQ^{(3)})\geqslant c^*N^{(1,2)}
$$
and thus 
$$
\lambda_1(\mQ^{(3)})\geqslant c^*N^{(1,2)} +\max_{k\in[K^{(3)}]}\lambda_k(\mQ^{(3)})
$$
By lemma \ref{lemma: singular value D} and $\mQ^{(3)}=\left(1-\frac{1}{M}\right)\mD^{(3)}\mD^{(3)\top}$, we have
$$
c^*N^{(1,2)}\leqslant\lambda_k(\mQ^{(3)})\leqslant N^{(1,2)}\text{ for all } k\in[K^{(3)}]
$$
Plugging these values into lemma \ref{lem:svd ke} with matrix $\mO^{(3)}=\text{diag}(o,\mO^*)\in\mathbb{R}^{K\times K}$ where $o\in\{\pm1\}$ and orthonormal matrix $\mO^*\in\mathbb{R}^{(K-1)\times (K-1)}$, we achieve the result with high probability. Note that we set $o\in\{\pm 1\}$ because the singular vector is determined up to a sign flip.
\par We then satisfy the conditions in Lemma \ref{lem:svd ke}, and need to study the terms left in \eqref{eq: rowwise ke lemma }.
\begin{enumerate}
    \item  Let $\mU^{*(3)}=\mathbf{\Xi}^{(3)}$, we have
   $$\|\mathbf{\Xi}^{(3)}_{r\star}\|_2^2\leqslant\|\mA_{r\star}\|_2^2\sigma_K(\mA)^{-2}\leqslant\|\mA_{r\star}\|_1\sigma_K(\mA)^{-2}\leqslant f_r/K$$
    \item The upper bound$ \|(\mQ^{(3)}-\hat \mQ^{(3)})_{r\star}\|_2\leqslant C^*\sqrt{\frac{f_rN^{(1)}N^{(2)}\log (N_R)}{M}}$. w.h.p. is given by 
    \begin{align*}
        \|(\mD^{(3)}\mZ^{(3)\top}+\mZ^{(3)}\mD^{(3)\top})_{r\star}\|_2\leqslant& \sum_k \mA_{rk}\|\mZ^{(3)}\mW_{k\star}\|+\sum_k\left|\sum_{ij}\mathcal{Z}_{ijr}\mW_{k,ij}\right|\|\mA_{\star k}\|_2\\
        \stackrel{\text{Lemma \ref{lemma: dz}}}{\lesssim}& \sqrt{\frac{f_rN^{(1,2)}\log N_R}{M}}\\
        \left\|\left(\mZ^{(3)}\mZ^{(3)\top}-\mathbb{E}[\mZ^{(3)}\mZ^{(3)\top}]\right)_{r\star}\right\|\leqslant&\sqrt{\sum_{r'}\left(\sum_{ij}\mathcal{Z}_{ijr}\mathcal{Z}_{ijr'}-\mathbb{E}[\mathcal{Z}_{ijr}\mathcal{Z}_{ijr'}]\right)^2}\\
        \stackrel{\text{Lemmas \ref{lemma: zz},\ref{lemma: zijr square}}}{\lesssim} &\sqrt{\frac{f_rN^{(1,2)}\log N_R}{M}} \quad \text{ with } f_r\leqslant K\\
        \frac{1}{M}\|\text{diag}(\mZ^{(3)}\mathbf{1}_{n})_{r\star}\|_2\stackrel{\text{Lemma \ref{lemma: sumij zij}}}{\lesssim}&  \frac{1}{M}\sqrt{\frac{f_rN^{(1,2)}\log N_R}{M}}
    \end{align*}
    Combined the two points above, we apply Lemma \ref{lem:svd ke} for two groups of eigenvectors to get the bound in \eqref{eq: E 3}
\end{enumerate}
\end{itemize}
\end{proof}

\section{Analysis of the Vextex Hunting Algorithm}\label{sec: vh hunting}

In our analysis, we will make the following assumption on the efficiency the vertex hunting algorithm (similar to Assumption 4 in \cite{tran2023sparse}).

\begin{assumption}\label{ass: vh efficient}[Efficiency of the VH algorithm]
Given $K$ and a set of data points $\{\mathbf{y}_i\}_{i\in[n]}\in\mathbb{R}^{p}$ such that $\mathbf{y}_i=\sum_{k=1}^K \omega_{ik} \vnu_{k}$ satisfying the ideal simplex condition \ref{def: ideal simplex}, the vertex hunting function $\mathcal{V}$ that outputs estimates $\mathcal{V}( \{\mathbf{y}_i\}_{i=1}^n) = \{\hat\vnu_k\}$ for the vertices of the point cloud $\{\mathbf{y}_i\}_{i\in[n]}$  satisfies, for some absolute constant $C>0$:
\begin{align}\label{eq: efficiency of VH algo}
\max_{k\in[K]}\|\hat \vnu_{\pi(k)}-\vnu_k\|\leqslant C^* \max_{i\in[n]} \|\mathbf{y}_i- \mathbf{\hat y}_i\|
\end{align}
subject to a label permutation $\pi$.
\end{assumption}
This assumption is commonly used to control vertex estimation error by requiring to have points near each vertex \cite{ke2022using,tran2023sparse}. The Successive Projection (SP) method \cite{araujo2001successive} has been demonstrated to satisfy this assumption (see Section 2.2 in \cite{ke2022using}). Another suitable vertex hunting (VH) algorithm, Sketched Vertex Search (SVS) \cite{jin2017estimating}, is more robust to noise in the simplex under mild conditions, although it is computationally slower. Additionally, the Archetype Analysis (AA) vertex hunting procedure can achieve the bound in \eqref{eq: efficiency of VH algo} under conditions that are more relaxed than the ideal simplex condition described in \ref{def: ideal simplex} (see Theorem F.1 in \cite{javadi2020nonnegative}). 
\par Remark that as long as the bound in \eqref{eq: efficiency of VH algo} is satisfied with high probability, our main results remain valid even under these relaxed conditions, which do not require the anchor conditions outlined in Assumption \ref{ass:identifiability}
. The anchor conditions are primarily used to satisfy the ideal simplex condition in Assumption \ref{ass: vh efficient}. We retain these anchor conditions to help the readers better understand the concepts and to keep the proofs straightforward and concise.

Next, we demonstrate how the anchor conditions ensure that the simplex formed by the rows of $ S$ in step 2 of the Post-SVD process satisfies the ideal simplex condition in Assumption \ref{ass: vh efficient}. Recall that in oracle procedure, we use $\mV$ to denote the matrix of vertices returned by the VH algorithm, while $\mV^*$ denotes the vertices matrix used in the recovery procedure (Algorithm~\ref{sec: estimating A3}). Define $\Omega$ as the weight matrix with $\mV$ returned by the VH algorithm in the oracle procedure, and $\mathbf{\Omega}^*$ its counterpart in the recovery procedure.
\begin{lemma}[Mode 1 \& 2 matricization]\label{lem: hat S row wise bound mode 1 and 2} Given $a\in\{1,2\}$,
suppose we define the simplex 
$$\mS^{(a)}=\mathbf{\Xi}^{(a)}=\mA^{(a)}\tilde{\mV}^{(a)}$$
with vertex matrix $\mV^{(a)}=\tilde{\mV}^{(a)}$ and weight matrix $\mathbf{\Omega}^{(a)}=\mA^{(a)}$. Under the conditions of Lemma \ref{lem: row-wise HOSVD}, we have probability at least $1-o(N_R^{-1})$,
      \begin{align*}
              &\|(\mathbf{\hat S}^{(1)} \mO^{(1)}-\mathbf{S}^{(1)})_{i\star}\|_2\lesssim
\sqrt{\frac{\log (N_R)}{N^{(1)}M}}\\
&\|(\mathbf{\hat S}^{(2)} \mO^{(2)}-\mathbf{S}^{(2)})_{j\star}\|_2\lesssim
\sqrt{\frac{\log (N_R)}{N^{(2)}M}}
      \end{align*}

If $\mA^{(a)}$ follows \eqref{case:1} such that $\sum_{k\in K^{(a)}}\mA_{ik}=1$ for all $i\in[N^{(a)}]$ and satisfies the anchor document condition \ref{assumption: anchor doc}, then  the simplex $\mathbf{S}^{(a)}$ is an ideal simplex.
      
    %and thus
    %\begin{align}\label{eq: V star and Omega star in c1}  \mV^{*(a)}=V^{(a)}=\tilde{\mV}^{(a)}\quad  \text{and} \quad\mathbf{\Omega}^{*(a)}=\mathbf{\Omega}^{(a)}=\mA^{(a)} \end{align}
   % and with probability at least $1-o(N_R^{-1})$, \begin{align*}&\max_{k\in[K^{(1)}]}\|\mO^{(1)\top}\hat \mV^{*(1)}_{\pi(k)\star}- \mV^{*(1)}_{k\star}\|\lesssim  E 1\\&\max_{k\in[K^{(2)}]}\|\mO^{(2)\top}\hat \mV^{*(2)}_{\pi(k)\star}- \mV^{*(2)}_{k\star}\|\lesssim E 2 \end{align*}
\end{lemma}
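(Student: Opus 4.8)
The plan is to split the statement into its two components: the row-wise perturbation bounds and the structural (ideal-simplex) claim, and to dispatch each by reducing it to results already established upstream. For the error bounds, the key observation is that for $a\in\{1,2\}$ the post-SVD preprocessing of Step~2 performs no SCORE normalization, so it simply sets $\mS^{(a)}=\mathbf{\Xi}^{(a)}$ in the oracle case and $\mathbf{\hat S}^{(a)}=\mathbf{\hat\Xi}^{(a)}$ in the noisy case. Consequently the quantities $\|(\mathbf{\hat S}^{(a)}\mO^{(a)}-\mS^{(a)})_{i\star}\|_2$ are \emph{verbatim} the row-wise HOSVD quantities $E_1,E_2$ controlled in Lemma~\ref{lem: row-wise HOSVD}. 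Thus the two displayed bounds are immediate consequences of \eqref{eq: E 1} and \eqref{eq: E 2}, holding on the same high-probability event of probability $1-o(N_R^{-1})$, with the same alignment matrices $\mO^{(1)},\mO^{(2)}$. No new estimate is needed here.

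For the ideal-simplex claim, I would invoke the factorization $\mathbf{\Xi}^{(a)}=\mA^{(a)}\tilde{\mV}^{(a)}$ from \eqref{eq: Xi=AtildeV} (equivalently, point~4 of Lemma~\ref{lemma:v_tilde_properties}), which is valid since Assumption~\ref{ass: min singular value} forces $\lambda_{K^{(a)}}(\mD^{(a)})>0$ and hence $\tilde{\mV}^{(a)}$ is unique and invertible. Reading this identity row-wise gives $\mathbf{\Xi}^{(a)}_{i\star}=\sum_{k=1}^{K^{(a)}}\mA^{(a)}_{ik}\,\tilde{\mV}^{(a)}_{k\star}$, so each row of $\mS^{(a)}$ is a linear combination of the rows of $\tilde{\mV}^{(a)}$ with coefficients $\omega_{ik}:=\mA^{(a)}_{ik}$. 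Because $\mA^{(a)}$ is row-stochastic (Case~\eqref{case:1}), these coefficients satisfy $\omega_{ik}\geqslant 0$ and $\sum_k\omega_{ik}=1$, so every row of $\mS^{(a)}$ lies in the convex hull of the vertices $\vnu_k:=\tilde{\mV}^{(a)}_{k\star}$; this matches the convex-combination requirement of Definition~\ref{def: ideal simplex}. To obtain the defining anchor property, I would then apply the anchor-document condition of Assumption~\ref{ass:identifiability}: for each class $k$ there exists an index $i^*$ with $\mA^{(a)}_{i^*k}=1$ and $\mA^{(a)}_{i^*k'}=0$ for $k'\neq k$, whence $\mathbf{\Xi}^{(a)}_{i^*\star}=\tilde{\mV}^{(a)}_{k\star}=\vnu_k$. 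This exhibits, for each $k$, a data point achieving weight $\omega_{i^*k}=1$, which is exactly the ideal-simplex condition.

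The argument is essentially a direct unwinding of definitions, so there is no substantive obstacle; the only point requiring care is the non-degeneracy of the vertex set, i.e. that the rows of $\tilde{\mV}^{(a)}$ are affinely independent so that $\{\vnu_k\}$ genuinely spans a $(K^{(a)}{-}1)$-simplex rather than collapsing. This is secured by the invertibility of $\tilde{\mV}^{(a)}$ guaranteed in Lemma~\ref{lemma:v_tilde_properties}, itself a consequence of the well-conditioning Assumption~\ref{ass: min singular value}. I would flag this dependence explicitly but not belabor it, since it is inherited rather than re-proved.
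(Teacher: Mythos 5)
Your proof is correct and takes essentially the same route as the paper: for $a\in\{1,2\}$ the bounds are indeed just the row-wise HOSVD estimates \eqref{eq: E 1}--\eqref{eq: E 2} read off via $\mS^{(a)}=\mathbf{\Xi}^{(a)}$ and $\mathbf{\hat S}^{(a)}=\mathbf{\hat \Xi}^{(a)}$, and the ideal-simplex claim follows, exactly as in the paper, from the factorization $\mathbf{\Xi}^{(a)}=\mA^{(a)}\tilde{\mV}^{(a)}$ of Lemma~\ref{lemma:v_tilde_properties} combined with row-stochasticity of $\mA^{(a)}$ and the anchor-document assumption producing a row equal to each vertex. Your added remark on non-degeneracy of the vertex set via invertibility of $\tilde{\mV}^{(a)}$ is a sound point the paper leaves implicit.
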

\begin{proof}
The SVD of $\mD^{(a)}$ in lemma \ref{lemma:v_tilde_properties} yields
$$
\mathbf{\Xi}^{(a)}_{i\star}=\sum_{k=1}^{K^{(a)}}\mA^{(a)}_{ik} \tilde{\mV}^{(a)}_{k\star}
$$
%For the purpose of clarity, we fix $a\in \{1,2\}$ and denote the corresponding matrix $\mA^{(a)}$ simply as $A$.
Fix $a\in\{1,2\}$, by observation~\eqref{eq:constraints A}, for any $i\in[N^{(a)}]$, $\sum_{k\in[K^{(a)}]}\mA^{(a)}_{ik}=1$. Furthermore, by the anchor document assumption on the matrix $\mA$ (Assumption~\ref{assumption: anchor doc}), for each $k$, there exists an index $i$ such that $\mA^{(a)}_{ik}=1$ and $\mA^{(a)}_{ik'}=0$ for $k\neq k'$. Thus, the linear equation $\mathbf{\Xi}^{(a)}=\mA^{(a)}\tilde{\mV}^{(a)}$ defines an ideal simplex (Definition~\ref{def: ideal simplex}), with the matrix $\mA^{(a)}$ playing the role of the weights $\mathbf{ \Omega}^{(a)}$. Consequently, we run the VH algorithm on the rows of the matrix $\mathbf{\Xi}^{(a)}$ to estimate the corresponding vertices. 
%Under assumption \ref{lem: vh efficient} to get the result.
\end{proof}

\begin{lemma}[Mode 3 matricization]\label{lem: hat S row wise bound mode 3}
Suppose the simple $\mathbf{S}^{(3)}$ is given by
    $$[\mathbf{1}_R, \mathbf{S}^{(3)}]=\left(\text{diag}(\mathbf{\Xi}^{(3)}_{\star 1})\right)^{-1} \mA^{(3)} \tilde{\mV}^{(3)}$$
  with vertex matrix $\mV^{(3)}$ and weight matrix $\mathbf{\Omega}^{(3)}$ such that
    $$
    [\mathbf{1}_{K^{(3)}}, \mV^{(3)}]=[\text{diag}(\tilde{\mV}^{(3)}_{\star 1})]^{-1}\tilde{\mV}^{(3)}\quad \text{and}\quad \mathbf{\Omega}^{(3)}=[\text{diag}(\mathbf{\Xi}^{(3)}_{\star 1})]^{-1} \mA^{(3)} [\text{diag}(\tilde{\mV}^{(3)}_{\star 1})]$$ 
     If we assume the same conditions in Lemma \ref{lem: row-wise HOSVD}, we have probability at least $1-o(N_R^{-1})$
    \begin{align}
         \|\mO^{*\top}\mathbf{\hat S}^{(3)}_{r\star}-\mathbf{S}^{(3)}_{r\star}\|_2\lesssim 
         \sqrt{\frac{\log N_R}{f_r N^{(1)}N^{(2)}M}}\text{ for all }r \in[R]
    \end{align}
    where $\mO^*\in\mathbb{O}_{K^{(3)}-1,K^{(3)}-1}$ comes from the orthogonal matrix $\mO^{(3)}=\text{diag}(o,\mO^*)$.
   Observed that $\mA^{(3)}$ follows \eqref{case:2} such that $\sum_{r\in[R]}\mA^{(3)}_{rk}=1$, if $\mA^{(3)}$ further satisfies the anchor word condition \ref{def: anchor word}, then the point cloud of the rows of $\mathbf{S}^{(3)}$ forms an ideal simplex.
  %\begin{align}\label{eq: Vstar and omega star in C2}\mV^{*(3)}=[\mathbf{1}_{K^{(3)}}, V^{(3)}]=[\text{diag}(\tilde{\mV}^{(3)}_{\star 1})]^{-1}\tilde{\mV}^{(3)}\quad \text{and}\quad \mathbf{\Omega}^{*(3)}=\mathbf{\Omega}^{(3)}=[\text{diag}(\mathbf{\Xi}^{(3)}_{\star 1})]^{-1} \mA^{(3)} [\text{diag}(\tilde{\mV}^{(3)}_{\star 1})]\end{align}
  
%    Using \eqref{eq: efficiency of VH algo}, we have probability at least $1-o(N_R^{-1})$,\begin{align}\label{eq: vstar error in A3}\max_{k\in[K^{(3)}]}\|\mO^{(3)\top} \hat \mV^{*(3)}_{\pi(k)\star}-\mV^{*(3)}\|\lesssim E_{3,n}/\min_r \sqrt{f_r}\end{align}where $E_{3,n}:=E 3(r)/\sqrt{f_r}$. $E_{3,n}$ is not related to $f_r$
\end{lemma}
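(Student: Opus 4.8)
The proof splits into an algebraic part establishing the ideal-simplex structure and a probabilistic part establishing the row-wise rate. The plan is to start from the identity $\mathbf{\Xi}^{(3)}=\mA^{(3)}\tilde{\mV}^{(3)}$ (Equation~\eqref{eq: Xi=AtildeV} and Lemma~\ref{lemma:v_tilde_properties}) and apply the SCORE map of Definition~\ref{def: SCORE normalization} row by row. Dividing the $r$-th row of $\mathbf{\Xi}^{(3)}$ by its first coordinate immediately gives $[\mathbf{1}_R,\mathbf{S}^{(3)}]=[\mathrm{diag}(\mathbf{\Xi}^{(3)}_{\star1})]^{-1}\mA^{(3)}\tilde{\mV}^{(3)}$, and a direct substitution of the stated formulas for $\mV^{(3)}$ and $\mathbf{\Omega}^{(3)}$ verifies $[\mathbf{1}_R,\mathbf{S}^{(3)}]=\mathbf{\Omega}^{(3)}[\mathbf{1}_{K^{(3)}},\mV^{(3)}]$. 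Reading off the first column of this identity yields $\mathbf{1}_R=\mathbf{\Omega}^{(3)}\mathbf{1}_{K^{(3)}}$, so the rows of $\mathbf{\Omega}^{(3)}$ sum to one; nonnegativity of $\mathbf{\Omega}^{(3)}_{rk}=\mA^{(3)}_{rk}\tilde{\mV}^{(3)}_{k1}/\mathbf{\Xi}^{(3)}_{r1}$ follows from $\mA^{(3)}_{rk}\ge 0$ together with the positivity of $\tilde{\mV}^{(3)}_{\star1}$ (Equation~\eqref{eq: bound on first value of tilde V}) and of $\mathbf{\Xi}^{(3)}_{\star1}$ (Perron's theorem). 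Under the anchor-word assumption~\ref{def: anchor word}, for each topic $k$ there is a word $r^*$ with a single nonzero entry $\mA^{(3)}_{r^*k}$, forcing $\mathbf{\Omega}^{(3)}_{r^*\star}$ to be a standard basis vector; hence the rows of $\mathbf{S}^{(3)}$ form an ideal simplex in the sense of Definition~\ref{def: ideal simplex}.

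For the probabilistic statement, I would first invoke the block structure $\mO^{(3)}=\mathrm{diag}(o,\mO^*)$ established in Lemma~\ref{lem: row-wise HOSVD}: the eigengap between $\mathbf{\Xi}^{(3)}_{\star1}$ and the remaining eigenvectors (Lemma~\ref{lem: eigen gap}) lets the alignment matrix treat the first coordinate by a sign $o\in\{\pm1\}$, and Perron positivity fixes $o=+1$. Writing $\hat a=(\mathbf{\hat\Xi}^{(3)}\mO^{(3)})_{rk}$, $\hat b=(\mathbf{\hat\Xi}^{(3)}\mO^{(3)})_{r1}$, $a=\mathbf{\Xi}^{(3)}_{rk}$, $b=\mathbf{\Xi}^{(3)}_{r1}$ for $k\ge2$, the SCORE error in that coordinate is $|\hat a/\hat b-a/b|$, which I would expand as $\tfrac{|\hat a-a|}{|\hat b|}+\bigl|\tfrac{a}{b}\bigr|\tfrac{|\hat b-b|}{|\hat b|}$.

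Then I would bound each piece. Both $|\hat a-a|$ and $|\hat b-b|$ are at most $E_3(r)\lesssim\sqrt{f_r\log N_R/(N^{(1)}N^{(2)}M)}$ by~\eqref{eq: E 3}; the denominator obeys $|b|=\mathbf{\Xi}^{(3)}_{r1}\asymp f_r$ (via $\mathbf{\Xi}^{(3)}_{r1}=\sum_k\mA^{(3)}_{rk}\tilde{\mV}^{(3)}_{k1}$ and the constant two-sided bounds on $\tilde{\mV}^{(3)}_{\star1}$), and since $E_3(r)\ll f_r$ under $f_r\gtrsim\sqrt{\log N_R/(N^{(1)}N^{(2)}M)}$ the same lower bound transfers to $|\hat b|$ by the triangle inequality. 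Crucially, I would control the middle factor by $|a/b|=|\mathbf{S}^{(3)}_{r,k-1}|=O(1)$ --- a coordinate of a point on the fixed ideal simplex --- rather than bounding $|a|$ and $|b|$ separately, which would cost a spurious $1/\sqrt{f_r}$. Both terms then collapse to $E_3(r)/f_r=\sqrt{\log N_R/(f_r N^{(1)}N^{(2)}M)}$, and summing the squares over the $K^{(3)}-1=O(1)$ coordinates gives the claimed $\ell_2$ row bound; all estimates hold simultaneously over $r\in[R]$ on the event of Lemma~\ref{lem: row-wise HOSVD}, i.e. with probability $1-o(N_R^{-1})$.

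The main obstacle is precisely this last bound: a naive triangle-inequality treatment of the ratio loses a factor $1/\sqrt{f_r}$ for low-frequency words, so the argument must exploit the boundedness of the normalized coordinate $a/b$ on the ideal simplex together with the matching lower bound $\mathbf{\Xi}^{(3)}_{r1}\asymp f_r$. Securing the latter \emph{uniformly} over $r$ --- which rests on the positivity and constant lower-boundedness of $\tilde{\mV}^{(3)}_{\star1}$, traced back through Lemma~\ref{lemma:v_tilde_properties} and Lemma~\ref{lem: eigen gap} --- is the delicate ingredient that makes the low-frequency regime work.
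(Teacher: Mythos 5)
Your proposal is correct and takes essentially the same route as the paper's proof: the identical algebraic verification that $\mathbf{\Omega}^{(3)}$ is nonnegative with unit row sums (via positivity of $\tilde{\mV}^{(3)}_{\star 1}$ and $\mathbf{\Xi}^{(3)}_{\star 1}$) with anchor words forcing basis rows, and the identical probabilistic argument combining the row-wise bound $E_3(r)$, the two-sided estimate $\mathbf{\Xi}^{(3)}_{r1}\asymp f_r$ from \eqref{eq: bound of xi 1 entry}, and the boundedness $\|\mathbf{S}^{(3)}_{r\star}\|_2\leq C^*$ on the simplex (which the paper secures through Lemma \ref{lemma: Vstar eigenvalue in C2}). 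Your coordinate-wise expansion of the SCORE ratio is simply the entrywise form of the paper's vector decomposition in \eqref{eq: bound of H error in C2}, so the two arguments coincide.
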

\begin{proof} For notation simplicity, let $\mO=\mO^{(3)}=\text{diag}(o,\mO^*)$ where $o\in\{\pm 1\}$. Let $\mA=\mA^{(3)}$, $\mW=\mW^{(3)}$, $\mathbf{\Xi}=\mathbf{\Xi}^{(3)}$, etc. Do SCORE normalization \ref{def: SCORE normalization} on $\mathbf{\Xi}^{(3)}$, we get 
$$
[\mathbf{1}_R, \mS ]=\left(\text{diag}(\mathbf{\Xi}_{\star 1})\right)^{-1}\mathbf{\Xi}=\left(\text{diag}(\mathbf{\Xi}_{\star 1})\right)^{-1} \mA \tilde{\mV}
$$
where $\mS\in\mathbb{R}^{R \times (K-1)}$ works data point cloud in VH algorithm.
\par To find an ideal simplex for VH algorithm, the equation above is not enough. Consider \begin{align}\label{def: Vstar in 3}
    \mV^{*}=[\text{diag}(\tilde{\mV}_{\star 1})]^{-1}\tilde{\mV}.
\end{align} The entries of the first column of $\mV^*$ are all equal to 1. To make $\mV^*$ and $\mS$ well-defined, we need to prove that the entries of $\tilde{\mV}_{\star 1}$ and $\mathbf{\Xi}_{r1}$ are all nonzero. We will show later that for some constant $c^*,C^*>0$ such that
\begin{align}
    \frac{c^*}{\sqrt{K^{(3)}}}\leqslant\min_{k\in[K^{(3)}]} \tilde{\mV}_{k1}^{(3)}\leqslant\max_{k\in[K^{(3)}]} \tilde{\mV}_{k1}^{(3)}\leqslant \frac{C^*}{\sqrt{K^{(3)}}}\label{eq: bound on first value of tilde V}\\
     \frac{c^* f_r}{\sqrt{K^{(3)}}}\leqslant\mathbf{\Xi}_{r1}^{(3)}\leqslant \frac{C^* f_r}{\sqrt{K^{(3)}}} \text{ for all } r\in[R]\label{eq: bound of xi 1 entry}
\end{align}
Therefore, all entries in $\tilde{\mV}_{\star 1}$ and $\mathbf{\Xi}_{\star 1}$ are positive. By the definition of $\mV^*$, it is of form 
\begin{equation*}
    \mV^{*\top}=\begin{pmatrix}
        1 & \dots & 1\\
        \mV_{1\star} &\dots &\mV_{K\star}
    \end{pmatrix}
\end{equation*}
where $\mV_{1\star},\dots, \mV_{K\star}$ are supposed to be the simplex vertices of the point cloud generated by the rows of $S$. We could rewrite 
$$
\mathbf{S}^*=[\mathbf{1}_R, \mS]=\left(\text{diag}(\mathbf{\Xi}_{\star 1})\right)^{-1} \mA [\text{diag}(\tilde{\mV}_{\star 1})] \mV^*=\mathbf{\Omega}^* \mV^*
$$
To prove $\mathbf{S}^*=\mathbf{\Omega}^*  \mV^*$ ($\mathbf{S}^*_{r\star}=\sum_{k=1}^K \Omega_{rk}^* \mV_{k\star}^*$) and $\mS=\mathbf{\Omega}^* \mV$ ($    \mS_{r\star}=\sum_{k=1}^K \mathbf{\Omega}_{rk}^* \mV_{k\star}$) are indeed of form of ideal Simplex, we need to prove $\mathbf{\Omega}^*$ is non negative and the rows of it sum up to 1. Remark that 
\begin{align*}
\mathbf{\Omega}^*:=\left[\text{diag}(\mathbf{\Xi}_{\star 1})\right]^{-1} \mA [\text{diag}(\tilde{\mV}_{\star 1})] 
\end{align*}
we have 
$$
\mathbf{\Omega}^*_{rk}=\frac{\tilde{\mV}_{k1}\mA_{rk}}{\mathbf{\Xi}_{r1}} \quad \text{and}\quad \sum_{k=1}^K \mathbf{\Omega}^*_{rk}=1
$$
Since the entries of the matrices $\text{diag}(\mathbf{\Xi}_{\star 1}),\text{diag}(\tilde{\mV}_{\star 1})$ and $\mA$ are all nonnegative, the entries of $\mathbf{\Omega}^*$ is then also nonnegative. If $r$ is an anchor word index, then $\mA_{rk}\neq 0$ but $\mA_{rk'}=0$ for all $k'\neq k$. Since $\sum_{k=1} \mathbf{\Omega}^*_{rk}=1$, $\mathbf{\Omega}^*_{rk}$ can only be equal to 1 at the anchor word index $r$. Thus the property of $\mathbf{\Omega}^*$ guarantees that $\mathbf{S}^*=\mathbf{\Omega}^* \mV^*$ and $\mS=\mathbf{\Omega}^* \mV$ are of form of ideal simplex. To get the bound of the error given the VH, we need to first bound the error between point cloud of $\mS$ and the data $\mathbf{\hat S}$.

By the definition, we have
\begin{equation}\label{eq: score normalization form}
    \begin{pmatrix}
        1\\
        \mS_{r\star }
    \end{pmatrix}=\mathbf{\Xi}_{r\star }/(\mathbf{\Xi}_{r1}), \quad
    \begin{pmatrix}
        1\\
        \mO^{*\top} \hat \mS_{r\star }
    \end{pmatrix}=\mO \mathbf{\hat \Xi}_{r\star }/(\mathbf{\hat \Xi}_{r1})
\end{equation}
To make \eqref{eq: score normalization form} to be well-defined, we also need $\mathbf{\hat \Xi}_{r 1}$ for all $r\in[R]$ are nonzero. By \eqref{eq: E 3}, we know that with high probability when $f_r\geqslant c^*\sqrt{\frac{\log(N_R)}{N^{(1,2)}M}}$, for all $r\in[R]$,
$$
|o \mathbf{\hat \Xi}_{r1}-\mathbf{\Xi}_{r1}|\leqslant C^*\sqrt{\frac{f_r\log (N_R)}{N^{(1,2)} M}}\leqslant C^* f_r\left(\frac{\log (N_R)}{N^{(1,2)} M}\right)^{1/4}
$$
and by \eqref{eq: bound of xi 1 entry}, we have 
$$
\mathbf{\Xi}_{r1}\geqslant c^*f_r>0.
$$
We can see that $|o \mathbf{\hat \Xi}_{r1}-\mathbf{\Xi}_{r1}|\ll \mathbf{\Xi}_{r1}$ with high probability as $N^{(1,2)}M$ is sufficiently large. Thus we have $|o \mathbf{\hat \Xi}_{r1}|\geqslant \mathbf{\Xi}_{r1}/2$. Therefore, we have $\mathbf{\Xi}_{\star 1}$ is strictly positive and thus $\mathbf{\hat \Xi}_{\star 1}$ is strictly positive with high probability. WLOG, we set $o=1$. By \eqref{eq: score normalization form}, we have 
%Note that $\mathbf{S}^*=\mathbf{\Omega}^* V^*$.
\begin{align}
    \|\mO^{*\top}\hat \mS_{r\star}-\mS_{r\star}\|_2&=\left\|\frac{1}{\mathbf{\hat \Xi}_{r1}}\mO^{\top} \mathbf{\hat\Xi}_{r\star}-\frac{1}{ \mathbf{\Xi}_{r1}}\mathbf{\Xi}_{r\star}\right\|_2\notag\\
    &=\left\|\frac{1}{\mathbf{\hat \Xi}_{r1}}\left(\mO^{\top} \mathbf{\hat\Xi}_{r\star}-\mathbf{\Xi}_{r\star }\right)-\frac{\mS_{r\star}}{ \mathbf{\hat \Xi}_{r1}}\left(\mathbf{\hat\Xi}_{r\star}-\mathbf{\Xi}_{r\star }\right)\right\|_2\notag\\
    &\leqslant |\mathbf{\hat\Xi}_{r1}|^{-1}\left(\|\mO \mathbf{\hat \Xi}_{r\star}-\mathbf{\Xi}_{r\star }\|+\| \mS_{r\star}\|\cdot |\mathbf{\hat \Xi}_{r1}-\mathbf{\Xi}_{r1}|\right)\notag\\
    &\leqslant c^*\frac{1}{f_r}\cdot E_{3}(r)\left(1+\|\mS_{r\star}\|_2\right)\text{ using the bound from \eqref{eq: E 3}}\notag\\
    &\leqslant C^*\sqrt{\frac{\log N_R}{f_r N^{(1,2)}M}} \left(1+\|\mS_{r\star}\|_2\right)\label{eq: bound of H error in C2}
\end{align}
Since each row $\mS_{r\star}$ is in the simplex, it follows that for some constant $C^*>0$
\begin{align}\label{eq: Sr in mode 3}
\|\mS_{r\star}\|_2\leqslant \max_{k\in[K]} \|\mV_{k\star }\|_2\leqslant\max_{k\in[K]} \|\mV_{k\star }^*\|_2\leqslant \|\mV^*\|_{2}\leqslant C^*
\end{align}
where the last inequality comes from the results of Lemma \ref{lemma: Vstar eigenvalue in C2}. 
%Finally, we apply By using lemma \ref{lem: vh efficient}, we have the result.
\noindent
\\
\vspace{0.5cm}
\\
\textbf{Proving inequalities \eqref{eq: bound on first value of tilde V} and \eqref{eq: bound of xi 1 entry} under the Assumption \ref{ass: min singular value}}\\
By Assumption \ref{ass: min singular value}, we have $\sigma_{K^{(3)}}(\mA^{(3)})\geqslant c^* \sqrt{K^{(3)}}$. Then
$$
\max_{k\in[K^{(3)}]} \tilde{\mV}^{(3)}_{k1}\leqslant \|\tilde{\mV}^{(3)}_{\star 1}\|_2\leqslant \sigma_1(\tilde{\mV}^{(3)})=\sigma_{K^{(3)}}(\mA^{(3)})^{-1}\leqslant \frac{C^*}{\sqrt{K^{(3)}}}
$$
Similarly, we have 
$$
\|\tilde{\mV}^{(3)}_{\star 1}\|_2\geqslant \sigma_{K^{(3)}}(\tilde{\mV}^{(3)})=\sigma_1(\mA^{(3)})^{-1}\geqslant\frac{1}{\sqrt{K^{(3)}}}
$$
By inequality \eqref{eq: entry of theta 3}, we have $\mathbf{\Theta}=\frac{1}{N^{(1,2)}}\mW^{(3)}\mW^{{(3)}\top} \mA^{(3)\top} \mA^{(3)}$ such that each entry of $\mathbf{\Theta}$ is lower bounded by a constant. $\mathbf{\Theta}$ is a strictly positive matrix. By lemma \ref{lem: eigen gap}, the leading eigenvector of $\mathbf{\Theta}$, which is $\tilde{\mV}^{(3)}_{\star 1}$ by item 3 in Lemma \ref{lemma:v_tilde_properties}, is a strictly positive vector.
$$
\min_{k\in[K^{(3)}]} \tilde{\mV}^{(3)}_{k1}=\|\tilde{\mV}^{(3)}_{\star 1}\|_2\min_{k}\left\{\frac{\tilde{\mV}^{(3)}_{k1}}{\|\tilde{\mV}^{(3)}_{\star 1}\|_2}\right\}\geqslant \frac{c^*}{\sqrt{K^{(3)}}}
$$
We end the proof for \eqref{eq: bound on first value of tilde V}. Next, by the definition $\mathbf{\Xi}^{(3)}=\mA^{(3)}\tilde{\mV}^{(3)}$, we have $\mathbf{\Xi}^{(3)}_{\star 1}=\sum_{k=1}^{K^{(3)}}\mA^{(3)}_{\star k}\tilde{\mV}^{(3)}_{k1}$. For any $r\in[R]$, we have
$$
c^*\frac{f_r}{\sqrt{K^{(3)}}}\leqslant\min_{k} \tilde{\mV}^{(3)}_{k1}f_r\leqslant\mathbf{\Xi}^{(3)}_{r 1}\leqslant f_r\max_{k}\tilde{\mV}^{(3)}_{k1}\leqslant C^*\frac{f_r}{\sqrt{K^{(3)}}}
$$
\end{proof}
\begin{corollary}\label{coro: V star row-wise bound}
Under the conditions in Lemma \ref{lem: row-wise HOSVD}, if there exists an efficient VH algorithm such that the bound in \eqref{eq: efficiency of VH algo} could be achieved with high probability, then we have probability at least $1-o(N_R^{-1})$, subject to label permutations $\pi_1,\pi_2,\pi_3$,
\begin{align*} &\max_{k\in[K^{(1)}]}\|\mO^{(1)\top}\hat \mV^{*(1)}_{\pi_1(k)\star}- \mV^{*(1)}_{k\star}\|\lesssim  \sqrt{\frac{\log N_R}{N^{(1)}M}}\\&\max_{k\in[K^{(2)}]}\|\mO^{(2)\top}\hat \mV^{*(2)}_{\pi_2(k)\star}- \mV^{*(2)}_{k\star}\|\lesssim\sqrt{\frac{\log N_R}{N^{(2)}M}} \\
&\max_{k\in[K^{(3)}]}\|\mO^{(3)\top}\hat \mV^{*(3)}_{\pi_3(k)\star}- \mV^{*(3)}_{k\star}\|\lesssim\max_r\sqrt{\frac{\log N_R}{f_rN^{(1)}N^{(2)}M}}
\end{align*}
where $\mV^{*(a)}=\mV^{(a)}$ for $a\in\{1,2\}$, $\mV^{*(3)}=[\mathbf{1}_{K^{(3)}}, \mV^{(3)}]$, and $\mathbf{\Omega}^{*(a)}=\mathbf{\Omega}^{(a)}$ for all $a\in[3]$.
\end{corollary}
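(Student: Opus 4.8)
The plan is to obtain the corollary as a direct consequence of the vertex-hunting efficiency assumption (Assumption \ref{ass: vh efficient}) combined with the row-wise simplex bounds already established in Lemmas \ref{lem: hat S row wise bound mode 1 and 2} and \ref{lem: hat S row wise bound mode 3}. The high-level observation is that, for each mode $a$, the vertices $\mV^{*(a)}$ are exactly the output of the VH algorithm applied to the point cloud formed by the rows of the ideal simplex $\mathbf{S}^{(a)}$, while the estimated vertices $\hat\mV^{*(a)}$ are the output of the same algorithm applied to the noisy point cloud $\mathbf{\hat S}^{(a)}$. Since the preceding lemmas already guarantee that the rows of $\mathbf{S}^{(a)}$ form an ideal simplex (so that \eqref{eq: efficiency of VH algo} is applicable), it only remains to control the worst-case perturbation of the point cloud and invoke the efficiency bound.

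First I would treat modes $1$ and $2$, where $\mathbf{S}^{(a)}=\mathbf{\Xi}^{(a)}$ and $\mV^{*(a)}=\mV^{(a)}=\tilde{\mV}^{(a)}$, and the VH algorithm is run on $\mathbf{\hat S}^{(a)}=\mathbf{\hat \Xi}^{(a)}$. Because the VH procedure depends only on the geometry of the point cloud (norms and projections), it is equivariant under the orthogonal rotation $\mO^{(a)}$; thus applying it to the rotated cloud $\mathbf{\hat \Xi}^{(a)}\mO^{(a)}$ produces the rotated vertices $\mO^{(a)\top}\hat\mV^{*(a)}$. Taking the true cloud to be $\{\mathbf{S}^{(a)}_{i\star}\}$ and the noisy cloud to be $\{(\mathbf{\hat S}^{(a)}\mO^{(a)})_{i\star}\}$ in Assumption \ref{ass: vh efficient} gives, up to a label permutation $\pi_a$,
\begin{align*}
\max_{k\in[K^{(a)}]}\|\mO^{(a)\top}\hat\mV^{*(a)}_{\pi_a(k)\star}-\mV^{*(a)}_{k\star}\|
\leqslant C^*\max_{i\in[N^{(a)}]}\|(\mathbf{\hat \Xi}^{(a)}\mO^{(a)}-\mathbf{\Xi}^{(a)})_{i\star}\|.
\end{align*}
The right-hand side is then controlled by $\sqrt{\log N_R/(N^{(a)}M)}$ via Lemma \ref{lem: hat S row wise bound mode 1 and 2} (equivalently the bounds \eqref{eq: E 1} and \eqref{eq: E 2}), which holds with probability $1-o(N_R^{-1})$, yielding the first two claimed rates.

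Next I would treat mode $3$, where the point cloud is the SCORE-normalized matrix $\mathbf{S}^{(3)}$ with vertices $\mV^{(3)}$, and $\mV^{*(3)}=[\mathbf{1}_{K^{(3)}},\mV^{(3)}]$. Applying Assumption \ref{ass: vh efficient} to the rotated cloud $\{\mO^{*\top}\mathbf{\hat S}^{(3)}_{r\star}\}$ against $\{\mathbf{S}^{(3)}_{r\star}\}$ and using the per-row bound of Lemma \ref{lem: hat S row wise bound mode 3} gives
\begin{align*}
\max_{k\in[K^{(3)}]}\|\mO^{*\top}\hat\mV^{(3)}_{\pi_3(k)\star}-\mV^{(3)}_{k\star}\|
\leqslant C^*\max_{r\in[R]}\|\mO^{*\top}\mathbf{\hat S}^{(3)}_{r\star}-\mathbf{S}^{(3)}_{r\star}\|
\lesssim \max_{r\in[R]}\sqrt{\frac{\log N_R}{f_r N^{(1)}N^{(2)}M}}.
\end{align*}
To translate this into the claimed bound on $\mV^{*(3)}$, I would note that the first column of both $\mV^{*(3)}$ and $\hat\mV^{*(3)}$ equals $\mathbf{1}_{K^{(3)}}$ by construction, and that $\mO^{(3)}=\text{diag}(o,\mO^*)$ with $o=1$ (established in the proof of Lemma \ref{lem: hat S row wise bound mode 3}, where $\mathbf{\hat \Xi}^{(3)}_{\star1}$ is shown to be strictly positive with high probability). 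Hence the leading coordinate contributes zero to the discrepancy and $\|\mO^{(3)\top}\hat\mV^{*(3)}_{\pi_3(k)\star}-\mV^{*(3)}_{k\star}\|=\|\mO^{*\top}\hat\mV^{(3)}_{\pi_3(k)\star}-\mV^{(3)}_{k\star}\|$, giving the third rate.

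The main obstacle is not analytic depth but the careful bookkeeping of the orthogonal rotations: one must verify that the VH algorithm is equivariant under $\mO^{(a)}$ so that Assumption \ref{ass: vh efficient} can be applied to the rotated clouds, and that the sign $o$ on the leading SCORE coordinate equals $+1$ with high probability, so that prepending the exact $\mathbf{1}_{K^{(3)}}$ column does not inflate the mode-$3$ error. Everything else reduces to substituting the per-row bounds from the preceding lemmas and taking maxima, with the stated probability $1-o(N_R^{-1})$ inherited from those lemmas via a union bound.
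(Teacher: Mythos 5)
Your proposal is correct and matches the paper's (implicit) derivation: the corollary is obtained exactly by feeding the row-wise perturbation bounds of Lemmas \ref{lem: hat S row wise bound mode 1 and 2} and \ref{lem: hat S row wise bound mode 3} into the VH efficiency bound \eqref{eq: efficiency of VH algo}, with the mode-3 claim reduced to the $\mV^{(3)}$-block via $\mO^{(3)}=\text{diag}(o,\mO^*)$, $o=1$, and the shared $\mathbf{1}_{K^{(3)}}$ first column. Your explicit remarks on rotation-equivariance of the VH procedure and on the sign $o$ being $+1$ with high probability are exactly the bookkeeping the paper leaves implicit (the sign issue is settled inside the proof of Lemma \ref{lem: hat S row wise bound mode 3}), so nothing is missing.
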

\begin{lemma}\label{lemma: Vstar eigenvalue in C1} If $\mA^{(1)}$ and $\mA^{(2)}$ satisfy Assumption \ref{ass: min singular value} and $\mV^{*(a)}=\tilde{\mV}^{(a)}$ for $a\in\{1,2\}$, then we have for some constants $c^*,C^*>0$
    \begin{align*}   
    \frac{c^*}{N^{(1)}}\leqslant\lambda_{K^{(1)}}(\mV^{*(1)} \mV^{*(1)\top})\leqslant\lambda_{1}(\mV^{*(1)}  \mV^{*(1)\top})\leqslant \frac{C^*}{N^{(1)}}\\
     \frac{c^*}{N^{(2)}} \leqslant\lambda_{K^{(2)}}(\mV^{*(2)} \mV^{*(2)\top})\leqslant\lambda_{1}(\mV^{*(2)} \mV^{*(2)\top})\leqslant \frac{C^*}{N^{(2)}}
     \end{align*}
\end{lemma}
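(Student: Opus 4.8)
The plan is to reduce everything to the second item of Lemma~\ref{lemma:v_tilde_properties}, which identifies $\tilde{\mV}^{(a)}\tilde{\mV}^{(a)\top}$ with $(\mA^{(a)\top}\mA^{(a)})^{-1}$, and then translate the conditioning hypotheses on $\mA^{(a)}$ into eigenvalue bounds on this inverse. Since $\mV^{*(a)}=\tilde{\mV}^{(a)}$ for $a\in\{1,2\}$ by assumption, Lemma~\ref{lemma:v_tilde_properties} gives $\mV^{*(a)}\mV^{*(a)\top}=(\mA^{(a)\top}\mA^{(a)})^{-1}$, so that the eigenvalues of the left-hand side are exactly the reciprocals of the eigenvalues of $\mA^{(a)\top}\mA^{(a)}$, taken in reversed order. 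Concretely, $\lambda_1(\mV^{*(a)}\mV^{*(a)\top})=1/\lambda_{K^{(a)}}(\mA^{(a)\top}\mA^{(a)})$ and $\lambda_{K^{(a)}}(\mV^{*(a)}\mV^{*(a)\top})=1/\lambda_1(\mA^{(a)\top}\mA^{(a)})$.

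First I would record the two-sided spectral bound on $\mA^{(a)\top}\mA^{(a)}$. The upper bound comes from Lemma~\ref{lemma: max singular values}: $\sigma_1(\frac{1}{N^{(a)}}\mA^{(a)\top}\mA^{(a)})\leqslant 1$, i.e. $\lambda_1(\mA^{(a)\top}\mA^{(a)})\leqslant N^{(a)}$. The lower bound comes from Assumption~\ref{ass: min singular value}: $\sigma_{K^{(a)}}(\frac{1}{N^{(a)}}\mA^{(a)\top}\mA^{(a)})\geqslant c^*$, i.e. $\lambda_{K^{(a)}}(\mA^{(a)\top}\mA^{(a)})\geqslant c^* N^{(a)}$. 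Here I use that $\mA^{(a)\top}\mA^{(a)}$ is symmetric positive semidefinite, so its singular values and eigenvalues coincide, and that scaling a matrix by the positive constant $1/N^{(a)}$ scales every eigenvalue by the same factor.

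Combining, for $a=1$ I would obtain $\lambda_1(\mV^{*(1)}\mV^{*(1)\top})=1/\lambda_{K^{(1)}}(\mA^{(1)\top}\mA^{(1)})\leqslant 1/(c^*N^{(1)})$ and $\lambda_{K^{(1)}}(\mV^{*(1)}\mV^{*(1)\top})=1/\lambda_1(\mA^{(1)\top}\mA^{(1)})\geqslant 1/N^{(1)}$, which are exactly the claimed bounds with $C^*=1/c^*$ and $c^*=1$; the intermediate eigenvalues are sandwiched between these two extremes by monotonicity. The case $a=2$ is identical after replacing $N^{(1)},K^{(1)}$ by $N^{(2)},K^{(2)}$, by the symmetry of the first two modes noted in Remark~\ref{remark:symmetry_mode12}.

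There is essentially no obstacle here: the statement is a direct consequence of the algebraic identity $\tilde{\mV}\tilde{\mV}^\top=(\mA^\top\mA)^{-1}$ together with the well-conditioning hypotheses. The only point requiring minor care is the order reversal when passing to the inverse --- the largest eigenvalue of the inverse corresponds to the smallest eigenvalue of $\mA^{(a)\top}\mA^{(a)}$ and vice versa --- so I would take care to match the upper bound on $\lambda_1(\mV^{*(a)}\mV^{*(a)\top})$ against the lower (Assumption~\ref{ass: min singular value}) bound, and the lower bound on $\lambda_{K^{(a)}}(\mV^{*(a)}\mV^{*(a)\top})$ against the upper (Lemma~\ref{lemma: max singular values}) bound.
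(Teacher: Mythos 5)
Your proposal is correct and follows essentially the same route as the paper: both invoke item 2 of Lemma~\ref{lemma:v_tilde_properties} to write $\mV^{*(a)}\mV^{*(a)\top}=(\mA^{(a)\top}\mA^{(a)})^{-1}$, then pair Lemma~\ref{lemma: max singular values} (giving $\lambda_{K^{(a)}}(\mV^{*(a)}\mV^{*(a)\top})\geqslant 1/N^{(a)}$) with Assumption~\ref{ass: min singular value} (giving $\lambda_1(\mV^{*(a)}\mV^{*(a)\top})\leqslant C^*/N^{(a)}$), with the same order-reversal bookkeeping you flag. No gaps.
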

\begin{proof}
    Applying the results from Lemma \ref{lem: hat S row wise bound mode 1 and 2} and corollary \ref{coro: V star row-wise bound}, we have  
    $\mV^{*(a)}= \mV^{(a)}=\tilde{\mV}^{(a)}$ for $a\in\{1,2\}$. Item 2 in Lemma \ref{lemma:v_tilde_properties} yields that $\tilde{\mV}^{(a)}\tilde{\mV}^{(a)\top}=(\mA^{(a)\top}\mA^{(a)})^{-1}$ is non-degenerate, we have
    $$
    \lambda_{K^{(a)}}(\mV^{*(a)}\mV^{{*(a)}\top})=\lambda_{1}(\mA^{(a)\top} \mA^{(a)})^{-1}\stackrel{\text{Lemma \ref{lemma: max singular values}}}{\geqslant} \frac{1}{N^{(a)}}>0
    $$
   Using Assumption \ref{ass: min singular value}, we have the upper bound
    $$ \lambda_{1}(\mV^{*(a)}\mV^{*{(a)}\top})\leqslant\frac{C^*}{N^{(a)}}$$
\end{proof}
\begin{lemma}\label{lemma: Vstar eigenvalue in C2} Under the Assumptions \ref{ass: min singular value}, let $\tilde{\mV}^{*(3)}=[\text{diag}(\tilde{\mV}^{(3)}_{\star 1})]^{-1}\tilde{\mV}^{(3)}$, for some constants $c^*,C^*>0$, we have
    $$
    c^* \leqslant\lambda_{K^{(3)}}(\mV^{*(3)} \mV^{*(3)\top})\leqslant\lambda_{1}(\mV^{*(3)} \mV^{*(3)\top})\leqslant C^*.
    $$
\end{lemma}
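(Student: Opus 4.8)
The plan is to factor $\mV^{*(3)}$ as a diagonal rescaling of $\tilde\mV^{(3)}$ and to control the extreme singular values of each factor separately, so that the two-sided spectral bound follows from submultiplicativity of the operator norm together with the elementary inequality $\sigma_{\min}(\mathbf{A}\mathbf{B})\geqslant \sigma_{\min}(\mathbf{A})\sigma_{\min}(\mathbf{B})$. Writing $\mathbf{\Delta}:=\text{diag}(\tilde\mV^{(3)}_{\star 1})$, equation~\eqref{def: Vstar in 3} gives $\mV^{*(3)}=\mathbf{\Delta}^{-1}\tilde\mV^{(3)}$, and hence $\mV^{*(3)}\mV^{*(3)\top}=\mathbf{\Delta}^{-1}\tilde\mV^{(3)}\tilde\mV^{(3)\top}\mathbf{\Delta}^{-1}$.

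First I would bound the singular values of $\tilde\mV^{(3)}$. Item 2 of Lemma~\ref{lemma:v_tilde_properties} states that $\tilde\mV^{(3)}\tilde\mV^{(3)\top}=(\mA^{(3)\top}\mA^{(3)})^{-1}$, so the singular values of $\tilde\mV^{(3)}$ are exactly the reciprocals of those of $\mA^{(3)}$. Combining the lower bound $\sigma_{K^{(3)}}(\mA^{(3)})\geqslant c^*\sqrt{K^{(3)}}$ from Assumption~\ref{ass: min singular value} with the upper bound $\sigma_1(\mA^{(3)})\leqslant\sqrt{K^{(3)}}$ from Lemma~\ref{lemma: max singular values} then yields $\tfrac{1}{\sqrt{K^{(3)}}}\leqslant\sigma_{K^{(3)}}(\tilde\mV^{(3)})\leqslant\sigma_1(\tilde\mV^{(3)})\leqslant\tfrac{C^*}{\sqrt{K^{(3)}}}$. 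Next I would control the diagonal rescaling: the diagonal entries of $\mathbf{\Delta}$ are precisely $\{\tilde\mV^{(3)}_{k1}\}_{k\in[K^{(3)}]}$, for which inequality~\eqref{eq: bound on first value of tilde V}, already established inside the proof of Lemma~\ref{lem: hat S row wise bound mode 3}, gives $\tfrac{c^*}{\sqrt{K^{(3)}}}\leqslant\tilde\mV^{(3)}_{k1}\leqslant\tfrac{C^*}{\sqrt{K^{(3)}}}$ for every $k$. Since $\mathbf{\Delta}^{-1}$ is diagonal, its singular values are the reciprocals of these entries, so $\tfrac{\sqrt{K^{(3)}}}{C^*}\leqslant\sigma_{\min}(\mathbf{\Delta}^{-1})\leqslant\sigma_{\max}(\mathbf{\Delta}^{-1})\leqslant\tfrac{\sqrt{K^{(3)}}}{c^*}$.

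Finally I would assemble the two estimates. For the upper bound, $\lambda_1(\mV^{*(3)}\mV^{*(3)\top})=\|\mathbf{\Delta}^{-1}\tilde\mV^{(3)}\|_{op}^2\leqslant\big(\|\mathbf{\Delta}^{-1}\|_{op}\,\|\tilde\mV^{(3)}\|_{op}\big)^2$; for the lower bound, since $\mathbf{\Delta}^{-1}$ and $\tilde\mV^{(3)}$ are both $K^{(3)}\times K^{(3)}$ and invertible, $\sigma_{\min}(\mathbf{\Delta}^{-1}\tilde\mV^{(3)})\geqslant\sigma_{\min}(\mathbf{\Delta}^{-1})\,\sigma_{\min}(\tilde\mV^{(3)})$, whence $\lambda_{K^{(3)}}(\mV^{*(3)}\mV^{*(3)\top})\geqslant\big(\sigma_{\min}(\mathbf{\Delta}^{-1})\sigma_{\min}(\tilde\mV^{(3)})\big)^2$. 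In both products the factors of $\sqrt{K^{(3)}}$ cancel exactly, leaving bounds by absolute constants. This exact cancellation is really the only substantive point to verify — it is what guarantees that the resulting $c^*,C^*$ genuinely do not depend on $K^{(3)}$ — while everything else is a routine application of the singular-value facts already collected in the earlier lemmas.
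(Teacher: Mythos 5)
Your proposal is correct and follows essentially the same route as the paper: the identical factorization $\mV^{*(3)}=[\text{diag}(\tilde{\mV}^{(3)}_{\star 1})]^{-1}\tilde{\mV}^{(3)}$, the same ingredients (item 2 of Lemma~\ref{lemma:v_tilde_properties}, inequality~\eqref{eq: bound on first value of tilde V}, Assumption~\ref{ass: min singular value} with Lemma~\ref{lemma: max singular values}), and the same submultiplicativity bounds in both directions, since the paper's $\left[\min_k\tilde{\mV}^{(3)}_{k1}\right]^{-2}[\lambda_{\min}(\mA^{(3)\top}\mA^{(3)})]^{-1}$ and $\left[\max_k\tilde{\mV}^{(3)}_{k1}\right]^{-2}[\lambda_{1}(\mA^{(3)\top}\mA^{(3)})]^{-1}$ are exactly your $\bigl(\|\mathbf{\Delta}^{-1}\|_{op}\|\tilde{\mV}^{(3)}\|_{op}\bigr)^2$ and $\bigl(\sigma_{\min}(\mathbf{\Delta}^{-1})\sigma_{\min}(\tilde{\mV}^{(3)})\bigr)^2$. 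If anything, your explicit tracking of the $\sqrt{K^{(3)}}$ cancellation is cleaner than the paper's final display, whose intermediate constants are stated loosely.
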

\begin{proof}
As $ \mV^{*(3)}=[\mathbf{1}_{K^{(3)}}, \mV^{(3)}]=[\text{diag}(\tilde{\mV}^{(3)}_{\star 1})]^{-1}\tilde{\mV}^{(3)}$, we could write
\begin{align*}
    \|\mV^{*(3)}\|_2^2=&\lambda_{\max}(\mV^{*(3)} \mV^{*(3)\top})\\
    =&\lambda_{\max}([\text{diag}(\tilde{\mV}^{(3)}_{\star 1})]^{-1} \tilde{\mV}^{(3)} \tilde{\mV}^{(3)\top}[\text{diag}(\tilde{\mV}^{(3)}_{\star 1})]^{-1} )\\
    \leqslant&\lambda_{\max}([\text{diag}(\tilde{\mV}^{(3)}_{\star 1})]^{-1}[\text{diag}(\tilde{\mV}^{(3)}_{\star 1})]^{-1})\lambda_{\max}(\tilde{\mV}^{(3)}\tilde{\mV}^{(3)\top})\\
    \stackrel{\text{Lemma \ref{lemma:v_tilde_properties}}}{\leqslant} &\left[\min_{k}\tilde{\mV}^{(3)}_{k1}\right]^{-2} [\lambda_{\min}(\mA^{(3)\top} \mA^{(3)})]^{-1} \quad \text{ as }  \tilde{\mV}^{(3)}\tilde{\mV}^{(3)\top}=(\mA^{(3)\top} \mA^{(3)})^{-1}\\
    \leqslant &\frac{\sqrt{K^{(3)}}}{c^*}\cdot c^* K^{(3)}\text{ by Assumption \ref{ass: min singular value} and inequality \eqref{eq: bound on first value of tilde V}}\\
    \leqslant& c^* \sqrt{K^{(3)}}\leqslant C^*
\end{align*}
On the other side, Lemma \ref{lemma: max singular values} and \eqref{eq: bound on first value of tilde V} enable
$$
\lambda_{K^{(3)}}(\mV^{*(3)}\mV^{*(3)\top})\geqslant \left(\frac{1}{\max_{k}\tilde{\mV}^{(3)}_{k1}}\right)^2 \frac{1}{\lambda_{1}(\mA^{(3)\top} \mA^{(3)})}\geqslant c^*
$$
\end{proof}

\section{\texorpdfstring{Recovery Error for weight matrix $\mOmega^*$}{Recovery Error for weight matrix Omega*}}
Recall that $\mS^*=\mOmega^*\mV^*$.
To get the recovery error, the first key step is to prove $\mV^*$ and $\hat \mV^*$ is non-degenerate. 
\begin{lemma}\label{lem: hat V is nondeg}
Suppose we have matrix $\mV^*\in\mathbb{R}^{K\times K}$ is non-degenerate such that $\sigma_{\min}(\mV^*)\geqslant \epsilon_n>0$. Given a permutation matrix $\Pi$ and orthogonal matrix $\mO$, if $\|\Pi \hat \mV^* \mO-\mV^*\|\leqslant \epsilon_1$ for some $\epsilon_n>\epsilon_1>0$,
then $\hat \mV^*$ is non-degenerate with $\sigma_{\min}(\hat \mV^*)\geqslant \epsilon_n-\epsilon_1$.
\end{lemma}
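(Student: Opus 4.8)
The plan is to reduce the claim to Weyl's perturbation inequality for singular values, after exploiting the fact that multiplying by orthogonal matrices does not alter the singular spectrum. The statement is essentially a one-line stability fact once these two ingredients are in place, so rather than a long argument I would organize the proof around them in order.

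First I would record the invariance of the singular values under the transformations appearing in the hypothesis. Since $\Pi$ is a permutation matrix it is orthogonal, and $\mO$ is orthogonal by assumption; consequently $\sigma_k(\Pi \hat \mV^* \mO) = \sigma_k(\hat \mV^*)$ for every $k \in [K]$, and in particular $\sigma_{\min}(\Pi \hat \mV^* \mO) = \sigma_{\min}(\hat \mV^*)$. This step reduces the problem to lower-bounding $\sigma_{\min}(\Pi \hat \mV^* \mO)$, which is now directly comparable to $\mV^*$ through the perturbation hypothesis.

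Next I would apply Weyl's inequality for singular values to the pair $A := \Pi \hat \mV^* \mO$ and $B := \mV^*$, which gives $|\sigma_{\min}(A) - \sigma_{\min}(B)| \leq \|A - B\|_{op}$. Since $\|A - B\| \leq \epsilon_1$ by assumption (and if the norm in the hypothesis were the Frobenius norm the bound would only be sharper, as $\|\cdot\|_{op} \leq \|\cdot\|_F$), this yields
$$
\sigma_{\min}(\Pi \hat \mV^* \mO) \geq \sigma_{\min}(\mV^*) - \epsilon_1 \geq \epsilon_n - \epsilon_1.
$$
Combining this with the invariance established in the first step gives $\sigma_{\min}(\hat \mV^*) \geq \epsilon_n - \epsilon_1$, which is strictly positive because $\epsilon_n > \epsilon_1$, so $\hat \mV^*$ is non-degenerate.

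I do not anticipate a genuine obstacle here: the only points requiring care are confirming that the norm in the hypothesis dominates the operator norm (immediate for both the operator and Frobenius norms) and noting explicitly that a permutation matrix is orthogonal so that the singular values are genuinely preserved. The whole argument is a direct combination of orthogonal invariance and Weyl's inequality.
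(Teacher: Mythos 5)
Your proof is correct and follows exactly the paper's argument: the paper likewise combines the orthogonal invariance $\sigma_{\min}(\hat \mV^*)=\sigma_{\min}(\Pi\hat \mV^*\mO)$ with Weyl's inequality to obtain $\sigma_{\min}(\hat \mV^*)\geqslant \sigma_{\min}(\mV^*)-\|\Pi\hat \mV^*\mO-\mV^*\|\geqslant \epsilon_n-\epsilon_1>0$. Your added remarks (permutation matrices are orthogonal; the Frobenius norm dominates the operator norm) only make explicit what the paper leaves implicit.
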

\begin{proof}
Since $\mV^*$ is non-degenerate, $\sigma_{\min}(\mV^*)>0$.
    By Wely's inequality, we have
     $$
    \sigma_{\min}(\hat \mV^*)=\sigma_{\min}(\Pi\hat \mV^*\mO)\geqslant \sigma_{\min}(\mV^*)-\|\Pi\hat \mV^*\mO-\mV^*\|\geqslant \epsilon_n-\epsilon_1>0
    $$
\end{proof}
\begin{lemma}\label{lem: hat omega bound} Suppose the VH algorithm is efficient as in Assumption \ref{ass: vh efficient}. Under the conditions in Lemma \ref{lem: row-wise HOSVD}, we have with probability at least $1-o(N_R^{-1})$, for every $i\in[N^{(1)}]$, $j\in[N^{(2)}]$ and $r\in[R]$
\begin{align*}
     &\|\mathbf{\hat \Omega}^{*(1)}- \mathbf{\Omega}^{*(1)} \Pi^{(1)}\|_F\lesssim \sqrt{\frac{N^{(1)}\log N_R}{M}} \quad\text{and}\quad  \|\mathbf{\hat \Omega}^{*(1)}_{i\star}- \Pi^{(1)\top}\mathbf{\Omega}^{*(1)}_{i\star} \|_1\lesssim \sqrt{\frac{N^{(1)}\log N_R}{M}}\\
        &\|\mathbf{\hat \Omega}^{*(2)}- \mathbf{\Omega}^{*(2)} \Pi^{(2)}\|_F\lesssim \sqrt{\frac{N^{(2)}\log N_R}{M}} \quad \text{and}\quad  \|\mathbf{\hat \Omega}^{*(2)}_{j\star}- \Pi^{(2)\top}\mathbf{\Omega}^{*(2)}_{j\star} \|_1\lesssim\sqrt{\frac{N^{(2)}\log N_R}{M}}\\
        &   \|(\mathbf{\hat \Omega}^{*(3)}- \mathbf{\Omega}^{*(3)} \Pi^{(3)})_{r\star }\|_2\lesssim \left(\frac{\log N_R}{N^{(1)}N^{(2)}M}\right)
\end{align*}
for some permutation matrices $\Pi^{(1)}$, $\Pi^{(2)}$, and $\Pi^{(3)}$ which are consistent as before.
\end{lemma}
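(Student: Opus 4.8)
The plan is to treat the recovery step as a perturbation of the linear system $\mathbf{\Omega}^{*(a)}\mV^{*(a)}=\mS^{*(a)}$. For every mode, $\mV^{*(a)}$ is square (of size $K^{(a)}$, after prepending the all-ones column when $a=3$) and, by Lemmas~\ref{lemma: Vstar eigenvalue in C1} and \ref{lemma: Vstar eigenvalue in C2}, non-degenerate, so I may write $\mathbf{\Omega}^{*(a)}=\mS^{*(a)}(\mV^{*(a)})^{-1}$ and $\hat{\mathbf{\Omega}}^{*(a)}=\hat{\mS}^{*(a)}(\hat{\mV}^{*(a)})^{-1}$. Writing $\mO=\mO^{(a)}$ for the orthogonal alignment produced by the HOSVD analysis and $\Pi=\Pi^{(a)}$ for the label permutation from the vertex-hunting step, I would set $\bar{\mV}^{*}:=\Pi\hat{\mV}^{*}\mO$ and $\bar{\mS}^{*}:=\hat{\mS}^{*}\mO$. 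Since $\hat{\mV}^{*}=\Pi^{\top}\bar{\mV}^{*}\mO^{\top}$, a one-line algebraic rearrangement (the invariance of barycentric coordinates under a common rotation of data and vertices) gives the clean identity $\hat{\mathbf{\Omega}}^{*}-\mathbf{\Omega}^{*}\Pi=\big[\bar{\mS}^{*}(\bar{\mV}^{*})^{-1}-\mS^{*}(\mV^{*})^{-1}\big]\Pi$, so that, permutations being isometries, it suffices to bound $\bar{\mS}^{*}(\bar{\mV}^{*})^{-1}-\mS^{*}(\mV^{*})^{-1}$.

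The next step is to certify that $\hat{\mV}^{*}$ is invertible and to control $\|(\bar{\mV}^{*})^{-1}\|$. Here I would invoke Lemma~\ref{lem: hat V is nondeg} with $\epsilon_n=\sigma_{\min}(\mV^{*(a)})$ and $\epsilon_1=\|\bar{\mV}^{*}-\mV^{*}\|$: the lower bounds $\sigma_{\min}(\mV^{*(a)})\gtrsim (N^{(a)})^{-1/2}$ for $a\in\{1,2\}$ and $\gtrsim 1$ for $a=3$ (Lemmas~\ref{lemma: Vstar eigenvalue in C1}--\ref{lemma: Vstar eigenvalue in C2}), together with the vertex-error estimates of Corollary~\ref{coro: V star row-wise bound} (whose operator-norm version is $\sqrt{K^{(a)}}$ times the stated row bound), show that $\epsilon_1/\epsilon_n=o(1)$ whenever $M\gtrsim\log N_R$ and $f_r\gtrsim\sqrt{\log N_R/(N^{(1,2)}M)}$. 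Thus $\hat{\mV}^{*}$ is non-degenerate with high probability and $\|(\bar{\mV}^{*})^{-1}\|\lesssim\|(\mV^{*})^{-1}\|$, i.e.\ $\lesssim\sqrt{N^{(a)}}$ for $a\in\{1,2\}$ and $\lesssim 1$ for $a=3$.

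I would then expand via the resolvent identity into
\begin{equation*}
(\bar{\mS}^{*}-\mS^{*})(\bar{\mV}^{*})^{-1}\;-\;\mS^{*}(\bar{\mV}^{*})^{-1}(\bar{\mV}^{*}-\mV^{*})(\mV^{*})^{-1},
\end{equation*}
and bound each factor by a quantity already in hand. For the Frobenius statements ($a\in\{1,2\}$) I would use the \emph{global} errors: $\|\bar{\mS}^{*(a)}-\mS^{*(a)}\|=\|\hat{\mathbf{\Xi}}^{(a)}\mO-\mathbf{\Xi}^{(a)}\|$ from Lemma~\ref{lem: HOSVD error} (Equations~\eqref{eq: hat xi 1}--\eqref{eq: hat xi 2}), the exact identity $\|\mS^{*(a)}\|_{op}=\|\mathbf{\Xi}^{(a)}\|_{op}=1$ for $a\in\{1,2\}$ (orthonormal columns), the vertex error of Corollary~\ref{coro: V star row-wise bound}, and the inverse bound above. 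Both terms then collapse to $\sqrt{N^{(a)}\log N_R/M}$, the claimed Frobenius rate. For the row-wise statements I would carry the same decomposition one row at a time, replacing the global errors by the row-wise bounds of Lemma~\ref{lem: row-wise HOSVD} and Lemma~\ref{lem: hat S row wise bound mode 1 and 2}/\ref{lem: hat S row wise bound mode 3}, and using $\|\mS^{*(a)}_{i\star}\|\lesssim (N^{(a)})^{-1/2}$ (Lemma~\ref{lemma:v_tilde_properties}, point~4); the two contributions again balance, and the stated $\ell_1$ (resp.\ $\ell_2$) bounds follow after the fixed-dimension conversion $\|\cdot\|_1\le\sqrt{K^{(a)}}\|\cdot\|_2$.

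The main obstacle is the mode-$3$ case. There the data points $\hat{\mS}^{(3)}$ come from SCORE normalization, so each row carries a heterogeneous, frequency-dependent error of order $\sqrt{\log N_R/(f_r N^{(1,2)}M)}$ (Lemma~\ref{lem: hat S row wise bound mode 3}), and this $1/\sqrt{f_r}$ weighting must be tracked faithfully through the perturbation; moreover one must ensure the normalizing coordinate $\hat{\mathbf{\Xi}}_{r1}$ stays bounded away from zero, which is exactly the content of \eqref{eq: bound of xi 1 entry} inside Lemma~\ref{lem: hat S row wise bound mode 3} and is what legitimizes the expression $\hat{\mathbf{\Omega}}^{*(3)}=\hat{\mS}^{*(3)}(\hat{\mV}^{*(3)})^{-1}$ to begin with. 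A secondary bookkeeping difficulty is the correct propagation of the block-diagonal alignment $\mO^{(3)}=\text{diag}(o,\mO^{*})$ and of $\Pi^{(3)}$, so that the all-ones first columns of $\mS^{*(3)}$ and $\mV^{*(3)}$ are matched exactly and contribute no error to the bound.
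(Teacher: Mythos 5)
Your perturbation argument is, up to repackaging, the paper's own proof: the linear-system viewpoint $\mathbf{\Omega}^{*}\mV^{*}=\mS^{*}$, the invertibility certification of $\hat\mV^{*}$ via Lemma \ref{lem: hat V is nondeg} fed by Lemmas \ref{lemma: Vstar eigenvalue in C1} and \ref{lemma: Vstar eigenvalue in C2} together with Corollary \ref{coro: V star row-wise bound}, the resolvent expansion into a data-error term and a vertex-error term, and the mode-dependent inputs ($\|(\hat\mV^{*(a)})^{-1}\|\lesssim\sqrt{N^{(a)}}$ for $a\in\{1,2\}$ versus $\lesssim 1$ for $a=3$, with the $1/\sqrt{f_r}$-weighted SCORE row errors in mode 3) all match Equations \eqref{eq: tilde Omega error} and \eqref{eq: rowwise tilde Omega error} in the paper. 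Your mode-3 bookkeeping (positivity of $\mathbf{\hat\Xi}_{\star 1}$, the block alignment $\mO^{(3)}=\text{diag}(o,\mO^{*})$, matching of the all-ones columns) is also correct and is exactly where the paper places it, inside Lemma \ref{lem: hat S row wise bound mode 3}.

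There is, however, one genuine gap: you identify $\mathbf{\hat\Omega}^{*(a)}$ with the raw least-squares solution $\mathbf{\hat S}^{*(a)}(\hat\mV^{*(a)})^{-1}$, but the estimator appearing in the lemma is not that matrix. The paper first forms $\mathbf{\tilde\Omega}^{*(a)}=\mathbf{\hat S}^{*(a)}(\hat\mV^{*(a)})^{-1}$, then clips negative entries to zero and renormalizes each row to unit $\ell_1$ norm to obtain $\mathbf{\hat\Omega}^{*(a)}$. Your argument therefore proves the stated rates for $\mathbf{\tilde\Omega}^{*}$, not for $\mathbf{\hat\Omega}^{*}$, and the missing step is not vacuous: dividing a row by its perturbed $\ell_1$ norm could in principle inflate the error when that norm drifts. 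The paper closes this with the short chain in Equation \eqref{eq:hat omega bound}, which crucially uses that both $\mathbf{\Omega}^{*(a)}_{i\star}$ and $\mathbf{\hat\Omega}^{*(a)}_{i\star}$ lie on the unit $\ell_1$ sphere to conclude $\|\mathbf{\hat\Omega}^{*(a)}_{i\star}-\Pi^{\top}\mathbf{\Omega}^{*(a)}_{i\star}\|_1\leqslant 2\,\|\mathbf{\tilde\Omega}^{*(a)}_{i\star}-\Pi^{\top}\mathbf{\Omega}^{*(a)}_{i\star}\|_1$, after which the Frobenius claims follow by summing squared row errors. This factor-of-two control is also what downstream results such as Lemma \ref{lem: hat A3 error} implicitly rely on (they use $\|\mathbf{\hat\Omega}^{*(3)}_{r\star}\|_1=1$). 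Adding that projection-and-renormalization step would make your proof complete; everything else follows the paper's route.
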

\begin{proof}
Recall that the estimator $ \mathbf{\tilde\Omega}^{*(a)}$ is obtained by $\mathbf{\tilde \Omega}^{*(a)}=\mathbf{\hat S}^{*(a)}\left(\hat \mV^{*(a)}\right)^{-1} $. Lemmas \ref{lem: hat S row wise bound mode 1 and 2} and \ref{lem: hat S row wise bound mode 3} control the error between $\mathbf{\hat S}^{*(a)} \mO^{(a)}$ and $\mathbf{S}^{*(a)}$, while an efficient VH algorithm enables then to control the discrepancy between $ \Pi^{(a)}\hat \mV^{*(a)}\mO^{(a)}$ and $\mV^{*(a)}$ with a label permutation $\Pi^{(a)}$ and orthogonal matrix $\mO^{(a)}$. Thereby, $\mathbf{\tilde \Omega}^{*(a)} \left(\Pi^{(a)}\right)^{-1}=\mathbf{\hat S}^{*(a)} \mO^{(a)} \mO^{(a)\top}\left(\hat \mV^{*(a)}\right)^{-1} \left(\Pi^{(a)}\right)^{-1}$. 

We will prove later that for any $a\in[3]$, $\hat \mV^{*(a)}$ is non-degenerate with probability at least $1-o(N_R^{-1})$.
Here we assume it is non-degenerate. For simplicity of notation, let $\mO$, $\Pi$ be general matrice for $\mO^{(a)}$, and $\Pi^{(a)}$ respectively. Then
    \begin{align}
        &\left\|\mathbf{\tilde \Omega}^{*(a)} -\mathbf{\Omega}^{*(a)} \Pi^{(a)}\right\|_F=\left\|\mathbf{\hat S}^{*(a)}  \mO\mO^{\top}\left(\hat \mV^{*(a)}\right)^{-1}-\mathbf{S}^{*(a)} \left( \mV^{*(a)}\right)^{-1}\Pi\right\|_F\notag\\
        =&\left\|(\mathbf{\hat S}^{*(a)} \mO  -\mathbf{S}^{*(a)} )\mO^{\top}\left(\hat \mV^{*(a)}\right)^{-1}+\mathbf{S}^{*(a)}(\mO^{\top}\left(\hat \mV^{*(a)}\right)^{-1}\Pi  - \left( \mV^{*(a)}\right)^{-1}\Pi\right\|_F\notag\\
        \leqslant &\left\|(\mathbf{\hat S}^{*(a)} \mO -\mathbf{S}^{*(a)} )\right\|_F\cdot \left\|\left(\hat \mV^{*(a)}\right)^{-1}\right\|_2+\left\|\mO^{\top}\left(\hat \mV^{*(a)}\right)^{-1}\Pi^{-1} - \left( \mV^{*(a)}\right)^{-1}\right\|_F\notag\\
        \leqslant& \left\|\mathbf{\hat S}^{*(a)} \mO  -\mathbf{S}^{*(a)} \right\|_F\cdot \left\|\left(\hat \mV^{*(a)}\right)^{-1}\right\|+\left\|\left(\hat \mV^{*(a)}\right)^{-1} \right\|\left\|\left( \mV^{*(a)}\right)^{-1}\right\|\left\|\Pi\hat \mV^{*(a)}\mO-\mV^{*(a)}\right\|_F\label{eq: tilde Omega error}
        %\\&\lesssim \sqrt{N^{(1)}}\cdot E 1\cdot \sqrt{N^{(1)}}+N^{(1)}\cdot E 1\\&\lesssim N^{(1)}\cdot E 1\\
        \end{align}
        Similarly, we also have for any abstract row index $r$,
 \begin{align}
    &\|\mathbf{\tilde \Omega}^{*(a)} _{r\star }-\Pi^{(a)\top}\mathbf{\Omega}^{*(a)} _{r\star}\|_2=\|\left(\hat \mV^{*(a)}\right)^{-\top}\mO(\mO^\top \mathbf{\hat S}^{*(a)}_{r\star})-\Pi^\top \left( \mV^{*(a)}\right)^{-\top}\mathbf{S}^{*(a)}_{r\star}\|_2\notag\\
    =&\left\|\left(\hat \mV^{*(a)}\right)^{-1}\mO \left(\mO^\top \mathbf{\hat S}^{*(a)}_{r\star}-\mathbf{S}^{*(a)}_{r\star }\right)+\left\{\left(\hat \mV^{*(a)}\right)^{-\top}\mO-\Pi^{-1}\left( \mV^{*(a)}\right)^{-\top}\right\}\mathbf{S}^{*(a)}_{r\star}\right\|_2\notag\\
    \leqslant&\left\| \left(\hat \mV^{*(a)}\right)^{-1}\right\|\cdot \|\mO^* \mathbf{\hat S}^{*(a)}_{r\star}- \mathbf{S}^{*(a)}_{r\star}\|_2+\|\mO^\top \left(\hat \mV^{*(a)}\right)^{-1}\Pi^{-1}-\left( \mV^{*(a)}\right)^{-1}\|\cdot \|\mS_{r\star}^{*(a)}\|_2\notag\\
    \leqslant& \left\| \left(\hat \mV^{*(a)}\right)^{-1}\right\| \|\mO^* \mathbf{\hat S}^{*(a)}_{r\star}- \mathbf{S}^{*(a)}_{r\star}\|+\left\|\left(\hat \mV^{*(a)}\right)^{-1} \right\|\left\|\left( \mV^{*(a)}\right)^{-1}\right\|\left\|\Pi\hat \mV^{*(a)}\mO-\mV^{*(a)}\right\|_F\|\mS_{r\star}^{*(a)}\|\label{eq: rowwise tilde Omega error}
\end{align}
\begin{itemize}
    \item \textbf{Mode 1 \& 2 matricization: } with probability at least $1-o(N_R^{-1})$, we have
    \begin{enumerate}
        \item $\hat \mV^{*(a)}$ for $a\in\{1,2\}$ is non-degenerate with $\sigma_{\min}(\hat \mV^{*(a)})\geqslant\frac{1}{2}\sqrt{\frac{c^*}{N^{(a)}}}$: Since Corollary \ref{coro: V star row-wise bound} yields that
        $$
        \left\|\Pi^{(a)}\hat \mV^{*(a)}\mO^{(a)}-\mV^{*(a)}\right\|_F\leqslant C^*\sqrt{\frac{\log N_R}{N^{(a)}M}} \quad \text{for any } a\in\{1,2\},
        $$
        and Lemma \ref{lemma: Vstar eigenvalue in C1} gives $\sigma_{\min}(\mV^{*(a)})\geqslant  \sqrt{\frac{c^*}{N^{(a)}}}$, we can use the assumption $M\geqslant \log N_R/c_0$ with $c_0< \frac{c^*}{4C^{*2}}$ to derive
        $\sqrt{\frac{c^*}{N^{(a)}}}-C^*\sqrt{\frac{\log N_R}{N^{(a)}M}}\geqslant \frac{1}{2}\sqrt{\frac{c^*}{N^{(a)}}}>0$. Therefore, 
we have the non-degeneracy of $\hat \mV^{*(a)}$ by lemma \ref{lem: hat V is nondeg}.
        \item Lemma \ref{lem: hat S row wise bound mode 1 and 2} can further admit that
        $
        \left\|\mathbf{\hat S}^{*(a)} \mO  -\mathbf{S}^{*(a)} \right\|_F^2=\sum_{i}\left\|(\mathbf{\hat S}^{*(a)} \mO  -\mathbf{S}^{*(a)} )_{i\star}\right\|_2^2\lesssim \frac{\log N_R}{M}.
        $
        Then, \eqref{eq: tilde Omega error} gives
        \begin{align*}
            \left\|\mathbf{\tilde \Omega}^{*(a)} -\mathbf{\Omega}^{*(a)} \Pi^{(a)}\right\|_F&\lesssim\sqrt{\frac{\log N_R}{M}}\sqrt{N^{(a)}}+N^{(a)}\cdot \sqrt{\frac{\log N_R}{N^{(a)}M}}\\
            &\leqslant\sqrt{\frac{N^{(a)}\log N_R}{M}}\quad \text{ for } a\in\{1,2\}
        \end{align*}
        \end{enumerate}
        \item \textbf{Mode 3 matricization: } with probability at least $1-o(N_R^{-1})$,
        \begin{enumerate}
            \item $\hat \mV^{*(3)}$ is non-degenerate with $\sigma_{\min}(\hat \mV^{*(3)})\geqslant c^*/2$:  
        Lemma \ref{lemma: Vstar eigenvalue in C2} gives $\sigma_{\min}(\mV^{*(3)})\geqslant  c^*$. By the assumption $f_r\geqslant \sqrt{\frac{\log N_R}{N^{(1,2)}M}}$, Corollary \ref{coro: V star row-wise bound} gives that $ \left\|\Pi^{(3)}\hat \mV^{*(3)}\mO^{(3)}-\mV^{*(3)}\right\|_F\lesssim\left(\frac{\log N_R}{N^{(1,2)}M}\right)^{1/4}$. As $c^*-C^*\left(\frac{\log N_R}{N^{(1,2)}M}\right)^{1/4}\geqslant c^*-o(1)\geqslant c^*/2>0$, Lemma \ref{lem: hat V is nondeg} gives the results.
        \item As $\mathbf{S}^{*(3)}=[1_R,\mathbf{S}^{(3)}]$, and $\mathbf{\hat S}^{*(3)}=[1_R,\mathbf{\hat S}^{(3)}]$, Lemma \ref{lem: hat S row wise bound mode 3} also yields that
        $
        \left\|(\mathbf{\hat S}^{*(3)} \mO^{(3)}  -\mathbf{S}^{*(3)} )_{r\star }\right\|\lesssim \sqrt{\frac{\log N_R}{f_rN^{(1,2)}M}}.
        $ with $\mO^{(3)}=\text{diag}(\pm 1, \mathbf{\Omega}^*)$.
        By \eqref{eq: Sr in mode 3}, we have $\|\mathbf{S}^{*(3)}_{r\star}\|_2\leqslant\sqrt{1+\|\mathbf{S}^{(3)}_{r\star}\|_2^2}\leqslant C^*$. Then, \eqref{eq: rowwise tilde Omega error} gives 
        \begin{align*}
           \|\mathbf{\tilde \Omega}^{*(3)} _{r\star }-\Pi^{(3)\top}\mathbf{\Omega}^{*(3)} _{r\star}\|_2 \lesssim\sqrt{\frac{\log N_R}{f_rN^{(1,2)}M}}+\left(\frac{\log N_R}{N^{(1,2)}M}\right)^{1/4}\lesssim\left(\frac{\log N_R}{N^{(1,2)}M}\right)^{1/4}
        \end{align*}
            \end{enumerate}
        \end{itemize}
       Note the final weight matrix $\mathbf{\hat \Omega}^{*(a)}$ is given by $\mathbf{\hat \Omega}^{*(a)}_{i\star }=\frac{ \mathbf{\check\Omega}^{*(a)}_{i\star }}{\left\| \mathbf{\check\Omega}^{*(a)}_{i\star }\right\|_1}$ where $ \mathbf{\check\Omega}^{*(a)}_{ik}=\max\{\mathbf{\tilde \Omega}^{*(a)}_{ik},0\}$. The weight matrix $\mathbf{\Omega}^{*(a)}=\mathbf{\Omega}^{(a)}$ for all $a\in[3]$ has property $\left\|\mathbf{\Omega}^{*(a)}_{i\star }\right\|_1=1=\left\|\mathbf{\hat \Omega}^{*(a)}_{i\star }\right\|_1$. Then, we have
        \begin{equation}\label{eq:hat omega bound}
        \begin{aligned}
            \left\|\mathbf{\hat \Omega}^{*(a)}_{i\star }-\Pi^{\top}\mathbf{\Omega}^{*(a)}_{i\star }\right\|_1&\leqslant\|\mathbf{\hat \Omega}^{*(a)}_{i\star }- \mathbf{\check\Omega}^{*(a)}_{i\star }\|_1+\| \mathbf{\check\Omega}^{*(a)}_{i\star }-\Pi^{\top}\mathbf{\Omega}^{*(a)}_{i\star }\|_1\\
            &= \|\mathbf{\hat \Omega}^{*(a)}_{i\star }\|_1\cdot\left|1-\| \mathbf{\check\Omega}^{*(a)}_{i\star }\|_1\right|+\| \mathbf{\check\Omega}^{*(a)}_{i\star }-\Pi^{\top}\mathbf{\Omega}^{*(a)}_{i\star }\|_1\text{ since } \mathbf{\check\Omega}^{*(a)}_{i\star  }=\mathbf{\hat \Omega}^{*(a)}_{i\star}\| \mathbf{\check\Omega}^{*(a)}_{i\star}\|_1\\
            &=\left|\|\mathbf{\hat \Omega}^{*(a)}_{i\star }\|_1-\|\mathbf{\check \Omega}_{i\star }^{*(a)}\|_1\right|+\| \mathbf{\check\Omega}^{*(a)}_{i\star }-\Pi^{\top}\mathbf{\Omega}^{*(a)}_{i\star }\|_1\\
            &\leqslant\| \Pi^{\top}\mathbf{\Omega}^{*(a)}_{i\star }-\mathbf{\check \Omega}_{i\star }^{*(a)}\|_1+\|\mathbf{\check \Omega}_{i\star }^{*(a)}-\Pi^{\top}\mathbf{\Omega}^{*(a)}_{i\star }\|_1\quad\text{as} \quad \|\mathbf{\hat \Omega}^{*(a)}_{i\star}\|_1=\|\mathbf{\Omega}^{*(a)}_{i\star}\|_1\\
          &  \leqslant 2 \|\mathbf{\check \Omega}_{i\star }^{*(a)}-\Pi^{\top}\mathbf{\Omega}^{*(a)}_{i\star }\|_1
            \\&\leqslant  2 \|\mathbf{\tilde \Omega}^{*(a)}_{i\star }-\Pi^{\top}\mathbf{\Omega}^{*(a)}_{i\star }\|_1
        \end{aligned}
        \end{equation}
        Therefore,
        \begin{align*}   
        \|\mathbf{\hat \Omega}^{*(a)}- \mathbf{\Omega}^{*(a)} \Pi^{(a)}\|_F^2&\leqslant\sum_{i=1}^{N^{(a)}} \|\mathbf{\hat \Omega}^{*(a)}_{i\star }-\Pi^{(a)\top}\mathbf{\Omega}^{*(a)}_{i\star }\|_1^2\\
        &\leqslant 2\sqrt{K^{(a)}}\sum_{i} \|\mathbf{\tilde \Omega}^{*(a)}_{i\star }-\Pi^{(a)\top}\mathbf{\Omega}^{*(a)}_{i\star }\|_2^2\\
        &\lesssim \frac{N^{(a)}\log N_R}{M}\text{ for } a\in\{1,2\}
        \end{align*}
        and
        $$
         \|(\mathbf{\hat \Omega}^{*(3)}- \mathbf{\Omega}^{*(3)} \Pi^{(3)})_{r\star }\|_2\lesssim \left(\frac{\log N_R}{N^{(1)}N^{(2)}M}\right)
        $$
\end{proof}

\section{Matrix A estimation error}
\begin{lemma}[Mode 1 \& 2 matricization]\label{lem: hat A1 2 ERROR} Fix $a\in\{1,2\}$, we obtain $\hat \mA^{(a)}=\mathbf{\hat \Omega}^{*(a)}$. Under the conditions in \ref{lem: hat omega bound}, we have with probability at least $1-o(N_R^{-1})$,
    $$
    \|\hat \mA^{(1)}-\mA^{(1)}\Pi^{(1)}\|_1\lesssim N^{(1)}\sqrt{\frac{\log N_R}{M}},  \quad \|\hat \mA^{(1)}-\mA^{(1)}\Pi^{(1)}\|_F\lesssim \sqrt{\frac{N^{(1)}\log N_R}{M}} , 
    $$
    and 
    $$
    \|\hat \mA^{(2)}-\mA^{(2)}\Pi^{(2)}\|_1\lesssim N^{(2)}\sqrt{\frac{\log N_R}{M}} ,  \quad \|\hat \mA^{(2)}-\mA^{(2)}\Pi^{(2)}\|_F\lesssim \sqrt{\frac{N^{(2)}\log N_R}{M}} , 
    $$
\end{lemma}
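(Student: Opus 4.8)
The plan is to leverage the fact that, for modes $a\in\{1,2\}$, the factor matrix estimate is exactly the recovered weight matrix: $\hat\mA^{(a)}=\mathbf{\hat\Omega}^{*(a)}$ and $\mA^{(a)}=\mathbf{\Omega}^{*(a)}$ (recall that for Case~\eqref{case:1}, Step~5 of the oracle/noisy procedure outputs $\mathbf{\Omega}^*$ directly, with no SCORE normalization or column-renormalization). Consequently the entire statement will follow by simply reading off the bounds already established for $\mathbf{\hat\Omega}^{*(a)}$ in Lemma~\ref{lem: hat omega bound}, and then converting between the $\ell_1$, $\ell_F$ and loss functionals. This means the lemma is essentially a corollary of Lemma~\ref{lem: hat omega bound}, and there is no genuinely new probabilistic content to establish.

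First I would state the identification $\hat\mA^{(a)}=\mathbf{\hat\Omega}^{*(a)}$ and $\mA^{(a)}=\mathbf{\Omega}^{*(a)}=\mathbf{\Omega}^{(a)}$ for $a\in\{1,2\}$, which is immediate from the definition of the output step and from Lemma~\ref{lem: hat S row wise bound mode 1 and 2} (where $\mathbf{\Omega}^{(a)}=\mA^{(a)}$). The Frobenius bound is then exactly the first conclusion of Lemma~\ref{lem: hat omega bound}, namely $\|\mathbf{\hat\Omega}^{*(a)}-\mathbf{\Omega}^{*(a)}\Pi^{(a)}\|_F\lesssim\sqrt{N^{(a)}\log N_R/M}$, so the $\ell_F$ claims require no work at all. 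For the $\ell_1$ claims, I would invoke the row-wise $\ell_1$ bound also furnished by Lemma~\ref{lem: hat omega bound}, $\|\mathbf{\hat\Omega}^{*(a)}_{i\star}-\Pi^{(a)\top}\mathbf{\Omega}^{*(a)}_{i\star}\|_1\lesssim\sqrt{N^{(a)}\log N_R/M}$, and sum over the $N^{(a)}$ rows. This gives
\begin{equation*}
\|\hat\mA^{(a)}-\mA^{(a)}\Pi^{(a)}\|_1=\sum_{i=1}^{N^{(a)}}\|\mathbf{\hat\Omega}^{*(a)}_{i\star}-\Pi^{(a)\top}\mathbf{\Omega}^{*(a)}_{i\star}\|_1\lesssim N^{(a)}\sqrt{\frac{\log N_R}{M}},
\end{equation*}
exactly as claimed. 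The symmetry between modes 1 and 2 (Remark~\ref{remark:symmetry_mode12}) means I would write out the argument once and note the other follows by swapping $N^{(1)}\leftrightarrow N^{(2)}$.

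One subtlety worth flagging is the per-row $\ell_1$ bound used above: the version of Lemma~\ref{lem: hat omega bound} in the text states the row-wise bound as $\|\mathbf{\hat\Omega}^{*(a)}_{i\star}-\Pi^{(a)\top}\mathbf{\Omega}^{*(a)}_{i\star}\|_1\lesssim\sqrt{N^{(a)}\log N_R/M}$ uniformly over $i$ with probability $1-o(N_R^{-1})$, which is precisely what is needed; if instead one only had the Frobenius bound, the naive conversion $\|\cdot\|_1\le\sqrt{K^{(a)}N^{(a)}}\|\cdot\|_F$ would yield the weaker $N^{(a)}\sqrt{\log N_R/M}$ with an extra $\sqrt{N^{(a)}}$ factor, so it is important to use the sharper per-row estimate rather than a blunt norm inequality. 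The only real care is therefore bookkeeping: ensuring the high-probability event of Lemma~\ref{lem: hat omega bound} (which holds with probability $1-o(N_R^{-1})$ under the hypotheses $M\ge c\log N_R$ and the anchor/conditioning assumptions) is the same event on which all four displayed bounds hold simultaneously, so that the union over the four claims does not degrade the probability. Since all bounds come from the single event of Lemma~\ref{lem: hat omega bound}, no additional union bound is needed. I do not expect any genuine obstacle here; the ``hard part,'' such as it is, is purely the earlier spectral and vertex-hunting analysis already encapsulated in Lemma~\ref{lem: hat omega bound}, and the present lemma is a transcription step.
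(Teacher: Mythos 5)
Your identification $\hat\mA^{(a)}=\mathbf{\hat\Omega}^{*(a)}$, $\mA^{(a)}=\mathbf{\Omega}^{*(a)}$ and your Frobenius claims are fine — those are verbatim from Lemma~\ref{lem: hat omega bound}. But your $\ell_1$ derivation has a genuine quantitative gap, and your flagged ``subtlety'' gets the comparison exactly backwards. The row-wise bound that Lemma~\ref{lem: hat omega bound} actually supplies is $\|\mathbf{\hat\Omega}^{*(a)}_{i\star}-\Pi^{(a)\top}\mathbf{\Omega}^{*(a)}_{i\star}\|_1\lesssim\sqrt{N^{(a)}\log N_R/M}$ \emph{per row} (note the $N^{(a)}$ inside the square root). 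Summing this over the $N^{(a)}$ rows, as you do, gives
\begin{equation*}
\sum_{i=1}^{N^{(a)}}\|\mathbf{\hat\Omega}^{*(a)}_{i\star}-\Pi^{(a)\top}\mathbf{\Omega}^{*(a)}_{i\star}\|_1\lesssim \left(N^{(a)}\right)^{3/2}\sqrt{\frac{\log N_R}{M}},
\end{equation*}
which is a factor $\sqrt{N^{(a)}}$ worse than the claimed rate $N^{(a)}\sqrt{\log N_R/M}$; your display asserting the latter does not follow from the bound you cite. (A per-row bound of order $\sqrt{\log N_R/M}$, which would make your summation work, is in fact derivable from Lemma~\ref{lem: hat S row wise bound mode 1 and 2} together with \eqref{eq: rowwise tilde Omega error}, using $\|(\hat\mV^{*(a)})^{-1}\|\lesssim\sqrt{N^{(a)}}$ and $\|\mS^{(a)}_{i\star}\|_2\lesssim 1/\sqrt{N^{(a)}}$ — but it is not what Lemma~\ref{lem: hat omega bound} states, so you would need to prove it.)

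The irony is that the route you dismissed as lossy is the paper's actual one-line proof. The conversion $\|\mQ\|_1\le\sqrt{N^{(a)}K^{(a)}}\,\|\mQ\|_F$ applied to the Frobenius bound gives
\begin{equation*}
\|\hat\mA^{(a)}-\mA^{(a)}\Pi^{(a)}\|_1\leqslant C^*\sqrt{N^{(a)}}\,\|\hat\mA^{(a)}-\mA^{(a)}\Pi^{(a)}\|_F\lesssim \sqrt{N^{(a)}}\cdot\sqrt{\frac{N^{(a)}\log N_R}{M}}=N^{(a)}\sqrt{\frac{\log N_R}{M}},
\end{equation*}
which is \emph{exactly} the target rate (the $\sqrt{K^{(a)}}$ factor is absorbed since $K^{(a)}$ is fixed) — not ``weaker,'' as you claim. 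You appear to have computed the conversion correctly and then mislabeled its output, which led you to reject the valid Cauchy--Schwarz argument and substitute the summation that does not close. Your probability bookkeeping (a single high-probability event from Lemma~\ref{lem: hat omega bound}, no extra union bound) is correct; the fix is simply to replace your $\ell_1$ step with the norm-conversion display above.
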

\begin{proof}
This can be easily obtained from the results of Lemma \ref{lem: hat omega bound}
$$
  \|\hat \mA^{(1)}-\mA^{(1)}\Pi^{(1)}\|_1\leqslant  C^* \sqrt{N^{(1)}} \|\hat \mA^{(1)}-\mA^{(1)}\Pi^{(1)}\|_F\lesssim N^{(1)}\sqrt{\frac{\log N_R}{M}}
$$
\end{proof}
\begin{lemma}[Mode 3 matricization]\label{lem: hat A3 error}
    Suppose we get $\hat \mA^{(3)}$ from the column-normalized $\text{diag}({\mathbf{\mathbf{\hat \Xi}}^{(3)}}_{\star 1})\mathbf{\hat \Omega}^{*(3)}$. Under conditions in Lemma \ref{lem: hat omega bound} hold, then we have with probability at least $1-o(N_R^{-1})$,
    $$
    \|\hat \mA^{(3)}-\mA^{(3)}\Pi^{(3)}\|_1\lesssim \left(\frac{\log N_R}{N^{(1)}N^{(2)}M}\right)^{1/4}
    $$
\end{lemma}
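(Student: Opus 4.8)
The plan is to peel off the final column-normalization and reduce everything to bounding the unnormalized product $\hat\mA^* := \text{diag}(\hat{\mathbf{\Xi}}^{(3)}_{\star 1})\hat{\mathbf{\Omega}}^{*(3)}$ against its oracle counterpart $\mA^* := \text{diag}(\mathbf{\Xi}^{(3)}_{\star 1})\mathbf{\Omega}^{*(3)}$. By Lemma \ref{lemma: relationship of G and S} the oracle object factors exactly as $\mA^* = \mA^{(3)}\text{diag}(\mathbf{q}_0)$ with $\mathbf{q}_0 = \tilde{\mV}^{(3)}_{\star 1}$, so its $k$-th column is $\mathbf{q}_0(k)\,\mA^{(3)}_{\star k}$ and has $\ell_1$-norm exactly $\mathbf{q}_0(k)$, which is bounded below by a constant $c^*$ thanks to \eqref{eq: bound on first value of tilde V}; hence column-normalizing $\mA^*$ returns $\mA^{(3)}$ itself. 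First I would record the elementary stability of $\ell_1$-normalization: for nonnegative vectors $a,b$ with $\|b\|_1>0$, $\left\| a/\|a\|_1 - b/\|b\|_1 \right\|_1 \leq 2\|a-b\|_1/\|b\|_1$. Applying this columnwise with $b = \mA^*_{\star\pi(k)}$ (so $\|b\|_1 = \mathbf{q}_0(\pi(k)) \geq c^*$), and noting that on the high-probability event of Lemma \ref{lem: row-wise HOSVD} every $\hat{\mathbf{\Xi}}^{(3)}_{r1}>0$ so that no rows are clipped away, this reduces the claim to showing $\|\hat\mA^* - \mA^*\Pi^{(3)}\|_1 \lesssim (\log N_R/(N^{(1,2)}M))^{1/4}$.

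Next I would split the product error rowwise. Taking the sign $o=1$ without loss of generality and writing $(\mathbf{\Omega}\Pi)_{rk}$ for the permuted oracle weights, decompose $\hat{\mathbf{\Xi}}_{r1}\hat{\mathbf{\Omega}}_{rk} - \mathbf{\Xi}_{r1}(\mathbf{\Omega}\Pi)_{rk} = \hat{\mathbf{\Xi}}_{r1}\big(\hat{\mathbf{\Omega}}_{rk} - (\mathbf{\Omega}\Pi)_{rk}\big) + \big(\hat{\mathbf{\Xi}}_{r1} - \mathbf{\Xi}_{r1}\big)(\mathbf{\Omega}\Pi)_{rk}$. For the first group I use $|\hat{\mathbf{\Xi}}_{r1}| \lesssim f_r$, which follows from $\mathbf{\Xi}_{r1} \asymp f_r/\sqrt{K^{(3)}}$ in \eqref{eq: bound of xi 1 entry} together with the fact that $E_3(r) = o(f_r)$ uniformly (a consequence of the frequency floor), combined with the rowwise weight bound $\|(\hat{\mathbf{\Omega}}^{*(3)} - \mathbf{\Omega}^{*(3)}\Pi^{(3)})_{r\star}\|_2 \lesssim (\log N_R/(N^{(1,2)}M))^{1/4}$ from the proof of Lemma \ref{lem: hat omega bound}. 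This gives $\sum_{r,k}|\hat{\mathbf{\Xi}}_{r1}|\,|\hat{\mathbf{\Omega}}_{rk} - (\mathbf{\Omega}\Pi)_{rk}| \lesssim \sum_r f_r\,\|(\hat{\mathbf{\Omega}} - \mathbf{\Omega}\Pi)_{r\star}\|_1 \lesssim (\sum_r f_r)(\log N_R/(N^{(1,2)}M))^{1/4}$, and since $\sum_r f_r = \sum_k \|\mA^{(3)}_{\star k}\|_1 = K^{(3)}$ is a constant, this group is already at the target rate.

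The main obstacle is the second group, the error transmitted by the leading singular vector $\hat{\mathbf{\Xi}}_{\star 1}$. Using row-stochasticity of $\mathbf{\Omega}^{*(3)}$ (so $\sum_k (\mathbf{\Omega}\Pi)_{rk} = 1$), it collapses to $\sum_r |\hat{\mathbf{\Xi}}_{r1} - \mathbf{\Xi}_{r1}| \leq \sum_r E_3(r) \lesssim \sqrt{\tfrac{\log N_R}{N^{(1,2)}M}}\sum_r \sqrt{f_r}$ by \eqref{eq: E 3}. A naive Cauchy--Schwarz bound $\sum_r\sqrt{f_r} \leq \sqrt{R K^{(3)}}$ would produce $\sqrt{R\log N_R/(N^{(1,2)}M)}$, which overshoots the claimed rate whenever $R$ is large. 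The resolution is the non-obvious use of the minimum-frequency hypothesis $f_r \geq c\sqrt{\log N_R/(N^{(1,2)}M)} =: \phi$ of Theorem \ref{theorem: main theorem}: writing $\sqrt{f_r} = f_r/\sqrt{f_r} \leq f_r/\sqrt{\phi}$ yields $\sum_r\sqrt{f_r} \leq \phi^{-1/2}\sum_r f_r = K^{(3)}\phi^{-1/2}$, so that this group is $\lesssim \sqrt{\tfrac{\log N_R}{N^{(1,2)}M}}\cdot\big(\tfrac{N^{(1,2)}M}{\log N_R}\big)^{1/4} = \big(\tfrac{\log N_R}{N^{(1,2)}M}\big)^{1/4}$. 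Combining the two groups with the normalization reduction of the first paragraph gives the stated bound. The delicate feature throughout is that, unlike $\mA^{(1)}$ and $\mA^{(2)}$, the third factor cannot be controlled purely rowwise: the singular-vector error must be summed over all $R$ words, and only the tension between $\sum_r f_r = O(1)$ and the frequency floor $\phi$ keeps this sum at the fourth-root rate.
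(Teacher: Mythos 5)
Your proposal is correct and matches the paper's own proof in all essentials: the same decomposition $\hat{\mathbf{\Xi}}_{r1}\hat{\mathbf{\Omega}}_{rk}-\mathbf{\Xi}_{r1}(\mathbf{\Omega}\Pi)_{rk}$ into a weight-error term (controlled via $|\hat{\mathbf{\Xi}}_{r1}|\lesssim f_r$ and the rowwise bound from Lemma \ref{lem: hat omega bound}) and a singular-vector term collapsed by $\|\mathbf{\Omega}^{*(3)}_{r\star}\|_1=1$, the same use of the frequency floor to trade $\sqrt{f_r\,\log N_R/(N^{(1,2)}M)}$ for $f_r(\log N_R/(N^{(1,2)}M))^{1/4}$, and the same treatment of the final column normalization via $\|\mA^*_{\star k}\|_1=\tilde{\mV}^{(3)}_{k1}\geq c^*$. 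The only differences are cosmetic: you apply the floor trick after summing over $r$ where the paper applies it row by row, and you package the normalization step as a standalone $\ell_1$-stability inequality where the paper argues it inline.
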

\begin{proof}
Define \begin{align}\label{def: hat Astar and Astar}
    \hat \mA^*=\text{diag}({{\mathbf{\hat \Xi}}^{(3)}}_{\star 1})\mathbf{\hat \Omega}^{*(3)}\text{ and }\mA^*=\text{diag}({\mathbf{\Xi}^{(3)}_{\star 1} )\mathbf{\Omega}^{*(3)}}.
\end{align} Note $|\mathbf{{\hat \Xi}}^{(3)}_{r1}|\leqslant 2|\mathbf{\Xi}^{(3)}_{r1}|\leqslant C^*f_r$ by \eqref{eq: bound of xi 1 entry}. Then with probability at least $1-o(N_R^{-1})$, for any $r\in[R]$, if $f_r\geqslant \sqrt{\frac{\log(N_R)}{N^{(1,2)}M}}$, we have
\begin{align*}
    \|\hat \mA^*_{r\star}-\Pi^{(3)\top} \mA^*_{r\star}\|_1&\leqslant\|{\mathbf{\hat \Xi}}^{(3)}_{r1}\mathbf{\hat \Omega}^{*(3)}_{r\star}-\Pi^{(3)\top} \mathbf{\Xi}^{(3)}_{r1}\mathbf{\Omega}^{*(3)}_{r\star}\|_1\\
    &\leqslant|\mathbf{\hat \Xi}^{(3)}_{r1}|\|\mathbf{\hat\Omega}^{*(3)}_{r\star}-\Pi^{(3)\top} \mathbf{\Omega}^{*(3)}_{r\star}\|_1+|\mathbf{{\hat \Xi}}^{(3)}_{r1}-\mathbf{\Xi}^{(3)}_{r1}|\|\mathbf{\Omega}^{*(3)}_{r\star}\|_1\\
    &\lesssim f_r\left(\frac{\log N_R}{N^{(1)}N^{(2)}M}\right)^{1/4}\text{ from Lemma \ref{lem: hat omega bound} and } \|\mathbf{\Omega}^{*(3)}_{r\star}\|_1=1
\end{align*}
Note that the final estimator $\hat \mA^{(3)}$ is defined as column-normalized $\hat \mA^*$. 
Also, $\mA^*=\mA^{(3)}\cdot \text{diag}(\tilde{\mV}^{(3)}_{\star 1})$ (by \eqref{eq:definition of Pi} and \eqref{def: hat Astar and Astar}).
We have for each $r\in[R]$ and $k\in[K^{{(3)}}]$,
$$
\hat \mA^{(3)}_{rk}=\frac{\hat \mA^*_{rk}}{\|\hat \mA^*_{\star k}\|_1}\text{ and }  \mA^{(3)}_{rk}=\frac{ \mA^*_{rk}}{\tilde{\mV}^{(3)}_{k1}}
$$
 Given that $\|\mA^{(3)}_{\star k}\|_1=1$, we have $
\|\mA^*_{\star \pi(k)}\|_1=\tilde{\mV}^{(3)}_{k1}$. Consequently, $|\hat \mA^{(3)}_{r\pi(k)}-\mA^{(3)}_{rk}|\leqslant\frac{1}{\|\hat \mA^*_{\star \pi(k)}\|_1}\left|\hat \mA_{r\pi(k)}^*-\mA_{rk}^*\right|+\frac{\left|\|\hat \mA^*_{\star \pi(k)}\|_1-\tilde{\mV}^{(3)}_{k1}\right|}{\|\hat \mA^*_{\star \pi(k)}\|_1}\left|\mA^{(3)}_{rk}\right|$.  Then,
\begin{align}
    \left|\|\hat \mA^*_{\star \pi(k)}\|_1-\tilde{\mV}^{(3)}_{k1}\right|&=\left|\|\hat \mA^*_{\star \pi(k)}\|_1-\|\mA^*_{\star k}\|_1\right|\notag\\&\leqslant\|\hat \mA^*_{\star \pi(k)}-\mA^*_{\star k}\|_1\notag\\
    &\leqslant \sum_{r=1}^R|\hat \mA^*_{r \pi(k)}-\mA^*_{r k}|\leqslant\sum_{r=1}^R\|\hat \mA^*_{r \star}-\Pi^{(3)\top} \mA^*_{r \star}\|_1\label{eq: hat Astar and tilde V error}
\end{align} 
Combining this with $\sum_{r}f_r=K$, $\|\mA^{(3)}_{r\star}\|_1=f_r$ and $\|\hat \mA_{\star k}^*\|_1\geqslant\tilde{\mV}^{(3)}_{k1}/2\geqslant c^*$ by \eqref{eq: bound on first value of tilde V}, we have
$$
\|\hat \mA^{(3)}_{r\star}-\Pi^{(3)\top} \mA^{(3)}_{r\star}\|_1\leqslant C^*\|\hat \mA_{r\star}^*-\Pi^{(3)\top} \mA_{r\star}^*\|_1
\lesssim f_r\left(\frac{\log N_R}{N^{(1)}N^{(2)}M}\right)^{1/4}
$$
Therefore, we have
$$
\|\hat \mA^{(3)}-\mA^{(3)}\Pi^{(3)}\|_1\leqslant \sum_{r}\|\mA^{(3)}_{r\star}\|_1\cdot \max_{r}\left\{\frac{\|\hat \mA^{(3)}_{r\star}-\Pi^{(3)\top} \mA^{(3)}_{r\star}\|_1}{\|\mA^{(3)}_{r\star}\|_1}\right\}\lesssim \left(\frac{\log N_R}{N^{(1)}N^{(2)}M}\right)^{1/4}
$$
\end{proof}
\section{Core Tensor Recovery}
\begin{lemma} \label{lem: G and S}
Let $\D = \mathcal{G} \cdot ( \mA^{(1)}, \mA^{(2)}, \mA^{(3)}) $ be a tensor following the Tensor Topic Model explicited in equation \eqref{eq: TTM}.
   Given its HOSVD decomposition $\mathcal{D}=\mathcal{S}\cdot \left(\mathbf{\Xi}^{(1)},\mathbf{\Xi}^{(2)},\mathbf{\Xi}^{(3)}\right)$ such that $\mathcal{S}\in\mathbb{R}^{K^{(1)}\times K^{(2)}\times K^{(3)}}$, if Assumption \ref{ass: min singular value} is satisfied, then there exist some invertible matrices $\tilde{\mV}^{(1)}$, $\tilde{\mV}^{(2)}$, and $\tilde{\mV}^{(3)}$ such that
    $$
    \mathcal{G}=\mathcal{S}\cdot \left(\tilde{\mV}^{(1)},\tilde{\mV}^{(2)},\tilde{\mV}^{(3)}\right).
    $$

\end{lemma}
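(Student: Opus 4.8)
The plan is to leverage the structural identity established in Lemma~\ref{lemma:v_tilde_properties}, namely that each left singular factor of the mode-$a$ matricization can be written as $\mathbf{\Xi}^{(a)} = \mA^{(a)} \tilde \mV^{(a)}$ for an invertible matrix $\tilde \mV^{(a)}$, and then to propagate this through the Tucker factorization. First I would record the singular value decompositions $\mD^{(a)} = \mathbf{\Xi}^{(a)} \mathbf{\Lambda}^{(a)} \mB^{(a)\top}$ of the three matricizations from~\eqref{eq:matricization}, and set $\tilde \mV^{(a)} := \mW^{(a)} \mB^{(a)} (\mathbf{\Lambda}^{(a)})^{-1}$ as in Lemma~\ref{lemma:v_tilde_properties}. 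The invertibility of $\tilde \mV^{(a)}$ is precisely where the hypotheses enter: Assumption~\ref{ass: min singular value} together with Lemma~\ref{lemma: singular value D} guarantees $\lambda_{K^{(a)}}(\mD^{(a)} \mD^{(a)\top}) \geq c^* K^{(1)}K^{(2)}K^{(3)}N^{(1)}N^{(2)} > 0$, so $\mD^{(a)}$ has full rank $K^{(a)}$ and Lemma~\ref{lemma:v_tilde_properties} applies, yielding invertibility of $\tilde \mV^{(a)}$ together with $\mA^{(a)} = \mathbf{\Xi}^{(a)} (\tilde \mV^{(a)})^{-1}$.

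Next I would substitute these expressions into the Tucker factorization of the signal, exactly mirroring the derivation preceding Lemma~\ref{lemma: relationship of G and S}:
\begin{align*}
\mathcal{D} = \mathcal{G} \cdot (\mA^{(1)}, \mA^{(2)}, \mA^{(3)}) = \left\{ \mathcal{G} \cdot ((\tilde \mV^{(1)})^{-1}, (\tilde \mV^{(2)})^{-1}, (\tilde \mV^{(3)})^{-1}) \right\} \cdot (\mathbf{\Xi}^{(1)}, \mathbf{\Xi}^{(2)}, \mathbf{\Xi}^{(3)}),
\end{align*}
where the second equality uses the mixed-product rule $\mathcal{X} \cdot_a (\mA \mB) = (\mathcal{X} \cdot_a \mB) \cdot_a \mA$ for mode-wise multiplication. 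Comparing this with the HOSVD representation $\mathcal{D} = \mathcal{S} \cdot (\mathbf{\Xi}^{(1)}, \mathbf{\Xi}^{(2)}, \mathbf{\Xi}^{(3)})$ and invoking the semi-orthonormality of the HOSVD factors, $\mathbf{\Xi}^{(a)\top}\mathbf{\Xi}^{(a)} = \mathbf{I}_{K^{(a)}}$, I would multiply both representations by $\mathbf{\Xi}^{(a)\top}$ along each mode $a \in [3]$ to extract the cores, obtaining $\mathcal{S} = \mathcal{G} \cdot ((\tilde \mV^{(1)})^{-1}, (\tilde \mV^{(2)})^{-1}, (\tilde \mV^{(3)})^{-1})$. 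Applying the inverse maps mode-by-mode then delivers the claimed identity $\mathcal{G} = \mathcal{S} \cdot (\tilde \mV^{(1)}, \tilde \mV^{(2)}, \tilde \mV^{(3)})$.

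The genuinely delicate points are bookkeeping rather than analytic. First, one must confirm that left-contracting both representations by $\mathbf{\Xi}^{(a)\top}$ recovers the core exactly; this relies on $\mathbf{\Xi}^{(a)}$ having orthonormal columns, so that the projection onto its range is faithful, rather than merely a partial contraction. Second, I would take care that the mode-product manipulations are sequenced correctly, since mode-$a$ and mode-$b$ products commute only for $a \neq b$; this cross-mode commutativity is exactly what licenses pulling each $(\tilde \mV^{(a)})^{-1}$ out of $\mA^{(a)} = \mathbf{\Xi}^{(a)}(\tilde \mV^{(a)})^{-1}$ independently in each mode. I expect the main obstacle to be stating the invertibility of $\tilde \mV^{(a)}$ cleanly, but this is already fully handled by Lemma~\ref{lemma:v_tilde_properties} once the rank condition supplied by Assumption~\ref{ass: min singular value} and Lemma~\ref{lemma: singular value D} is invoked, so the remainder is a direct, essentially algebraic, computation.
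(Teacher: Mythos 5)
Your proposal is correct and takes essentially the same route as the paper: both proofs rest on Lemma~\ref{lemma:v_tilde_properties} to write $\mathbf{\Xi}^{(a)}=\mA^{(a)}\tilde \mV^{(a)}$ with $\tilde \mV^{(a)}$ invertible (the rank condition supplied by Assumption~\ref{ass: min singular value} via Lemma~\ref{lemma: singular value D}), and then substitute into the Tucker factorization of $\mathcal{D}$. The only mechanical difference is how the core identity is isolated: you contract both representations with $\mathbf{\Xi}^{(a)\top}$ using $\mathbf{\Xi}^{(a)\top}\mathbf{\Xi}^{(a)}=\mathbf{I}_{K^{(a)}}$ (exactly the manipulation the paper itself sketches in Section~\ref{sec: procedure for core tensor}), whereas the appendix proof compares the two mode-$1$ matricizations of $\mathcal{D}$ and cancels $\mA^{(1)}$ and $\mA^{(2)}\otimes\mA^{(3)}$ via the invertibility of their Gram matrices --- an equivalent step.
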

\begin{proof}
    Note $\mathbf{\Xi}^{(a)}$ is the matrix of $K^{(a)}$ leading right singular vectors from SVD of mode-a matricization $\mathcal M_a( \D)$ (so $\mathbf{\Xi}^{(a)\top}\mathbf{\Xi}^{(a)} = I_{K^{(a)}}$), and 
    where the $K^{(1)}\times K^{(2)}\times K^{(3)}$ core tensor $\mathcal{S}$ is defined as 
    $$
    \mathcal{S}:=\mathcal{D}\cdot\left(\mathbf{\Xi}^{(1)\top},\mathbf{\Xi}^{(2)\top},\mathbf{\Xi}^{(3)\top}\right)
    $$
    Recall for any $a\in[3]$, by Lemma~\ref{lemma:v_tilde_properties} (bullet point 4), since $\mathbf{\Xi}^{(a)}$ corresponds to the left singular vectors of the matricized data $\mD^{(a)}$, we have $\mathbf{\Xi}^{(a)}=\mA^{(a)}\tilde{\mV}^{(a)}$ where $\tilde{\mV}^{(a)}$ is invertible. Considering in particular the mode-1 matricization of $\D$, we note that:
    \begin{align*}
       % \mathcal{M}_1(\mathcal{D})&= \mathbf{\Xi}^{(1)} \mathbf{\Lambda}_1 B_1^\top\\ 
 \qquad    \mathcal{M}_1(\mathcal{D})&= \mathcal{M}_1(\mathcal{S}\cdot \left(\mathbf{\Xi}^{(1)},\mathbf{\Xi}^{(2)},\mathbf{\Xi}^{(3)}\right)) =\mathbf{\Xi}^{(1)} \mathcal{M}_1(\mathcal{S}\cdot \left(\mathbf{\Xi}^{(2)},\mathbf{\Xi}^{(3)}\right))\\
&=\mathbf{\Xi}^{(1)}\mathcal{M}_1(\mathcal{S})\left(\mathbf{\Xi}^{(2)\top}\otimes \mathbf{\Xi}^{(3)\top}\right) \qquad \text{(Lemma 4 of \cite{zhang2018tensor})}\\
        &=\mA^{(1)}\tilde{\mV}^{(1)}\mathcal{M}_1(\mathcal{S})\left[\left(\tilde{\mV}^{(2)\top}\mA^{(2)\top}\right)\otimes \left(\tilde{\mV}^{(3)\top}\mA^{(3)\top}\right)\right]\\
        &=\mA^{(1)}\tilde{\mV}^{(1)}\mathcal{M}_1(\mathcal{S})\left(\tilde{\mV}^{(2)\top}\otimes \tilde{\mV}^{(3)\top}\right)\left(\mA^{(2)\top}\otimes \mA^{(3)\top}\right)
    \end{align*}
    By \eqref{eq: TTM}, we also have:
    $$\mathcal{M}_1(\mathcal{D})= \mathcal{M}_1(\mathcal{G} \cdot (\mA^{(1)}, \mA^{(2)}, \mA^{(3)}) )=\mA^{(1)} \mathcal{M}_1(\mathcal{G}) ( \mA^{(2)\top} \otimes  \mA^{(3)\top}) \\  $$
Therefore, since $\mA^{(1)\top }\mA^{(1)}$ is invertible (Assumption~\ref{ass: min singular value}), and since the minimum eigenvalue of  
$ ( \mA^{(2)\top} \otimes  \mA^{(3)\top})( \mA^{(2)} \otimes  \mA^{(3)})=  \mA^{(2)\top}\mA^{(2)}  \otimes  \mA^{(3)\top}\mA^{(3)}$ is $ \sigma_{K^{(2)}}(\mA^{(2)\top}\mA^{(2)}) \sigma_{K^{(3)}}(\mA^{(3)\top}\mA^{(3)}) \geq c^*>0$ (Assumption~\ref{ass: min singular value}) and the matrix is thus invertible, we simply take:
$$\mathcal{M}_1(\mathcal{G})=\tilde{\mV}^{(1)}\mathcal{M}_1(\mathcal{S})\left(\tilde{\mV}^{(2)\top}\otimes \tilde{\mV}^{(3)\top}\right)
$$
to get the result.
\end{proof}
\begin{lemma}\label{lemma: S error}
Define $\hat{\mathcal{S}}:=\mathcal{Y}\cdot \left(\mathbf{\mathbf{\hat \Xi}}^{(1)\top},\mathbf{\mathbf{\hat \Xi}}^{(2)\top},\mathbf{\mathbf{\hat \Xi}}^{(3)\top}\right)$, Under the conditions of Lemma \ref{lem: HOSVD error}
then we have with probability at least $1-o(N_R^{-1})$
$$
\|\hat{\mathcal{S}}-\mathcal{S}\left(\mO^{(1)},\mO^{(2)},\mO^{(3)}\right)\|_{F}\lesssim\sqrt{\frac{N^{(1)}N^{(2)}\log N_R}{M}}
$$
\end{lemma}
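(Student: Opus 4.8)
The plan is to write $\mathcal{Y}=\mathcal{D}+\mathcal{Z}$ and exploit the multilinearity of the mode product. First I would record the identity
\begin{equation*}
\mathcal{S}\cdot\left(\mO^{(1)},\mO^{(2)},\mO^{(3)}\right)=\mathcal{D}\cdot\left(\mO^{(1)}\mathbf{\Xi}^{(1)\top},\mO^{(2)}\mathbf{\Xi}^{(2)\top},\mO^{(3)}\mathbf{\Xi}^{(3)\top}\right),
\end{equation*}
which follows from $\mathcal{S}=\mathcal{D}\cdot(\mathbf{\Xi}^{(1)\top},\mathbf{\Xi}^{(2)\top},\mathbf{\Xi}^{(3)\top})$ and the associativity of mode products (as in the proof of Lemma \ref{lem: G and S}). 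Substituting $\mathcal{Y}=\mathcal{D}+\mathcal{Z}$ into $\hat{\mathcal{S}}$ then splits the error as
\begin{equation*}
\hat{\mathcal{S}}-\mathcal{S}\cdot(\mO^{(1)},\mO^{(2)},\mO^{(3)})=\underbrace{\mathcal{Z}\cdot\left(\mathbf{\hat\Xi}^{(1)\top},\mathbf{\hat\Xi}^{(2)\top},\mathbf{\hat\Xi}^{(3)\top}\right)}_{\text{(noise)}}+\underbrace{\mathcal{D}\cdot\left(\mathbf{\hat\Xi}^{(1)\top},\mathbf{\hat\Xi}^{(2)\top},\mathbf{\hat\Xi}^{(3)\top}\right)-\mathcal{D}\cdot\left(\mO^{(1)}\mathbf{\Xi}^{(1)\top},\mO^{(2)}\mathbf{\Xi}^{(2)\top},\mO^{(3)}\mathbf{\Xi}^{(3)\top}\right)}_{\text{(signal perturbation)}}.
\end{equation*}
I would bound the Frobenius norm of each piece separately, using that Frobenius norm is invariant under matricization.

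For the noise term, matricizing along mode~$1$ gives $\mathcal{M}_1$ equal to $\mathbf{\hat\Xi}^{(1)\top}\mZ^{(1)}(\mathbf{\hat\Xi}^{(2)}\otimes\mathbf{\hat\Xi}^{(3)})$. Since $\mathbf{\hat\Xi}^{(1)\top}$ has operator norm $1$ and the Kronecker product $\mathbf{\hat\Xi}^{(2)}\otimes\mathbf{\hat\Xi}^{(3)}$ has orthonormal columns with Frobenius norm $\sqrt{K^{(2)}K^{(3)}}$, the submultiplicative bound $\|\mathbf{M}\mathbf{U}\|_F\le\|\mathbf{M}\|_{op}\|\mathbf{U}\|_F$ yields a bound of $\sqrt{K^{(2)}K^{(3)}}\,\|\mZ^{(1)}\|_{op}$. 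This bound holds for any orthonormal $\mathbf{\hat\Xi}^{(a)}$, so no independence between $\mathcal{Z}$ and the (data-dependent) $\mathbf{\hat\Xi}^{(a)}$ is needed. By Lemma \ref{lem: Z l2 no}, $\|\mZ^{(1)}\|_{op}\lesssim\sqrt{N^{(1)}N^{(2)}\log(N_R)/M}$, so the noise term matches the target rate.

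For the signal perturbation, I would write $\mathbf{\hat\Xi}^{(a)\top}=\mO^{(a)}\mathbf{\Xi}^{(a)\top}+\mathbf{E}^{(a)\top}$ with $\mathbf{E}^{(a)}:=\mathbf{\hat\Xi}^{(a)}-\mathbf{\Xi}^{(a)}\mO^{(a)\top}$, noting $\|\mathbf{E}^{(a)}\|_{op}=\|\mathbf{\hat\Xi}^{(a)}\mO^{(a)}-\mathbf{\Xi}^{(a)}\|_{op}\le\epsilon_a$, where by Lemma \ref{lem: HOSVD error} $\epsilon_1=\epsilon_2\lesssim\sqrt{\log N_R/M}$ and $\epsilon_3\lesssim\sqrt{\log N_R/(N^{(1,2)}M)}$. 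A one-factor-at-a-time telescoping expresses the signal perturbation as the sum of $\mathcal{D}\cdot(\mathbf{E}^{(1)\top},\mathbf{\hat\Xi}^{(2)\top},\mathbf{\hat\Xi}^{(3)\top})$, $\mathcal{D}\cdot(\mO^{(1)}\mathbf{\Xi}^{(1)\top},\mathbf{E}^{(2)\top},\mathbf{\hat\Xi}^{(3)\top})$, and $\mathcal{D}\cdot(\mO^{(1)}\mathbf{\Xi}^{(1)\top},\mO^{(2)}\mathbf{\Xi}^{(2)\top},\mathbf{E}^{(3)\top})$. For the term carrying $\mathbf{E}^{(a)}$ I would matricize along mode~$a$; the remaining factors form an orthonormal-column Kronecker product (of constant Frobenius norm), so each term is bounded by $\|\mathbf{E}^{(a)}\|_{op}\,\|\mD^{(a)}\|_{op}$ up to a $K$-dependent constant. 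By Lemma \ref{lemma: singular value D}, $\|\mD^{(a)}\|_{op}\lesssim\sqrt{N^{(1)}N^{(2)}}$, so the first two terms are $\lesssim\sqrt{N^{(1)}N^{(2)}\log N_R/M}$ while the third, using the sharper $\epsilon_3$, is only $\lesssim\sqrt{\log N_R/M}$ and is dominated. Summing all contributions gives the claimed rate. The main obstacle is not any single estimate but the careful multilinear bookkeeping: ensuring the correct matricization identity $\mathcal{M}_a(\mathcal{X}\cdot(\mathbf{A}^{(1)},\mathbf{A}^{(2)},\mathbf{A}^{(3)}))=\mathbf{A}^{(a)}\mathcal{M}_a(\mathcal{X})(\mathbf{A}^{(b)}\otimes\mathbf{A}^{(c)})^\top$ is applied in the right mode for each telescoped term, and verifying at each step that the residual factors retain orthonormal columns so that the operator/Frobenius norm inequalities apply.
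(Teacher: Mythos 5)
Your proposal is correct and follows essentially the same route as the paper's proof: both add and subtract to split $\hat{\mathcal{S}}-\mathcal{S}\cdot(\mO^{(1)},\mO^{(2)},\mO^{(3)})$ into noise and signal-perturbation pieces (your two-term split with mode-by-mode telescoping is just a reorganization of the paper's four terms \eqref{eq:S.1}--\eqref{eq:S.4}), and both invoke the same inputs — Lemma \ref{lem: HOSVD error} for the singular-vector errors, Lemma \ref{lem: Z l2 no} for $\|\mZ^{(a)}\|_{op}$, and the bound $\|\mD^{(1)}\|_F\leqslant\sqrt{N^{(1)}N^{(2)}}$ from Lemma \ref{lemma: singular value D}. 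Your one genuine refinement — bounding the noise term by $\sqrt{K^{(2)}K^{(3)}}\,\|\mZ^{(1)}\|_{op}$ uniformly over orthonormal frames, so that no independence between $\mathcal{Z}$ and the data-dependent $\mathbf{\hat\Xi}^{(a)}$ is needed — is a valid and slightly cleaner handling of what the paper splits into \eqref{eq:S.1} and \eqref{eq:S.3}, but it does not change the substance of the argument.
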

\begin{proof}
Given the HOSVD form $\mathcal{D}=\mathcal{S}\cdot (\mathbf{\Xi}^{(1)},\mathbf{\Xi}^{(2)},\mathbf{\Xi}^{(3)})$,
\begin{align}
  &\|\hat{\mathcal{S}}-\mathcal{S}\left(\mO^{(1)},\mO^{(2)},\mO^{(3)}\right)\|_{F} =\|\mathcal{M}_1(\hat{\mathcal{S}})-\mO^{(1)}\mathcal{M}_1(\mathcal{S})\left(\mO^{(2)}\otimes \mO^{(3)}\right)^\top\|_{F}\notag\\
  =&\left\|\mathbf{\mathbf{\hat \Xi}}^{(1)\top}\mathcal{M}_1(\mathcal{Y})\left(\mathbf{\mathbf{\hat \Xi}}^{(2)}\otimes \mathbf{\mathbf{\hat \Xi}}^{(3)}\right)-\mO^{(1)}\mathbf{\Xi}^{(1)\top}\mathcal{M}_1(\mathcal{D})\left( \mathbf{\Xi}^{(2)}\otimes \mathbf{\Xi}^{(3)}\right)\left(\mO^{(2)}\otimes \mO^{(3)}\right)^\top\right\|_{F}\notag\\
  \leqslant&\left\|\left(\mathbf{\mathbf{\hat \Xi}}^{(1)}-\mathbf{\Xi}^{(1)} \mO^{(1)\top}\right)^\top\left(\mathcal{M}_1(\mathcal{Y})-\mathcal{M}_1(\mathcal{D})\right)\left(\mathbf{\mathbf{\hat \Xi}}^{(2)}\otimes \mathbf{\mathbf{\hat \Xi}}^{(3)}\right)\right\|_{F}\tag{S.1}\label{eq:S.1}\\
  &+\left\|\left(\mathbf{\mathbf{\hat \Xi}}^{(1)}-\mathbf{\Xi}^{(1)} \mO^{(1)}\right)^\top\mathcal{M}_1(\mathcal{D})\left(\mathbf{\mathbf{\hat \Xi}}^{(2)}\otimes \mathbf{\mathbf{\hat \Xi}}^{(3)}\right)\right\|_{F}\tag{S.2}\label{eq:S.2}\\  &+\left\|\mO^{(1)}\mathbf{\Xi}^{(1)\top}\left(\mathcal{M}_1(\mathcal{Y})-\mathcal{M}_1(\mathcal{D})\right)\left(\mathbf{\mathbf{\hat \Xi}}^{(2)}\otimes \mathbf{\mathbf{\hat \Xi}}^{(3)}\right)\right\|_{F}\tag{S.3}\label{eq:S.3}\\
  &+\left\|\mO^{(1)}\mathbf{\Xi}^{(1)\top}\mathcal{M}_1(\mathcal{D})\left[\left(\mathbf{\mathbf{\hat \Xi}}^{(2)}\otimes \mathbf{\mathbf{\hat \Xi}}^{(3)}\right)-\left( \mathbf{\Xi}^{(2)}\otimes \mathbf{\Xi}^{(3)}\right)\left(\mO^{(2)}\otimes \mO^{(3)}\right)^\top\right]\right\|_{F}\tag{S.4}\label{eq:S.4}.
\end{align}
We use the property of sub-multiplicative matrix norm to bound equations \eqref{eq:S.1} to \eqref{eq:S.4}, respectively. 
\begin{itemize}
    \item Applying Lemmas \ref{lem: HOSVD error}, \ref{lem: Z l2 no} and the fact that for any matrix $\mA\in\mathbb{R}^{n\times m}$ such that $\|\mA\|_F^2=\sum_{j=1}^{n}\|\mA_{j\star}\|^2_2$, we have
    $$\eqref{eq:S.1}\lesssim\sqrt{\frac{\log N_R}{M}}\cdot \sqrt{\frac{N^{(1)}N^{(2)}\log N_R}{M}}$$
    \item Use the fact that $\|\mD^{(1)}\|_F^2\leqslant \|\mD^{(1)}\|_1\leqslant N^{(1)}N^{(2)}$, we apply \eqref{eq: hat xi 1} again to have$$\eqref{eq:S.2}\lesssim\sqrt{\frac{\log N_R}{M}}\cdot \sqrt{N^{(1)}N^{(2)}}$$
    \item It is easily observed that $\eqref{eq:S.3}\leqslant\sqrt{\sum_{i=1}^{N^{(1)}}\|\mZ^{(1)}_{i\star}\|_2^2}\lesssim\sqrt{\frac{N^{(1)}N^{(2)}\log N_R}{M}}$.
    \item Using the fact that $\|\hat \mA\otimes \hat \mB-\mA\otimes \mB\|\leqslant\|(\hat \mA-\mA)\otimes \hat \mB\|+\|\mA\otimes (\hat \mB-\mB)\|$, we then have $$\eqref{eq:S.4}\lesssim\sqrt{N^{(1)}N^{(2)}}\cdot \left(\sqrt{\frac{\log N_R}{M}}+\sqrt{\frac{\log N_R}{N^{(1)}N^{(2)}M}}\right)\lesssim\sqrt{\frac{N^{(1)}N^{(2)}\log N_R}{M}}$$
    
\end{itemize}

\end{proof}
\begin{lemma}\label{lemma: check V error}
   Define $\check \mV^{(3)}:= \text{diag}(\|\hat \mA^*_{\star 1}\|_1,\dots, \|\hat \mA^*_{\star K}\|_1)\hat \mV^{*(3)}$, where $\hat \mA^*=\text{diag}(\mathbf{\mathbf{\hat \Xi}}^{(3)}_{\star 1})\mathbf{\hat \Omega}^{*(3)}$. Under the settings in Lemma \ref{lem: hat omega bound} we have probability at least $1-o(N_R^{-1})$, 
    \begin{align*}
              \|\Pi^{(3)}\check \mV^{(3)}\mO^{(3)}-\tilde{\mV}^{(3)}\|_F\lesssim \left(\frac{\log N_R}{N^{(1)}N^{(2)}M}\right)^{1/4}
    \end{align*}
\end{lemma}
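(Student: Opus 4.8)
The plan is to recognize $\check \mV^{(3)}$ as the empirical analogue of $\tilde \mV^{(3)}$ and then split the error into a ``diagonal rescaling'' part and a ``vertex matrix'' part. First I would recall from Lemma \ref{lemma: relationship of G and S} that $\tilde \mV^{(3)}=\text{diag}(\mathbf{q}_0)\mV^{*(3)}$ with $\mathbf{q}_0=\tilde \mV^{(3)}_{\star 1}$ in the oracle procedure, and that $\mA^{(3)}\text{diag}(\mathbf{q}_0)=\text{diag}(\mathbf{\Xi}^{(3)}_{\star 1})\mathbf{\Omega}^{*(3)}=\mA^*$. Since each column of $\mA^{(3)}$ has unit $\ell_1$-norm, this identifies $(\mathbf{q}_0)_k=\|\mA^*_{\star k}\|_1=\tilde \mV^{(3)}_{k1}$, so $\mathbf{\hat q}_0=(\|\hat \mA^*_{\star 1}\|_1,\dots,\|\hat \mA^*_{\star K^{(3)}}\|_1)^\top$ is precisely the plug-in estimate of $\mathbf{q}_0$, and $\check \mV^{(3)}=\text{diag}(\mathbf{\hat q}_0)\hat \mV^{*(3)}$ mirrors $\tilde \mV^{(3)}=\text{diag}(\mathbf{q}_0)\mV^{*(3)}$.

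Next I would handle the label permutation by commuting $\Pi^{(3)}$ through the diagonal factor via $\Pi^{(3)}\text{diag}(\mathbf{\hat q}_0)=\text{diag}(\Pi^{(3)}\mathbf{\hat q}_0)\Pi^{(3)}$, and write $\hat \mV:=\Pi^{(3)}\hat \mV^{*(3)}\mO^{(3)}$. A telescoping decomposition then gives
$$\Pi^{(3)}\check \mV^{(3)}\mO^{(3)}-\tilde \mV^{(3)}=\text{diag}(\Pi^{(3)}\mathbf{\hat q}_0-\mathbf{q}_0)\hat \mV+\text{diag}(\mathbf{q}_0)\left(\hat \mV-\mV^{*(3)}\right),$$
so that
$$\|\Pi^{(3)}\check \mV^{(3)}\mO^{(3)}-\tilde \mV^{(3)}\|_F\leq\max_k\left|(\Pi^{(3)}\mathbf{\hat q}_0)_k-(\mathbf{q}_0)_k\right|\cdot\|\hat \mV\|_F+\max_k(\mathbf{q}_0)_k\cdot\|\hat \mV-\mV^{*(3)}\|_F.$$
I would then bound the four factors. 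The scalar $\max_k(\mathbf{q}_0)_k=\max_k\tilde \mV^{(3)}_{k1}\leq C^*/\sqrt{K^{(3)}}$ is a constant by \eqref{eq: bound on first value of tilde V}, and $\|\hat \mV\|_F\leq\|\mV^{*(3)}\|_F+\|\hat \mV-\mV^{*(3)}\|_F\leq C^*$ since Lemma \ref{lemma: Vstar eigenvalue in C2} gives a constant bound and the perturbation is $o(1)$. For the vertex error, Corollary \ref{coro: V star row-wise bound} yields $\|\hat \mV-\mV^{*(3)}\|_F\lesssim\sqrt{K^{(3)}}\max_r\sqrt{\log N_R/(f_rN^{(1)}N^{(2)}M)}$, and the crucial simplification is that under $f_r\geq c^*\sqrt{\log N_R/(N^{(1)}N^{(2)}M)}$ one has $\sqrt{\log N_R/(f_rN^{(1)}N^{(2)}M)}\lesssim(\log N_R/(N^{(1)}N^{(2)}M))^{1/4}$, matching the target rate. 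For the rescaling error I would reuse \eqref{eq: hat Astar and tilde V error} from the proof of Lemma \ref{lem: hat A3 error}, giving $|(\Pi^{(3)}\mathbf{\hat q}_0)_k-(\mathbf{q}_0)_k|=\big|\|\hat \mA^*_{\star\pi(k)}\|_1-\|\mA^*_{\star k}\|_1\big|\leq\sum_r\|\hat \mA^*_{r\star}-\Pi^{(3)\top}\mA^*_{r\star}\|_1\lesssim(\sum_r f_r)(\log N_R/(N^{(1)}N^{(2)}M))^{1/4}$, and since $\sum_r f_r=K^{(3)}$ is constant this again gives the target rate. Combining these bounded and rate factors closes the proof with probability $1-o(N_R^{-1})$.

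The main obstacle I anticipate is the bookkeeping of $\Pi^{(3)}$ and $\mO^{(3)}$ so that the same relabeling $\pi_3$ aligns the diagonal factor $\mathbf{\hat q}_0$ (indexed by topics) and the matrix factor $\hat \mV^{*(3)}$ simultaneously in the telescoping split. The other delicate point is verifying the reduction $\max_r\sqrt{\log N_R/(f_rN^{(1)}N^{(2)}M)}\lesssim(\log N_R/(N^{(1)}N^{(2)}M))^{1/4}$, which is exactly where the frequency lower bound $f_r\gtrsim\sqrt{\log N_R/(N^{(1)}N^{(2)}M)}$ is consumed and which explains why the overall rate degrades only to the fourth-root scale rather than something worse.
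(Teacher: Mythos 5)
Your proposal is correct and takes essentially the same route as the paper's proof: both use the identification $\tilde{\mV}^{(3)}=\text{diag}(\tilde{\mV}^{(3)}_{\star 1})\mV^{*(3)}$ with $\tilde{\mV}^{(3)}_{k1}=\|\mA^*_{\star k}\|_1$, split the error by the product rule into a diagonal-rescaling term controlled by \eqref{eq: hat Astar and tilde V error} (with $\sum_r f_r = K^{(3)}$) and a vertex term controlled by Corollary \ref{coro: V star row-wise bound} together with Lemma \ref{lemma: Vstar eigenvalue in C2}, and consume the lower bound $f_r\gtrsim\sqrt{\log N_R/(N^{(1)}N^{(2)}M)}$ in exactly the same place to reduce $\max_r\sqrt{\log N_R/(f_r N^{(1)}N^{(2)}M)}$ to the fourth-root rate. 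Your explicit commuting of the permutation, $\Pi^{(3)}\text{diag}(\mathbf{\hat q}_0)=\text{diag}(\Pi^{(3)}\mathbf{\hat q}_0)\Pi^{(3)}$, is only a bookkeeping refinement of the alignment the paper handles implicitly.
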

\begin{proof}
    Recall the definition in \eqref{def: Vstar in 3}, $\tilde{\mV}^{(3)}=\text{diag}(\tilde{\mV}^{(3)}_{\star 1})\mV^{*(3)}$. By \eqref{eq: hat Astar and tilde V error} and the fact that $\sum_r f_r=K^{(3)}$, we have for each $k\in[K^{(3)}]$
   $$
   \left|\|\hat \mA^*_{\star \pi(k)}\|_1-\tilde{\mV}^{(3)}_{k1}\right|\lesssim\sum_r f_r \left(\frac{\log N_R}{N^{(1)}N^{(2)}M}\right)^{1/4}\leqslant \left(\frac{\log N_R}{N^{(1)}N^{(2)}M}\right)^{1/4}
$$
Then, let the label permutation $\pi\leftarrow \Pi^{(3)}$,
   \begin{align*}
       \left\|\text{diag}(\|\hat \mA^*_{\star \pi(1)}\|_1,\dots, \|\hat \mA^*_{\star \pi(K)}\|_1)-\text{diag}(\tilde{\mV}^{(3)}_{\star 1})\right\|_F^2&\lesssim \sum_{k\in[K^{(3)}]}\left(\frac{\log N_R}{N^{(1)}N^{(2)}M}\right)^{1/2}\\
       &\lesssim \left(\frac{\log N_R}{N^{(1)}N^{(2)}M}\right)^{1/2}\end{align*}
    By Corollary \ref{coro: V star row-wise bound} and $f_r\geq \left(\frac{\log N_R}{N^{(1)}N^{(2)}M}\right)^{1/2}$, we have
    $$
    \|\Pi^{(3)} \hat \mV^{*(3)}\mO^{(3)}-\mV^{*(3)}\|_F^2\leqslant \sum_{k\in[K^{(3)}]}\|\mO^{(3)\top}\hat \mV^{*(3)}_{\pi(k)\star}-\mV^{*(3)}_{k\star}\|_2^2 \lesssim \left(\frac{\log N_R}{N^{(1)}N^{(2)}M}\right)^{1/2}
    $$
    Note $\|\mV^*\|\leqslant c^*$ by Lemma \ref{lemma: Vstar eigenvalue in C2}, and $\|\text{diag}(\|\hat \mA^*_{\star \pi(1)}\|_1,\dots, \|\hat \mA^*_{\star \pi(K)}\|_1)\|_2\leqslant \max_k\|\hat \mA^*_{\star k}\|_1\leqslant \sum_{r} f_r=K^{(3)}$.
    Combining these results above, we use the fact $\|\hat \mA \hat \mB-\mA \mB\|\leqslant\|(\hat \mA-\mA) \hat \mB\|+\|\mA (\hat \mB-\mB)\|$ again. By definitions, $\check \mV^{(3)}=\text{diag}(\|\hat \mA^*_{\star 1}\|_1,\dots,\|\hat \mA^*_{\star K}\|_1,\dots)\hat \mV^{*(3)}$ and $\tilde{\mV}^{(3)}=\text{diag}(\tilde{\mV}^{(3)}_{\star 1}) \mV^{*(3)}$, we have
    $$
    \|\Pi^{(3)}\check \mV^{(3)}\mO^{(3)}-\tilde{\mV}^{(3)}\|_F\lesssim \left(\frac{\log N_R}{N^{(1)}N^{(2)}M}\right)^{1/4}
    $$
\end{proof}
\begin{lemma}\label{lem: hat G error} Let $\mathcal{\tilde G}=\mathcal{\mathbf{\hat S}}\cdot \left(\check \mV^{(1)},\check \mV^{(2)},\check \mV^{(3)}\right)$ where $\mathcal{\mathbf{\hat S}},\check{\mV}^{(3)}$ are defined  in lemmas \ref{lemma: S error} and \ref{lemma: check V error}, respectively. Let $\check{\mV}^{(1)}=\hat \mV^{*(1)}$ and $\check \mV^{(2)}=\hat \mV^{*(2)}$. Define $\hat \G$ such that $\mathcal{M}_3(\hat \G)$ is the column-normalized of $\mathcal{M}_3(\tilde \G)$.
Under the condition in Lemma \ref{lem: hat omega bound}, with probability at least $1-o(N_R)$, we have

$$
\|\mathcal{G}\cdot \left(\Pi^{(1)\top},\Pi^{(2)\top},\Pi^{(3)\top}\right)-\hat{\mathcal{G}}\|_{1}\leqslant C^* \max\left\{\sqrt{\frac{\log N_R}{M}},\left(\frac{\log N_R}{N^{(1,2)}M}\right)^{1/4}\right\}
$$
    
\end{lemma}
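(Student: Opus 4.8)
The plan is to show that $\tilde{\mathcal{G}}$, before the clipping and renormalization step, already approximates the permuted core $\mathcal{G}\cdot(\Pi^{(1)\top},\Pi^{(2)\top},\Pi^{(3)\top})$ in Frobenius norm at the stated rate, and then to argue that the final projection to $\hat{\mathcal{G}}$ does not inflate the error. First I would rewrite the target in a form compatible with the estimator. By Lemma \ref{lem: G and S}, $\mathcal{G}=\mathcal{S}\cdot(\tilde{\mV}^{(1)},\tilde{\mV}^{(2)},\tilde{\mV}^{(3)})$, so the mixed-product rule $(\mathcal{X}\cdot(\mA,\mB,\mC))\cdot(\mD,\mE,\mF)=\mathcal{X}\cdot(\mD\mA,\mE\mB,\mF\mC)$ gives $\mathcal{G}\cdot(\Pi^{(1)\top},\Pi^{(2)\top},\Pi^{(3)\top})=\mathcal{S}\cdot(\Pi^{(1)\top}\tilde{\mV}^{(1)},\Pi^{(2)\top}\tilde{\mV}^{(2)},\Pi^{(3)\top}\tilde{\mV}^{(3)})$. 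Setting $\mathcal{S}^{o}:=\mathcal{S}\cdot(\mO^{(1)},\mO^{(2)},\mO^{(3)})$ and using $\mO^{(a)\top}\mO^{(a)}=I$, the same rule shows that the idealized composition reproduces the target exactly: $\mathcal{S}^{o}\cdot(\Pi^{(1)\top}\tilde{\mV}^{(1)}\mO^{(1)\top},\Pi^{(2)\top}\tilde{\mV}^{(2)}\mO^{(2)\top},\Pi^{(3)\top}\tilde{\mV}^{(3)}\mO^{(3)\top})=\mathcal{G}\cdot(\Pi^{(1)\top},\Pi^{(2)\top},\Pi^{(3)\top})$. This is the key algebraic step that makes the rotations cancel, and it identifies $\tilde{\mathcal{G}}=\hat{\mathcal{S}}\cdot(\check{\mV}^{(1)},\check{\mV}^{(2)},\check{\mV}^{(3)})$ as a perturbation of the target through the four quantities $\hat{\mathcal{S}}-\mathcal{S}^{o}$ and $\check{\mV}^{(a)}-\Pi^{(a)\top}\tilde{\mV}^{(a)}\mO^{(a)\top}$ (the latter having Frobenius norm equal to $\|\Pi^{(a)}\check{\mV}^{(a)}\mO^{(a)}-\tilde{\mV}^{(a)}\|_F$, since permutation and orthogonal factors preserve $\|\cdot\|_F$).

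Next I would bound the difference by a four-term telescoping expansion over these perturbations, estimating each term by the Frobenius norm of the relevant perturbation times the operator (or Frobenius) norms of the remaining factors. The inputs are: $\|\hat{\mathcal{S}}-\mathcal{S}^{o}\|_F\lesssim\sqrt{N^{(1)}N^{(2)}\log N_R/M}$ (Lemma \ref{lemma: S error}); the perturbation of $\check{\mV}^{(a)}$ is $\lesssim\sqrt{\log N_R/(N^{(a)}M)}$ for $a\in\{1,2\}$ (Corollary \ref{coro: V star row-wise bound}, using $\check{\mV}^{(a)}=\hat{\mV}^{*(a)}$ and $\tilde{\mV}^{(a)}=\mV^{*(a)}$) and $\lesssim(\log N_R/(N^{(1,2)}M))^{1/4}$ for $a=3$ (Lemma \ref{lemma: check V error}); $\|\mathcal{S}\|_F\lesssim\sqrt{N^{(1)}N^{(2)}}$ (from Lemma \ref{lemma: singular value D} and orthonormality of the $\mathbf{\Xi}^{(a)}$); and the operator-norm scalings $\|\tilde{\mV}^{(a)}\|_{op}\lesssim 1/\sqrt{N^{(a)}}$ for $a\in\{1,2\}$ (Lemma \ref{lemma: Vstar eigenvalue in C1}) and $\|\tilde{\mV}^{(3)}\|_{op}\lesssim 1$ (Lemma \ref{lemma: Vstar eigenvalue in C2}). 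Under $M\gtrsim\log N_R$ each perturbation is of strictly smaller order than its corresponding factor, so the perturbed factors $\hat{\mathcal{S}},\check{\mV}^{(a)}$ carry the same-order norms, and I may replace them by their references in the telescoping estimates. A direct computation then shows every term collapses: the $\hat{\mathcal{S}}$-term and the two mode-$1,2$ terms each contribute $\sqrt{\log N_R/M}$ (the $\sqrt{N^{(1)}N^{(2)}}$ from the core cancels the $1/\sqrt{N^{(1)}N^{(2)}}$ from the factor norms), while the mode-$3$ term contributes $(\log N_R/(N^{(1,2)}M))^{1/4}$. Hence $\|\tilde{\mathcal{G}}-\mathcal{G}\cdot(\Pi^{(1)\top},\Pi^{(2)\top},\Pi^{(3)\top})\|_F\lesssim\max\{\sqrt{\log N_R/M},(\log N_R/(N^{(1,2)}M))^{1/4}\}$, and since the core has only $O(1)$ entries this also controls the $\ell_1$ norm.

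Finally I would pass from $\tilde{\mathcal{G}}$ to $\hat{\mathcal{G}}$. The permuted true core is entrywise nonnegative with each mode-$3$ fiber $\{k^{(1)},k^{(2)},\star\}$ summing to one, while $\hat{\mathcal{G}}$ is obtained from $\tilde{\mathcal{G}}$ by clipping negative entries to zero and renormalizing each fiber $\hat{\mathcal{G}}_{k^{(1)}k^{(2)}\star}$ to unit $\ell_1$ mass. This is exactly the clip-and-renormalize projection already analyzed in Equation \eqref{eq:hat omega bound}, applied fiberwise rather than rowwise; the identical chain of inequalities gives $\|\hat{\mathcal{G}}-\mathcal{G}\cdot(\Pi^{(1)\top},\Pi^{(2)\top},\Pi^{(3)\top})\|_1\le 2\|\tilde{\mathcal{G}}-\mathcal{G}\cdot(\Pi^{(1)\top},\Pi^{(2)\top},\Pi^{(3)\top})\|_1$, which yields the claim. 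I expect the main obstacle to be the careful bookkeeping of scalings in the telescoping expansion: the core and the factor matrices live on very different scales ($\sqrt{N^{(1)}N^{(2)}}$ versus $1/\sqrt{N^{(a)}}$), and it is precisely the cancellation of these dimension-dependent factors, rather than any delicate probabilistic estimate, that produces the clean dimension-free rate.
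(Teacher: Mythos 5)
Your proposal is correct and follows essentially the same route as the paper's proof: the identity $\mathcal{G}=\mathcal{S}\cdot\left(\tilde{\mV}^{(1)},\tilde{\mV}^{(2)},\tilde{\mV}^{(3)}\right)$ from Lemma \ref{lem: G and S}, a four-term telescoping expansion in which the orthogonal rotations $\mO^{(a)}$ cancel, the same input bounds (Lemma \ref{lemma: S error}, Corollary \ref{coro: V star row-wise bound}, Lemmas \ref{lemma: check V error}, \ref{lemma: Vstar eigenvalue in C1}, \ref{lemma: Vstar eigenvalue in C2}, and $\|\mathcal{S}\|_F\leqslant\sqrt{N^{(1,2)}}$), and the clip-and-renormalize step via the chain of inequalities in \eqref{eq:hat omega bound}. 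Your accounting of the dimension-dependent cancellations ($\sqrt{N^{(1)}N^{(2)}}$ against $1/\sqrt{N^{(a)}}$) matches the paper's term-by-term bounds of \eqref{eq: G.1}--\eqref{eq: G.4} exactly.
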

\begin{proof}
By the definitions of $\tilde\G$ and $\G=\mathcal{S}\cdot \left(\tilde{\mV}^{(1)},\tilde{\mV}^{(2)},\tilde{\mV}^{(3)}\right)$ in Lemma \ref{lem: G and S}, we have
    \begin{align}
        &\|\mathcal{G}\cdot \left(\Pi^{(1)\top},\Pi^{(2)\top},\Pi^{(3)\top}\right)-\tilde{\mathcal{G}}\|_{F}\notag\\
        =&\|\Pi^{(1)\top}\tilde{\mV}^{(1)}\mathcal{M}_1(\mathcal{S})\left(\tilde{\mV}^{(2)\top}\Pi^{(2)}\otimes \tilde{\mV}^{(3)\top}\Pi^{(3)}\right)-\check \mV^{(1)}\mathcal{M}_1(\hat{\mathcal{S}})\left(\check \mV^{(2)\top}\otimes \check \mV^{(3)\top}\right)\|_{F}\notag\\
            \leqslant&\left\|\left(\Pi^{(1)\top}\tilde{\mV}^{(1)}-\check \mV^{(1)} \mO^{(1)}\right)\mathcal{M}_1(\mathcal{S})\left[\left(\tilde{\mV}^{(2)\top}\Pi^{(2)}\otimes \tilde{\mV}^{(3)\top}\Pi^{(3)}\right)-\left(\mO^{(2)\top}\check \mV^{(2)\top}\otimes \mO^{(3)\top}\check \mV^{(3)\top}\right)\right]\right\|_{F}\tag{G.1}\label{eq: G.1}\\
            &+\left\|\left(\Pi^{(1)\top}\tilde{\mV}^{(1)}-\check \mV^{(1)} \mO^{(1)}\right)\mathcal{M}_1(\mathcal{S})\left(\mO^{(2)\top}\check \mV^{(2)\top}\otimes \mO^{(3)\top}\check \mV^{(3)\top}\right)\right\|_{F}\tag{G.2}\label{eq: G.2}\\
            &+\left\|\check \mV^{(1)} \mO^{(1)}\mathcal{M}_1(\mathcal{S})\left[\left(\tilde{\mV}^{(2)\top}\Pi^{(2)}\otimes \tilde{\mV}^{(3)\top}\Pi^{(3)}\right)-\left(\mO^{(2)\top}\check \mV^{(2)\top}\otimes \mO^{(3)\top}\check \mV^{(3)\top}\right)\right]\right\|_{F}\tag{G.3}\label{eq: G.3}\\
            &+\left\|\check \mV^{(1)} \left(\mO^{(1)}\mathcal{M}_1(\mathcal{S})\left(\mO^{(2)\top}\otimes \mO^{(3)\top}\right)-\mathcal{M}_1(\hat{\mathcal{S}})\right)\left(\check \mV^{(2)\top}\otimes\check \mV^{(3)\top}\right)\right\|_{F}\tag{G.4}\label{eq: G.4}
    \end{align}
Our plan is to bound the four terms \eqref{eq: G.1} to \eqref{eq: G.4} separately as follows. By the definitions of $\check \mV^{(1)}$, $\check \mV^{(2)}$ and $\mV^{*(a)}=\mV^{(a)}=\tilde{\mV}^{(a)}$ for $a\in\{1,2\}$, we apply Corollary \ref{coro: V star row-wise bound} to have $\left\|\Pi^{(1)\top}\tilde{\mV}^{(1)}-\check \mV^{(1)} \mO^{(1)\top}\right\|_F\leqslant \| \mV^{*(1)}-\Pi^{(1)}\hat \mV^{*(1)} \mO^{(1)}\|_F\lesssim\sqrt{\frac{\log N_R}{N^{(1)}M}}$. Similarly, $\left\|\Pi^{(2)\top}\tilde{\mV}^{(2)}-\check \mV^{(2)} \mO^{(2)\top}\right\|_F\lesssim\sqrt{\frac{\log N_R}{N^{(2)}M}}$. Lemma \ref{lemma: check V error} gives $\left\|\Pi^{(3)\top}\tilde{\mV}^{(3)}-\check \mV^{(3)} \mO^{(3)\top}\right\|_F\lesssim\left(\frac{\log N_R}{N^{(1)}N^{(2)}M}\right)^{1/4}$. By the definition of $\mathcal{S}$, we have
\begin{align*}
\|\mathcal{S}\|_F=\|\mathbf{\Xi}^{(1)\top}\mathcal{M}_1(\mathcal{D})\left(\mathbf{\Xi}^{(2)}\otimes \mathbf{\Xi}^{(3)}\right)\|_{F}\leqslant \|\mD^{(1)}\|_F\leqslant \sqrt{N^{(1,2)}}
\end{align*}
Note $\|\tilde{\mV}^{(1)}\|_F=K^{(1)}\|\tilde{\mV}^{(1)}\|_2\leqslant K^{(1)}\frac{1}{\sigma_{K}(\mA^{(1)})}\leqslant \frac{c^*}{\sqrt{N^{(1)}}}\leqslant c^*$ by assumption \ref{ass: min singular value}. Using Corollary \ref{coro: V star row-wise bound}, we have
\begin{align*}  
\|\check \mV^{(1)}\mO^{(1)\top}\|_F&\leqslant\|\tilde{\mV}^{(1)}\|_F+\| \mV^{*(1)}-\Pi^{(1)}\hat \mV^{*(1)} \mO^{(1)}\|_F\\
&\leqslant C^*\left(\sqrt{\frac{1}{N^{(1)}}}+\sqrt{\frac{\log N_R}{N^{(1)}M}}\right)\leqslant C^*\sqrt{\frac{1}{N^{(1)}}}
\end{align*}
Similarly, $\|\tilde{\mV}^{(2)}\|_F\leqslant c^*$ and $\|\check \mV^{(2)}\mO^{(2)\top}\|_F\lesssim\sqrt{\frac{1}{N^{(2)}}}$. $\|\tilde{\mV}^{(3)}\|_F\leqslant K^{(3)}\|\tilde  V^{(3)}\|_2\leqslant K^{(3)}\frac{1}{\sigma_{K}(\mA^{(3)})}\leqslant C^*$ and then $\|\check \mV^{(3)}\mO^{(3)}\|_F\leqslant C^*$ by lemma \ref{lemma: check V error}. 
\begin{itemize}
    \item \textbf{For \eqref{eq: G.1}:} 
    \begin{align*}
    \eqref{eq: G.1}\leqslant&\left\|\Pi^\top\tilde{\mV}^{(1)}-\check \mV^{(1)} \mO^{(1)\top}\right\|_F\cdot\|\mathcal{S}\|_F\\
    &\cdot \left(\left\|\Pi^\top\tilde{\mV}^{(2)}-\check \mV^{(2)} \mO^{(2)\top}\right\|_F\|\tilde{\mV}^{(3)}\|_F+\left\|\Pi^\top\tilde{\mV}^{(3)}-\check \mV^{(3)} \mO^{(3)\top}\right\|_F\|\check \mV^{(2)} \mO^{(2)\top}\|_F\right)\\
    \lesssim& \sqrt{\frac{\log N_R}{N^{(1)}M}}\cdot \sqrt{N^{(1)}N^{(2)}}\cdot \left(\sqrt{\frac{\log N_R}{N^{(2)}M}}+\left(\frac{\log N_R}{N^{(1)}N^{(2)}M}\right)^{1/4}\sqrt{\frac{1}{N^{(2)}}}\right)\\
    \leqslant &\frac{\log N_R}{M}+\left(\frac{\log N_R}{M}\right)^{3/4} \left(\frac{1}{N^{(1)}N^{(2)}}\right)^{1/4}
\end{align*}
    \item \textbf{For \eqref{eq: G.2}:} $\eqref{eq: G.2}\lesssim\sqrt{\frac{\log N_R}{N^{(1)}M}}\sqrt{N^{(1,2)}}\sqrt{\frac{1}{N^{(2)}}}\leqslant\sqrt{\frac{\log N_R}{M}}$
    \item \textbf{For \eqref{eq: G.3}:}  $\lesssim\sqrt{\frac{1}{N^{(1)}}}\cdot \sqrt{N^{(1,2)}}\left(\sqrt{\frac{\log N_R}{N^{(2)}M}}+\left(\frac{\log N_R}{N^{(1)}N^{(2)}M}\right)^{1/4}\sqrt{\frac{1}{N^{(2)}}}\right)\leqslant\sqrt{\frac{\log N_R}{M}}+\left(\frac{\log N_R}{N^{(1)}N^{(2)}M}\right)^{1/4}$
    \item \textbf{For \eqref{eq: G.4}:} Lemma \ref{lemma: S error} gives
    $$\eqref{eq: G.4}\lesssim \sqrt{\frac{N^{(1,2)}\log N_R}{M}}\sqrt{\frac{1}{N^{(1,2)}}}\leqslant \sqrt{\frac{\log N_R}{M}}$$
\end{itemize}
Combined all of these and $\|\mA\|_1\leqslant \sqrt{K}\|\mA\|_F$, 
we have
$$
\|\mathcal{G}\cdot \left(\Pi^{(1)\top},\Pi^{(2)\top},\Pi^{(3)\top}\right)-{ \mathcal{\tilde G}}\|_{1}\leqslant C^* \max\left\{\sqrt{\frac{\log N_R}{M}},\left(\frac{\log N_R}{N^{(1,2)}M}\right)^{1/4}\right\}
$$
Using the techniques in \eqref{eq:hat omega bound}, we have
$$
\|\mathcal{G}\cdot \left(\Pi^{(1)\top},\Pi^{(2)\top},\Pi^{(3)\top}\right)-\hat{\mathcal{G}}\|_{1}\leqslant C^* \max\left\{\sqrt{\frac{\log N_R}{M}},\left(\frac{\log N_R}{N^{(1,2)}M}\right)^{1/4}\right\}
$$
\end{proof}

\section{Proof of Theorem \ref{theorem: main theorem} and Lemma \ref{lemm: sparse ttm}}
Theorem \ref{theorem: main theorem} is easy to obtain by the results from Lemmas \ref{lem: hat A1 2 ERROR}, \ref{lem: hat A3 error} and \ref{lem: hat G error}. For the sparse tensor topic modeling, the proof is readaptation of the Section E in \cite{tran2023sparse}.
Consider the oracle sets such that
\begin{align*}
    \mathcal{J}_{\pm}:=\left\{r\in[R]: \frac{1}{N^{(1)}N^{(2)}}\sum_{i\in[N^{(1)}],j\in[N^{(2)}]}\mathcal{D}_{ijr}\right\}> \alpha_{\pm}c'\sqrt{\frac{\log N_R}{N^{(1)}N^{(2)}M}}
\end{align*}
where $c'$ is a constant defined as in set $\mathcal{J}$ and $\alpha_{-}>1$ and $\alpha_{+}\in(0,1)$ for some suitably chosen constants.
\begin{proposition}[Lemma E.1 in \cite{tran2023sparse}]\label{prop: sparse} Suppose the matrix $\mA^{(3)}$ follows the column-wise $\ell_q$ sparsity in Assumption \ref{ass: weak lq sparsity}. If $M\gtrsim \log N_R$ and assumption \ref{ass: min singular value} holds, then with probability at least $1-o(N_R^{-1})$,
the event $\mathcal{E}:=\{\mathcal{J}_{-}\subseteq\mathcal{J}\subseteq\mathcal{J}_{+}\}$ occurs, $\min_{r\in[\mathcal{J}]} f_r>\alpha_{+}c'\sqrt{\frac{\log N_R}{N^{(1)}N^{(2)}M}}$, and
$$
\|\mA^{(3)}_{\mathcal{J}^c\star}\|_1\lesssim \frac{1}{1-q}s_0\left(\frac{\log N_R}{N^{(1)}N^{(2)}M}\right)^{\frac{1-q}{2}}
$$
\end{proposition}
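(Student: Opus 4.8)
The plan is to compare the data-driven rule defining $\mathcal{J}$ against the two oracle thresholds defining $\mathcal{J}_\pm$ via a uniform concentration bound on the averaged noise, and then to control $\|\mA^{(3)}_{\mathcal{J}^c\star}\|_1$ by a weak-$\ell_q$ tail estimate. Throughout, write $\tau := c'\sqrt{\log N_R /(N^{(1)}N^{(2)}M)}$ and abbreviate the column averages $\bar{\mathcal{D}}_r := \frac{1}{N^{(1)}N^{(2)}}\sum_{ij}\mathcal{D}_{ijr}$, and likewise $\bar{\mathcal{Y}}_r,\bar{\mathcal{Z}}_r$, so that $\bar{\mathcal{Y}}_r = \bar{\mathcal{D}}_r + \bar{\mathcal{Z}}_r$ and $r\in\mathcal{J}$ iff $\bar{\mathcal{Y}}_r \geq \tau$. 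The first step is to upgrade Lemma \ref{lemma: sumij zij} to a bound valid for every $r\in[R]$ irrespective of the size of $f_r$: applying Bernstein's inequality (Lemma \ref{lemma: bernstein ineq}) to $\bar{\mathcal{Z}}_r = \frac{1}{N^{(1)}N^{(2)}M}\sum_{ijm}\mathcal{X}_{ijm}(r)$, whose summands are bounded by $1$ with per-term variance at most $\bar{\mathcal{D}}_r$ (Lemma \ref{lem: Dijr less than fr}), and separating the variance- and mean-dominated regimes, a union bound over $r\leq N_R$ yields, with probability $1-o(N_R^{-1})$,
$$|\bar{\mathcal{Z}}_r| \lesssim \sqrt{\frac{\bar{\mathcal{D}}_r \log N_R}{N^{(1)}N^{(2)}M}} + \frac{\log N_R}{N^{(1)}N^{(2)}M} \quad \text{for all } r\in[R].$$
Since $\log N_R/(N^{(1)}N^{(2)}M) = \tau^2/c'^2 = o(\tau)$, both terms on the right are $o(\bar{\mathcal{D}}_r \vee \tau)$, uniformly in $r$.

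On this event I would verify both inclusions of $\mathcal{E}$. For $\mathcal{J}_-\subseteq\mathcal{J}$: if $\bar{\mathcal{D}}_r > \alpha_-\tau$ with $\alpha_->1$, the display gives $|\bar{\mathcal{Z}}_r| \lesssim (\log N_R/(N^{(1)}N^{(2)}M))^{1/4}\bar{\mathcal{D}}_r = o(\bar{\mathcal{D}}_r)$, so $\bar{\mathcal{Y}}_r \geq \bar{\mathcal{D}}_r(1-o(1)) > \tau$ once $N^{(1)}N^{(2)}M$ is large. For $\mathcal{J}\subseteq\mathcal{J}_+$ I argue by contrapositive: if $\bar{\mathcal{D}}_r \leq \alpha_+\tau$ with $\alpha_+<1$, then $|\bar{\mathcal{Z}}_r| = o(\tau)$, hence $\bar{\mathcal{Y}}_r \leq \alpha_+\tau + o(\tau) < \tau$, i.e. $r\notin\mathcal{J}$. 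Both steps hold for suitably chosen $\alpha_\pm$. The intermediate claim $\min_{r\in\mathcal{J}}f_r > \alpha_+\tau$ is then immediate: on $\mathcal{E}$, $r\in\mathcal{J}\subseteq\mathcal{J}_+$ forces $f_r \geq \bar{\mathcal{D}}_r > \alpha_+\tau$, using $\mathcal{D}_{ijr}\leq f_r$ (Lemma \ref{lem: Dijr less than fr}).

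For the final bound, note that on $\mathcal{E}$ we have $\mathcal{J}^c \subseteq \mathcal{J}_-^c$, so it suffices to bound $\sum_{r\in\mathcal{J}_-^c}\sum_k\mA^{(3)}_{rk}$. The crux is to show that membership in $\mathcal{J}_-^c$ (i.e. $\bar{\mathcal{D}}_r \leq \alpha_-\tau$) forces every topic loading to be small, $\mA^{(3)}_{rk}\lesssim\tau$. Averaging model \eqref{eq: model defined entry-wise} over $(i,j)$ writes $\bar{\mathcal{D}}_r = \sum_k w_k \mA^{(3)}_{rk}$ with nonnegative weights $w_k = \sum_{k^{(1)},k^{(2)}}\mathcal{G}_{k^{(1)}k^{(2)}k}\bar{\mA}^{(1)}_{k^{(1)}}\bar{\mA}^{(2)}_{k^{(2)}}$, whence $\mA^{(3)}_{rk}\leq \bar{\mathcal{D}}_r / w_k$. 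The main obstacle is the uniform lower bound $\min_k w_k \geq c$; I would obtain it from Assumption \ref{ass: min singular value} in two pieces: first $\bar{\mA}^{(a)}_k = \frac{1}{N^{(a)}}\|\mA^{(a)}_{\star k}\|_1 \geq \frac{1}{N^{(a)}}\|\mA^{(a)}_{\star k}\|_2^2 = (\frac{1}{N^{(a)}}\mA^{(a)\top}\mA^{(a)})_{kk}\geq c^*$ (a PSD diagonal entry dominates the least eigenvalue, and $0\leq\mA^{(a)}_{ik}\leq1$), and second $\sum_{k^{(1)},k^{(2)}}\mathcal{G}_{k^{(1)}k^{(2)}k}\geq\sum_{k^{(1)},k^{(2)}}\mathcal{G}_{k^{(1)}k^{(2)}k}^2 = (\mathcal{M}_3(\mathcal{G})\mathcal{M}_3(\mathcal{G})^\top)_{kk}\geq c^*K^{(1)}K^{(2)}$, so that $w_k \geq c^{*3}K^{(1)}K^{(2)} =: c$. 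Consequently $\mA^{(3)}_{rk}\leq \alpha_- c^{-1}\tau$ for all $r\in\mathcal{J}_-^c$ and all $k$.

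It then remains to apply the standard weak-$\ell_q$ tail estimate of Assumption \ref{ass: weak lq sparsity}: the bound $\mA^{(3)}_{[r]k}\leq(s_0/r)^{1/q}$ implies, for any threshold $\lambda$, that $\sum_{r:\,\mA^{(3)}_{rk}\leq\lambda}\mA^{(3)}_{rk}\lesssim \frac{1}{1-q}s_0\lambda^{1-q}$. Taking $\lambda\asymp\tau$, so that all loadings with $r\in\mathcal{J}_-^c$ lie below $\lambda$, and summing over the $K^{(3)}=O(1)$ topics yields
$$\|\mA^{(3)}_{\mathcal{J}^c\star}\|_1 \leq \sum_k\sum_{r\in\mathcal{J}_-^c}\mA^{(3)}_{rk}\lesssim \frac{1}{1-q}s_0\tau^{1-q} = \frac{1}{1-q}s_0\Big(\frac{\log N_R}{N^{(1)}N^{(2)}M}\Big)^{\frac{1-q}{2}},$$
which is the asserted bound. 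I expect essentially all the difficulty to concentrate in the weight lower bound $\min_k w_k\geq c$, since it is precisely this estimate that certifies that thresholding the noisy average $\bar{\mathcal{Y}}_r$ faithfully detects columns with small topic loadings; it is the only place where the conditioning of the factor matrices and of the core tensor must be invoked together.
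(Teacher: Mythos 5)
Your proof is correct, but note that the paper itself never proves this proposition: it is imported verbatim as Lemma E.1 of \cite{tran2023sparse}, and the surrounding text only remarks that the sparse analysis is a ``readaptation of Section E'' of that reference. Your argument reconstructs the cited proof and, more importantly, supplies the one genuinely tensor-specific ingredient that the bare citation leaves implicit: the uniform lower bound $\min_k w_k \geq c^{*3}K^{(1)}K^{(2)}$ on the averaged mixture weights in $\bar{\mathcal{D}}_r=\sum_k w_k \mA^{(3)}_{rk}$. In the matrix pLSI setting of \cite{tran2023sparse} the analogous quantity is the row average of the single mixture matrix $\mW$, bounded below directly by the well-conditioning assumption there; in the tensor setting the bound must be threaded through both factor matrices \emph{and} the core, and your PSD-diagonal device — $\bar{\mA}^{(a)}_k \geq \bigl(\tfrac{1}{N^{(a)}}\mA^{(a)\top}\mA^{(a)}\bigr)_{kk} \geq c^*$ using $x\geq x^2$ on $[0,1]$, and likewise $\sum_{k^{(1)},k^{(2)}}\G_{k^{(1)}k^{(2)}k} \geq \bigl(\mathcal{M}_3(\G)\mathcal{M}_3(\G)^\top\bigr)_{kk}\geq c^*K^{(1)}K^{(2)}$ — is exactly the right way to extract it from Assumption \ref{ass: min singular value}, and you are right that this is where the conditioning of the factors and the core must act jointly. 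The remaining components (Bernstein with variance proxy $\bar{\mathcal{D}}_r$ and a union bound over $r\le N_R$; the two inclusions via $\alpha_->1$, $\alpha_+<1$; the deduction $f_r\geq\bar{\mathcal{D}}_r>\alpha_+\tau$ from Lemma \ref{lem: Dijr less than fr}; and the weak-$\ell_q$ tail sum split at rank $r^*\asymp s_0\lambda^{-q}$, giving $\sum_{r:\mA^{(3)}_{rk}\le\lambda}\mA^{(3)}_{rk}\lesssim \tfrac{1}{1-q}s_0\lambda^{1-q}$) are all standard and correctly executed, and match the structure of the external proof.

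One small point of rigor: your inclusion arguments repeatedly assert that the boundary noise is $o(\tau)$, which requires $u:=\log N_R/(N^{(1)}N^{(2)}M)\to 0$, whereas the stated hypothesis $M\gtrsim\log N_R$ alone only gives $u\lesssim 1/(N^{(1)}N^{(2)})$. This is harmless, but you should say which of the two available fixes you invoke: either the constants $\alpha_\pm$ are ``suitably chosen'' (the noise at the boundary $\bar{\mathcal{D}}_r\asymp\tau$ is of order $\tau u^{1/4}+u$, so a fixed inflation of $\alpha_-$ and deflation of $\alpha_+$ absorbs it for $u$ below a constant), or one works in the regime where Lemma \ref{lemm: sparse ttm} is applied, in which $s_0 u^{(1-q)/2}=o(1)$ together with $s_0\gtrsim 1$ (forced by $\|\mA^{(3)}_{\star k}\|_1=1$ under Assumption \ref{ass: weak lq sparsity}) yields $\tau=o(1)$ outright.
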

We apply this proposition to adapt the proof for sparse tensor topic modeling. Observe that: 
\begin{align*}
    \|\hat \mA^{(3)}-\mA^{(3)}\Pi^{(3)}\|_1=\sum_{r\in\mathcal{J}}\|(\hat \mA^{(3)}-\mA^{(3)}\Pi^{(3)})_{r\star }\|_1+\sum_{r\in\mathcal{J}^c}\|(\hat \mA^{(3)}-\mA^{(3)}\Pi^{(3)})_{r\star }\|_1.
\end{align*}
For the first part, Proposition \ref{prop: sparse} indicates that $\min_{r\in[\mathcal{J}]} f_r\gtrsim\sqrt{\frac{\log N_R}{N^{(1)}N^{(2)}M}}$ which meets the assumption in Lemma \ref{lem: hat A3 error}. Consequently, the first term has the same rate as before, $\left(\frac{\log N_R}{N^{(1)}N^{(2)}M}\right)^{1/4}$. For the second part, where we estimate the rows of $\mA^{(3)}$ corresponding to indices in $\mathcal{J}^c$ as zero. Proposition \ref{prop: sparse} provides the corresponding bound for the second term as well. With this, we complete the proof for the $\ell$-1 error of the estimator $\mA^{(3)}$ in the context of sparse tensor topic modeling.

Remark that the subsequent estimation procedures, including HOSVD and core recovery, rely on the set $\mathcal{J}$ where $\min_{r\in\mathcal{J}}f_r\gtrsim \left(\frac{\log N_R}{N^{(1)}N^{(2)}M}\right)^{1/2}$. Consequently, the error rates for 
 $\mA^{(1)}$   $\mA^{(2)}$ and $\mathcal{G}$
 remain consistent with previous results.

\section{Benchmarks}\label{app:benchmarks}

\subsection{Tensor LDA}

To compare our method with an extension of LDA to the tensor setting, we implement its Bayesian counterpart -- that is, we endow each latent variable with a Dirichlet prior. The full model is detailed below:

\[
\phi_k \sim \text{Dirichlet}(\beta), \quad \forall k \in \{1, \dots, K_3\}
\]
\[
\theta_{n_1} \sim \text{Dirichlet}(\alpha_1), \quad \forall n_1 \in \{1, \dots, N_1\}
\]
\[
\theta_{n_2} \sim \text{Dirichlet}(\alpha_2), \quad \forall n_2 \in \{1, \dots, N_2\}
\]
\[
G_{k_1,k_2, \cdot} \sim \text{Dirichlet}(\alpha_3), \quad \forall k_1 \in \{1, \dots, K_1\}, \, k_2 \in \{1, \dots, K_2\}
\]
\[
\theta_{n_1, n_2, k_3} = \sum_{k1=1}^{K_1} \sum_{k2=1}^{K_2} G_{k_1,k_2,k_3} \cdot \theta_{n_1 k_1} \cdot \theta_{n_2 k_2}, \quad \forall k_3 \in \{1, \dots, K\}
\]
\[
\X_{n_1, n_2, \star} \sim \text{Multinomial}(M, \theta_{n_1, n_2, \star}\phi).
\]

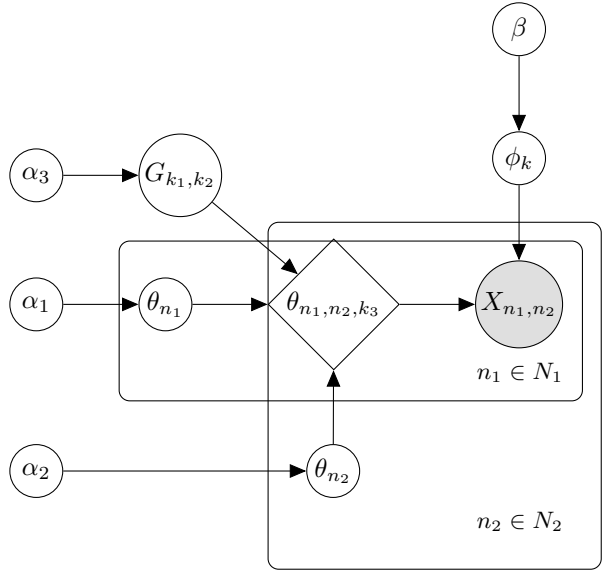
\begin{figure}[H]
\centering
\begin{tikzpicture}

  % Nodes
      \node[latent] (alpha3) {\(\alpha_{3}\)};
            \node[latent, below=of alpha3] (alpha1) {\(\alpha_{1}\)};
                  \node[latent, below=of alpha1,yshift=-0.5cm] (alpha2) {\(\alpha_{2}\)};
    \node[latent, right=of alpha3] (G) {\(G_{k_1,k_2}\)};
  \node[latent, right=of alpha1] (theta_n1) {\(\theta_{n_1}\)};
  \node[latent, right=of alpha2, xshift=2.23cm] (theta_n2) {\(\theta_{n_2}\)};
  \node[det, right=of theta_n1] (theta_n1_n2_k3) {\(\theta_{n_1,n_2,k_3}\)};
  \node[obs, right=of theta_n1_n2_k3] (X) {\(X_{n_1,n_2}\)};
    \node[latent, above=of X] (phi) {\(\phi_k\)};
    \node[latent, above=of phi] (beta) {\(\beta\)};
  
  % Plates
  \plate[inner sep=0.25cm] {plate1} {(theta_n1)(X)} {\(n_1 \in N_1\)};
  \plate[inner sep=0.5cm] {plate2} {(theta_n2)(X)} {\(n_2 \in N_2\)};
  
  % Edges
    \edge {alpha3} {G};
        \edge {alpha1} {theta_n1};
                \edge {alpha2} {theta_n2};
                    \edge {beta} {phi};
  \edge {phi} {X};
  \edge {theta_n1} {theta_n1_n2_k3};
  \edge {theta_n2} {theta_n1_n2_k3};
  \edge {G} {theta_n1_n2_k3};
  \edge {theta_n1_n2_k3} {X};
  
\end{tikzpicture}
\caption{Plate Diagram for the tensor LDA model}
\end{figure}

In practice, we set the hyperparameters $\beta, \alpha_1, \alpha_2$ and $\alpha_3$ to 1. To fit this model, we use the variational Bayes provided in RStan\cite{guo2020package}. This provides a fast and scalable solution to the model specified above. Throughout the paper, we report the average of the posterior estimated from the data using this model.

\subsection{Hybrid LDA}

The Hybrid LDA model for tensor topic modeling is a three-step algorithm that relies on the traditional Latent Dirichlet allocation algorithm to estimate the topics. The different latent variables are fitted as follows:
\begin{itemize}
    \item {\it Step 1: Estimation of $\mA^{(3)}$ through LDA}. The first step consists in the estimation of the topic matrix $\mA^{(3)}$ using the matricized matrix $\mD^{(3)} = \mathcal{M}_3(\D) \in \R^{R \times N_1N_2}$. To this end, we use the traditional Latent Dirichlet Allocation model, and recover a posterior distribution for the topic matrix $\mA^{(3)}$ as well as the topic proportions $\mW^{(3)}$. We take $\hat{\mA}^{(3)}$ to be the posterior mean of the LDA topic matrix. We note that this step is performed blindly of any existing correlation between documents corresponding to the same entry in mode 1 or 2 (i.e. for each $i$ and each $j$, the rows of the matrices  $\D_{i\star \star}$ and  $\D_{\star j \star}$  are treated independently). 
    \item {\it Step 2: Estimation of $\mA^{(1)}, \mA^{(2)}$}. Having estimated $\hat{\mA}^{(3)}$, we then estimate $\mA^{(1)}$ and $\mA^{(2)}$ from the estimated mixture matrix $\mW^{(3)} \in \R^{K_3 \times N_1 N_2}$.
    \item {\it Step 3: Estimation of the core $\G$ through constrained regression.} Having estimated all the latent factor matrices, we propose estimating the core through the following estimation procedure:
 \begin{equation}
     \begin{split}
         \hat{\G} \in \text{argmin} \| \mathcal{Y} - \G \cdot (\hat{\mA}_1, \hat{\mA}_2, \hat{\mA}_3)\|^2
     \end{split}
 \end{equation}
    
\end{itemize}

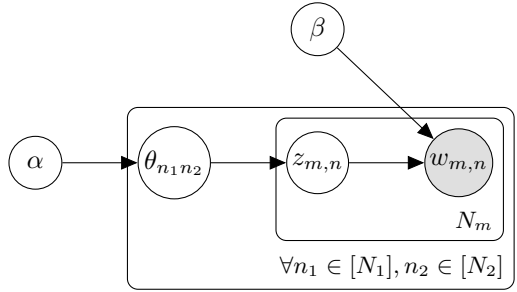
\begin{figure}[H]
    \centering
\begin{tikzpicture}

  % Nodes
  \node[latent] (alpha) {\(\alpha\)};
  \node[latent, right=of alpha] (theta) {\(\theta_{n_1n_2}\)};
  \node[latent, right=of theta] (z) {\(z_{m,n}\)};
  \node[obs, right=of z] (w) {\(w_{m,n}\)};
  \node[latent, above=of z] (beta) {\(\beta\)};
  
  % Plates
  \plate {plate1} {(z)(w)} {\(N_m\)};
  \plate {plate2} {(theta)(plate1)} {\(\forall n_1 \in [N_1], n_2\in [N_2]\)};
  
  % Edges
  \edge {alpha} {theta};
  \edge {theta} {z};
  \edge {z} {w};
  \edge {beta} {w};

\end{tikzpicture}
    \caption{Plate Model for the LDA model fitted in Step 1 of the Hybrid LDA approach}
    \label{fig:lda}
\end{figure}

\section{Synthetic experiments: additional results}\label{app:synthetic}
This appendix supplements the synthetic experiments presented in the main text.

\begin{figure}[H]
    \centering
    \includegraphics[width=\textwidth]{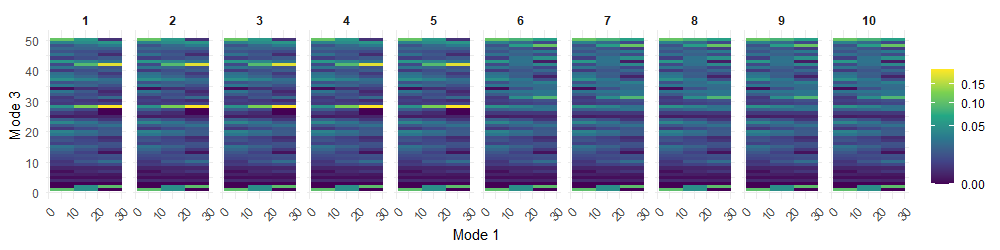}
    \caption{Nonnegative $30\times 10\times 50$ (ground-truth) tensor $\mathcal{D}$ where the slices are taken along the second mode. %For each slice, the first 10 columns (indices 1-10) are highly correlated. The last 10 columns (indices 21-30) are also correlated, but distinctly from the first group. The middle 10 columns (indices 11-20) are a mixture of the first 10 and last 10 columns, calculated as the average of the corresponding entries from columns 1-10 and 21-30. Across the Second Mode: A change occurs between slices 5 and 6, but only for the entries in rows 26-50 in columns 11-30. The change in columns 11-20 is driven by the change in columns 21-30, reflecting their influence as the middle columns are a mixture of the first and last column groups
    }
    \label{fig:setting1_D_mixed}
\end{figure}
\noindent

\subsection{Tucker decomposition: Illustration and Comparison}\label{app:syn:tucker}
In this section, we provide a detailed discussion and illustration of why Tucker decomposition is our preferred approach for tensor topic modeling in multivariate data applications, rather than matrix decomposition or the {\it CANDECOMP/PARAFAC} (CP) decomposition. Using the tensor data shown in Figure \ref{fig:setting1_D_mixed}, we compare our Tucker decomposition-based method with Nonnegative CP decomposition implemented in the \texttt{rTensor} package and structural topic modeling (STM) utilizing supervised matrix decomposition from the \texttt{stm} package.
The two key questions we aim to address are:  
(i) Can we identify the presence of two distinct core groups along mode 1 (the document axis)?  
(ii) Can we detect the topic shift within the mixed membership group between time periods 5 and 6 along mode 2 (the time axis)?

\par To address the questions above, we first apply our method to the entire tensor using an input rank of \((2, 2, 3)\), obtaining the Tucker factor matrices \(\mA^{(1)}, \mA^{(2)}, \mA^{(3)}\) and the core tensor \(\mathcal{G}\). The results for this example are presented in Figure \ref{appfig:sim:ours}. 

The factor matrix \(\mA^{(1)}\) effectively identifies two primary groups along the first mode (document axis): group 1 and group 2, corresponding to the first and last ten indices of the first dimension, respectively. Additionally, the middle 10 rows of \(\mA^{(1)}\) represent documents with mixed membership between the two groups. The matrix \(\mA^{(2)}\) captures interactions reflecting topic evolution along the second dimension (e.g., time). 

The core tensor, illustrated in Figure \ref{appfig:sim:ours}, reveals interactions between topics and latent clusters across dimensions. Notably, the topic proportions for document cluster 1, especially those aligned with topic 2, remain stable despite a significant event occurring between time periods 5 and 6. In contrast, documents in cluster 2 exhibit a marked shift in preferences during this period, transitioning from topic 3 to topic 1.

 %It can model complex relationships such as how purchasing patterns change at different time points for different customer segments and across product categories.
Tucker decomposition offers greater flexibility by allowing different ranks for each mode of the tensor, making it well-suited for datasets with varying levels of complexity across modes such as customer segments, time periods, and product categories. In this synthetic dataset, Tucker decomposition effectively captures these diverse structures. 

In contrast, CP decomposition enforces the same rank across all modes, which can be overly restrictive in scenarios where document clusters (first mode), time (second mode), and product categories (third mode) exhibit distinct underlying patterns. For example, in NNCPD, the matrix \(\mA^{(1)}\) represents clusters associations, \(\mA^{(2)}\) captures latent time clusters, and \(\mA^{(3)}\) reflects word-topic associations. While Figure \ref{app:subplot:NNCPD 2} shows that topic 2 is associated primarily with cluster 1 during time periods 6–10 and topic 1 is linked to cluster 2 in time periods 1–5, it fails to clearly isolate the topic change occurring exclusively within the mixed membership group between time periods 5 and 6. Increasing the rank to 4, as shown in Figure \ref{app:subplot:NNCPD 4}, does not resolve this limitation, underscoring that higher ranks alone cannot fully capture such nuanced changes, and highlighting the need for more sophisticated decompositions that incorporate additional interactions.

Matrix decomposition is inherently limited to two modes (e.g., customers at a given time and products), which fails to preserve the multidimensional relationships in the data. Post-processing is often required to extract interactions between modes, but this approach introduces challenges and may not always yield meaningful insights. Figure \ref{app:subplot:LDA} illustrates results from standard Latent Dirichlet Allocation (LDA) \cite{blei2003latent}, while Figure \ref{app:subplot:STM} presents supervised LDA results from Structural Topic Modeling (STM). The document-topic matrix \(\mW^{(3)}\) in STM demonstrates clearer stripe patterns, reflecting the benefits of incorporating structural information like time stamps and document groups, leading to more accurate reconstructions. However, proper post-processing of \(\mW^{(3)}\) is essential for analyzing interactions between observed changes and customer groups. Figure \ref{app:subplot:LDA slice} shows a failed case where Hybrid-LDA’s core tensor illustrates topic changes over time but fails to capture the transition between time points 5 and 6 in \(\mA^{(2)}\).

By contrast, STM effectively identifies the timing of the event and aligns its core tensor with the expected model behavior, albeit with slightly higher reconstruction error. While STM is robust, as it employs multiple EM algorithm runs to ensure convergence, it requires external information, limiting its applicability in real-world scenarios where such data might be unavailable. In comparison, our method is unsupervised, offering greater flexibility and applicability without relying on additional inputs. For further details on its application to real-world datasets, we refer readers to Section \ref{sec: real data experiments}.
\begin{figure}[H]
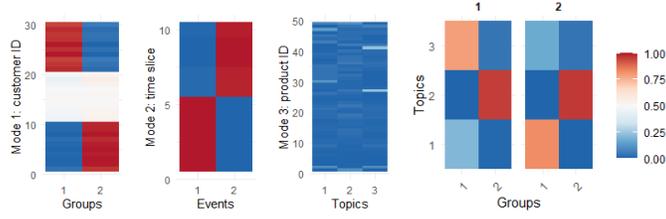

    \centering
    \begin{subfigure}[b]{0.10\textwidth}
    \includegraphics[width=\textwidth]{IMG/synthetic/OURS_A1.png}
    \end{subfigure}
    \begin{subfigure}[b]{0.10\textwidth}
    \includegraphics[width=\textwidth]{IMG/synthetic/OURS_A2.png}
    \end{subfigure}
    %\hspace{0.2cm}
    \begin{subfigure}[b]{0.10\textwidth}
    \includegraphics[width=\textwidth]{IMG/synthetic/OURS_A3.png}
    \end{subfigure}
    %\hspace{0.2cm}
    \begin{subfigure}[b]{0.22\textwidth}
    \includegraphics[width=\textwidth]{IMG/synthetic/OURS_CORE.png}
    \end{subfigure}
    \caption{Tucker components $\mA^{(1)},\mA^{(2)},\mA^{(3)},\mathcal{G}$ (Left to Right respectively) estimated by our method with ranks $(2,2,3)$ for the tensor from Figure \ref{fig:setting1_D_mixed}. Notice that the factor $\mA^{(1)}$ shows the latent groups of documents. The $\mA^{(2)}$ factor shows two events across time slices . The $\mA^{(3)}$ factor shows the product-representation of topics. The core tensor $\mathcal{G}$ showcases the interactions between multiple modes. Each slice in the last subfigure represents the events derived from $\mA^{(2)}$. The reconstruction error is $\|\mathcal{\hat D}-\mathcal{D}\|_1=22.687$ }
    \label{appfig:sim:ours}
\end{figure}

%\begin{figure}[H]   \centering  \begin{subfigure}[b]{0.22\textwidth}\includegraphics[width=\textwidth]{}\end{subfigure}\begin{subfigure}[b]{0.22\textwidth}\includegraphics[width=\textwidth]{}\end{subfigure}\begin{subfigure}[b]{0.27\textwidth}\includegraphics[width=\textwidth]{} \end{subfigure} \begin{subfigure}[b]{0.115\textwidth}\includegraphics[width=\textwidth] {}\end{subfigure}\begin{subfigure}[b]{0.115\textwidth}\includegraphics[width=\textwidth]{}\end{subfigure}\begin{subfigure}[b]{0.115\textwidth}\includegraphics[width=\textwidth]{} \end{subfigure}\caption{CP components $\mA^{(1)},\mA^{(2)},\mA^{(3)}$(Left to Right respectively) estimated by NNCPD with ranks $4$ for the tensor from Figure \ref{fig:setting1_D_mixed}. NNCPD reconstruction error is $\|\mathcal{\hat D}-\mathcal{D}\|_1=251.540$}\label{fig:sim1_NNCPD}\end{figure}

\begin{figure}[H]
    \centering
    \begin{subfigure}[b]{0.4\textwidth}\includegraphics[width=\textwidth]{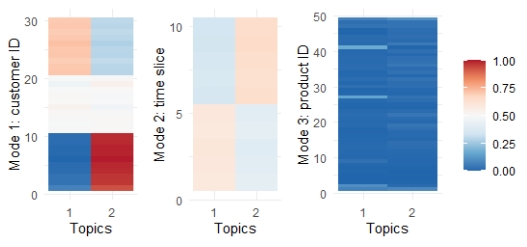}
    \subcaption{NNCPD factors with rank 2}\label{app:subplot:NNCPD 2}\end{subfigure}
    \hspace{1.0cm}
    \begin{subfigure}[b]{0.5\textwidth}\includegraphics[width=\textwidth]{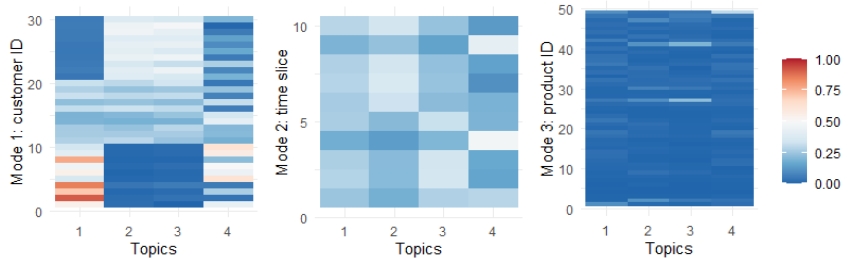}
    \subcaption{NNCPD factors with rank 4}\label{app:subplot:NNCPD 4}\end{subfigure}
    \caption{NNCPD components $\mA^{(1)},\mA^{(2)},\mA^{(3)}$ (Left to right, respectively) for both subplots (a) and (b). Components $\mA^{(1)},\mA^{(2)}$, and $\mA^{(3)}$ representing the factor matrices corresponding to the first (document), second (time), and third (word) modes, respectively. Plot (a) and Plot (b) differ based on the rank used in the decomposition. For rank 2 decomposition, the reconstruction error is $\|\mathcal{\hat D}-\mathcal{D}\|_1=251.540$. For rank 4 decomposition, the reconstruction error slightly increases $\|\mathcal{\hat D}-\mathcal{D}\|_1=253.581$.}
    \label{fig:sim: NNCPD}
\end{figure}

\begin{figure}[H]
    \centering
    \begin{subfigure}[b]{0.25\textwidth}\includegraphics[width=\textwidth]{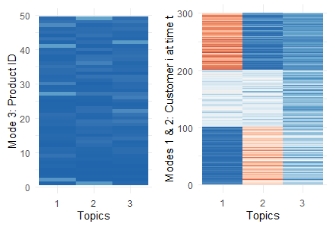}
    \subcaption{LDA factors}\label{app:subplot:LDA}\end{subfigure}
    \hspace{1.7cm}
    \begin{subfigure}[b]{0.3\textwidth}\includegraphics[width=\textwidth]{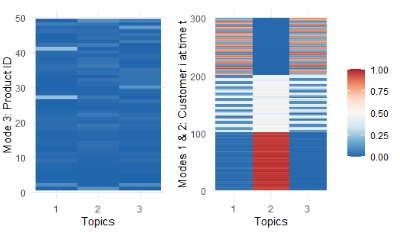}
    \subcaption{STM factors}\label{app:subplot:STM}\end{subfigure}
    \caption{The Matrix LDA components $\mA^{(3)}$ and $\mW^{(3)\top}$ are shown from left to right in both subplots (a) and (b). Here, $\mA^{(3)}$ represents the word-topic matrix, while $\mW^{(3)}$ denotes the topic proportions for each document at different time points. Plot (a) illustrates the results from regular LDA, where the topic distributions are generated without considering external covariates. Plot (b) shows the results from supervised LDA using structural LDA (STM), which incorporates covariate information, such as indicator variables for time stamps and documents. For LDA, the reconstruction error is $\|\mathcal{\hat D}-\mathcal{D}\|_1=82.703$.  After applying STM, the reconstruction error significantly decreases to  $\|\mathcal{\hat D}-\mathcal{D}\|_1=24.504$.}
    \label{fig:sim: matrix lda}
\end{figure}

%\begin{figure}[H]   \centering  \begin{subfigure}[b]{0.11\textwidth}\includegraphics[width=\textwidth]{IMG/synthetic/lda_A1.png}\end{subfigure}\begin{subfigure}[b]{0.11\textwidth}\includegraphics[width=\textwidth]{IMG/synthetic/LDA_A2.png}\end{subfigure}\begin{subfigure}[b]{0.24\textwidth}\includegraphics[width=\textwidth]{IMG/synthetic/LDA_CORE.png}\end{subfigure}\begin{subfigure}[b]{0.11\textwidth}\includegraphics[width=\textwidth]{}\end{subfigure}\begin{subfigure}[b]{0.11\textwidth}\includegraphics[width=\textwidth]{}\end{subfigure}\begin{subfigure}[b]{0.24\textwidth}\includegraphics[width=\textwidth]{}\end{subfigure}\caption{CP components $\mA^{(1)},\mA^{(2)},\mA^{(3)}$(Left to Right respectively) estimated by NNCPD with ranks $4$ for the tensor from Figure \ref{fig:setting1_D_mixed}. NNCPD reconstruction error is $\|\mathcal{\hat D}-\mathcal{D}\|_1=251.540$}\label{fig:sim1_NNCPD}\end{figure}
\begin{figure}[H]
    \centering
    \begin{subfigure}[b]{0.4\textwidth}\includegraphics[width=\textwidth]{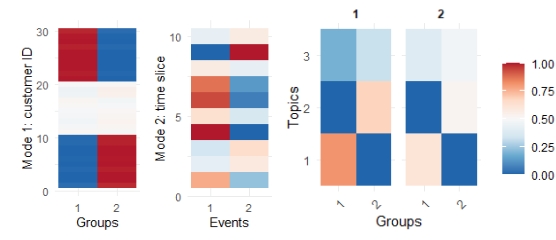}
    \subcaption{LDA post-processing factors}\label{app:subplot:LDA slice}\end{subfigure}
    \hspace{1.2cm}
    \begin{subfigure}[b]{0.4\textwidth}\includegraphics[width=\textwidth]{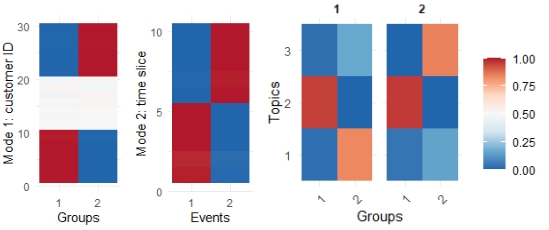}
    \subcaption{STM post-processing factors}\label{app:subplot:STM SLICE}\end{subfigure}
    \caption{The post-processing 
 Matrix LDA components $\mA^{(1)}$ $\mA^{(2)}$ and $\mathcal{G}$ from matrix $\mW^{(3)}$ given from Figure \ref{fig:sim: matrix lda} (shown from left to right in both subplots (a) and (b). Here, $\mW^{(3)}=\mathcal{M}_3(\mathcal{G})\left(\mA^{(1)}\otimes \mA^{(2)}\right)^\top$. The columns in the plots need to be permuted to facilitate a direct comparison. }
    \label{fig:sim: matrix lda post-process}
\end{figure}
\begin{figure}[H]
    \centering
    \includegraphics[width=0.5\linewidth]{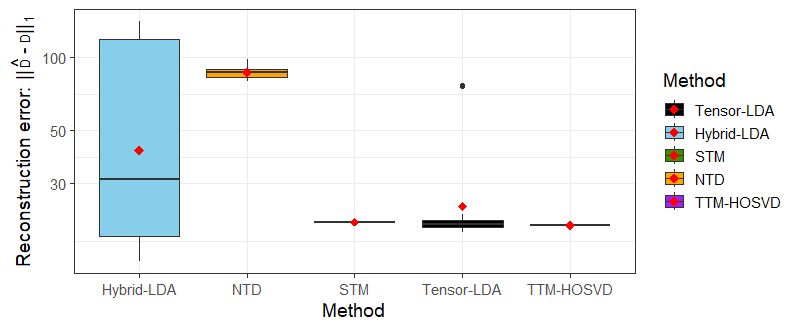}
    \caption{The Box plot of the reconstruction errors over 30 repeated runs on the same dataset from Figure \ref{fig:setting1_D_mixed}. The middle line in the plot represents the median value over 30 runs, while the red point indicates the mean value. STM is a robust method but relies on multiple iterations of the EM (Expectation-Maximization) algorithm—here, we set EM = 20. In this data example, the reconstruction error for STM is slightly higher than ours, and less computationally efficient.}
    \label{fig:sim1_reconstructionerror all}
\end{figure}

\section{Real experiments: additional results}\label{app:real data}
This appendix supplements the synthetic experiments presented in the main text. 
\subsection{Arvix's Paper}\label{appsec:arvix}
\paragraph{Preprocessing:}
The raw repository consists of 1.7 million articles from the vast branches of physics to the many subdisciplines of computer science to everything in between, including math, statistics, electrical engineering, quantitative biology, and economics. The metadata included information such as paper identifiers, titles, abstracts, publication dates, categories, and author information. Each paper in the arXiv repository is assigned one or more categories that follow a hierarchical structure, typically in the format $\text{main}\_\text{category}$.subcategory (e.g., cs.AI for Artificial Intelligence in computer science). For our analysis, we pre-processed the arXiv dataset by randomly selecting up to 500 article abstracts from four distinct categories: mathematics, physics, computer science, and statistics. We then categorized these articles into two main clusters: theoretical and application. The mapping was based on the nature of the research topics.
\begin{itemize}
    \item The theoretical categories included, but were not limited to:
    \begin{itemize}
        \item 
Mathematics: Algebraic Geometry (math.AG), Analysis of PDEs (math.AP), and Combinatorics (math.CO)
\item Statistics: Statistics 
 Theory (stat.TH).
\item Computer Science: Computational Complexity (cs.CC), Formal Languages and Automata Theory (cs.FL), and Logic in Computer Science (cs.LO).
\item Physics: High Energy Physics - Theory (hep-th), General Relativity and Quantum Cosmology (gr-qc), and Quantum Physics (quant-ph).
    \end{itemize}
    \item The application categories encompassed areas such as:
\begin{itemize}
    
\item Computer Science Applications: Artificial Intelligence (cs.AI), Computer Vision and Pattern Recognition (cs.CV), Machine Learning (cs.LG), and Robotics (cs.RO).
\item Applied Physics and Engineering: Applied Physics (physics.app-ph), Medical Physics (physics.med-ph).
\item Statistics: Statistics Applications (stat.AP), Computation (stat.CO), and Machine Learning (stat.ML).
\end{itemize}
\end{itemize}
The selected data has been summarized in the following two tables:
\begin{table}[H]
\centering
\begin{tabular}{|l|c|c|c|c|}
\hline
Category     & CS   & Math & Physics & Stat \\ \hline
Application  & 1505 & 109  & 119     & 150  \\ \hline
Theoretical  & 332  & 2160 & 702       & 27   \\ \hline
\end{tabular}
\caption{Distribution of Application and Theoretical papers by fields.}
\end{table}

\begin{table}[H]
\centering
\begin{tabular}{|l|c|c|c|c|c|}
\hline
Year & CS  & Math & Physics & Stat & Total \\ \hline
2007 & 31  & 162  & 35      & 0    & 228   \\ \hline
2008 & 20  & 111  & 32      & 2    & 165   \\ \hline
2009 & 10  & 45   & 19      & 0    & 74    \\ \hline
2010 & 37  & 143  & 44      & 8    & 232   \\ \hline
2011 & 44  & 138  & 38      & 5    & 225   \\ \hline
2012 & 73  & 176  & 60      & 7    & 316   \\ \hline
2013 & 87  & 161  & 47      & 12   & 307   \\ \hline
2014 & 57  & 160  & 53      & 12   & 282   \\ \hline
2015 & 51  & 91   & 44      & 2    & 188   \\ \hline
2016 & 93  & 121  & 46      & 12   & 272   \\ \hline
2017 & 110 & 132  & 53      & 14   & 309   \\ \hline
2018 & 129 & 139  & 51      & 10   & 329   \\ \hline
2019 & 149 & 126  & 46      & 16   & 337   \\ \hline
2020 & 162 & 111  & 63      & 17   & 353   \\ \hline
2021 & 177 & 105  & 53      & 17   & 352   \\ \hline
2022 & 198 & 115  & 42      & 20   & 375   \\ \hline
2023 & 200 & 114  & 44      & 13   & 371   \\ \hline
2024 & 209 & 119  & 41      & 10   & 379   \\ \hline
\end{tabular}
\caption{Yearly distribution of papers by subjects with totals.  For each year, we randomly selected 500 papers, with replacement. If a year contained fewer than 500 papers, we allowed sampling with replacement to reach the target number of samples.}\label{appsec:arxiv,table by year, sampling}
\end{table}
\begin{figure}[H]
    \centering
    \includegraphics[width=0.45\linewidth]{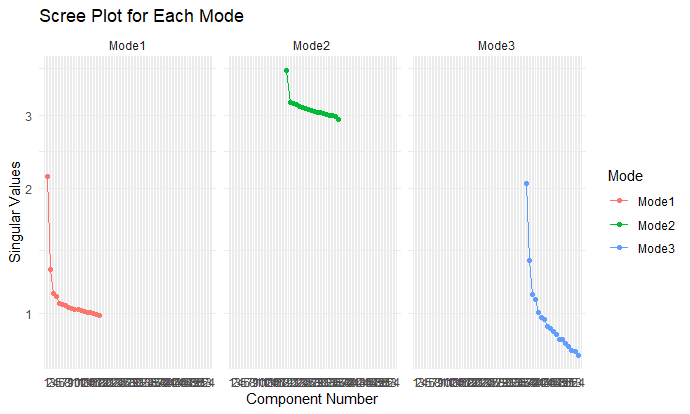}
    \caption{Scree plot of $\mD^{(a)}$ from TTM-HOSVD}
    \label{fig:articles: singular value}
\end{figure}

\begin{figure}[H]
    \centering
    \begin{subfigure}[b]{0.2 \textwidth}
    \includegraphics[width=\textwidth]{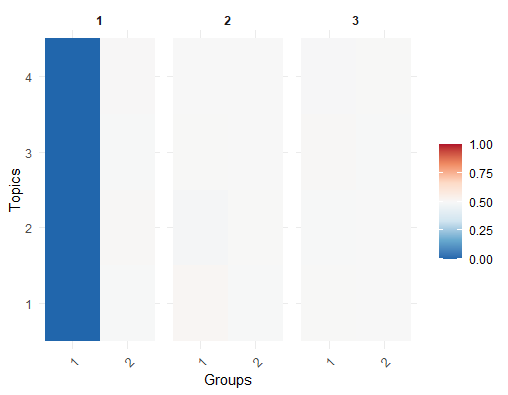}
    \subcaption{Hybrid-LDA}
    \end{subfigure}
    \begin{subfigure}[b]{0.2 \textwidth}
\includegraphics[width=\textwidth]{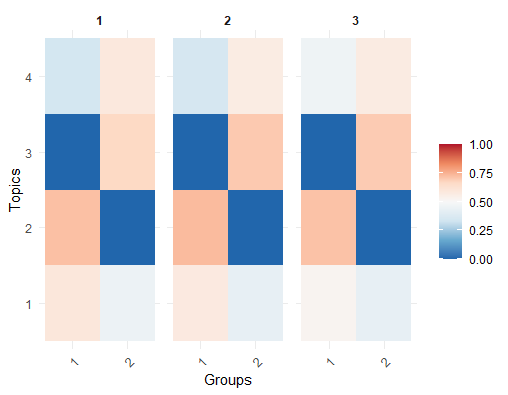} \subcaption{STM}
    \end{subfigure}
        \begin{subfigure}[b]{0.2\textwidth}
\includegraphics[width=\textwidth]{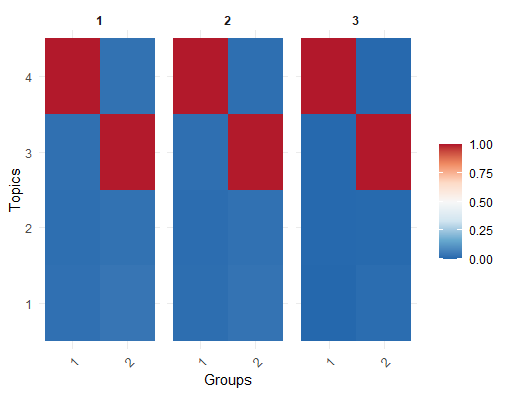} \subcaption{Tensor-LDA}
    \end{subfigure}\\
     \begin{subfigure}[b]{0.2 \textwidth}
\includegraphics[width=\textwidth]{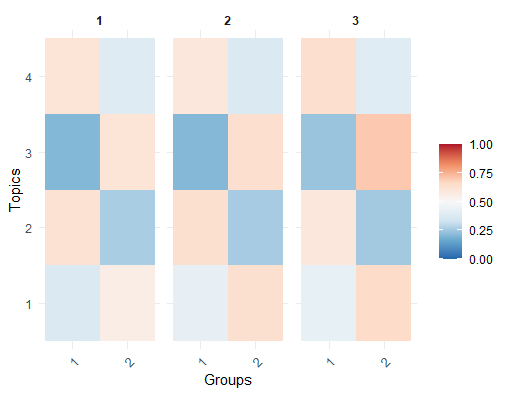} \subcaption{NTD}
    \end{subfigure}
    \begin{subfigure}[b]{0.2\textwidth}
\includegraphics[width=\textwidth]{IMG/real_data/arxiv/ours_core.png} \subcaption{TTM-HOSVD}
\end{subfigure}
    %\hspace{0.2cm}
    \caption{Core tensor components $\G$ derived from different methods.}
    \label{fig: arxiv topic others core}
\end{figure}

\begin{table}[H]\tiny
\centering
\begin{tabular}{|c|c|c|c|}
\hline
\textbf{Time Slice} & \textbf{[,1]} & \textbf{[,2]} & \textbf{[,3]} \\ \hline
\multicolumn{4}{|c|}{\textbf{Statistics}} \\ \hline
Theoretical Group & 0.11 & 0.08 & 0.16 \\ \hline
Application Group & 0.08 & 0.06 & 0.08 \\ \hline
\textbf{Total} & \textbf{0.19} & \textbf{0.14} & \textbf{0.24} \\ \hline
\multicolumn{4}{|c|}{\textbf{Physics}} \\ \hline
Theoretical Group & 0.14 & 0.16 & 0.15 \\ \hline
Application Group & 0.00 & 0.00 & 0.00 \\ \hline
\textbf{Total} & \textbf{0.14} & \textbf{0.16} & \textbf{0.15} \\ \hline
\multicolumn{4}{|c|}{\textbf{Computer Science}} \\ \hline
Theoretical Group & 0.00 & 0.00 & 0.00 \\ \hline
Application Group & 0.79 & 0.87 & 0.85 \\ \hline
\textbf{Total} & \textbf{0.79} & \textbf{0.87} & \textbf{0.85} \\ \hline
\multicolumn{4}{|c|}{\textbf{Math}} \\ \hline
Theoretical Group & 0.76 & 0.76 & 0.69 \\ \hline
Application Group & 0.13 & 0.07 & 0.07 \\ \hline
\textbf{Total} & \textbf{0.89} & \textbf{0.83} & \textbf{0.76} \\ \hline
\end{tabular}
\caption{Core-tensor Data (Rounded to 2 Decimal Places) derived from TTM-HOSVD}\label{arvix table: core: ours}
\end{table}

\begin{table}[H]\tiny
\centering

\begin{tabular}{|c|c|c|c|c|c|c|}
\hline
\multirow{2}{*}{\textbf{Time Slice}} & \multicolumn{3}{c|}{\textbf{TTM Method}} & \multicolumn{3}{c|}{\textbf{NTD Method}} \\ \cline{2-7}
                                     & \textbf{[,1]} & \textbf{[,2]} & \textbf{[,3]} & \textbf{[,1]} & \textbf{[,2]} & \textbf{[,3]} \\ \hline
\multicolumn{7}{|c|}{\textbf{Slice 1 (Statistics)}} \\ \hline
Theoretical Group                     & 0.11         & 0.08         & 0.16         & 0.14         & 0.18         & 0.19         \\ \hline
Application Group                     & 0.08         & 0.06         & 0.08         & 0.31         & 0.41         & 0.43         \\ \hline
\textbf{Total}                        & \textbf{0.19} & \textbf{0.14} & \textbf{0.24} & \textbf{0.45} & \textbf{0.59} & \textbf{0.62} \\ \hline
\multicolumn{7}{|c|}{\textbf{Slice 2 (Math)}} \\ \hline
Theoretical Group                     & 0.14         & 0.16         & 0.15         & 0.39         & 0.41         & 0.36         \\ \hline
Application Group                     & 0.00         & 0.00         & 0.00         & 0.07         & 0.07         & 0.07         \\ \hline
\textbf{Total}                        & \textbf{0.14} & \textbf{0.16} & \textbf{0.15} & \textbf{0.46} & \textbf{0.48} & \textbf{0.42} \\ \hline
\multicolumn{7}{|c|}{\textbf{Slice 3 (Computer Science)}} \\ \hline
Theoretical Group                     & 0.00         & 0.00         & 0.00         & 0.04         & 0.04         & 0.06         \\ \hline
Application Group                     & 0.79         & 0.87         & 0.85         & 0.37         & 0.41         & 0.50         \\ \hline
\textbf{Total}                        & \textbf{0.79} & \textbf{0.87} & \textbf{0.85} & \textbf{0.41} & \textbf{0.45} & \textbf{0.56} \\ \hline
\multicolumn{7}{|c|}{\textbf{Slice 4 (Physics)}} \\ \hline
Theoretical Group                     & 0.76         & 0.76         & 0.69         & 0.37         & 0.35         & 0.41         \\ \hline
Application Group                     & 0.13         & 0.07         & 0.07         & 0.15         & 0.14         & 0.16         \\ \hline
\textbf{Total}                        & \textbf{0.89} & \textbf{0.83} & \textbf{0.76} & \textbf{0.52} & \textbf{0.50} & \textbf{0.57} \\ \hline
\end{tabular}
\caption{Comparison of Core-Tensor Data (Rounded to 2 Decimal Places) for TTM and NTD Methods}
\end{table}

\begin{table}[H]\tiny
\centering

\begin{tabular}{|c|c|c|c|c|c|c|c|c|c|}
\hline
\multirow{2}{*}{\textbf{Time Slice}} & \multicolumn{3}{c|}{\textbf{STM Method}} & \multicolumn{3}{c|}{\textbf{LDA Method}} & \multicolumn{3}{c|}{\textbf{Tensor LDA Method}} \\ \cline{2-10}
                                     & \textbf{[,1]} & \textbf{[,2]} & \textbf{[,3]} & \textbf{[,1]} & \textbf{[,2]} & \textbf{[,3]} & \textbf{[,1]} & \textbf{[,2]} & \textbf{[,3]} \\ \hline
\multicolumn{10}{|c|}{\textbf{Slice 1 (Statistics)}} \\ \hline
Theoretical Group                     & 0.36         & 0.34         & 0.27         & 0.00         & 0.26         & 0.25         & 0.00         & 0.00         & 0.00         \\ \hline
Application Group                     & 0.21         & 0.18         & 0.19         & 0.25         & 0.25         & 0.25         & 0.00         & 0.00         & 0.00         \\ \hline
\textbf{Total}                        & \textbf{0.57} & \textbf{0.52} & \textbf{0.46} & \textbf{0.25} & \textbf{0.51} & \textbf{0.50} & \textbf{0.00} & \textbf{0.00} & \textbf{0.00} \\ \hline
\multicolumn{10}{|c|}{\textbf{Slice 2 (Math)}} \\ \hline
Theoretical Group                     & 0.52         & 0.54         & 0.52         & 0.00         & 0.23         & 0.25         & 0.00         & 0.00         & 0.00         \\ \hline
Application Group                     & 0.00         & 0.00         & 0.00         & 0.26         & 0.25         & 0.25         & 0.00         & 0.00         & 0.00         \\ \hline
\textbf{Total}                        & \textbf{0.52} & \textbf{0.54} & \textbf{0.52} & \textbf{0.26} & \textbf{0.49} & \textbf{0.50} & \textbf{0.00} & \textbf{0.00} & \textbf{0.00} \\ \hline
\multicolumn{10}{|c|}{\textbf{Slice 3 (Computer Science)}} \\ \hline
Theoretical Group                     & 0.00         & 0.00         & 0.00         & 0.00         & 0.25         & 0.26         & 0.00         & 0.00         & 0.00         \\ \hline
Application Group                     & 0.45         & 0.50         & 0.49         & 0.25         & 0.25         & 0.25         & 1.00         & 1.00         & 1.00         \\ \hline
\textbf{Total}                        & \textbf{0.45} & \textbf{0.50} & \textbf{0.49} & \textbf{0.25} & \textbf{0.50} & \textbf{0.51} & \textbf{1.00} & \textbf{1.00} & \textbf{1.00} \\ \hline
\multicolumn{10}{|c|}{\textbf{Slice 4 (Physics)}} \\ \hline
Theoretical Group                     & 0.12         & 0.13         & 0.21         & 0.00         & 0.25         & 0.24         & 1.00         & 1.00         & 1.00         \\ \hline
Application Group                     & 0.35         & 0.32         & 0.33         & 0.25         & 0.25         & 0.25         & 0.00         & 0.00         & 0.00         \\ \hline
\textbf{Total}                        & \textbf{0.47} & \textbf{0.45} & \textbf{0.54} & \textbf{0.25} & \textbf{0.50} & \textbf{0.50} & \textbf{1.00} & \textbf{1.00} & \textbf{1.00} \\ \hline
\end{tabular}
\caption{Comparison of Core-Tensor Data (Rounded to 2 Decimal Places) for STM, LDA, and Tensor LDA Methods}
\end{table}

\subsection{Sub-communities of the vaginal microbiota in non-pregnant women}\label{sec:app:real_exp:vaginal}
\paragraph{Additional results for (i):}
Here, we include the \texttt{alto} plot to visualize topic refinement and coherence, as well as topic resolution for STM (Structural Topic Model). However, STM performs the worst in both the topic refinement and topic resolution plots, suggesting that it places too much emphasis on enhancing relationships derived from external information. This over-reliance on external covariates makes STM overly sensitive to both the selection of data and changes in the number of topics, ultimately affecting its performance and robustness in recovering $\mA^{(3)}$.
\begin{figure}[H]
    \centering
  \begin{subfigure}[b]{0.45\textwidth}
    \includegraphics[width=\textwidth]{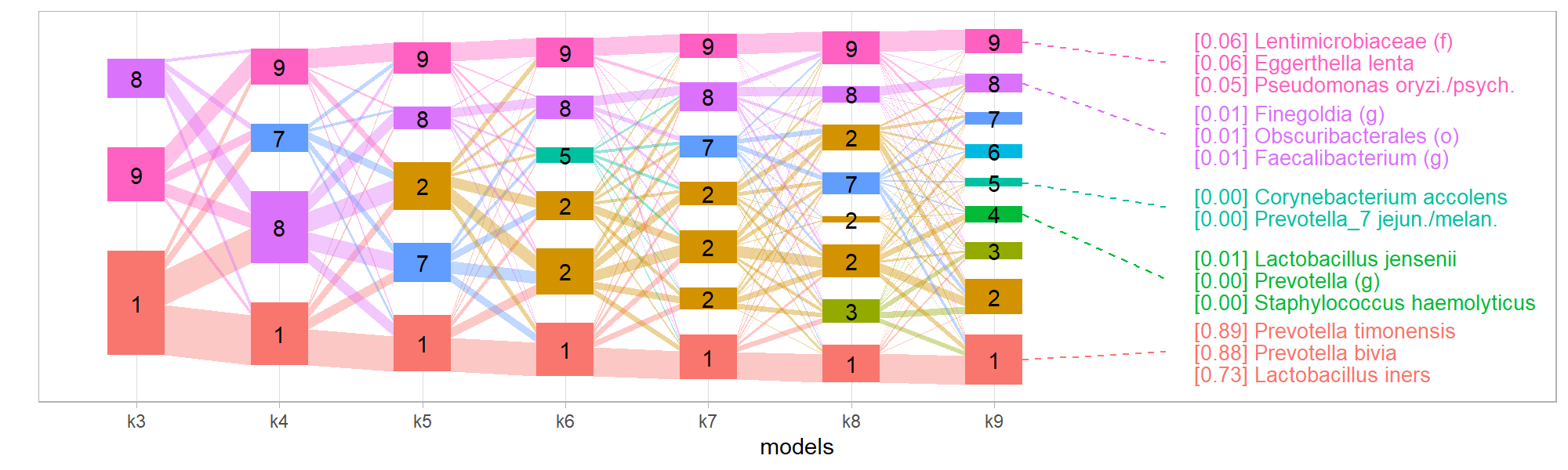}
    \subcaption{Topic refinement}\label{subplot:vaginal:refinement:stm}
    \end{subfigure}
    \hspace{0.3cm}
    \begin{subfigure}[b]{0.4\textwidth}
    \includegraphics[width=\textwidth]{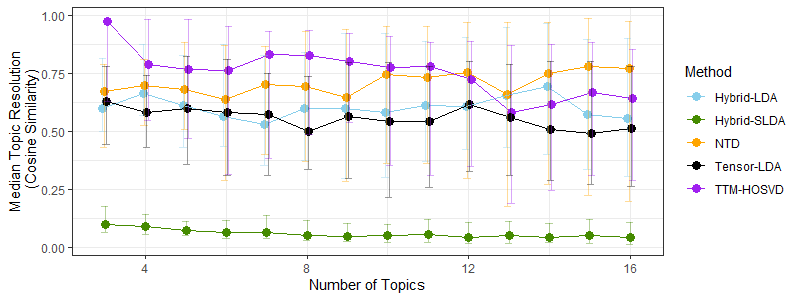}
\subcaption{Topic resolution}\label{subfig: vaginal: resolution:stm}
    \end{subfigure}
    \caption{Topic refinement and resolution as a function of $K^{(3)}$ on the vaginal ecosystem Data \cite{symul2023sub} given from STM. Vertical error bars represent the interquartile range for the average topic resolution scores over 25 trials.}
    \label{fig:vaginal(i)_stm}
\end{figure}

\paragraph{Additional results for (ii):}
Compared to the post-processing matrix methods, such as STM and Hybrid-LDA, the Tucker-based decomposition methods (as shown in Figure \ref{fig: dirichlet others method}), including our method (TTM-HOSVD), Tensor LDA, and NTD, show more significant points in the document-topic matrix $\mW^{(3)}$.  This indicates that the proportion of vaginal microbiota is strongly related to demographic characteristics and menstrual phases, and that these relationships could clearly identified through the menstrual phases and the group of subjects, respectively. %cluster independently from the menstrual cycle and participant subjects. 
In this data example, using Tucker decomposition facilitates the detection of this significant relationship, as it could further provides how different phases and subject characteristics influence the vaginal microbiota composition.

\par Upon closer examination of matrix component $\mA^{(2)}$, we observed that STM extracts more significant points than LDA by utilizing external information, as shown in Figures \ref{subplot:dirichlet STM:A2} and \ref{subplot:dirichlet LDA:A2}. Other methods, including ours, also perform well in Dirichlet regression on
$\mA^{(2)}$. However, when we further investigate the heatmap of proportions in $\mA^{(2)}$ in Figure \ref{fig:Heatmap:vaginal A2}, our method outperforms the others in recovering the theoretical menstrual phases. Both Hybrid-LDA and our method show similar patterns, but in Hybrid LDA, day `-6' appears closer to the phases on days `-7' and `-8', which should correspond to ovulation. In contrast, our method captures a mixed pattern on day `-6', reflecting a transition between the ovulation phase (days `-7' and `-8') and the luteal phase (days `-2' to `-5'), as expected. In comparison, STM provides a more mixed distinction between the luteal, menstrual, and follicular phases, making it harder to differentiate between these phases clearly. On the other hand, Tensor LDA tends to assign too much weight to a single phase, causing a disproportionate focus on one menstrual phase, which deviates from the expected patterns. NTD places more weight on the time after menstruation, and associates it more closely with the luteal phase instead of the follicular phase, which is contradiction to the phase progression.
\begin{figure}[H]
    \centering
     \begin{subfigure}[b]{0.285\textwidth}
    \includegraphics[width=\textwidth]{IMG/real_data/microbiota/TEST/OURS_A1_TEST.png}
    \subcaption{$\mA^{(1)}-$TTM-HOSVD} 
    \end{subfigure}
    \begin{subfigure}[b]{0.285\textwidth}
    \includegraphics[width=\textwidth]{IMG/real_data/microbiota/TEST/OURS_A2_TEST.png}
\subcaption{$\mA^{(2)}-$TTM-HOSVD}
    \end{subfigure}
    \begin{subfigure}[b]{0.38\textwidth}
\includegraphics[width=\textwidth]{IMG/real_data/microbiota/TEST/OURS_W_TEST.png}
    \subcaption{$\mW^{(3)}-$TTM-HOSVD}
    \end{subfigure}
    %\hspace{0.2cm}\\
    \begin{subfigure}[b]{0.285\textwidth}
    \includegraphics[width=\textwidth]{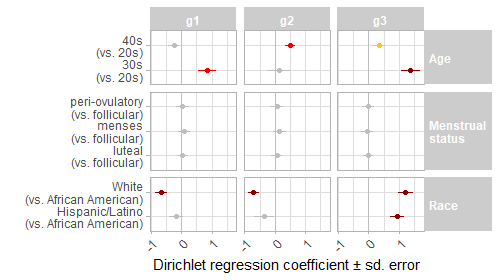}
    \subcaption{$\mA^{(1)}-$Tensor-LDA} \label{subplot:dirichlet TLDA:A1}
    \end{subfigure}
    \begin{subfigure}[b]{0.285\textwidth}
    \includegraphics[width=\textwidth]{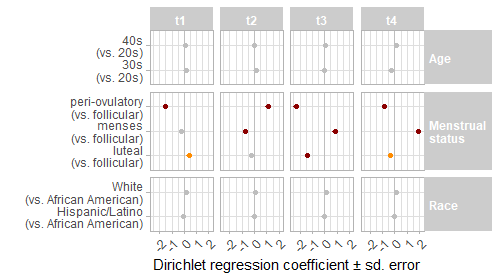}
\subcaption{$\mA^{(2)}-$Tensor-LDA}\label{subplot:dirichlet TLDA:A2}
    \end{subfigure}
    \begin{subfigure}[b]{0.38\textwidth}
\includegraphics[width=\textwidth]{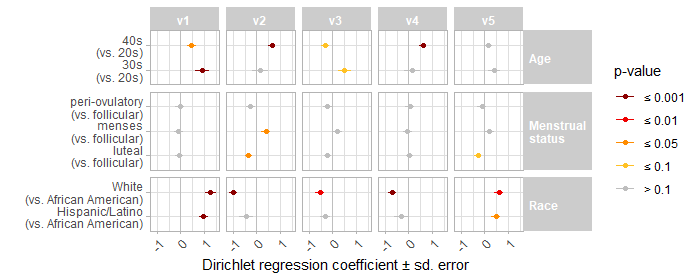}\label{subplot:dirichlet TLDA:W}
    \subcaption{$\mW^{(3)}-$Tensor-LDA}
    \end{subfigure}\\
    \begin{subfigure}[b]{0.285\textwidth}
    \includegraphics[width=\textwidth]{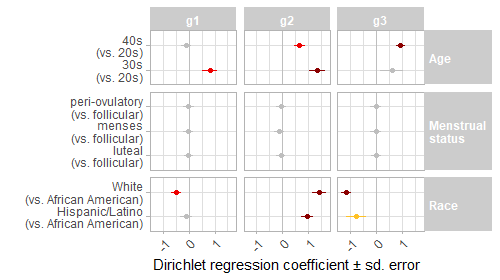}
    \subcaption{$\mA^{(1)}-$Hybrid-LDA} \label{subplot:dirichlet LDA:A1}
    \end{subfigure}
    \begin{subfigure}[b]{0.285\textwidth}
    \includegraphics[width=\textwidth]{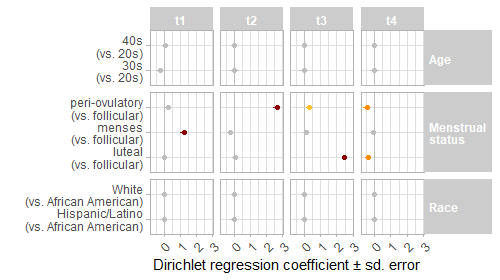}
\subcaption{$\mA^{(2)}-$Hybrid-LDA}\label{subplot:dirichlet LDA:A2}
    \end{subfigure}
    \begin{subfigure}[b]{0.38\textwidth}
\includegraphics[width=\textwidth]{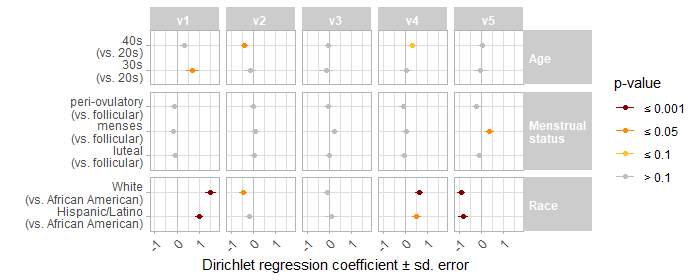}\label{subplot:dirichlet LDA:W}
    \subcaption{$\mW^{(3)}-$Hybrid-LDA}
    \end{subfigure}\\
    \begin{subfigure}[b]{0.285\textwidth}
    \includegraphics[width=\textwidth]{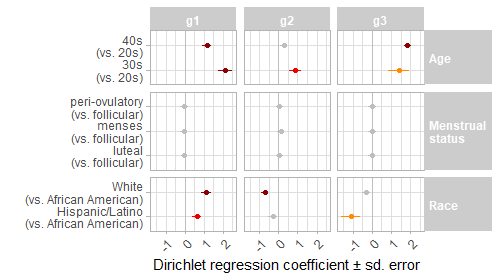}
    \subcaption{$\mA^{(1)}-$STM} \label{subplot:dirichlet STM:A1}
    \end{subfigure}
    \begin{subfigure}[b]{0.285\textwidth}
    \includegraphics[width=\textwidth]{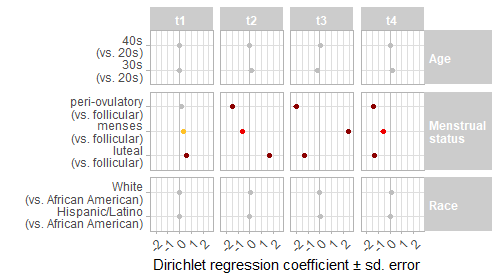}
\subcaption{$\mA^{(2)}-$STM}\label{subplot:dirichlet STM:A2}
    \end{subfigure}
    \begin{subfigure}[b]{0.38\textwidth}
\includegraphics[width=\textwidth]{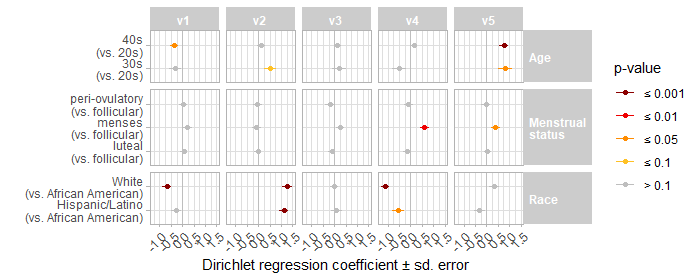}\label{subplot:dirichlet STM:W}
    \subcaption{$\mW^{(3)}-$STM}
    \end{subfigure}\\
    \begin{subfigure}[b]{0.285\textwidth}
    \includegraphics[width=\textwidth]{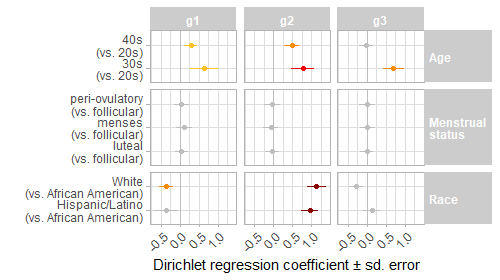}
    \subcaption{$\mA^{(1)}-$NTD} \label{subplot:dirichlet NTD:A1}
    \end{subfigure}
    \begin{subfigure}[b]{0.285\textwidth}
    \includegraphics[width=\textwidth]{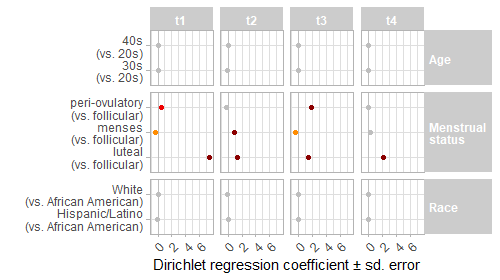}
\subcaption{$\mA^{(2)}-$NTD}\label{subplot:dirichlet NTD:A2}
    \end{subfigure}
    \begin{subfigure}[b]{0.38\textwidth}
\includegraphics[width=\textwidth]{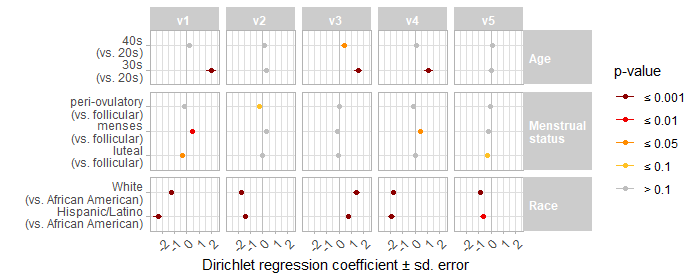}\label{subplot:dirichlet NTD:W}
    \subcaption{$\mW^{(3)}-$NTD}
    \end{subfigure}
    %\hspace{0.2cm}
    \caption{Dirichlet regression estimated coefficients (x-axis) quantifying the associations between race, menstrual status, age (y-axis) and categorical proportions (horizontal panels). Colours indicate the statistical significance. }
    \label{fig: dirichlet others method}
\end{figure}

\begin{figure}[H]
    \centering
     \begin{subfigure}[b]{0.285\textwidth}
    \includegraphics[width=\textwidth]{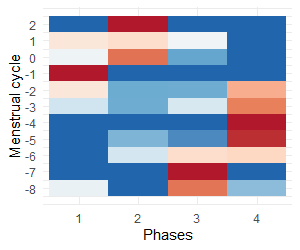}
    \subcaption{$\mA^{(2)}-$TTM-HOSVD} \label{subplot:vaginal: heatmap: ours:A2}
    \end{subfigure}
    \begin{subfigure}[b]{0.285\textwidth}
    \includegraphics[width=\textwidth]{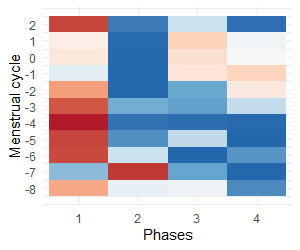}
\subcaption{$\mA^{(2)}-$Tensor-LDA}\label{subplot:vaginal: heatmap: tlda:A2}
    \end{subfigure}
    \begin{subfigure}[b]{0.38\textwidth}
\includegraphics[width=\textwidth]{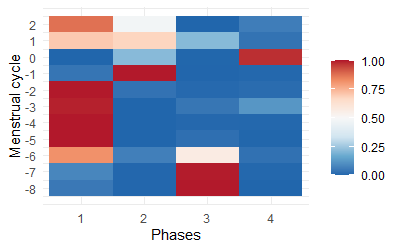}\label{subplot:vaginal: heatmap: NTD:A2}
    \subcaption{$\mA^{(2)}-$NTD}
    \end{subfigure}\\
    \begin{subfigure}[b]{0.285\textwidth}
\includegraphics[width=\textwidth]{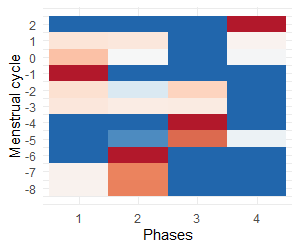}\label{subplot:vaginal: heatmap: lda:A2}
    \subcaption{$\mA^{(2)}-$Hybrid-LDA}
    \end{subfigure}
    \begin{subfigure}[b]{0.285\textwidth}
\includegraphics[width=\textwidth]{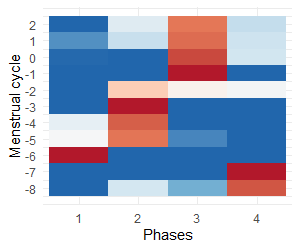}\label{subplot:vaginal: heatmap: stm:A2}
    \subcaption{$\mA^{(2)}-$STM}
    \end{subfigure}
     \begin{subfigure}[b]{0.35\textwidth}
\includegraphics[width=\textwidth]{IMG/real_data/microbiota/output.png}
    \subcaption{menstrual cycle}
    \end{subfigure}
    \caption{Matrix component $\mA^{(2)}$ derived from all methods.  The color scale has been sqrt-transformed. Remind that we select 3 days from the follicular phase, 2 days from the menstrual phase, 4 days from the luteal phase, and 2 days from ovulation. The columns in the plots need to be permuted to facilitate a direct comparison. }
    \label{fig:Heatmap:vaginal A2}
\end{figure}

\paragraph{Additional results for (iii)}From Figure \ref{fig: vaginal plot2}, in group 3 of subjects (which aligns with White) during time phase 1 (the theoretical follicular phase), microbiome topic 1 (dominated by L. gasseri) as characterized by Hybrid-LDA appears more active than microbiome topic 5 (dominated by L. iners). However, this pattern contradicts Figure \ref{fig: vaginal plot1}, where the proportion of topic 5 (v5) in White subjects from day 2 to day 6 is clearly higher than that of topic 1 (v1). It is also inconsistent with other Tucker-decomposition-based methods, which further suggests that the post-processing method can distort the results if the intermediate step of estimating $\mW^{(3)}$ is not sufficiently close to the ground truth. This highlights the importance of Tucker-decomposition-based methods for date with tensor structure. Additionally, Tensor-LDA produces the most sparse core proportion, as the Bayesian framework promotes sparse solutions through priors. In contrast, NTD results in the least sparse core, as KL minimization tends to distribute data more evenly across components, capturing a broader range of interactions. Our HOSVD-based method lies between the two. It captures the change in mixed-topic dynamics, allowing us to model both the prominent topics and the subtle transitions between them.

\begin{figure}[H]
    \centering
    \begin{subfigure}[b]{0.20\textwidth}
    \includegraphics[width=\textwidth]{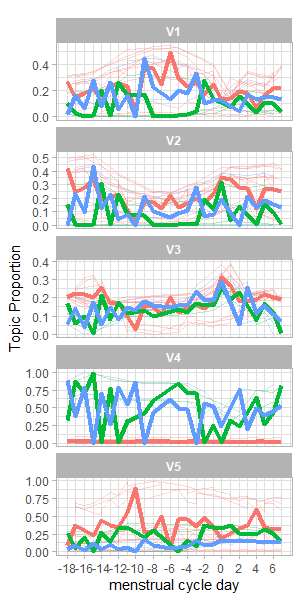}\subcaption{TTM-HOSVD}
    \end{subfigure}
    \begin{subfigure}[b]{0.20\textwidth}
    \includegraphics[width=\textwidth]{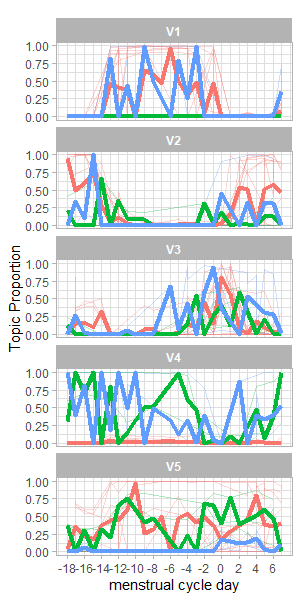}\subcaption{Hybrid-LDA}
    \end{subfigure}
    %\hspace{0.2cm}
    \begin{subfigure}[b]{0.20\textwidth}
    \includegraphics[width=\textwidth]{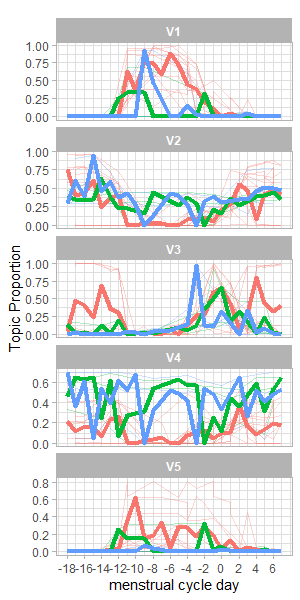}\subcaption{Tensor-LDA}
    \end{subfigure}
    %\hspace{0.2cm}
    \begin{subfigure}[b]{0.3\textwidth}
    \includegraphics[width=\textwidth]{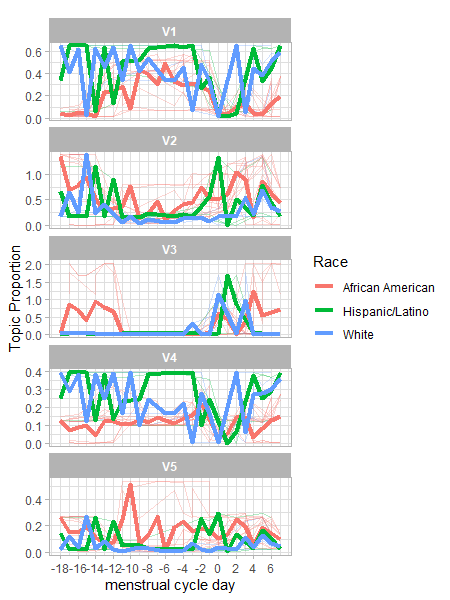}\subcaption{NTD}
    \end{subfigure}
    \caption{The topic proportions $\mW^{(3)}$ by different methods throughout the menstrual cycle}
    \label{fig: vaginal plot1}
\end{figure}

\begin{figure}[H]
    \centering
    \begin{subfigure}[b]{0.27\textwidth}
    \includegraphics[width=\textwidth]{IMG/real_data/microbiota/iii/vaginal_ours_core_algn.png}\subcaption{TTM-HOSVD}
    \end{subfigure}
    \begin{subfigure}[b]{0.195\textwidth}
    \includegraphics[width=\textwidth]{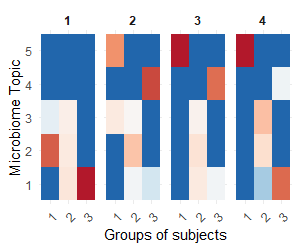}\subcaption{Hybrid-LDA}
    \end{subfigure}
    %\hspace{0.2cm}
    \begin{subfigure}[b]{0.21\textwidth}
    \includegraphics[width=\textwidth]{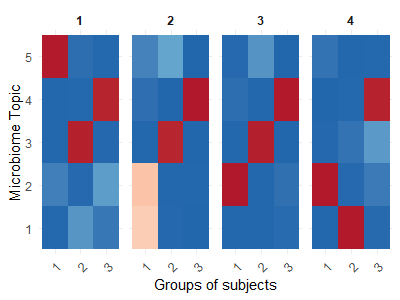}\subcaption{Tensor-LDA}
    \end{subfigure}
    %\hspace{0.2cm}
    \begin{subfigure}[b]{0.29\textwidth}
    \includegraphics[width=\textwidth]{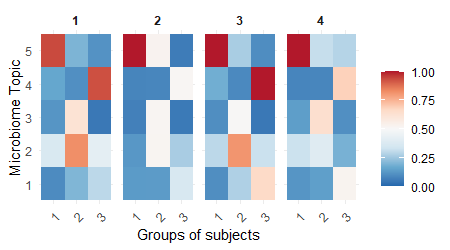}\subcaption{NTD}
    \end{subfigure}
    \caption{Core tensor obtained by different methods. Groups of subject align with the Race (Hispanic/Latino, African American, White). The microbiome topics align with that of our method (TTM-HOSVD).}
    \label{fig: vaginal plot2}
\end{figure}

%\begin{figure}\centering\includegraphics[width=0.5\linewidth]{IMG/real_data/microbiota/iii/uab028_white.png} \caption{ Some selected the bacterial proportions over the menstrual days for Subject ``UAB028" who identifies as White. The bacteria have been categorized into topics based on the maximum value of the proportion in $\mA^{(3)}$ using the TTM-HOSVD method.}\label{fig:enter-label}\end{figure}

%\paragraph{Compared the results in \cite{symul2023sub}:} In Section (e) of \cite{symul2023sub}, using matrix LDA doesn't capture the change of some topic proportions throughout the cycle for subject ``UAB077", but the underlying taxa composition varied. This discrepancy might cause by the not accurate enough estimation of document-topic matrix $\mW^{(3)}$, 

\subsection{Market-basket analysis}\label{app:market}

\paragraph{Preprocessing} The original Dunnhumby dataset consists of a list of 2 million transactions corresponding to the shopping baskets of 2,500 households at a retailer over a period of 2 years.
We pre-process this dataset using the following steps.
\begin{itemize}
\item We process item names to remove content in parentheses (e.g. "tomatoes (vine)" becomes "tomatoes").
    \item We aggregate items according to their cleaned name and category, thereby eliminating all information pertaining quantities (e.g. ``canned tomatoes 16 oz" and ``canned tomatoes 32 oz" are both equivalent to ``tomatoes'').
    \item We discard items that appear less than 10 times in the dataset, as well as items that are too frequent to be informative (defined here as items occurring in 15\% of all baskets).
    \item We keep baskets that are big enough to determine topics (e.g. bigger than 1 purchase)
    \item We keep households who appear in the dataset at least once every two weeks. We aggregate baskets if they appear several times in a two-week window.
\end{itemize}

\begin{figure}[H]
\centering\includegraphics[width=0.75\linewidth]{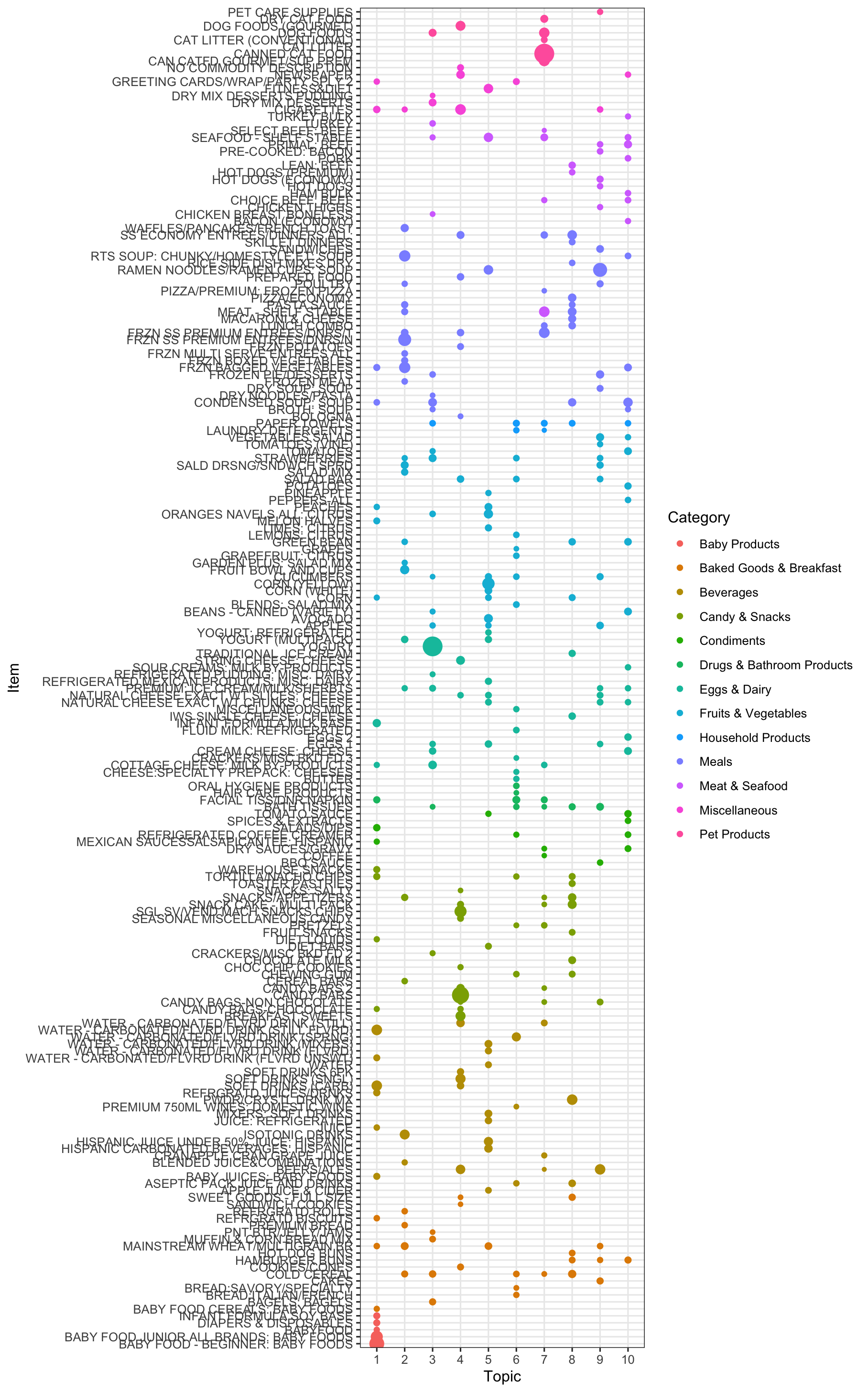}
    \caption{Market-basket topics recovered by STM.}
    \label{fig:market_topic_stm}
\end{figure}

\begin{figure}[H]
    \centering
\includegraphics[width=0.75\linewidth]{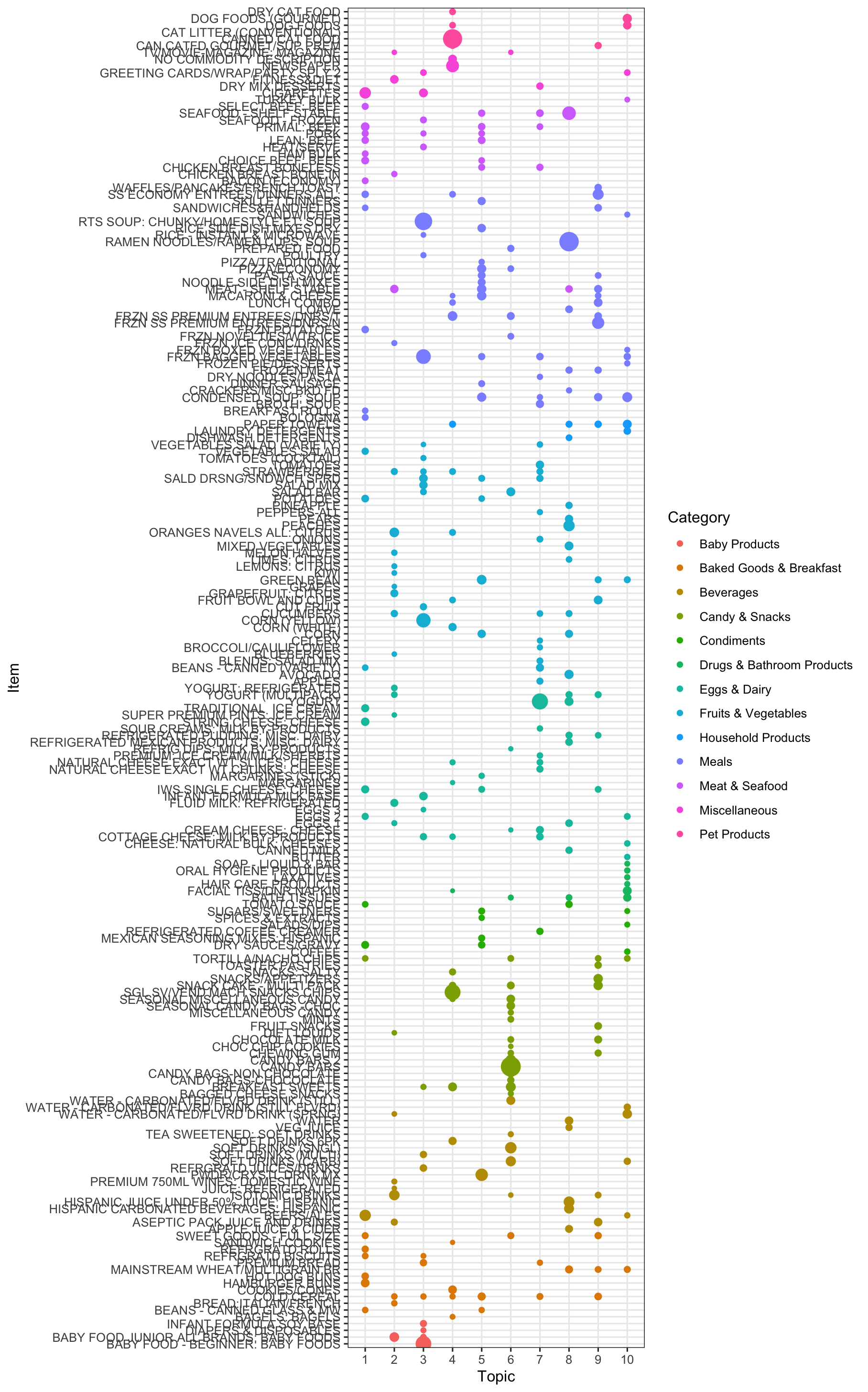}
    \caption{Topics recovered using Hybrid-LDA.}
    \label{fig:market_lda2}
\end{figure}

\begin{figure}[H]
    \centering
\includegraphics[width=0.8\linewidth]{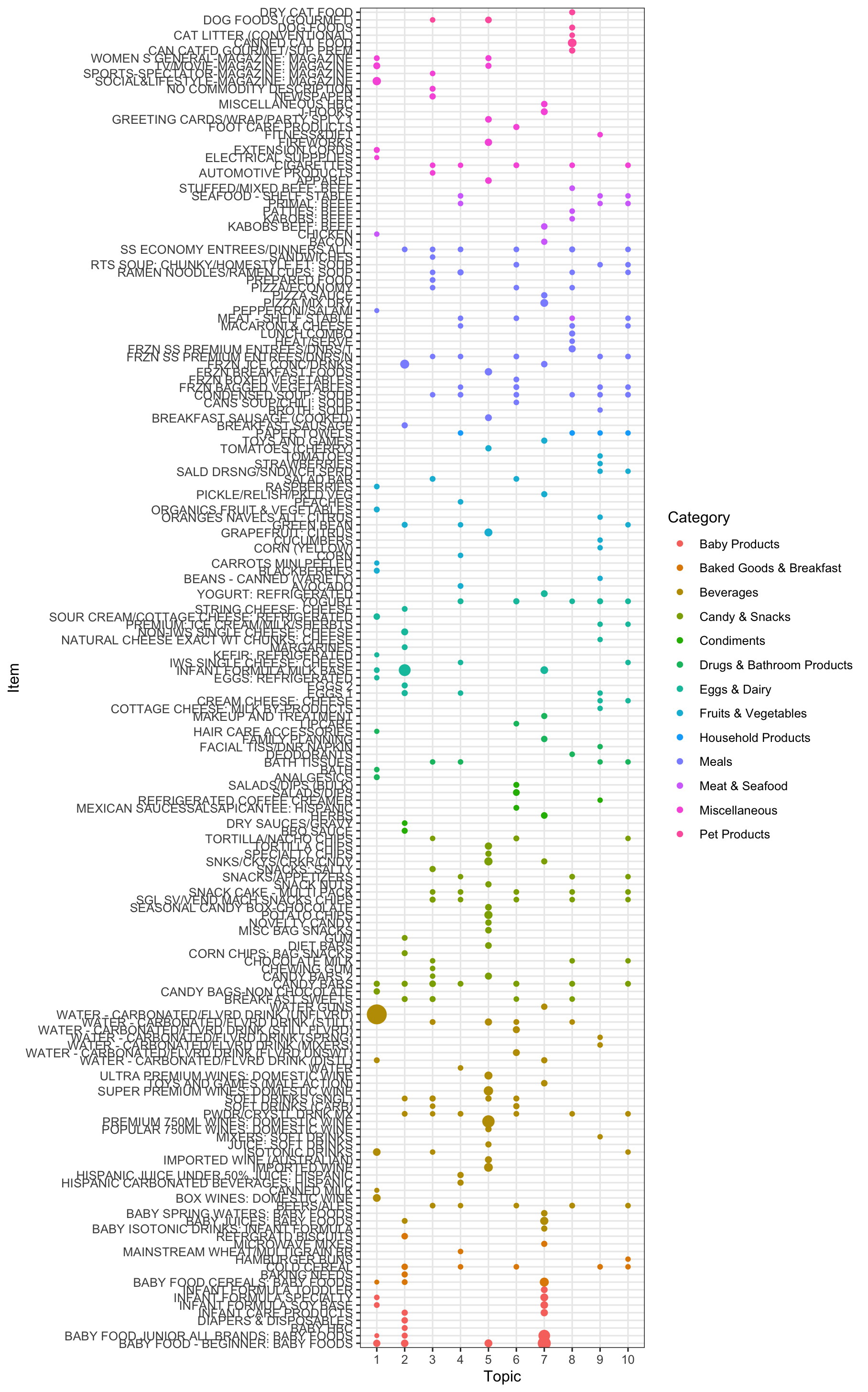}
    \caption{Topics recovered using Topic-Score HOSVD.}
    \label{fig:market-topics-topic-score hosvd}
\end{figure}

% Please add the following required packages to your document preamble:
% \usepackage{multirow}
% Please add the following required packages to your document preamble:
% \usepackage{multirow}
% Please add the following required packages to your document preamble:
% \usepackage{multirow}
\begin{table}[H]
   \resizebox{\textwidth}{!}{ % Adjust to text width
\begin{tabular}{|c|l|r|r|r|r|r|r|r|r|r|r|r|r|r|}
\hline
\textbf{Method} & \multicolumn{1}{c|}{\textbf{Topic}} & \multicolumn{1}{c|}{\textbf{\begin{tabular}[c]{@{}c@{}}Baby \\ Products\end{tabular}}} & \multicolumn{1}{c|}{\textbf{\begin{tabular}[c]{@{}c@{}}Baked \\ Goods \\ \& Breakfast\end{tabular}}} & \multicolumn{1}{c|}{\textbf{Beverages}} & \multicolumn{1}{c|}{\textbf{\begin{tabular}[c]{@{}c@{}}Candy \textbackslash\\ \& Snacks\end{tabular}}} & \multicolumn{1}{c|}{\textbf{Condiments}} & \multicolumn{1}{c|}{\textbf{\begin{tabular}[c]{@{}c@{}}Drugs \& \\ Bathroom \\ Products\end{tabular}}} & \multicolumn{1}{c|}{\textbf{Eggs \& Dairy}} & \multicolumn{1}{c|}{\textbf{\begin{tabular}[c]{@{}c@{}}Fruits \\ \& Vegetables\end{tabular}}} & \multicolumn{1}{c|}{\textbf{\begin{tabular}[c]{@{}c@{}}Household \\ Products\end{tabular}}} & \multicolumn{1}{c|}{\textbf{Meals}} & \multicolumn{1}{c|}{\textbf{\begin{tabular}[c]{@{}c@{}}Meat \\ \& Seafood\end{tabular}}} & \multicolumn{1}{c|}{\textbf{\begin{tabular}[c]{@{}c@{}}Miscell-\\ aneous\end{tabular}}} & \multicolumn{1}{c|}{\textbf{\begin{tabular}[c]{@{}c@{}}Pet \\ Products\end{tabular}}} \\ \hline\hline
\multirow{10}{*}{LDA} & 1 &  & 0.06 & 0.04 & 0.01 & 0.02 &  & 0.05 & 0.03 &  & 0.04 & \textbf{0.07} & 0.04 &  \\ \cline{2-15} 
 & 2 & 0.02 & 0.02 & 0.06 & 0.00 &  &  & 0.04 & \textbf{0.08} &  & 0.01 & 0.02 & 0.02 &  \\ \cline{2-15} 
 & 3 & 0.13 & 0.02 & 0.02 & 0.01 &  &  & 0.03 & 0.15 &  & \textbf{0.25} & 0.02 & 0.03 &  \\ \cline{2-15} 
 & 4 &  & 0.03 & 0.01 & \textbf{0.16} &  & 0.00 & 0.02 & 0.04 & 0.01 & 0.04 &  & 0.08 & 0.22 \\ \cline{2-15} 
 & 5 &  & 0.02 & 0.06 &  & 0.03 &  & 0.01 & 0.06 &  & \textbf{0.17} & 0.06 &  &  \\ \cline{2-15} 
 & 6 &  & 0.01 & 0.10 & \textbf{0.38} &  & 0.01 & 0.01 & 0.02 &  & 0.04 &  & 0.00 &  \\ \cline{2-15} 
 & 7 &  & 0.01 &  &  & 0.01 &  & \textbf{0.17} & 0.10 &  & 0.04 & 0.03 & 0.01 &  \\ \cline{2-15} 
 & 8 &  & 0.01 & 0.10 &  & 0.01 & 0.01 & 0.07 & 0.13 & 0.02 & \textbf{0.22} & 0.08 &  &  \\ \cline{2-15} 
 & 9 &  & 0.03 & 0.02 & 0.10 &  &  & 0.02 & 0.03 & 0.01 & \textbf{0.19} &  &  & 0.01 \\ \cline{2-15} 
 & 10 &  & 0.01 & 0.05 & 0.01 & 0.02 & \textbf{0.06} & 0.02 & 0.01 & 0.03 & 0.05 & 0.01 & 0.01 & 0.04 \\ \hline\hline
\multirow{10}{*}{STM} & 1 & \textbf{0.20} & 0.02 & 0.12 & 0.03 & 0.02 & 0.01 & 0.02 & 0.02 &  & 0.02 &  & 0.02 &  \\ \cline{2-15} 
 & 2 &  & 0.05 & 0.04 & 0.02 &  &  & 0.02 & 0.07 &  & \textbf{0.26} &  & 0.01 &  \\ \cline{2-15} 
 & 3 &  & 0.04 &  & 0.01 &  & 0.01 & \textbf{0.31} & 0.05 & 0.01 & 0.04 & 0.02 & 0.02 & 0.01 \\ \cline{2-15} 
 & 4 &  & 0.02 & 0.10 & 0.33 &  &  & 0.03 & 0.01 &  & 0.05 &  & 0.07 & 0.03 \\ \cline{2-15} 
 & 5 &  & 0.01 & 0.11 & 0.01 & 0.01 &  & 0.06 & \textbf{0.18 }&  & 0.03 & 0.03 & 0.03 &  \\ \cline{2-15} 
 & 6 &  & 0.02 & 0.04 & 0.02 & 0.01 & 0.03 & 0.03 & 0.05 & 0.02 &  &  & 0.01 &  \\ \cline{2-15} 
 & 7 &  & 0.01 & 0.02 & 0.03 & 0.01 & 0.01 & 0.01 &  & 0.01 & 0.07 & 0.06 &  & \textbf{0.37} \\ \cline{2-15} 
 & 8 &  & 0.04 & 0.05 & 0.09 &  & 0.01 & 0.02 & 0.02 & 0.01 & \textbf{0.14} & 0.02 &  &  \\ \cline{2-15} 
 & 9 &  & 0.02 & 0.04 & 0.01 & 0.01 & 0.01 & 0.03 & 0.07 &  & \textbf{0.15} & 0.04 & 0.01 & 0.01 \\ \cline{2-15} 
 & 10 &  & 0.01 &  &  & 0.04 &  & 0.05 &\textbf{ 0.06} & 0.01 & \bf 0.06 & \bf 0.06 & 0.01 &  \\ \hline\hline
\multirow{10}{*}{TTM-HOOI} & 1 &  & 0.01 & 0.07 & 0.04 & 0.01 & 0.01 & 0.01 & 0.01 & 0.00 & 0.06 & 0.03 & 0.01 &  \\ \cline{2-15} 
 & 2 &  & 0.01 & 0.04 & 0.07 &  & 0.01 & 0.03 &  & 0.01 & 0.24 & 0.01 &  & 0.01 \\ \cline{2-15} 
 & 3 &  & 0.03 & 0.00 & 0.02 &  & 0.01 & 0.03 & 0.02 & 0.01 & 0.09 & 0.01 & 0.01 &  \\ \cline{2-15} 
 & 4 & 0.40 & 0.01 & 0.06 & 0.04 &  & 0.01 & 0.04 & 0.02 &  & 0.06 &  & 0.01 & 0.02 \\ \cline{2-15} 
 & 5 & 0.01 & 0.02 & 0.02 & 0.04 &  &  & 0.04 & 0.02 &  & 0.13 & 0.01 &  &  \\ \cline{2-15} 
 & 6 &  & 0.01 & 0.13 & 0.08 &  & 0.00 & 0.06 & 0.01 &  & 0.18 & 0.01 & 0.08 & 0.04 \\ \cline{2-15} 
 & 7 &  & 0.02 & 0.01 & 0.05 &  & 0.01 & 0.04 & 0.01 & 0.01 & 0.04 & 0.01 &  &  \\ \cline{2-15} 
 & 8 &  & 0.01 & 0.06 & 0.20 &  & 0.01 & 0.02 & 0.01 & 0.00 & 0.04 & 0.00 & 0.02 & 0.00 \\ \cline{2-15} 
 & 9 &  & 0.00 & 0.02 & 0.04 & 0.00 & 0.01 & 0.01 & 0.00 & 0.01 & 0.10 & 0.01 & 0.00 & 0.23 \\ \cline{2-15} 
 & 10 & 0.01 & 0.02 & 0.01 & 0.02 &  & 0.01 & 0.16 & 0.03 & 0.01 & 0.04 & 0.01 &  &  \\ \hline \hline
\multirow{10}{*}{TTM-HOSVD} & 1 &  & 0.01 & \textbf{0.07} & 0.04 & 0.01 & 0.01 & 0.01 & 0.01 & 0.00 & 0.06 & 0.03 & 0.01 &  \\ \cline{2-15} 
 & 2 &  & 0.01 & 0.04 & 0.07 &  & 0.01 & 0.03 &  & 0.01 & \textbf{0.24} & 0.01 &  & 0.01 \\ \cline{2-15} 
 & 3 &  & 0.03 & 0.00 & 0.02 &  & 0.01 & 0.03 & 0.02 & 0.01 & \textbf{0.09} & 0.01 & 0.01 &  \\ \cline{2-15} 
 & 4 & \textbf{0.40} & 0.01 & 0.06 & 0.04 &  & 0.01 & 0.04 & 0.02 &  & 0.06 &  & 0.01 & 0.02 \\ \cline{2-15} 
 & 5 & 0.01 & 0.02 & 0.02 & 0.04 &  &  & 0.04 & 0.02 &  & \textbf{0.13} & 0.01 &  &  \\ \cline{2-15} 
 & 6 &  & 0.01 & 0.13 & 0.08 &  & 0.00 & 0.06 & 0.01 &  & \textbf{0.18} & 0.01 & 0.08 & 0.04 \\ \cline{2-15} 
 & 7 &  & 0.02 & 0.01 &\textbf{ 0.05} &  & 0.01 & 0.04 & 0.01 & 0.01 & 0.04 & 0.01 &  &  \\ \cline{2-15} 
 & 8 &  & 0.01 & 0.06 & 0\textbf{0.20} &  & 0.01 & 0.02 & 0.01 & 0.00 & 0.04 & 0.00 & 0.02 & 0.00 \\ \cline{2-15} 
 & 9 &  & 0.00 & 0.02 & 0.04 & 0.00 & 0.01 & 0.01 & 0.00 & 0.01 & 0.10 & 0.01 & 0.00 & \textbf{0.23} \\ \cline{2-15} 
 & 10 & 0.01 & 0.02 & 0.01 & 0.02 &  & 0.01 & \textbf{0.16} & 0.03 & 0.01 & 0.04 & 0.01 &  &  \\ \hline \hline
\multirow{10}{*}{TopicScore-HOSVD} & 1 & 0.03 & 0.00 & \textbf{0.83} & 0.02 &  & 0.01 & 0.02 & 0.01 &  & 0.00 & 0.00 & 0.07 &  \\ \cline{2-15} 
 & 2 & 0.05 & 0.03 & 0.01 & 0.02 & 0.01 &  & \textbf{0.20} & 0.00 &  & 0.08 &  &  &  \\ \cline{2-15} 
 & 3 &  &  & 0.02 & 0.04 &  & 0.00 &  & 0.00 &  & 0.02 &  & 0.02 & 0.00 \\ \cline{2-15} 
 & 4 &  & 0.00 & 0.02 & 0.01 &  & 0.00 & 0.01 & 0.01 & 0.00 & 0.02 & 0.01 & 0.00 &  \\ \cline{2-15} 
 & 5 & 0.03 &  & \textbf{0.40} & 0.17 &  &  &  & 0.04 &  & 0.04 &  & 0.06 & 0.01 \\ \cline{2-15} 
 & 6 &  & 0.00 & 0.06 & 0.02 & 0.02 & 0.00 & 0.01 & 0.00 &  & 0.03 &  & 0.01 &  \\ \cline{2-15} 
 & 7 & \textbf{0.47} & 0.07 & 0.08 & 0.01 & 0.01 & 0.02 & 0.04 & 0.01 &  & 0.06 & 0.02 & 0.02 &  \\ \cline{2-15} 
 & 8 &  &  & 0.01 & 0.02 &  & 0.00 & 0.00 &  & 0.00 & 0.04 & 0.01 & 0.00 & 0.07 \\ \cline{2-15} 
 & 9 &  & 0.00 & 0.01 &  & 0.00 & 0.00 & 0.02 & 0.02 & 0.00 & 0.01 & 0.00 & 0.00 &  \\ \cline{2-15} 
 & 10 &  & 0.00 & 0.01 & 0.02 &  & 0.00 & 0.01 & 0.00 & 0.00 & 0.02 & 0.01 & 0.00 &  \\ \hline
\end{tabular}
}
\caption{Results for the market basket analyss}\label{tab:market}
\end{table}

%%%%%

%%%%%%%%%%%%%%%%%%%%%%%%

\end{document}